\DeclareMathOperator{\interior}{int}
\DeclareMathOperator{\Id}{Id}
\DeclareMathOperator{\tr}{tr}
\DeclareMathOperator{\Stab}{Stab}
\DeclareMathOperator{\supp}{supp}
\DeclareMathOperator{\GL}{GL}
\DeclareMathOperator{\SO}{SO}
\DeclareMathOperator{\Lip}{Lip}
\DeclareMathOperator{\proj}{proj}
\DeclareMathOperator{\Cay}{Cay}
\DeclareMathOperator{\T}{T}
\DeclareMathOperator{\F}{F}
\DeclareMathOperator{\Isom}{Isom}
\DeclareMathOperator{\diam}{diam}
\DeclareMathOperator{\Banana}{Banana}
\DeclareMathOperator{\len}{len}
\let\Pr\relax
\DeclareMathOperator{\Pr}{Pr}
\DeclareMathOperator{\Ad}{Ad}
\DeclareMathOperator{\Inj}{Inj}
\DeclareMathOperator{\Var}{Var}
\DeclareFontFamily{U}{mathb}{\hyphenchar\font45}
\DeclareFontShape{U}{mathb}{m}{n}{
      <5> <6> <7> <8> <9> <10> gen * mathb
      <10.95> mathb10 <12> <14.4> <17.28> <20.74> <24.88> mathb12
      }{}
\DeclareSymbolFont{mathb}{U}{mathb}{m}{n}
\DeclareMathSymbol{\bigast}{2}{mathb}{"06}
\def\XXint#1#2#3{{\setbox0=\hbox{$#1{#2#3}{\int}$}
     \vcenter{\hbox{$#2#3$}}\kern-.5\wd0}}
\theoremstyle{plain}
\newtheorem{theorem}{Theorem}[section]
\newtheorem{corollary}{Corollary}[theorem]
\newtheorem{lemma}[theorem]{Lemma}
\theoremstyle{definition}
\newtheorem{definition}[theorem]{Definition}
\newtheoremstyle{remark}
{}   % ABOVESPACE, \topsep for no space
{}   % BELOWSPACE, \topsep for no space
{\normalfont}  % BODYFONT
{}       % INDENT (empty value is the same as 0pt)
{\itshape} % HEADFONT
{.}         % HEADPUNCT
{5pt plus 1pt minus 1pt} % HEADSPACE
{}          % CUSTOM-HEAD-SPEC
\theoremstyle{remark}
\newtheorem*{remark}{Remark}
\setlist[enumerate,1]{ref=(\arabic*)}
\setlist[enumerate,2]{ref=(\theenumi)(\alph*)}
\setlist[enumerate,3]{ref=(\theenumi)(\theenumii)(\roman*)}
\setlist[enumerate,4]{ref=(\theenumi)(\theenumii)(\theenumiii)(\Alph*)}
\newlist{alternative}{enumerate}{4}     % this creates a dedicated counter named 'subtaski'
\setlist[alternative,1]{label=(\arabic*), ref=(\arabic*)}
\setlist[alternative,2]{label=(\alph*), ref=(\thealternativei)(\alph*)}
\setlist[alternative,3]{label=(\roman*), ref=(\thealternativei)(\thealternativeii)(\roman*)}
\setlist[alternative,4]{label=(\Alph*), ref=(\thealternativei)(\thealternativeii)(\thealternativeiii)(\Alph*)}
\Crefname{enumi}{Property}{Properties}
\Crefname{alternativei}{Alternative}{Alternatives}
\Crefname{subsection}{Subsection}{Subsections}
\begin{document}

%\setlength{\parindent}{0 in}
%\setlength{\mathindent}{0.5 in}
%\numberwithin{equation}{section}
%\renewcommand*{\abstractname}{Introduction}

\title[Uniform Exponential Mixing]{Uniform Exponential Mixing for Congruence Covers of Convex Cocompact Hyperbolic Manifolds}
\author{Pratyush Sarkar}
\address{Department of Mathematics, Yale University, New Haven, Connecticut 06511}
\email{pratyush.sarkar@yale.edu}
\date{\today}

\begin{abstract}
Let $\Gamma$ be a Zariski dense convex cocompact subgroup contained in an arithmetic lattice of $\SO(n, 1)^\circ$. We prove uniform exponential mixing of the geodesic flow for congruence covers of the hyperbolic manifold $\Gamma \backslash \mathbb H^n$ avoiding finitely many prime ideals. This extends the work of Oh-Winter who proved the result for the $n = 2$ case. Following their approach, we use Dolgopyat's method for the proof of exponential mixing of the geodesic flow. We do this uniformly over congruence covers by establishing uniform spectral bounds for the congruence transfer operators associated to the geodesic flow. This requires another key ingredient which is the expander machinery due to Bourgain-Gamburd-Sarnak extended by Golsefidy-Varj\'{u}.
\end{abstract}

\maketitle

\setcounter{tocdepth}{1}
\tableofcontents

\section{Introduction}
Let $\mathbb H^n$ be the $n$-dimensional hyperbolic space for $n \geq 2$. Let $\mathbb K \subset \mathbb R$ be a totally real number field, $\mathcal{O}_{\mathbb K}$ be the corresponding ring of integers, and $Q$ be a quadratic form of signature $(n, 1)$ defined over $\mathbb K$. Let $\mathbf{G} = \SO_Q(\mathbb C) < \GL_{n + 1}(\mathbb C)$ be an algebraic group defined over $\mathbb K$ such that $\mathbf{G}(\mathbb R) \cong \SO(n, 1)$ and $\mathbf{G}^\sigma(\mathbb R) \cong \SO(n + 1)$, which is compact, for all nontrivial embeddings $\sigma: \mathbb K \hookrightarrow \mathbb R$. Let $G = \mathbf{G}(\mathbb R)^\circ$ which we recognize as the group of orientation preserving isometries of $\mathbb H^n$. We identify $\mathbb H^n$ and its unit tangent bundle $\T^1(\mathbb H^n)$ with $G/K$ and $G/M$ respectively where $M \subset K$ are compact subgroups of $G$. Let $A = \{a_t: t \in \mathbb R\} < G$ be a one-parameter subgroup of semisimple elements such that its right translation action on $G/M$ corresponds to the geodesic flow.

Let $\Gamma < G$ be a Zariski dense convex cocompact subgroup. By the work of Stoyanov \cite{Sto11}, exponential mixing of the geodesic flow on the hyperbolic manifold $\T^1(\Gamma \backslash \mathbb H^n) = \Gamma \backslash G/M$ is known for the Bowen-Margulis-Sullivan measure. In this paper, when $\Gamma$ is defined arithmetically and satisfies the strong approximation property, we establish the \emph{uniform} exponential mixing for all congruence covers of $\Gamma \backslash \mathbb H^n$ corresponding to ideals $\mathfrak{q} \subset \mathcal{O}_{\mathbb K}$ avoiding finitely many prime ideals, extending the work of Oh-Winter \cite{OW16} for $n = 2$ to arbitrary dimensions $n \geq 2$.

%we prove the following theorem establishing \emph{uniform} exponential mixing of the geodesic flow, i.e., the right $A$-action on $\Gamma_\mathfrak{q} \backslash G/M$, with respect to the Bowen-Margulis-Sullivan (BMS) measure $m^{\mathrm{BMS}}_\mathfrak{q}$, for all nontrivial ideals $\mathfrak{q} \subset \mathcal{O}_{\mathbb K}$ avoiding finitely many primes.

Let $\tilde{\pi}: \tilde{\mathbf{G}} \to \mathbf{G}$ be a simply connected cover defined over $\mathbb K$. For all ideals $\mathfrak{q} \subset \mathcal{O}_{\mathbb K}$, let $\pi_{\mathfrak{q}}: \tilde{\mathbf{G}}(\mathcal{O}_{\mathbb K}) \to \tilde{\mathbf{G}}(\mathcal{O}_{\mathbb K}/\mathfrak{q})$ be the canonical quotient map. Let $\Gamma < \mathbf{G}(\mathbb K)$ be a Zariski dense torsion-free convex cocompact subgroup such that $\tilde{\pi}^{-1}(\Gamma)$ is contained in $\tilde{\mathbf{G}}(\mathcal{O}_{\mathbb K})$ and $\{\tr(\Ad_g): g \in \tilde{\pi}^{-1}(\Gamma)\}$ generates the ring $\mathcal{O}_{\mathbb K}$. We impose these conditions so that $\Gamma$ satisfies the strong approximation property. For all nontrivial ideals $\mathfrak{q} \subset \mathcal{O}_{\mathbb K}$, let $\Gamma_\mathfrak{q} < \Gamma$ be a congruence subgroup of level $\mathfrak{q}$, meaning that $\tilde{\pi}^{-1}(\Gamma_\mathfrak{q}) = \tilde{\pi}^{-1}(\Gamma) \cap \langle\ker(\pi_{\mathfrak{q}}), \{e, -e\}\rangle$. For all nontrivial ideals $\mathfrak{q} \subset \mathcal{O}_{\mathbb K}$, let $N_{\mathbb K}(\mathfrak{q})$ be the ideal norm and $m^{\mathrm{BMS}}_\mathfrak{q}$ be the Bowen-Margulis-Sullivan measure on $\Gamma_\mathfrak{q} \backslash G$ induced from the one on $\Gamma \backslash G/M$.

%Let $m^{\mathrm{BMS}}$ be the right $M$-invariant measure on $\Gamma \backslash G$ induced by the Bowen-Margulis-Sullivan measure on $\Gamma \backslash G/M$.

%Let $\Lambda(\Gamma)$ denote the limit set of $\Gamma$ and $\delta_\Gamma \in (0, n - 1]$ denote the critical exponent of $\Gamma$, which by convex cocompactness coincides with the Hausdorff dimension of $\Lambda(\Gamma)$. \\

\begin{theorem}
\label{thm:TheoremUniformExponentialMixingOfGeodesicFlow}
There exist $\eta > 0$, $C > 0$ and a nontrivial proper ideal $\mathfrak{q}_0 \subset \mathcal{O}_{\mathbb K}$ such that for all square free ideals $\mathfrak{q} \subset \mathcal{O}_{\mathbb K}$ coprime to $\mathfrak{q}_0$, and for all $M$-invariant functions $\phi, \psi \in C^1(\Gamma_\mathfrak{q} \backslash G, \mathbb R)$, we have
\begin{multline*}
\left|\int_{\Gamma_\mathfrak{q} \backslash G} \phi(xa_t)\psi(x) \, dm^{\mathrm{BMS}}_\mathfrak{q}(x) - \frac{1}{m^{\mathrm{BMS}}_\mathfrak{q}(\Gamma_\mathfrak{q} \backslash G)} m^{\mathrm{BMS}}_\mathfrak{q}(\phi) \cdot m^{\mathrm{BMS}}_\mathfrak{q}(\psi)\right| \\
\leq CN_{\mathbb K}(\mathfrak{q})^C e^{-\eta t} \|\phi\|_{C^1} \|\psi\|_{C^1}.
\end{multline*}
\end{theorem}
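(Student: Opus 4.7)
\emph{Symbolic coding and congruence transfer operators.} I would first realize the geodesic flow on $\T^1(\Gamma \backslash \mathbb H^n)$ as a suspension flow, using the Bowen--Ratner construction as adapted to convex cocompact hyperbolic manifolds of arbitrary dimension by Stoyanov. This produces a Markov section whose base is a subshift of finite type $(\Sigma_A, \sigma)$ equipped with a H\"{o}lder roof function $\tau$, so that the Bowen--Margulis--Sullivan flow corresponds to the suspension flow with equilibrium state for the potential $-\delta \tau$, where $\delta$ is the critical exponent of $\Gamma$. Lifting the section to $\Gamma_\mathfrak{q} \backslash G$ records a deck-transformation cocycle $c_\mathfrak{q}: \Sigma_A \to \Gamma/\Gamma_\mathfrak{q}$, and the relevant object is the congruence transfer operator $\mathcal{L}_{s, \mathfrak{q}}$ on H\"{o}lder functions $\Sigma_A^+ \to \ell^2(\Gamma/\Gamma_\mathfrak{q})$, obtained by twisting the usual Ruelle operator by the permutation representation of $c_\mathfrak{q}$. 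Strong approximation guarantees that for square-free $\mathfrak{q}$ coprime to a suitable $\mathfrak{q}_0$, the quotient $\Gamma/\Gamma_\mathfrak{q}$ factors as a product over prime divisors, reducing the task to prime $\mathfrak{q}$.

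\emph{Reduction to uniform spectral bounds.} A standard Paley--Wiener/contour-shift argument (as in Pollicott--Sharp, Stoyanov, and Oh--Winter) reduces the theorem to producing $\eta, C > 0$ such that for all admissible $\mathfrak{q}$, all $b \in \mathbb R$ and all $n \geq 0$,
\[
\|\mathcal{L}_{\delta + ib, \mathfrak{q}}^n f\|_{1, b} \leq C \, N_{\mathbb K}(\mathfrak{q})^C e^{-\eta n} \|f\|_{1, b},
\]
where $\|\cdot\|_{1, b}$ is the Dolgopyat-style $C^1$ norm scaled by $b$ and $f$ takes values in the orthogonal complement of the constant vector $\mathbf{1} \in \ell^2(\Gamma/\Gamma_\mathfrak{q})$. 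The invariant line $\mathbb C \cdot \mathbf{1}$ corresponds to Stoyanov's unconditional exponential mixing on $\Gamma \backslash G$; the new content lies entirely in the complementary representation, and the polynomial loss $N_{\mathbb K}(\mathfrak{q})^C$ is exactly what yields the polynomial factor in the theorem's statement.

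\emph{Dolgopyat's method with the expander input.} For $|b|$ large, I would execute Dolgopyat's oscillatory cancellation argument uniformly in $\mathfrak{q}$, extracting $L^2$-contraction from phase cancellation in $e^{ib\tau_n}$. The key geometric input, a local non-integrability condition for the temporal distance function, comes from Zariski density of $\Gamma$ in $G$ via Stoyanov's higher-dimensional LNIC, and it passes verbatim to the covers since the underlying symbolic base is unchanged. For $|b|$ bounded, cancellation must instead come from the cocycle $c_\mathfrak{q}$: here I would prove an $\ell^2$-flattening estimate for convolution measures on $\Gamma/\Gamma_\mathfrak{q}$ supported on the return words of the symbolic dynamics. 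By a Bourgain--Gamburd-style multiscale argument this flattening follows from the uniform spectral gap for random walks on $\Gamma/\Gamma_\mathfrak{q}$ supplied by Golsefidy--Varj\'u, applied to any fixed finite subset of $\Gamma$ whose Zariski closure is $\mathbf{G}$ and which is realized by return words of the Markov section.

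\emph{The main obstacle.} The hardest step is merging the two regimes into a single congruence Dolgopyat estimate whose constants degrade only polynomially in $N_{\mathbb K}(\mathfrak{q})$. One must construct Dolgopyat-style dense collections of admissible cylinders on which the phase $e^{ib\tau_n}$ and the cocycle $c_\mathfrak{q}$ decorrelate simultaneously, and verify a vector-valued cone contraction that is compatible with the non-abelian twist. In higher dimensions this is substantially more delicate than in Oh--Winter: the strong unstable foliation is higher-dimensional, non-concentration of the BMS measure on proper subvarieties must be established via Zariski density and the shadow lemma, and the expander machinery has to be invoked in a form strong enough to handle convolutions by the empirical measures produced by the transfer operator rather than by a fixed generating set. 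This combinatorial-analytic synthesis is where the bulk of the new technical work lies.
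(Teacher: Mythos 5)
Your outline reproduces the paper's architecture (Markov coding, congruence transfer operators twisted by the locally constant cocycle, Dolgopyat for large $|b|$, Golsefidy--Varj\'u flattening for small $|b|$, Paley--Wiener inversion), but it has a genuine gap at the decisive point. When the iterated operator $\mathcal{M}_{\xi,\mathfrak q}^s$ is approximated by convolutions, the measures that appear are supported on the specific elements $\prod_{j}\mathtt{c}(\alpha_{p+1-j},\alpha_{p-j})\prod_{j}\mathtt{c}(\tilde\alpha_{1+j},\tilde\alpha_j)^{-1}$, i.e.\ on the return-word sets $S^p(y,z)$ between two fixed rectangles. The expander input must therefore be applied to the Cayley graphs of $\tilde G_\mathfrak{q}$ with respect to $\pi_\mathfrak{q}(\tilde S^p(y,z))$; you do not get to apply it to ``any fixed finite subset of $\Gamma$ whose Zariski closure is $\mathbf G$'' --- the dynamics dictates the generating set. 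The assertion that $\langle S^p(y,z)\rangle$ is Zariski dense in $\mathbf G$ for large $p$ is exactly the main new ingredient of the paper (\cref{lem:Z-DenseInU-CoverG} and the surrounding subsection): it is proved by using hyperbolicity and the Markov property to produce elements $h\in S^p(y,z)$ whose orbit points $h\pi_1(u_0)$ land near prescribed boundary points of $\overline U_y$, and then invoking the Karpelevi\v{c}--Mostow theorem to show that a non--Zariski-dense $H^p(y,z)$ would force $\Lambda(H^p(y,z))$ into a $0$- or $(n-2)$-dimensional sphere meeting finitely many prescribed small balls, contradicting the fact (from Zariski density of $\Gamma$) that $\Lambda(\Gamma)$ near a point of $\overline U_y$ lies in no $(n-2)$-sphere. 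Nothing in your sketch supplies this, and it does not follow from ``non-concentration of the BMS measure on proper subvarieties via the shadow lemma.''

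Two further calibration points. First, your ``main obstacle'' is misplaced: because the cocycle is constant on cylinders of length one, Dolgopyat's large-$|b|$ estimate (in Stoyanov's LNIC form) goes through uniformly in $\mathfrak q$ with no interaction with the twist, and the two regimes are combined by a soft argument; there is no need to make the phase $e^{ib\tau_m}$ and the cocycle ``decorrelate simultaneously,'' nor any new cone-contraction issue beyond the scalar case. Second, the paper does not reduce to prime $\mathfrak q$: it works with square-free $\mathfrak q$ throughout, decomposing $L^2_0(\tilde G_\mathfrak{q},\mathbb C)$ into ``new'' subspaces indexed by divisors $\mathfrak q'$ and projecting to level $\mathfrak q'$; the Chinese remainder factorization enters only in the quasi-randomness step, where lower bounds on dimensions of nontrivial irreducible representations of the factors give the $N_{\mathbb K}(\mathfrak q)^{-1/2}$ gain in the $L^2$-flattening lemma.
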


%For the case of $n = 2$, this has already been established by Oh-Winter \cite{OW16}. Moreover, if the critical exponent of $\Gamma$ satisfies $\delta_\Gamma > \frac{1}{2}$, then a related result was established by Bourgain-Gamburd-Sarnak \cite{BGS11}. Not only are mixing results of interest in itself in the study of dynamics, they are also of interest because they have a number of immediate applications to various problems such as counting, equidistribution and affine sieve.

\begin{remark}
\hfill
\begin{enumerate}
\item \cref{thm:TheoremUniformExponentialMixingOfGeodesicFlow} can be used to show the existence of a uniform resonance free strip for the resolvent of the Laplacian as well as a uniform zero-free strip of the Selberg zeta functions on the family of hyperbolic manifolds $\Gamma_\mathfrak{q} \backslash \mathbb H^n$ for nontrivial ideals $\mathfrak{q} \subset \mathcal{O}_{\mathbb K}$ (cf. \cite{OW16}).
\item When the critical exponent of $\Gamma$ satisfies $\delta_\Gamma > \frac{n - 1}{2}$, \cref{thm:TheoremUniformExponentialMixingOfGeodesicFlow} has been established by Mohammadi-Oh in \cite{MO15}.
\item In a forthcoming version, we extend \cref{thm:TheoremUniformExponentialMixingOfGeodesicFlow} to arbitrary functions which are not necessarily $M$-invariant.
\end{enumerate}
\end{remark}

Fix a Haar measure on $G$. This induces a left $G$-invariant measure on $\Gamma_\mathfrak{q} \backslash G$ for all ideals $\mathfrak{q} \subset \mathcal{O}_{\mathbb K}$. For all ideals $\mathfrak{q} \subset \mathcal{O}_{\mathbb K}$ and for all $\phi, \psi \in L^2(\Gamma_\mathfrak{q} \backslash G, \mathbb C)$, we define the matrix coefficient by the usual inner product
\begin{align*}
\langle a_t\phi, \psi \rangle = \int_{\Gamma_\mathfrak{q} \backslash G} \phi(xa_t)\overline{\psi(x)} \, dx
\end{align*}
where the use of the $G$-invariant measure is implicit. We denote by $m^{\mathrm{BR}}_\mathfrak{q}$ and $m^{\mathrm{BR}_*}_\mathfrak{q}$ the unstable Burger-Roblin measure and the stable Burger-Roblin measure on $\Gamma_\mathfrak{q} \backslash G$ compatible with the choice of the Haar measure for all ideals $\mathfrak{q} \subset \mathcal{O}_{\mathbb K}$.

\begin{corollary}
There exist $\eta > 0, C > 0$ and a nontrivial proper ideal $\mathfrak{q}_0 \subset \mathcal{O}_{\mathbb K}$ such that for all square free ideals $\mathfrak{q} \subset \mathcal{O}_{\mathbb K}$ coprime to $\mathfrak{q}_0$, and for all $M$-invariant functions $\phi, \psi \in C_c^1(\Gamma_\mathfrak{q} \backslash G, \mathbb R)$, there exists $C_{\phi, \psi} > 0$ such that we have
\begin{multline*}
\left|e^{(1 - \delta_\Gamma)t}\langle a_t\phi, \psi \rangle - \frac{1}{m^{\mathrm{BMS}}_\mathfrak{q}(\Gamma_\mathfrak{q} \backslash G)} m^{\mathrm{BR}}_\mathfrak{q}(\phi) \cdot m^{\mathrm{BR}_*}_\mathfrak{q}(\psi)\right| \\
\leq C_{\phi, \psi}N_{\mathbb K}(\mathfrak{q})^C e^{-\eta t} \|\phi\|_{C^1} \|\psi\|_{C^1}
\end{multline*}
where $C_{\phi, \psi}$ can be taken to depend only the supports of the functions $\phi$ and $\psi$.
\end{corollary}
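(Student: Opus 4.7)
The plan is to deduce the corollary from \cref{thm:TheoremUniformExponentialMixingOfGeodesicFlow} via a standard thickening argument of the type originating with Roblin and made quantitative by Oh-Shah and Mohammadi-Oh. The idea is to convert the Haar inner product into an integral against the Bowen-Margulis-Sullivan measure by smearing the test functions along the stable and unstable horospherical directions. Since $\Gamma$ is convex cocompact, the injectivity radius on the (compact) supports of $\phi$ and $\psi$ is bounded below uniformly in $\mathfrak{q}$, because $\Gamma_\mathfrak{q} \backslash G \to \Gamma \backslash G$ is a covering map and injectivity radii can only increase upstairs.

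First I would use a smooth partition of unity to reduce to the case that $\phi$ and $\psi$ are supported in a single flow box of small radius $r \ll 1$. On such a box, the Haar measure admits the Hopf-type decomposition with density $e^{(n-1)(\beta_{\xi^+} + \beta_{\xi^-})}\, dm_o(\xi^+)\, dm_o(\xi^-)\, ds$, where $m_o$ is spherical Lebesgue, while $m^{\mathrm{BMS}}_\mathfrak{q}$ is obtained by replacing the exponent $n-1$ by $\delta_\Gamma$ and the Lebesgue factors by the Patterson-Sullivan density $\nu_o$. The measures $m^{\mathrm{BR}}_\mathfrak{q}$ and $m^{\mathrm{BR}_*}_\mathfrak{q}$ are hybrids, replacing only the unstable or only the stable factor back to Lebesgue. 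For a small parameter $\epsilon > 0$, I would introduce bump thickenings $\Phi_\epsilon$ of $\phi$ along the stable direction $N^-$ and $\Psi_\epsilon$ of $\psi$ along the unstable direction $N^+$, normalized by the PS mass of an $\epsilon$-box so that $m^{\mathrm{BMS}}_\mathfrak{q}(\Phi_\epsilon) = m^{\mathrm{BR}}_\mathfrak{q}(\phi) + O(\epsilon \|\phi\|_{C^1})$ and symmetrically for $\Psi_\epsilon$.

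A direct computation in these coordinates, using the Patterson-Sullivan conformality together with the $a_t$-expansion of the unstable leaf, then gives
\begin{equation*}
\langle a_t \phi, \psi \rangle \;=\; e^{-(1-\delta_\Gamma)t} \int_{\Gamma_\mathfrak{q} \backslash G} \Phi_\epsilon(x a_t)\, \Psi_\epsilon(x)\, dm^{\mathrm{BMS}}_\mathfrak{q}(x) \;+\; O\!\left(\epsilon\, e^{-(1-\delta_\Gamma)t} \|\phi\|_{C^1} \|\psi\|_{C^1}\right).
\end{equation*}
Applying \cref{thm:TheoremUniformExponentialMixingOfGeodesicFlow} to $(\Phi_\epsilon, \Psi_\epsilon)$ produces the claimed main term $\frac{m^{\mathrm{BR}}_\mathfrak{q}(\phi) \cdot m^{\mathrm{BR}_*}_\mathfrak{q}(\psi)}{m^{\mathrm{BMS}}_\mathfrak{q}(\Gamma_\mathfrak{q} \backslash G)}$ up to an error of order $N_{\mathbb K}(\mathfrak{q})^C e^{-\eta t} \|\Phi_\epsilon\|_{C^1} \|\Psi_\epsilon\|_{C^1}$. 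Since the $C^1$ norms of the thickenings blow up at worst polynomially like $\epsilon^{-D}$ for some fixed $D = D(n)$, choosing $\epsilon = e^{-ct}$ for a sufficiently small $c > 0$ balances the two error terms and produces exponential decay at a possibly smaller rate $\eta' > 0$, at the cost of enlarging the polynomial factor in $N_{\mathbb K}(\mathfrak{q})$.

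The main obstacle I anticipate is the bookkeeping needed to verify that the $\epsilon$-error coming from the PS-ball normalization is a strictly positive power of $\epsilon$, so that it can absorb the $\epsilon^{-D}$ blow-up of the $C^1$-norms, and that the constants in these estimates are uniform across the family of covers. The latter holds because the PS density is a feature of the base $\Gamma \backslash G$ and the thickening is entirely in the $G$-direction, so all $\mathfrak{q}$-dependence remains localized in the application of \cref{thm:TheoremUniformExponentialMixingOfGeodesicFlow}; this is precisely what allows the final constant $C_{\phi, \psi}$ to depend only on the supports of $\phi$ and $\psi$ (through the PS-mass of an appropriate neighborhood thereof) and not on their fine features or on $\mathfrak{q}$.
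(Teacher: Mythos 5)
The route you outline (flow boxes, the Hopf-coordinate product structure of Haar, BMS, BR, BR$_*$, thickening of the test functions, applying \cref{thm:TheoremUniformExponentialMixingOfGeodesicFlow} to the thickenings, and optimizing $\epsilon=e^{-ct}$, with uniformity in $\mathfrak{q}$ coming from the covering-map/injectivity-radius remark and the fact that all Patterson--Sullivan data lives on the base) is indeed the intended one; the paper gives no proof and defers to the argument of Oh--Winter/Mohammadi--Oh. However, two of your concrete steps fail as stated. First, your choice of thickening directions is inconsistent with the normalization identities you need: in Hopf coordinates $m^{\mathrm{BMS}}$ and $m^{\mathrm{BR}}$ agree in the stable ($u^-$) factor and differ precisely in the unstable ($u^+$) factor (Patterson--Sullivan versus Lebesgue), so smearing $\phi$ along $N^-$ can never yield $m^{\mathrm{BMS}}_\mathfrak{q}(\Phi_\epsilon)=m^{\mathrm{BR}}_\mathfrak{q}(\phi)+O(\epsilon\|\phi\|_{C^1})$; symmetrically, $m^{\mathrm{BMS}}_\mathfrak{q}(\Psi_\epsilon)\approx m^{\mathrm{BR}_*}_\mathfrak{q}(\psi)$ forces a stable smearing of $\psi$. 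Moreover the dynamical roles cannot be assigned arbitrarily: it is the unstable smearing of the \emph{flowed} function which, after the change of variables $n^+\mapsto a_{-t}n^+a_t$, produces the Jacobian comparison $e^{(n-1)t}$ (Lebesgue) against $e^{\delta_\Gamma t}$ (PS conformality), and this is the only place the renormalizing exponential can come from (for general $n$ it is $e^{(n-1-\delta_\Gamma)t}$, matching the stated $e^{(1-\delta_\Gamma)t}$ only when $n=2$); the unflowed function only tolerates an $O(\epsilon)$ transverse smear. Your displayed identity relating $\langle a_t\phi,\psi\rangle$ to the BMS pairing of the thickenings is the entire content of the proof, and it is asserted rather than derived; with your assignment of directions it is not correct.

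Second, the device of ``normalizing by the PS mass of an $\epsilon$-box'' does not produce errors that are a positive power of $\epsilon$. For convex cocompact $\Gamma$ the PS measure is only Ahlfors regular/doubling, so for $d(x,y)\le\epsilon$ one has $\mu^{\mathrm{PS}}(B_\epsilon(x))\asymp\mu^{\mathrm{PS}}(B_\epsilon(y))$ with absolute constants but not a ratio of the form $1+O(\epsilon^\theta)$; after Fubini, a pointwise PS-normalized bump therefore gives only $m^{\mathrm{BMS}}_\mathfrak{q}(\Phi_\epsilon)\asymp m^{\mathrm{BR}}_\mathfrak{q}(\phi)$, which cannot identify the main term. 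This is exactly the point you defer to ``bookkeeping,'' and it is where the scheme as written breaks. The standard argument avoids ever dividing by PS masses of small balls: one smears the flowed function along $N^+$ against Lebesgue, extracts $e^{(n-1-\delta_\Gamma)t}$ from the change of variables, and lets the leafwise PS masses enter linearly and recombine with the transverse Lebesgue factor carried by $\psi$ to reconstitute $m^{\mathrm{BR}_*}_\mathfrak{q}(\psi)$ up to $O(\epsilon^\theta)$; then \cref{thm:TheoremUniformExponentialMixingOfGeodesicFlow} is applied to thickenings that are made $M$-invariant and $C^1$ (your bump construction also needs $M$-averaging and smoothing to be admissible for the theorem), whose $C^1$ norms grow polynomially in $\epsilon^{-1}$, and $\epsilon=e^{-ct}$ closes the argument as you indicate. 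Your uniformity-in-$\mathfrak{q}$ observations are correct and are indeed what allows $C_{\phi,\psi}$ to depend only on the supports.
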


\subsection{Outline of the proof of \texorpdfstring{\cref{thm:TheoremUniformExponentialMixingOfGeodesicFlow}}{\autoref{thm:TheoremUniformExponentialMixingOfGeodesicFlow}}}
First we recount the proof of exponential mixing of the geodesic flow on the single manifold $\T^1(\Gamma \backslash \mathbb H^n)$. From the works of Bowen and Ratner \cite{Bow70,Rat73}, it is well known that there are Markov sections for the geodesic flow on $\T^1(\Gamma \backslash \mathbb H^n)$. This allows us to model the geodesic flow as a suspension space of subshift of finite type on bi-infinite sequences $\Sigma$. For any function $\phi \in C^1(\Gamma \backslash G/M, \mathbb R)$, we can integrate out the strong stable direction of the Markov sections and take the Laplace transform in the flow direction to be left to deal with functions on one sided infinite sequences $\Sigma^+$ instead. Moreover, by Pollicott's observation which was used by many others \cite{AGY06,Dol98,Sto11}, the Ruelle-Perron-Frobenius theorem can be used to cleanly write the Laplace transform of the correlation function as an infinite sum involving the transfer operators $\mathcal{L}_\xi: C(\Sigma^+, \mathbb R) \to C(\Sigma^+, \mathbb R)$ for $\xi \in \mathbb C$ defined by
\begin{align*}
\mathcal{L}_\xi(h)(x) = \sum_{x' \in \sigma^{-1}(x)} e^{-(a + \delta_\Gamma - ib)\tau(x')}h(x').
\end{align*}
Now, by a Paley-Wiener type of analysis, we can extract the desired exponential decay using the inverse Laplace transform formula given that the Laplace transform has a holomorphic extension to the left of the imaginary axis. To obtain the holomorphic extension, we are lead to find spectral bounds for the transfer operators. A major advancement regarding this study is the work of Dolgopyat \cite{Dol98}. For transfer operators with large frequencies $|\Im(\xi)| \gg 1$, he was able to obtain spectral bounds by working explicitly on the strong unstable leaves of the Markov sections rather than the purely symbolic space in order to use the geometry of the manifold to obtain sufficiently strong bounds. The geometry provides a crucial local non-integrability condition which implies highly oscillating summands in the transfer operator and hence large cancellations which provide the required bounds. Moreover, he provided the right technical framework for the whole process to work in harmony. What is left are the transfer operators with small frequencies $|\Im(\xi)| \ll 1$ but they do not cause any problems due to the availability of the complex Ruelle-Perron-Frobenius theorem and a compactness argument which completes the proof.

For the proof of uniform exponential mixing, we can proceed as above, but we must use instead the \emph{congruence} transfer operators $\mathcal{M}_{\xi, \mathfrak{q}}: C(\Sigma^+, L^2(\Gamma_\mathfrak{q} \backslash \Gamma, \mathbb C)) \to C(\Sigma^+, L^2(\Gamma_\mathfrak{q} \backslash \Gamma, \mathbb C))$ for $\xi \in \mathbb C$ and ideal $\mathfrak{q} \subset \mathcal{O}_{\mathbb K}$ defined by
\begin{align*}
\mathcal{M}_{\xi, \mathfrak{q}}(H)(x) &= \sum_{x' \in \sigma^{-1}(x)} e^{-(a + \delta_\Gamma - ib)\tau(x')} \mathtt{c}_\mathfrak{q}(x')^{-1} H(x').
\end{align*}
Here the cocycle $\mathtt{c}_\mathfrak{q}$ ``keeps track of the coordinate in the fibers'' of the congruence cover $\T^1(\Gamma_\mathfrak{q} \backslash \mathbb H^n) \to \T^1(\Gamma \backslash \mathbb H^n)$. With this formulation, we now require spectral bounds for the congruence transfer operators uniform in the ideals $\mathfrak{q} \subset \mathcal{O}_{\mathbb K}$. A simple but crucial observation of Oh-Winter \cite{OW16} is that the cocycle is locally constant. Consequently, there is no interference with Dolgopyat's method for large frequencies $|\Im(\xi)| \gg 1$ and following \cite{OW16,Sto11}, it can be carried through uniformly in the ideals $\mathfrak{q} \subset \mathcal{O}_{\mathbb K}$. But now, since there are countably many ideals $\mathfrak{q} \subset \mathcal{O}_{\mathbb K}$, the compactness argument for small frequencies $|\Im(\xi)| \ll 1$ fails to hold uniformly in the ideals $\mathfrak{q} \subset \mathcal{O}_{\mathbb K}$.

Nevertheless, Bourgain-Gamburd-Sarnak demonstrated in their breakthrough work \cite{BGS11} that the required bounds are attainable using entirely new methods of expander graphs. Built on their work is the expander machinery of Golsefidy-Varj\'{u} \cite{GV12} which is what we use. The expander machinery captures the spectral properties of a certain family of Cayley graphs coming from the congruence setting. This time, we obtain large cancellations of the summands in the congruence transfer operator due to the cocyle. We follow \cite{OW16,MOW17} to use the expander machinery along with other techniques to take care of the spectral bounds for small frequencies $|\Im(\xi)| \ll 1$. In particular, Bourgain-Kontorovich-Magee has shown in the appendix of \cite{MOW17} a method to use the expander machinery more directly which we adapt in this paper. In their work however, Zariski density of certain subgroups of $\Gamma$ which appears in the argument are crucial for successfully using the expander machinery. This was a trivial point in their work in the Schottky semigroups and continued fractions settings but in the setting of this paper, this is not at all a triviality, although it is quite natural to expect it to be true. The proof of this fact is the main new ingredient in this paper and completes the necessary tools to obtain spectral bounds for small frequencies $|\Im(\xi)| \ll 1$ uniform in the ideals $\mathfrak{q} \subset \mathcal{O}_{\mathbb K}$ which completes the proof.

\subsection{Organization of the paper}
We start with reviewing the necessary background in \cref{sec:BackgroundAndNotations} and also go through important constructions for the rest of the paper. Then we first go through the expander machinery part of the argument in \cref{sec:GeodesicFlowSmall|b|} to obtain uniform spectral bounds for small frequencies. In \cref{sec:GeodesicFlowLarge|b|}, we use Dolgopyat's method to obtain uniform spectral bounds for large frequencies. Finally in \cref{sec:UniformExponentialMixingOfTheGeodesicFlow}, we use the obtained uniform spectral bounds to go through arguments by Pollicott along with Paley-Wiener theory to prove uniform exponential mixing.

\subsection*{Acknowledgements}
This work is part of my Ph.D. thesis at Yale University. First and foremost I thank my advisor Hee Oh for introducing the field of homogeneous dynamics and suggesting to think about this problem and helpful guidance throughout. I thank Dale Winter for explaining parts of his work. I also thank Wenyu Pan for helpful discussions early on.

%First and foremost I thank my advisor Hee Oh for introducing the field of homogeneous dynamics and suggesting to think about this problem and helpful guidance throughout. I thank Dale Winter and Michael Magee for explaining parts of their work. I also thank Minju Lee, Wenyu Pan, Jayameenakshi Venkatraman and Raghavendra Venkatraman for helpful conversations.

\section{Background and notations}
\label{sec:BackgroundAndNotations}
Fix an integer $n \geq 2$ and let $\mathbb H^n$ be the $n$-dimensional hyperbolic space, i.e., the unique complete simply connected $n$-dimensional Riemannian manifold with constant negative sectional curvature. We denote by $\langle \cdot, \cdot\rangle$ and $\|\cdot\|$ the inner product and norm respectively on any tangent space of $\mathbb H^n$ induced by the hyperbolic metric. Similarly, we denote by $d$ the distance function on $\mathbb H^n$ induced by the hyperbolic metric. Let $\mathbb K \subset \mathbb R$ be a totally real number field, $\mathcal{O}_{\mathbb K}$ be the corresponding ring of integers, and $Q$ be a quadratic form of signature $(n, 1)$ defined over $\mathbb K$. Let $\mathbf{G} = \SO_Q(\mathbb C) < \GL_{n + 1}(\mathbb C)$ be an algebraic group defined over $\mathbb K$ such that $\mathbf{G}(\mathbb R) \cong \SO(n, 1)$ and $\mathbf{G}^\sigma(\mathbb R) \cong \SO(n + 1)$, which is compact, for all nontrivial embeddings $\sigma: \mathbb K \hookrightarrow \mathbb R$. Let $G = \mathbf{G}(\mathbb R)^\circ$ which we recognize as the group of orientation preserving isometries of $\mathbb H^n$ by recalling that $\Isom^+(\mathbb H^n) \cong \SO(n, 1)^\circ$. Let $\Gamma < G$ be a torsion-free convex cocompact subgroup and Zariski dense in $\mathbf G$. Let $o \in \mathbb H^n$ be a reference point and $v_o \in \T^1(\mathbb H^n)$ be a reference tangent vector at $o$. Then we have the stabilizers $K = \Stab_G(o)$ and $M = \Stab_G(v_o) < K$. Note that $K \cong \SO(n)$ and it is a maximal compact subgroup of $G$ and $M \cong \SO(n - 1)$. Our base hyperbolic manifold is $X = \Gamma \backslash \mathbb H^n \cong \Gamma \backslash G/K$, its unit tangent bundle is $\T^1(X) \cong \Gamma \backslash G/M$ and its oriented orthonormal frame bundle is $\F_{\SO}(X) \cong \Gamma \backslash G$ which is a principal $\SO(n)$-bundle over $X$ and a principal $\SO(n - 1)$-bundle over $\T^1(X)$. There is a one-parameter subgroup of semisimple elements $A = \{a_t: t \in \mathbb R\} < G$, where $C_G(A) = AM$, parametrized such that its canonical right action on $G/M$ and $G$ corresponds to the geodesic flow and frame flow respectively. We choose any Riemannian metric on $G$ such that it is left $G$-invariant and right $K$-invariant \cite{Sas58,Mok78} and again use the notations $\langle \cdot, \cdot\rangle, \|\cdot\|$ and $d$ on $G$ and any of its quotient spaces. In particular, the metric descends down to $\mathbb H^n \cong G/K$ and coincides with the previous hyperbolic metric.

%\Gamma finitely generated is implied since it is convex cocompact (and hence geometrically finite).

%We also use the notations $\langle \cdot, \cdot\rangle, \|\cdot\|$ and $d$ on any Riemannian manifold where there is a canonical Riemannian metric induced by the hyperbolic metric on $\mathbb H^n$. In particular, there is a one parameter family of Riemannian metrics which are left $G$-invariant and right $K$-invariant on $\F_{\SO}(\mathbb H^n) \cong G$ called Sasaki-Mok metrics. The one parameter family is in one to one correspondence with biinvariant metrics on $\mathfrak{so}(n)$ which are all scalar multiples of each other and any one particular choice suffices. By right $K$ invariance, the Sasaki-Mok metric then decends to the Sasaki metric on $\T^1(\mathbb H^n)$. See Infinitesimal isometries of frame bundles with a natural Riemannian metric and Isometry types of frame bundles for details. By left $G$ invariance, these Riemannian metrics then decend to right quotients by $\Gamma$. \\

%and also on any Riemannian manifold where the Riemannian metric is induced by the Riemannian metric on $\mathbb H^n$, in particular the distance function induced by the Sasaki metric

To make use of the strong approximation theorem of Weisfeiler \cite{Wei84} later on, we need to work on the simply connected cover $\tilde{\mathbf G}$ endowed with the covering map $\tilde{\pi}: \tilde{\mathbf G} \to \mathbf G$ defined over $\mathbb K$. Let $\tilde{G} = \tilde{\mathbf G}(\mathbb R)$ which is connected and projects down to $\tilde{\pi}(\tilde{G}) = G$. Let $\tilde{\Gamma} < \tilde{G}$ be a convex cocompact subgroup containing $\ker(\tilde{\pi}) = \{e, -e\}$ as the only torsion elements and Zariski dense in $\tilde{\mathbf{G}}$. To be able to discuss the notion of congruence subgroups, let us suppose that there is a totally real number field $\mathbb K$ with ring of integers $\mathcal{O}_{\mathbb K}$ such that $\tilde{\Gamma} < \tilde{\mathbf G}(\mathcal{O}_{\mathbb K})$ which is indeed discrete in $\tilde{G}$ by our assumption that $\mathbf{G}^\sigma(\mathbb R)$ is compact for all nontrivial embeddings $\sigma: \mathbb K \hookrightarrow \mathbb R$. Another assumption we will make to use the strong approximation theorem is that the subring of $\mathbb K$ generated by $\{\tr(\Ad_g): g \in \tilde{\Gamma}\}$ is $\mathcal{O}_{\mathbb K}$. Then we take $\Gamma$ introduced previously to be $\Gamma = \tilde{\pi}(\tilde{\Gamma})$.

Let $\partial_\infty(\mathbb H^n)$ denote the boundary at infinity and $\Lambda(\Gamma) \subset \partial_\infty(\mathbb H^n)$ denote the limit set of $\Gamma$. We denote $\{\mu^{\mathrm{PS}}_x: x \in \mathbb H^n\}$ to be the \emph{Patterson-Sullivan density} of $\Gamma$ \cite{Pat76,Sul79}, i.e., the set of finite Borel measures on $\partial_\infty(\mathbb H^n)$ supported on $\Lambda(\Gamma)$ such that
\begin{enumerate}
\item	$g_*\mu^{\mathrm{PS}}_x = \mu^{\mathrm{PS}}_{gx}$ for all $g \in \Gamma$, for all $x \in \mathbb H^n$
\item	$\frac{d\mu^{\mathrm{PS}}_x}{d\mu^{\mathrm{PS}}_y}(\xi) = e^{\delta_\Gamma \beta_{\xi}(y, x)}$ for all $\xi \in \partial_\infty(\mathbb H^n)$, for all $x, y \in \mathbb H^n$
\end{enumerate}
where $\beta_{\xi}$ denotes the \emph{Busemann function} at $\xi \in \partial_\infty(\mathbb H^n)$ defined by $\beta_{\xi}(y, x) = \lim_{t \to \infty} (d(\xi(t), y) - d(\xi(t), x))$, where $\xi: \mathbb R \to \mathbb H^n$ is any geodesic such that $\lim_{t \to \infty} \xi(t) = \xi$. We allow tangent vector arguments for the Busemann function as well in which case we will use their basepoints in the definition. Since $\Gamma$ is convex cocompact, for all $x \in \mathbb H^n$, the measure $\mu^{\mathrm{PS}}_x$ is the $\delta_\Gamma$-dimensional Hausdorff measure on $\partial_\infty(\mathbb H^n)$ supported on $\Lambda(\Gamma)$ corresponding to the spherical metric on $\partial_\infty(\mathbb H^n)$ with respect to $x$, up to scalar multiples. Now, using the Hopf parametrization via the homeomorphism $G/M \cong \T^1(\mathbb H^n) \to \{(u^+, u^-) \in \partial_\infty(\mathbb H^n) \times \partial_\infty(\mathbb H^n): u^+ \neq u^-\} \times \mathbb R$ defined by $u \mapsto (u^+, u^-, t = \beta_{u^-}(o, u))$, we define the \emph{Bowen-Margulis-Sullivan (BMS) measure} $m^{\mathrm{BMS}}$ on $G/M$ \cite{Mar04,Bow71,Kai90} by
\begin{align*}
dm^{\mathrm{BMS}}(u) = e^{\delta_\Gamma \beta_{u^+}(o, u)} e^{\delta_\Gamma \beta_{u^-}(o, u)} \, d\mu^{\mathrm{PS}}_o(u^+) \, d\mu^{\mathrm{PS}}_o(u^-) \, dt
\end{align*}
Note that this definition only depends on $\Gamma$ and not on the choice of reference point $o \in \mathbb H^n$ and moreover $m^{\mathrm{BMS}}$ is left $\Gamma$-invariant. We now define induced measures on other spaces all of which we call the BMS measures and denote by $m^{\mathrm{BMS}}$ by abuse of notation. Since $M$ is compact, we can use any Haar measure on $M$ to lift $m^{\mathrm{BMS}}$ to a right $M$-invariant measure on $G$. By left $\Gamma$-invariance, $m^{\mathrm{BMS}}$ now descends to a measure on $\Gamma \backslash G$. By right $M$-invariance, $m^{\mathrm{BMS}}$ descends once more to a measure on $\Gamma \backslash G/M$. It can be checked that the BMS measures are invariant with respect to the geodesic flow or the frame flow as appropriate, i.e., right $A$-invariant. We denote $\Omega = \supp\left(m^{\mathrm{BMS}}\right) \subset \Gamma \backslash G/M$ which is compact since $\Gamma$ is convex cocompact, and it is also invariant with respect to the geodesic flow.

\subsection{Markov sections}
We use Markov sections on $\Omega \subset \T^1(X) \cong \Gamma \backslash G/M$, as developed by Bowen and Ratner \cite{Bow70,Rat73}, to obtain a symbolic coding of the dynamical system at hand. Let $W^{\mathrm{su}}(w) \subset \T^1(X)$ and $W^{\mathrm{ss}}(w) \subset \T^1(X)$ denote the leaves through $w \in \T^1(X)$ of the strong unstable and strong stable foliations, and $W_{\epsilon}^{\mathrm{su}}(w) \subset W^{\mathrm{su}}(w)$ and $W_{\epsilon}^{\mathrm{ss}}(w) \subset W^{\mathrm{ss}}(w)$ denote the open balls of radius $\epsilon > 0$ with respect to the induced distance functions $d_{\mathrm{su}}$ and $d_{\mathrm{ss}}$, respectively. We use similar notations for the weak unstable and weak stable foliations by replacing `su' with `wu' and `ss' with `ws' respectively. We recall that for all $w \in \T^1(X)$, for all $u \in W_{\epsilon_0}^{\mathrm{wu}}(w)$, for all $s \in W_{\epsilon_0}^{\mathrm{ss}}(w)$, there is a unique intersection denoted by $[u, s] = W_{\epsilon_0'}^{\mathrm{ss}}(u) \cap W_{\epsilon_0'}^{\mathrm{wu}}(s)$ and moreover $[\cdot, \cdot]$ defines a homeomorphism from $W_{\epsilon_0}^{\mathrm{wu}}(w) \times W_{\epsilon_0}^{\mathrm{ss}}(w)$ onto its image, where $\epsilon_0, \epsilon_0' > 0$ are constants from \cite{Rat73} which we use in this subsection. Subsets $U \subset W_{\epsilon_0}^{\mathrm{su}}(w) \cap \Omega$ and $S \subset W_{\epsilon_0}^{\mathrm{ss}}(w) \cap \Omega$ are called proper if $U = \overline{\interior^{\mathrm{su}}(U)}^{\mathrm{su}}$ and $S = \overline{\interior^{\mathrm{ss}}(S)}^{\mathrm{ss}}$ where the superscripts signify that the interiors and closures are taken in the topology of $W^{\mathrm{su}}(w) \cap \Omega$ and $W^{\mathrm{ss}}(w) \cap \Omega$, respectively. We will often drop the superscripts henceforth and include it whenever further clarity in notations is required. For any $w \in \Omega$ and proper sets $U \subset W_{\epsilon_0}^{\mathrm{su}}(w) \cap \Omega$ and $S \subset W_{\epsilon_0}^{\mathrm{ss}}(w) \cap \Omega$ both containing $w$, we call $R = [U, S] = \{[u, s] \in \Omega: u \in U, s \in S\} \subset \Omega$ a \emph{rectangle of size $\hat{\delta}$} if $\diam_{d_{\mathrm{su}}}(U_j), \diam_{d_{\mathrm{ss}}}(S_j) \leq \hat{\delta}$ for some $\hat{\delta} > 0$, and we call $w$ the \emph{center} of $R$. For any rectangle $R = [U, S]$, we generalize the notation and define $[v_1, v_2] = [u_1, s_2]$ for all $v_1 = [u_1, s_1] \in R$, for all $v_2 = [u_2, s_2] \in R$.

\begin{definition}
A set $\mathcal{R} = \{R_1, R_2, \dotsc, R_N\} = \{[U_1, S_1], [U_2, S_2], \dotsc, [U_N, S_N]\}$ for some $N \in \mathbb Z_{>0}$ consisting of rectangles is called a \emph{complete set of rectangles} of size $\hat{\delta}$ in $\Omega$ if
\begin{enumerate}
\item \label{itm:MarkovProperty1} $R_j \cap R_k = \varnothing$ for all integers $1 \leq j, k \leq N$ with $j \neq k$
\item \label{itm:MarkovProperty2} $\diam_{d_{\mathrm{su}}}(U_j), \diam_{d_{\mathrm{ss}}}(S_j) \leq \hat{\delta}$ for all integers $1 \leq j \leq N$
\item \label{itm:MarkovProperty3} $\Omega = \bigcup_{j = 1}^N \bigcup_{t \in [0, \hat{\delta}]} R_j a_t$.
\end{enumerate}
\end{definition}
%\item \label{itm:MarkovProperty3} for all integers $1 \leq j, k \leq N$ with $j \neq k$, we have $R_j \cap \bigcup_{t \in [0, \hat{\delta}]}R_k a_t = \varnothing$ or $R_k \cap \bigcup_{t \in [0, \hat{\delta}]}R_j a_t = \varnothing$.

We set $R = \bigsqcup_{j = 1}^N R_j$ and $U = \bigsqcup_{j = 1}^N U_j$. Let $\mathcal{P}: R \to R$ be the Poincar\'{e} first return map and $\sigma = ({\proj_U} \circ \mathcal{P})|_U: U \to U$ be its projection where $\proj_U: R \to U$ is the projection defined by $\proj_U([u, s]) = u$ for all $[u, s] \in R$. Let $\tau: R \to \mathbb R_{>0}$ be the first return time map defined by $\tau(u) = \inf\{t \in \mathbb R_{>0}: ua_t \in R\}$ so that $\mathcal{P}(u) = ua_{\tau(u)}$ for all $u \in R$. Note that $\tau$ is constant on $[u, S_j]$ for all $u \in U_j$, for all integers $1 \leq j \leq N$. For future convenience we define $\overline{\tau} = \sup_{u \in R} \tau(u)$ and $\underline{\tau} = \inf_{u \in R} \tau(u)$. Define the \emph{cores} $\hat{R} = \{u \in R: \mathcal{P}^k(u) \in \interior(R) \text{ for all } k \in \mathbb Z\}$ and $\hat{U} = \{u \in U: \sigma^k(u) \in \interior(U) \text{ for all } k \in \mathbb Z_{\geq 0}\}$. We note that the cores are both residual subsets (complements of meager sets) of $R$ and $U$ respectively.

\begin{definition}
We call the complete set $\mathcal{R}$ a \emph{Markov section} if in addition to \cref{itm:MarkovProperty1,itm:MarkovProperty2,itm:MarkovProperty3}, the following property
\begin{enumerate}
\setcounter{enumi}{3}
\item $[\interior(U_k), \mathcal{P}(u)] \subset \mathcal{P}([\interior(U_j), u])$ and $\mathcal{P}([u, \interior(S_j)]) \subset [\mathcal{P}(u), \interior(S_k)]$ for all integers $1 \leq i, j \leq N$ and for all $u \in R$ such that $u \in \interior(R_j) \cap \mathcal{P}^{-1}(\interior(R_k)) \neq \varnothing$
\end{enumerate}
called the \emph{Markov property}, is satisfied.
\end{definition}

The existence of Markov sections for Anosov flows (and hence in particular for geodesic flows) of arbitrarily small size was proved by by Bowen and Ratner \cite{Bow70,Rat73}. Henceforth, we fix a positive
\begin{align*}
\hat{\delta} < \min\left(1, \epsilon_0, \epsilon_0', \frac{1}{4}\Inj(\T^1(X))\right)
\end{align*}
where $\Inj(\T^1(X))$ denotes the injectivity radius of $\T^1(X)$. We also fix $\mathcal{R} = \{R_1, R_2, \dotsc, R_N\} = \{[U_1, S_1], [U_2, S_2], \dotsc, [U_N, S_N]\}$ to be a Markov section of size $\hat{\delta}$ in $\Omega$. We introduce the distance function $d$ on $U$ defined by
\begin{align*}
d(u, u') =
\begin{cases}
d_{\mathrm{su}}(u, u'), & u, u' \in U_j \text{ for some integer } 1 \leq j \leq N \\
1, & \text{otherwise.}
\end{cases}
\end{align*}

\begin{remark}
This definition makes sense since $\diam_{d_{\mathrm{su}}}(U_j) \leq \hat{\delta} < 1$ for all integers $1 \leq j \leq N$. This notation should not cause any confusion but we will use $d_{\mathrm{su}}$ whenever further clarity is required.
\end{remark}

\subsection{Symbolic dynamics}
Let $\mathcal A = \{1, 2, \dotsc, N\}$ be the alphabet for the symbolic coding corresponding to the Markov sections. Define the $N \times N$ \emph{transition matrix} $T$ by
\begin{align*}
T_{j, k} =
\begin{cases}
1, & \interior(R_j) \cap \mathcal{P}^{-1}(\interior(R_k)) \neq \varnothing \\
0, & \text{otherwise}
\end{cases}
\end{align*}
for all integers $1 \leq j, k \leq N$. The transition matrix $T$ is \emph{topologically mixing} \cite[Theorem 4.3]{Rat73}, i.e., there is a fixed $N_T \in \mathbb Z_{>0}$ such that all the entries of $T^{N_T}$ are positive. This definition is equivalent to the one in \cite{Rat73} in the setting of Markov sections. Define the spaces of bi-infinite and infinite \emph{admissible sequences} respectively by
\begin{align*}
\Sigma &= \{(\dotsc, x_{-1}, x_0, x_1, \dotsc) \in \mathcal A^{\mathbb Z}: T_{x_j, x_{j + 1}} = 1 \text{ for all } j \in \mathbb Z\} \\
\Sigma^+ &= \{(x_0, x_1, \dotsc) \in \mathcal A^{\mathbb Z_{\geq 0}}: T_{x_j, x_{j + 1}} = 1 \text{ for all } j \in \mathbb Z_{\geq 0}\}.
\end{align*}
We will use the term \emph{admissible sequences} for finite sequences as well in the natural way. For any $\theta \in (0, 1)$, we can endow $\Sigma$ with the distance function $d_\theta(x, y) = \theta^{\inf\{|j| \in \mathbb Z_{\geq 0}: x_j \neq y_j \text{ for } j \in \mathbb Z\}}$ for all $x, y \in \Sigma$. We can similarly endow $\Sigma^+$ with a distance function which we also denote by $d_\theta$.

\begin{definition}
For all $k \in \mathbb Z_{\geq 0}$, for all admissible sequences $x = (x_0, x_1, \dotsc, x_k)$, we define the corresponding \emph{cylinder} to be $\mathtt{C}[x] = \{u \in U: \sigma^j(u) \in \interior(U_{x_j}), 0 \leq j \leq k\}$ with \emph{length} $\len(\mathtt{C}[x]) = k$. We will denote cylinders simply by $\mathtt{C}$ (or other typewriter style letters) when we do not need to specify the corresponding admissible sequence. A \emph{closed cylinder} $\mathtt{C}$ will be the closure of some cylinder $\mathtt{C}'$, i.e., $\mathtt{C} = \overline{\mathtt{C}'}$.
\end{definition}

\begin{remark}
For all admissible pairs $(j, k)$, the restricted maps $\sigma|_{\mathtt{C}[j, k]}: \mathtt{C}[j, k] \to \interior(U_k), (\sigma|_{\mathtt{C}[j, k]})^{-1}: \interior(U_k) \to \mathtt{C}[j, k]$ and $\tau|_{\mathtt{C}[j, k]}: \mathtt{C}[j, k] \to \mathbb R_{>0}$ are Lipschitz.
\end{remark}

By a slight abuse of notation, let $\sigma$ also denote the shift map on $\Sigma$ or its subspaces. There are natural continuous surjections $\zeta: \Sigma \to R$ and $\zeta^+: \Sigma^+ \to U$ defined by $\zeta(x) = \bigcap_{j = -\infty}^\infty \overline{\mathcal{P}^{-j}(\interior(R_{x_j}))}$ for all $x \in \Sigma$ and $\zeta^+(x) = \bigcap_{j = 0}^\infty \overline{\sigma^{-j}(\interior(U_{x_j}))}$ for all $x \in \Sigma^+$. Define $\hat{\Sigma} = \zeta^{-1}(\hat{R})$ and $\hat{\Sigma}^+ = (\zeta^+)^{-1}(\hat{U})$. Then the restrictions $\zeta|_{\hat{\Sigma}}: \hat{\Sigma} \to \hat{R}$ and $\zeta^+|_{\hat{\Sigma}^+}: \hat{\Sigma}^+ \to \hat{U}$ are bijective and satisfy $\zeta|_{\hat{\Sigma}} \circ \sigma|_{\hat{\Sigma}} = \mathcal{P}|_{\hat{R}} \circ \zeta|_{\hat{\Sigma}}$ and $\zeta^+|_{\hat{\Sigma}^+} \circ \sigma|_{\hat{\Sigma}^+} = \sigma|_{\hat{U}} \circ \zeta^+|_{\hat{\Sigma}^+}$.

For $\theta \in (0, 1)$ sufficiently close to $1$, the maps $\zeta$ and $\zeta^+$ are Lipschitz \cite[Lemma 2.2]{Bow73}. We fix $\theta$ to be any such constant. We now introduce some function spaces corresponding to $\Sigma, \Sigma^+$ and $U$. Let $B(\Sigma, \mathbb R) \supset C(\Sigma, \mathbb R) \supset C^{\Lip(d_\theta)}(\Sigma, \mathbb R)$ denote the spaces of functions $f: \Sigma \to \mathbb R$ which are bounded, continuous and Lipschitz respectively. The first two are Banach spaces with the $L^\infty$ norm $\|f\|_\infty$ for all $f \in B(\Sigma, \mathbb R)$ or $f \in C(\Sigma, \mathbb R)$ and the last is a Banach space with the norm $\|f\|_{\Lip(d_\theta)} = \|f\|_\infty + \Lip_{d_\theta}(f)$ where
\begin{align*}
\Lip_{d_\theta}(f) = \sup\left\{\frac{|f(x) - f(y)|}{d_\theta(x, y)} \in \mathbb R_{>0}: x, y \in \Sigma \text{ such that } x \neq y\right\}
\end{align*}
is the Lipschitz seminorm, for all $f \in C^{\Lip(d_\theta)}(\Sigma, \mathbb R)$. We use similar notations for function spaces with domain space $\Sigma^+$ or $U$. We use similar notations again for target space $\mathbb C$.

Since $(\tau \circ \zeta)|_{\hat{\Sigma}}$ and $(\tau \circ \zeta^+)|_{\hat{\Sigma}^+}$ are Lipschitz, there are unique Lipschitz extensions which we denote by $\tau_\Sigma: \Sigma \to \mathbb R_{>0}$ and $\tau_{\Sigma^+}: \Sigma^+ \to \mathbb R_{>0}$ respectively. Note that the resulting maps are distinct from $\tau \circ \zeta$ and $\tau \circ \zeta^+$ because they may differ precisely on $x \in \Sigma$ for which $\proj_U(\zeta(x)) \in \partial(\mathtt{C})$ and $x \in \Sigma^+$ for which $\zeta^+(x) \in \partial(\mathtt{C})$ respectively, for some cylinder $\mathtt{C} \subset U$ with $\len(\mathtt{C}) = 1$. Then the previous properties extend to $\zeta(\sigma(x)) = \zeta(x)a_{\tau_\Sigma(x)}$ for all $x \in \Sigma$ and $\zeta^+(\sigma(x)) = \proj_U(\zeta^+(x)a_{\tau_{\Sigma^+}(x)})$ for all $x \in \Sigma^+$.

\subsection{Thermodynamics}
\begin{definition}
\label{def:Pressure}
For all $f \in C^{\Lip(d_\theta)}(\Sigma, \mathbb R)$, called the \emph{potential}, the \emph{pressure} is
\begin{align*}
\Pr_\sigma(f) = \sup_{\nu} \int_\Sigma f \, d\nu + h_\nu(\sigma)
\end{align*}
where the supremum is taken over all $\sigma$-invariant Borel probability measures $\nu$ on $\Sigma$ and $h_\nu(\sigma)$ is the measure theoretic entropy of $\nu$ with respect to $\sigma$.
\end{definition}

For all $f \in C^{\Lip(d_\theta)}(\Sigma, \mathbb R)$, there is in fact a unique $\sigma$-invariant Borel probability measure $\nu_f$ on $\Sigma$ which attains the supremum in \cref{def:Pressure} called the \emph{$f$-equilibrium state} \cite[Theorems 2.17 and 2.20]{Bow08} and it satisfies $\nu_f(\hat{\Sigma}) = 1$ \cite[Corollary 3.2]{Che02}.

In particular, we will consider the probability measure $\nu_{-\delta_\Gamma\tau_\Sigma}$ on $\Sigma$ which we will denote simply by $\nu_\Sigma$ and has corresponding pressure $\Pr_\sigma(-\delta_\Gamma\tau_\Sigma) = 0$. According to above, $\nu_\Sigma(\hat{\Sigma}) = 1$. Define the corresponding probability measure $\nu_R = \zeta_*(\nu_\Sigma)$ on $R$ and note that $\nu_R(\hat{R}) = 1$. Now consider the suspension space $R^\tau = (R \times \mathbb R_{\geq 0})/\mathord{\sim}$ where $\sim$ is the equivalence relation on $R \times \mathbb R_{\geq 0}$ defined by $(u, t + \tau(u)) \sim (\mathcal{P}(u), t)$. Then we have a bijection $R^\tau \to \Omega$ defined by $(u, t) \mapsto ua_t$. We can define the measure $\nu^\tau$ on $R^\tau$ as the product measure $\nu_R \times m^{\mathrm{Leb}}$ on $\{(u, t) \in R \times \mathbb R_{\geq 0}: 0 \leq t < \tau(u)\}$. Then using the aforementioned bijection we have the pushforward measure which, by abuse of notation, we also denote by $\nu^\tau$ on $\T^1(X)$ supported on $\Omega$.

\begin{theorem}
\label{thm:FlowEquilibriumStateEqualsBMS}
We have $\frac{\nu^\tau}{\nu_R(\tau)} = \frac{m^{\mathrm{BMS}}}{m^{\mathrm{BMS}}(\Gamma \backslash G/M)}$.
\end{theorem}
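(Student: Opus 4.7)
The plan is to identify both sides as normalized measures of maximal entropy for the geodesic flow on $\Omega$ and then to invoke the Bowen--Margulis--Sullivan uniqueness theorem for the measure of maximal entropy (MME) of the geodesic flow restricted to the nonwandering set of a convex cocompact hyperbolic manifold. Recall that the normalized BMS measure is the unique $a_t$-invariant Borel probability measure supported on $\Omega$ whose measure-theoretic entropy equals the topological entropy $\delta_\Gamma$ of the geodesic flow on $\Omega$ (Bowen--Margulis--Sullivan). So it suffices to show that $\nu^\tau / \nu_R(\tau)$ is an $a_t$-invariant Borel probability measure on $\Omega$ with measure-theoretic entropy equal to $\delta_\Gamma$.

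First, I would check that $\nu^\tau / \nu_R(\tau)$ is an $a_t$-invariant Borel probability measure supported on $\Omega$. Right $A$-invariance is built into the suspension construction under the identification $R^\tau \to \Omega$, $(u, t) \mapsto u a_t$; the pushforward is supported on $\Omega$ because $R \subset \Omega$ and $\Omega$ is $A$-invariant; and by Fubini the total mass of $\nu^\tau$ is $\int_R \tau \, d\nu_R = \nu_R(\tau)$, so the normalization makes it a probability measure.

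Next, I would compute the entropy. Since $\nu_\Sigma$ is the $(-\delta_\Gamma \tau_\Sigma)$-equilibrium state and $\Pr_\sigma(-\delta_\Gamma \tau_\Sigma) = 0$, the variational principle yields
\[
h_{\nu_\Sigma}(\sigma) \;=\; \delta_\Gamma \int_\Sigma \tau_\Sigma \, d\nu_\Sigma.
\]
Because $\zeta|_{\hat{\Sigma}} \colon \hat{\Sigma} \to \hat{R}$ is a Borel bijection conjugating $\sigma|_{\hat{\Sigma}}$ to $\mathcal{P}|_{\hat{R}}$, and $\tau_\Sigma = \tau \circ \zeta$ on $\hat{\Sigma}$, while $\nu_\Sigma(\hat{\Sigma}) = 1$ and $\nu_R(\hat{R}) = 1$ by the equilibrium state property, the systems $(\Sigma, \sigma, \nu_\Sigma)$ and $(R, \mathcal{P}, \nu_R)$ are measure-theoretically isomorphic. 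Hence $h_{\nu_R}(\mathcal{P}) = h_{\nu_\Sigma}(\sigma)$ and $\int_R \tau \, d\nu_R = \int_\Sigma \tau_\Sigma \, d\nu_\Sigma$. Abramov's formula applied to the suspension flow $(R^\tau, \{a_t\}, \nu^\tau/\nu_R(\tau))$ then gives
\[
h_{\nu^\tau/\nu_R(\tau)}(a_1) \;=\; \frac{h_{\nu_R}(\mathcal{P})}{\nu_R(\tau)} \;=\; \frac{\delta_\Gamma \int_R \tau \, d\nu_R}{\nu_R(\tau)} \;=\; \delta_\Gamma.
\]
Combining this with the uniqueness of the MME described above yields the claimed equality.

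The main obstacle is technical rather than conceptual: one must justify that the symbolic coding transports entropies and integrals faithfully even though $\zeta$ is only a bijection on the residual sets $\hat{\Sigma}$ and $\hat{R}$, and that Abramov's formula applies to the (nonsmooth, but measurable) suspension here. These points are handled by the fact that the cores have full measure for the equilibrium states in question, so the measurable dynamics genuinely factors through the conjugacy; the needed MME uniqueness for convex cocompact hyperbolic geodesic flows is a standard consequence of the Bowen--Margulis--Sullivan construction.
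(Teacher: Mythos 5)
Your proposal is correct and follows essentially the same strategy as the paper: both measures are identified with the (normalized) unique measure of maximal entropy for the geodesic flow on $\Omega$, and Sullivan's uniqueness theorem \cite{Sul84} then forces the equality after normalization. The only difference is that where the paper simply cites \cite[Theorem 4.4]{Che02} for the fact that $\nu^\tau$ is the measure of maximal entropy, you supply the underlying argument directly — the vanishing pressure identity $h_{\nu_\Sigma}(\sigma) = \delta_\Gamma \, \nu_\Sigma(\tau_{\Sigma})$, the measure isomorphism through the full-measure cores $\hat{\Sigma}$ and $\hat{R}$, and Abramov's formula — which is precisely the content behind that citation.
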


\begin{proof}
Sullivan \cite{Sul84} proved that $m^{\mathrm{BMS}}$ is the unique measure of maximal entropy for the geodesic flow on $\T^1(X)$. By \cite[Theorem 4.4]{Che02}, $\nu^\tau$ is also the unique measure of maximal entropy on $\T^1(X)$. Thus they are equal after normalization.
\end{proof}

Finally, we define the probability measure $\nu_U = (\proj_U)_*(\nu_R)$ and note that $\nu_U(\hat{U}) = 1$ and $\nu_U(\tau) = \nu_R(\tau)$. As a consequence of the RPF theorem, $\nu_U$ is in fact a Gibbs measure \cite{Bow08, PP90}, i.e., there are $c_1^U, c_2^U > 0$ such that
\begin{align}
\label{eqn:PropertyOfGibbsMeasures}
c_1^Ue^{-\delta_\Gamma \tau_k(x)} \leq \nu_U(\mathtt{C}) \leq c_2^Ue^{-\delta_\Gamma \tau_k(x)}
\end{align}
for all $x \in \mathtt{C}$, for all cylinders $\mathtt{C}$ with $\len(\mathtt{C}) = k \in \mathbb Z_{\geq 0}$, since $\lambda_0 = 1$. See the next subsection for terms and notations.

\subsection{Transfer operators}
First we recall the classic transfer operator and associated theorems.

\begin{definition}
For all $f \in B(\Sigma^+, \mathbb C)$, we define the \emph{transfer operator} $\mathcal{L}_f: B(\Sigma^+, \mathbb C) \to B(\Sigma^+, \mathbb C)$ by
\begin{align*}
\mathcal{L}_f(h)(x) = \sum_{x' \in \sigma^{-1}(x)} e^{f(x')}h(x')
\end{align*}
for all $x \in \Sigma^+$, for all $h \in B(\Sigma^+, \mathbb C)$.
\end{definition}

\begin{remark}
In fact, if $f \in C(\Sigma^+, \mathbb C)$ or $f \in C^{\Lip(d_\theta)}(\Sigma^+, \mathbb C)$, then the transfer operator $\mathcal{L}_f$ preserves the subspace $C(\Sigma^+, \mathbb C)$ or $C^{\Lip(d_\theta)}(\Sigma^+, \mathbb C)$ respectively.
\end{remark}

The following theorem is a consequence of the Ruelle-Perron-Frobenius (RPF) theorem and the theory of Gibbs measures \cite{Bow08,PP90}.

\begin{theorem}
\label{thm:RPF}
For all $f \in C^{\Lip(d_\theta)}(\Sigma^+, \mathbb R)$, the operator $\mathcal{L}_f: C(\Sigma^+, \mathbb C) \to C(\Sigma^+, \mathbb C)$ and its dual $\mathcal{L}_f^*: C(\Sigma^+, \mathbb C)^* \to C(\Sigma^+, \mathbb C)^*$ has eigenvectors with the following properties. There exist a unique positive function $h \in C^{\Lip(d_\theta)}(\Sigma^+, \mathbb R)$ and a unique Borel probability measure $\nu$ on $\Sigma^+$ such that
\begin{enumerate}
\item	$\mathcal{L}_f(h) = e^{\Pr_\sigma(f)}h$
\item	$\mathcal{L}_f^*(\nu) = e^{\Pr_\sigma(f)}\nu$
\item	the eigenvalue $e^{\Pr_\sigma(f)}$ is maximal simple and the rest of the spectrum of $\mathcal{L}_f|_{C^{\Lip(d_\theta)}(\Sigma^+, \mathbb C)}$ is contained in a disk of radius strictly less than $e^{\Pr_\sigma(f)}$
%\item	for all $k \in \mathbb Z_{\geq 0}$, we have
%\begin{align*}
%\left|e^{-k\Pr(f)} \mathcal{L}_f^k(\phi)(u) - \hat{\nu}(\phi)\hat{h}(u)\right| \leq c(1 - \epsilon)^k\|\phi\|%_{\Lip(d_\theta)}
%\end{align*}
%where $\hat{h}$ is normalized such that $\hat{\nu}(\hat{h}) = 1$
\item	$\nu(h) = 1$ and the Borel probability measure $\mu$ defined by $d\mu = h \, d\nu$ is $\sigma$-invariant and is the projection of the $f$-equilibrium state to $\Sigma^+$, i.e., $\mu$ is the pushforward of $\nu_f$ via the map $\Sigma \to \Sigma^+$ defined by $(\dotsc, x_{-1}, x_0, x_1, \dotsc) \mapsto (x_0, x_1, \dotsc)$.
\end{enumerate}
\end{theorem}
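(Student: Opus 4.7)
The statement is the classical Ruelle--Perron--Frobenius theorem in the Lipschitz setting, and I would proceed along the well-trodden route of Bowen, Parry--Pollicott, and Ruelle, adapting only mildly to the one-sided subshift of finite type $(\Sigma^+, \sigma)$ that is topologically mixing by the choice of the transition matrix $T$.

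The plan is to first construct the eigenmeasure $\nu$ and then the eigenfunction $h$. For $\nu$, I would apply the Schauder--Tychonoff fixed point theorem to the continuous map
\[
\Phi: \rho \longmapsto \frac{\mathcal{L}_f^*\rho}{(\mathcal{L}_f^*\rho)(\mathbf 1)}
\]
on the convex compact set of Borel probability measures on $\Sigma^+$ (with the weak-$*$ topology). A fixed point $\nu$ satisfies $\mathcal{L}_f^*\nu = \lambda\nu$ for some $\lambda>0$, and topological mixing of $T$ together with the positivity of $e^f$ forces $\nu$ to assign positive mass to every nonempty cylinder. Standard Gibbs estimates for the iterates $\mathcal{L}_f^n \mathbf 1$, obtained by recording the Birkhoff sums $S_nf$ along admissible preimages and using the Lipschitz bound for $f$ to control the distortion
\[
|S_nf(x') - S_nf(y')| \leq \frac{\Lip_{d_\theta}(f)}{1-\theta}\, d_\theta(\sigma^n(x'), \sigma^n(y')),
\]
then show that the sequence $h_n = \lambda^{-n}\mathcal{L}_f^n\mathbf 1$ is uniformly bounded above and below on $\Sigma^+$ and equicontinuous in $d_\theta$. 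Arzel\`a--Ascoli yields a subsequential limit $h \in C^{\Lip(d_\theta)}(\Sigma^+,\mathbb R)$ that is strictly positive and satisfies $\mathcal{L}_f h = \lambda h$. Normalizing so that $\nu(h)=1$ then gives parts (1), (2), and (4) modulo identification of $\lambda$ and the equilibrium state.

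To identify $\lambda = e^{\Pr_\sigma(f)}$ and to show that $d\mu = h\,d\nu$ is the projection of $\nu_f$ to $\Sigma^+$, I would use the variational principle: on one hand, a direct computation using $\mathcal{L}_f h = \lambda h$ and $\mathcal{L}_f^*\nu = \lambda\nu$ shows that $\mu$ is $\sigma$-invariant, and the Rokhlin formula together with $\int f\, d\mu + h_\mu(\sigma) = \log\lambda$ shows $\log\lambda \leq \Pr_\sigma(f)$. On the other hand, the Gibbs property
\[
c_1 e^{S_kf(x) - k\log\lambda} \leq \mu(\mathtt{C}[x_0,\dots,x_k]) \leq c_2 e^{S_kf(x) - k\log\lambda}
\]
implied by the construction of $h$ and $\nu$ lets one apply the uniqueness of equilibrium states for Lipschitz potentials on topologically mixing shifts to conclude $\log\lambda = \Pr_\sigma(f)$ and $\mu$ projects $\nu_f$.

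The main obstacle is part (3), the simplicity of the leading eigenvalue together with the spectral gap. The cleanest route is to replace $\mathcal{L}_f$ by the normalized operator $\widetilde{\mathcal{L}}(g) = (\lambda h)^{-1}\mathcal{L}_f(hg)$, which fixes the constant function $\mathbf 1$ and preserves integration against the probability measure $d\mu = h\, d\nu$. One then establishes a Lasota--Yorke type inequality
\[
\Lip_{d_\theta}(\widetilde{\mathcal{L}}^n g) \leq A\theta^n \Lip_{d_\theta}(g) + B\|g\|_\infty,
\]
valid because each preimage branch contracts $d_\theta$ by exactly $\theta$ and the weights $e^{S_nf}$ have bounded distortion. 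Combined with the fact that $\widetilde{\mathcal{L}}$ acts as a Markov operator on $L^2(\mu)$, a Doeblin--Fortet / Ionescu-Tulcea--Marinescu argument together with topological mixing of $T$ (which precludes peripheral eigenvalues other than $1$) gives quasi-compactness of $\widetilde{\mathcal{L}}$ on $C^{\Lip(d_\theta)}$, simplicity of the eigenvalue $1$, and a spectral gap. Translating back via $\mathcal{L}_f = \lambda h \widetilde{\mathcal{L}} h^{-1}$ yields (3). This spectral gap step is the technical heart; everything else is bookkeeping around the construction of $h$, $\nu$, and the Gibbs estimates.
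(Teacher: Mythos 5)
Your proposal is correct, but note that the paper gives no proof of this statement at all: it is quoted as the classical Ruelle--Perron--Frobenius theorem with a citation to Bowen and Parry--Pollicott, and your sketch (Schauder--Tychonoff for the eigenmeasure, bounded-distortion plus Arzel\`a--Ascoli for the eigenfunction, the variational principle and Gibbs property to identify $e^{\Pr_\sigma(f)}$ and the equilibrium state, and a Lasota--Yorke/Ionescu-Tulcea--Marinescu argument for the spectral gap on $C^{\Lip(d_\theta)}$) is precisely the standard argument in those references. So you have reproduced essentially the same approach as the paper's source, and no further comparison is needed.
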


Similarly, we would like to define transfer operators on $B(U, \mathbb C)$ specifically corresponding to the function $\xi\tau$ for any $\xi \in \mathbb C$, but we need to deal with some technicalities due to the boundaries of the rectangles. For all admissible pairs $(j, k)$, it is not hard to see from the Markov section construction that there are unique natural Lipschitz maps $\sigma^{-(j, k)}: U_k \to \overline{\mathtt{C}[j, k]}$ which extends $(\sigma|_{\mathtt{C}[j, k]})^{-1}: \interior(U_k) \to \mathtt{C}[j, k]$ and $\tau_{(j, k)}: \overline{\mathtt{C}[j, k]} \to \mathbb R_{>0}$ which extends $\tau|_{\mathtt{C}[j, k]}: \mathtt{C}[j, k] \to \mathbb R_{>0}$. Note that $\tau_{(x_0, x_1)}(\zeta^+(x)) = \tau_{\Sigma^+}(x)$ for all $x \in \Sigma^+$.

\begin{definition}
\label{def:TransferOperatorOriginal}
For all $\xi \in \mathbb C$, we define the \emph{transfer operator} $\mathcal{L}_{\xi \tau}: B(U, \mathbb C) \to B(U, \mathbb C)$ by
\begin{align*}
\mathcal{L}_{\xi \tau}(h)(u) = \sum_{\substack{(j, k)\\ u' = \sigma^{-(j, k)}(u)}} e^{\xi\tau_{(j, k)}(u')}h(u')
\end{align*}
for all $u \in U_k$, for all $k \in \mathcal{A}$, for all $h \in B(U, \mathbb C)$, where it is understood that the sum is only over admissible pairs.
\end{definition}

\begin{remark}
Due to the careful treatment of the boundaries of the rectangles, it is clear that for all $\xi \in \mathbb C$, the transfer operator $\mathcal{L}_{\xi \tau}$ preserves the subspaces $C(U, \mathbb C)$ and $C^{\Lip(d)}(U, \mathbb C)$. Let $\xi \in \mathbb C$ and $h \in B(U, \mathbb C)$. Then for all $u \in \interior(U)$, we can in fact write $\mathcal{L}_{\xi \tau}(h)(u) = \sum_{u' \in \sigma^{-1}(u)} e^{\xi\tau(u')}h(u')$. With these observations, if $h \in C(U, \mathbb C)$, then we can equivalently define $\mathcal{L}_{\xi \tau}(h)$ by the previous expression on $\interior(U)$ and then extend continuously to a function on $U$ (where the unique existence is now known by \cref{def:TransferOperatorOriginal}). This will help us avoid the more cumbersome \cref{def:TransferOperatorOriginal}. Finally, note that $\mathcal{L}_{\xi\tau_{\Sigma^+}} \circ (\zeta^+)^* = (\zeta^+)^* \circ \mathcal{L}_{\xi\tau}$.
\end{remark}

The following is the RPF theorem in this setting.

\begin{theorem}
\label{thm:RPFonU}
For all $a \in \mathbb R$, the operator $\mathcal{L}_{a\tau}: C(U, \mathbb C) \to C(U, \mathbb C)$ and its dual $\mathcal{L}_{a\tau}^*: C(U, \mathbb C)^* \to C(U, \mathbb C)^*$ has eigenvectors with the following properties. There exist a unique positive function $h \in C^{\Lip(d)}(U, \mathbb R)$ and a unique Borel probability measure $\nu$ on $U$ such that
\begin{enumerate}
\item	$\mathcal{L}_{a\tau}(h) = e^{\Pr_\sigma(a\tau_{\Sigma})}h$
\item	$\mathcal{L}_{a\tau}^*(\nu) = e^{\Pr_\sigma(a\tau_{\Sigma})}\nu$
\item	the eigenvalue $e^{\Pr_\sigma(a\tau_{\Sigma})}$ is maximal simple and the rest of the spectrum of $\mathcal{L}_f|_{C^{\Lip(d)}(U, \mathbb C)}$ is contained in a disk of radius strictly less than $e^{\Pr_\sigma(a\tau_{\Sigma})}$
%\item	for all $k \in \mathbb Z_{\geq 0}$, we have
%\begin{align*}
%\left|e^{-k\Pr(f)} \mathcal{L}_f^k(\phi)(u) - \hat{\nu}(\phi)\hat{h}(u)\right| \leq c(1 - \epsilon)^k\|\phi\|%_{\Lip(d_\theta)}
%\end{align*}
%where $\hat{h}$ is normalized such that $\hat{\nu}(\hat{h}) = 1$
\item	$\nu(h) = 1$ and the Borel probability measure $\mu$ defined by $d\mu = h \, d\nu$ is $\sigma$-invariant and is the projection of the $a\tau_{\Sigma}$-equilibrium state to $U$, i.e., $\mu = (\proj_U \circ \zeta)_*(\nu_{a\tau_{\Sigma}})$.
\end{enumerate}
\end{theorem}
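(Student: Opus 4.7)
The plan is to deduce the statement from \cref{thm:RPF} applied on $\Sigma^+$, by transferring data through the coding map $\zeta^+\colon\Sigma^+\to U$ via the intertwining relation $\mathcal{L}_{a\tau_{\Sigma^+}}\circ(\zeta^+)^*=(\zeta^+)^*\circ\mathcal{L}_{a\tau}$ noted after \cref{def:TransferOperatorOriginal}. Since $a\tau_{\Sigma^+}\in C^{\Lip(d_\theta)}(\Sigma^+,\mathbb R)$, \cref{thm:RPF} yields a positive eigenfunction $\tilde h\in C^{\Lip(d_\theta)}(\Sigma^+,\mathbb R)$ of $\mathcal{L}_{a\tau_{\Sigma^+}}$ and a Borel probability eigenmeasure $\tilde\nu$ for its dual, both with eigenvalue $e^{\Pr_\sigma(a\tau_\Sigma)}$, together with simplicity and a spectral gap. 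The soft fact used throughout is that, because $\zeta^+$ is a continuous surjection, $(\zeta^+)^*\colon C(U,\mathbb C)\to C(\Sigma^+,\mathbb C)$ is an isometric embedding whose image is closed; any continuous function on $\Sigma^+$ that is known to lie in this image therefore descends uniquely to a continuous function on $U$.

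First I would set $\nu=(\zeta^+)_*\tilde\nu$, and verify property~(2) directly: for $\phi\in C(U,\mathbb C)$, the intertwining gives
\begin{align*}
\nu(\mathcal{L}_{a\tau}\phi)=\tilde\nu\bigl((\zeta^+)^*\mathcal{L}_{a\tau}\phi\bigr)=\tilde\nu\bigl(\mathcal{L}_{a\tau_{\Sigma^+}}(\zeta^+)^*\phi\bigr)=e^{\Pr_\sigma(a\tau_\Sigma)}\nu(\phi).
\end{align*}
For the eigenfunction I would exploit the standard RPF convergence $e^{-n\Pr_\sigma(a\tau_\Sigma)}\mathcal{L}_{a\tau_{\Sigma^+}}^n(1)\to\tilde\nu(1)\tilde h=\tilde h$ uniformly on $\Sigma^+$; since $1=(\zeta^+)^*(1)$, iterated intertwining identifies this with $(\zeta^+)^*\bigl(e^{-n\Pr_\sigma(a\tau_\Sigma)}\mathcal{L}_{a\tau}^n(1)\bigr)$, so the sequence on the right is Cauchy in $C(U,\mathbb C)$ by the isometry and converges to a unique $h\in C(U,\mathbb R)$ with $(\zeta^+)^*h=\tilde h$. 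Positivity of $h$, the eigenvalue identity $\mathcal{L}_{a\tau}(h)=e^{\Pr_\sigma(a\tau_\Sigma)}h$, and the normalization $\nu(h)=\tilde\nu(\tilde h)=1$ then descend from the corresponding statements for $\tilde h$ by injectivity of $(\zeta^+)^*$, yielding property~(1).

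The main obstacle I anticipate is twofold: upgrading $h$ to be Lipschitz with respect to the metric $d$ on $U$, and transferring simplicity and the spectral gap in property~(3). Lipschitz regularity cannot simply be pulled back, since $d_\theta$ on $\Sigma^+$ and $d$ on $U$ measure distances by different mechanisms; instead I would run the standard RPF bootstrapping directly on $U$, using the Lipschitz bounds on each branch $\tau_{(j,k)}$ together with the exponential contraction in $d_{\mathrm{su}}$ of the inverse branches $\sigma^{-1}_{(j,k)}$ (inherent to the Markov section construction for the hyperbolic geodesic flow) to derive uniform Lipschitz estimates on $e^{-n\Pr_\sigma(a\tau_\Sigma)}\mathcal{L}_{a\tau}^n(1)$ on each $U_j$, which then descend to the uniform limit $h$. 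For property~(3), any eigenvector of $\mathcal{L}_{a\tau}$ on $C(U,\mathbb C)$ with eigenvalue of modulus at least $e^{\Pr_\sigma(a\tau_\Sigma)}$ pulls back via $(\zeta^+)^*$ to an eigenvector of $\mathcal{L}_{a\tau_{\Sigma^+}}$ with the same eigenvalue, so by \cref{thm:RPF}(3) the eigenvalue must equal $e^{\Pr_\sigma(a\tau_\Sigma)}$ and the pulled-back eigenvector must be a scalar multiple of $\tilde h=(\zeta^+)^*h$; injectivity of $(\zeta^+)^*$ then forces the original eigenvector to be a scalar multiple of $h$, and the same embedding argument confines the remaining spectrum to a disc of radius $r<e^{\Pr_\sigma(a\tau_\Sigma)}$. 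Finally, property~(4) follows because $\mu=h\nu=(\zeta^+)_*(\tilde h\tilde\nu)$ is the pushforward of the projection of $\nu_{a\tau_\Sigma}$ to $\Sigma^+$ under $\zeta^+$; the compatibility $\zeta^+\circ(\Sigma\to\Sigma^+)=\proj_1\circ\zeta$ built into the coding identifies this with $(\proj_1\circ\zeta)_*(\nu_{a\tau_\Sigma})$, and $\sigma$-invariance of $\mu$ on $U$ is inherited from $\sigma$-invariance on $\Sigma^+$.
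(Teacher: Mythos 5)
Your overall route---pushing the eigenmeasure $\tilde\nu$ forward to $\nu=(\zeta^+)_*\tilde\nu$, realizing $h$ as the uniform limit of $e^{-n\Pr_\sigma(a\tau_\Sigma)}\mathcal{L}_{a\tau}^n(\chi_U)$ through the isometric embedding $(\zeta^+)^*$ with closed image, and checking (1), (2) and (4) by the intertwining relation $\mathcal{L}_{a\tau_{\Sigma^+}}\circ(\zeta^+)^*=(\zeta^+)^*\circ\mathcal{L}_{a\tau}$---is correct and is exactly the correspondence the paper records in the remark following \cref{thm:RPFonU}; the paper gives no detailed proof, treating the statement as the standard RPF theorem in this setting. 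Two small caveats: the convergence $e^{-n\Pr_\sigma(a\tau_\Sigma)}\mathcal{L}_{a\tau_{\Sigma^+}}^n(\chi_{\Sigma^+})\to\tilde h$ is an input slightly beyond the literal statement of \cref{thm:RPF} (though it is part of the standard Ruelle theorem in the cited references), and the identities $\zeta^+\circ\sigma=\sigma\circ\zeta^+$ and $\proj_1\circ\zeta=\zeta^+\circ(\Sigma\to\Sigma^+)$ hold only on the cores, so (4) should be checked using $\nu_{a\tau_\Sigma}(\hat\Sigma)=1$. Your plan for the $d$-Lipschitz regularity of $h$ (branchwise contraction plus Lipschitz potential, run directly on $U$) is the right mechanism; it is the same cone estimate the paper itself establishes later in \cref{lem:PreliminaryLogLipschitz}.

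The one genuine gap is in your transfer of property (3). After identifying $C(U,\mathbb C)$ with the closed invariant subspace $(\zeta^+)^*C(U,\mathbb C)$, what you must control is the full spectrum of the restriction of $\mathcal{L}_{a\tau_{\Sigma^+}}$ to that subspace, and pulling back eigenvectors (or generalized eigenvectors, which gives simplicity) only constrains the point spectrum. In infinite dimensions the spectrum of a restriction to a closed invariant subspace is not exhausted by eigenvalues, and it need not even be contained in the spectrum of the ambient operator: it can fill in bounded components of the resolvent set (the unilateral shift, as the restriction of the bilateral shift to the Hardy subspace, has spectrum the whole closed disk rather than the circle). So the sentence ``the same embedding argument confines the remaining spectrum to a disc of radius $r$'' does not follow as written. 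It is repairable in either of two standard ways: (i) invoke the fact that the spectrum of a restriction is contained in the union of the ambient spectrum with the bounded components of its complement, and observe that the complement of $\{e^{\Pr_\sigma(a\tau_\Sigma)}\}\cup\{|z|\le r\}$ is connected, so no holes can be filled; or (ii) more in line with how such gaps are usually obtained (and with the Lipschitz bootstrap you already set up), prove a Lasota--Yorke inequality for $\mathcal{L}_{a\tau}$ on $C^{\Lip(d)}(U)$ and deduce quasicompactness there directly, using the pullback argument only to identify the peripheral eigenvalue and its one-dimensional (generalized) eigenspace.
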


\begin{remark}
Recall $\mathcal{L}_{\xi\tau_{\Sigma^+}} \circ (\zeta^+)^* = (\zeta^+)^* \circ \mathcal{L}_{\xi\tau}$ for all $\xi \in \mathbb C$. Using this, it is clear from the proofs that the $h$ in \cref{thm:RPFonU} pulls back to the corresponding one in \cref{thm:RPF} and the $\nu$ in \cref{thm:RPFonU} is the pushforward of the corresponding one in \cref{thm:RPF}, both via the map $\zeta^+$. By the same property, the eigenvalues in \cref{thm:RPFonU} are the same as the corresponding ones in \cref{thm:RPF}.
\end{remark}

%\begin{remark}
%Using the identifications via $\zeta^+$, we can regard the transfer operators as defined on $B(U)$. Note that the %condition $f \in C_\theta(\Sigma^+)$ translates to requiring $f$ be essentially Lipschitz.
%\end{remark}

Now we normalize the transfer operators for convenience. Let $a \in \mathbb R$. Define $\lambda_a = e^{\Pr_\sigma(-(\delta_\Gamma + a)\tau_{\Sigma})}$ which is the largest eigenvalue of $\mathcal{L}_{-(\delta_\Gamma + a)\tau}$ and recall that $\lambda_0 = 1$. Define the eigenvectors, the unique positive function $h_a \in C^{\Lip(d)}(U, \mathbb R)$ and the unique probability measure $\nu_a$ on $U$ with $\nu_a(h_a) = 1$ such that $\mathcal{L}_{-(\delta_\Gamma + a)\tau}(h_a) = \lambda_a h_a$ and $\mathcal{L}_{-(\delta_\Gamma + a)\tau}^*(\nu_a) = \lambda_a \nu_a$, provided by \cref{thm:RPFonU}. Note that $d\nu_U = h_0 \, d\nu_0$. By abuse of notation, we will denote the pullback $(\zeta^+)^*(h_a)$, which is the corresponding eigenvector for $\mathcal{L}_{-(\delta_\Gamma + a)\tau_{\Sigma^+}}$, by $h_a$ as well. Moreover, by perturbation theory for operators as in \cite[Chapter 8]{Kat95} and \cite[Proposition 4.6]{PP90}, we can fix a $a_0' > 0$ such that the map $[-a_0', a_0'] \to \mathbb R$ defined by $a \mapsto \lambda_a$ and the map $[-a_0', a_0'] \to C(U, \mathbb R)$ defined by $a \mapsto h_a$ are Lipschitz. In particular, there is a $C > 0$ such that $|\lambda_a - 1| \leq C|a|$ and $|h_a(u) - h_0(u)| \leq C|a|$ for all $u \in U$, for all $|a| \leq a_0'$. For all $a \in \mathbb R$, we define
\begin{align}
\label{eqn:fa}
f^{(a)} = -(\delta_\Gamma + a)\tau + \log \circ h_0 - \log \circ (h_0 \circ \sigma) - \log(\lambda_a).
\end{align}
Then we can fix a $A_f > 0$ such that $|f^{(a)}(u) - f^{(0)}(u)| \leq A_f|a|$ for all $u \in U$, for all $|a| \leq a_0'$. For all $k \in \mathbb Z_{>0}$, we use the notation
\begin{align*}
f_k^{(a)}(u) &= \sum_{j = 0}^{k - 1} f^{(a)}(\sigma^j(u)) & \tau_k(u) &= \sum_{j = 0}^{k - 1} \tau(\sigma^j(u))
\end{align*}
for all $u \in U$, and when $k = 0$ we mean the empty sum which is $0$. By a slight abuse of notation, for all $\xi = a + ib \in \mathbb C$, we define $\mathcal{L}_\xi: C(U, \mathbb C) \to C(U, \mathbb C)$ by
\begin{align*}
\mathcal{L}_\xi(h)(u) &= \sum_{u' \in \sigma^{-1}(u)} e^{(f^{(a)} + ib\tau)(u')}h(u') \\
&= \frac{1}{\lambda_a h_0(u)} \sum_{u' \in \sigma^{-1}(u)} e^{-(a + \delta_\Gamma - ib)\tau(u')}(h_0h)(u')
\end{align*}
and for all $k \in \mathbb Z_{> 0}$, its $k$\textsuperscript{th} iteration is
\begin{align*}
\mathcal{L}_\xi^k(h)(u) = \sum_{u' \in \sigma^{-k}(u)} e^{(f_k^{(a)} + ib\tau_k)(u')}h(u')
\end{align*}
for all $u \in \interior(U)$, and then extend continuously to a function on $U$, for all $h \in C(U, \mathbb C)$. Then the transfer operators are normalized such that the largest eigenvalue of $\mathcal{L}_a$ is $1$ with normalized eigenvector $\frac{h_a}{h_0}$ for all $a \in \mathbb R$. For $\mathcal{L}_0$ in particular, the largest eigenvalue is $1$ with normalized eigenvector $\chi_U$ and $\mathcal{L}_0^*(\nu_U) = \nu_U$. We leave it to the reader to make the appropriate adjustments to define the operators on $B(U, \mathbb C)$ as in \cref{def:TransferOperatorOriginal}.

\begin{remark}
We can also define $f_{\Sigma^+}^{(a)}$ by replacing $\sigma|_U$ by $\sigma|_{\Sigma^+}$, $\tau$ by $\tau_{\Sigma^+}$ and using $h_0 \in C^{\Lip(d_\theta)}(\Sigma^+, \mathbb R)$ in \cref{eqn:fa}, which would serve to define, by abuse of notation, a corresponding normalized operator $\mathcal{L}_\xi: C(\Sigma^+, \mathbb C) \to C(\Sigma^+, \mathbb C)$ for all $\xi \in \mathbb C$. Then $\mathcal{L}_\xi \circ (\zeta^+)^* = (\zeta^+)^* \circ \mathcal{L}_\xi$ for all $\xi \in \mathbb C$ similar to the final remark after \cref{def:TransferOperatorOriginal}.
\end{remark}

%Let $D = D(\Gamma, o) \cap \CH(\Lambda(\Gamma))$ be the intersection of the Dirichlet domain $D(\Gamma, o) \subset \mathbb H^n$ and the convex hull $\CH(\Lambda(\Gamma))$.

Now fix a
\begin{align*}
T_0 >{}&\max\left(\|\tau\|_\infty, \Lip_d^{\mathrm{e}}(\tau), \Lip_{d_\theta}(\tau_{\Sigma^+}), \rule{0cm}{0.6cm}\right. \\
{}&\left.\sup_{|a| \leq a_0'} \|f^{(a)}\|_\infty, \sup_{|a| \leq a_0'} \Lip_d^{\mathrm{e}}(f^{(a)}), \sup_{|a| \leq a_0'} \Lip_{d_\theta}(f_{\Sigma^+}^{(a)})\right)
\end{align*}
where we define the essentially Lipschitz seminorm for any function $\varphi: U \to \mathbb R$ to be
\begin{align*}
&\Lip_d^{\mathrm{e}}(\varphi) \\
={}&\sup\left\{\frac{|\varphi(u) - \varphi(u')|}{d(u, u')} \in \mathbb R_{>0}: u, u' \in \mathtt{C}, \mathtt{C} \subset U \text{ is a cylinder with } \len(\mathtt{C}) = 1\right\}
\end{align*}
and call $\varphi$ essentially Lipschitz if $\Lip_d^{\mathrm{e}}(\varphi) < \infty$. This is possible by \cite[Lemma 4.1]{PS16}.

\subsection{Cocycles and congruence transfer operators}
\label{subsec:CocyclesAndCongruenceTransferOperators}
We recall some definitions from \cite{OW16} regarding the congruence setting. Noting that $\T^1(\mathbb H^n)$ is a locally isometric cover of $\T^1(X)$, for all $j \in \mathcal{A}$, choose homeomorphic lifts $\overline{R}_j = [\overline{U}_j, \overline{S}_j] \subset \T^1(\mathbb H^n) \cong G/M$ of $R_j$. Define $\overline{R} = \bigsqcup_{j = 1}^N \overline{R}_j$ and $\overline{U} = \bigsqcup_{j = 1}^N \overline{U}_j$. For all $u \in R$, let $\overline{u} \in \overline{R}$ denote the unique lift in $\overline{R}$.
%such that $\overline{R}_j \cap D \neq \varnothing$

\begin{definition}
The \emph{cocycle} $\mathtt{c}: R \to \Gamma$ is a map such that for all $u \in R$, we have $\overline{u}a_{\tau(u)} \in \mathtt{c}(u)\overline{R}$.
\end{definition}

\begin{lemma}
\label{lem:CocyclesLocallyConstant}
The cocycle $\mathtt{c}$ is locally constant, i.e., if $u_1, u_2 \in R_x \cap \mathcal{P}^{-1}(R_y)$ for some $x, y \in \mathcal{A}$, then $\mathtt{c}(u_1) = \mathtt{c}(u_2)$.
\end{lemma}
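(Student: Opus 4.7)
The plan is to bound $d(\overline{u}_1 a_{\tau(u_1)}, \overline{u}_2 a_{\tau(u_2)})$ in $G/M$ by a constant multiple of $\hat{\delta}$ and, separately, to bound from below the distance between any two distinct $\Gamma$-translates of $\overline{R}_y$ in $G/M$. Since $\overline{u}_i a_{\tau(u_i)} \in \mathtt{c}(u_i) \overline{R}_y$ by the definition of the cocycle, these two bounds together will force $\mathtt{c}(u_1) \overline{R}_y = \mathtt{c}(u_2) \overline{R}_y$, and hence $\mathtt{c}(u_1) = \mathtt{c}(u_2)$, provided $\hat{\delta}$ is chosen small enough.

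For the upper bound, the Markov property gives $R_x \cap \mathcal{P}^{-1}(R_y) = [\mathtt{C}[x, y], S_x]$ up to boundary effects, so writing $u_i = [u_i', s_i']$ with $u_i' \in \mathtt{C}[x, y]$ and $s_i' \in S_x$, the Lipschitz bound on $\tau_{(x, y)}$ combined with the constancy of $\tau$ along stable slices yields $|\tau(u_1) - \tau(u_2)| \leq \Lip(\tau_{(x, y)}) \cdot \hat{\delta}$. Since the covering $G/M \to \Gamma \backslash G/M$ is a local isometry and $\overline{R}_x$ is a homeomorphic lift of small diameter, one has $d(\overline{u}_1, \overline{u}_2) = d(u_1, u_2)$, itself bounded by a constant multiple of $\hat{\delta}$. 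A triangle inequality combined with the uniform Lipschitz bound on the right $A$-action on $G/M$ for $t \in [0, \overline{\tau}]$ then produces $d(\overline{u}_1 a_{\tau(u_1)}, \overline{u}_2 a_{\tau(u_2)}) \leq C \hat{\delta}$ for a constant $C > 0$ depending only on $\overline{\tau}$ and on the intrinsic Lipschitz constants of the setup. For the lower bound, torsion-freeness of $\Gamma$ together with the homeomorphism property of the lift ensures that the translates $\{\gamma \overline{R}_y : \gamma \in \Gamma\}$ are pairwise disjoint. If $\gamma \neq e$ with $d(\overline{R}_y, \gamma \overline{R}_y) < \Inj(\T^1(X)) - 2 \diam(\overline{R}_y)$, then a triangle inequality through the centers $w \in \overline{R}_y$ and $\gamma w \in \gamma \overline{R}_y$ would force $d(w, \gamma w) < \Inj(\T^1(X))$; but $w \neq \gamma w$ project to the same point of $\T^1(X)$, contradicting the definition of the injectivity radius. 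Hence $d(\overline{R}_y, \gamma \overline{R}_y) \geq \Inj(\T^1(X)) - 2 \diam(\overline{R}_y)$, which is bounded below by a positive constant for $\hat{\delta}$ sufficiently small.

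Combining the two bounds, for $\hat{\delta}$ small enough the upper bound $C \hat{\delta}$ is strictly less than the separation between any two distinct translates, so $\overline{u}_1 a_{\tau(u_1)}$ and $\overline{u}_2 a_{\tau(u_2)}$ must lie in a common translate $\mathtt{c}(u_1) \overline{R}_y = \mathtt{c}(u_2) \overline{R}_y$, giving $\mathtt{c}(u_1) = \mathtt{c}(u_2)$. The main technical point is to verify that the paper's choice $\hat{\delta} < \frac{1}{4}\Inj(\T^1(X))$ suffices, which reduces to making the constant $C$ explicit; $C$ depends on $\Lip(\tau_{(x, y)})$ and on the Lipschitz constant of the right $A$-action over $[0, \overline{\tau}]$, both controlled by standard compactness and smoothness arguments. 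If needed, $\hat{\delta}$ can simply be chosen smaller at the outset without affecting other parts of the paper.
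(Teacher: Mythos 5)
Your overall strategy is the one the paper uses: compare the lifts $\overline{u}_1a_{\tau(u_1)} \in \mathtt{c}(u_1)\overline{R}_y$ and $\overline{u}_2a_{\tau(u_2)} \in \mathtt{c}(u_2)\overline{R}_y$, and rule out distinct translates by a covering-space/injectivity-radius argument; your lower bound on the separation of distinct $\Gamma$-translates of $\overline{R}_y$ is correct. The gap is in the closing step. Your upper bound is $C\hat{\delta}$ with $C$ assembled from $\Lip(\tau_{(x, y)})$, the Lipschitz constant of the bracket $[\cdot, \cdot]$, and the Lipschitz constant of the time-$t$ maps over $[0, \overline{\tau}]$; these are constants of the \emph{fixed} Markov section, which is constructed only after $\hat{\delta}$ has been chosen. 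You acknowledge that it remains to check that $C\hat{\delta}$ is smaller than the separation under the paper's choice $\hat{\delta} < \frac{1}{4}\Inj(\T^1(X))$, and your fallback of ``choosing $\hat{\delta}$ smaller at the outset'' is circular: a smaller $\hat{\delta}$ means a new Markov section, and nothing in your argument shows that $\Lip(\tau_{(x, y)})$, $\overline{\tau}$, the number of rectangles, etc.\ remain bounded as the section is refined. So, as written, the lemma is not established for the section the paper actually fixes.

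The missing observation, which is exactly what makes the paper's proof close with explicit constants, is that \cref{itm:MarkovProperty3} in the definition of a complete set of rectangles of size $\hat{\delta}$ forces $\overline{\tau} \leq \hat{\delta}$: for $u \in R$ and any $\epsilon > 0$, flow-invariance of $\Omega$ gives $ua_{\hat{\delta} + \epsilon} = ra_t$ for some $r \in R$ and $t \in [0, \hat{\delta}]$, so $u$ returns to $R$ within time $\hat{\delta} + \epsilon - t \leq \hat{\delta} + \epsilon$, whence $\tau(u) \leq \hat{\delta}$. Consequently the flow displacement itself satisfies $d(\overline{u}_i, \overline{u}_ia_{\tau(u_i)}) \leq \tau(u_i) \leq \hat{\delta}$, no Lipschitz constant for $\tau_{(x, y)}$ or for the time-$t$ map is needed (and $|\tau(u_1) - \tau(u_2)| \leq \hat{\delta}$ trivially, making your stable-slice argument superfluous). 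The paper then strings together four legs, each of length at most $\hat{\delta}$ (the flow displacement of $\overline{u}_1$, $\diam(\overline{R}_x)$, the flow displacement of $\overline{u}_2$, $\diam(\overline{R}_y)$), to bound the distance between the two distinct points $\mathtt{c}(u_1)\overline{v}_1$ and $\mathtt{c}(u_2)\overline{v}_1$ of a single fiber, where $\overline{v}_j = \mathtt{c}(u_j)^{-1}\overline{u}_ja_{\tau(u_j)} \in \overline{R}_y$, by $4\hat{\delta} < \Inj(\T^1(X))$, which is the desired contradiction. If you insert $\overline{\tau} \leq \hat{\delta}$ into your scheme, your constant $C$ becomes explicit and the stated choice $\hat{\delta} < \frac{1}{4}\Inj(\T^1(X))$ suffices, with no need to perturb $\hat{\delta}$ or to control section-dependent Lipschitz constants.
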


\begin{proof}
Let $u_1, u_2 \in R_x \cap \mathcal{P}^{-1}(R_y)$ for some $x, y \in \mathcal{A}$ but suppose $\mathtt{c}(u_1) \neq \mathtt{c}(u_2)$. Then $\overline{u}_j \in \overline{R}_x$ and $u_j = \Gamma \overline{u}_j$ for all $j \in \{1, 2\}$. By definition, $\overline{v}_j = \mathtt{c}(u_j)^{-1}\overline{u}_j a_{\tau(u_j)} \in \overline{R}_x$ for all $j \in \{1, 2\}$ and by local isometry, we have
\begin{align*}
0 \neq d(\mathtt{c}(u_1)\overline{v}_1, \mathtt{c}(u_2)\overline{v}_1) &\leq d(\overline{u}_1, \mathtt{c}(u_2)\overline{v}_1) + \hat{\delta} \leq d(\overline{u}_2, \mathtt{c}(u_2)\overline{v}_1) + 2\hat{\delta} \\
&\leq d(\mathtt{c}(u_2)\overline{v}_2, \mathtt{c}(u_2)\overline{v}_1) + 3\hat{\delta} \leq 4\hat{\delta} \leq \Inj(\T^1(X))
\end{align*}
and so there is a geodesic from $\mathtt{c}(u_1)\overline{v}_1$ to $\mathtt{c}(u_2)\overline{v}_1$ of positive length less than the injectivity radius. But such a geodesic would project isometrically to a closed geodesic on $\T^1(X)$ since $\mathtt{c}(u_j)\overline{v}_1$ projects to $\Gamma \mathtt{c}(u_j)\overline{v}_1 = \Gamma \overline{v}_1$ for all $j \in \{1, 2\}$ which is a contradiction.
\end{proof}

Justified by the lemma above, for all $k \in \mathbb Z_{>0}$, we use the notation
\begin{align*}
\mathtt{c}^k(u) = \prod_{j = 0}^{k - 1}\mathtt{c}(\sigma^j(u)) = \mathtt{c}(u)\mathtt{c}(\sigma(u)) \dotsb \mathtt{c}(\sigma^{k - 1}(u))
\end{align*}
for all $u \in U$, and when $k = 0$ we mean the empty product which is $e \in \Gamma$. Note that the order in the product is important in the definition above.

\begin{corollary}
\label{cor:CocyclesLocallyConstantCorollary}
If $u_1, u_2 \in \mathtt{C}$ for some cylinder $\mathtt{C} \subset U$ with $\len(\mathtt{C}) = k \in \mathbb Z_{>0}$, then $\mathtt{c}^k(u_1) = \mathtt{c}^k(u_2)$.
\end{corollary}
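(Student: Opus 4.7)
The plan is to reduce the claim to a termwise application of \cref{lem:CocyclesLocallyConstant}. By definition, $\mathtt{c}^k(u_i) = \prod_{j = 0}^{k - 1} \mathtt{c}(\sigma^j(u_i))$ for each $i \in \{1, 2\}$, so it is enough to show that $\mathtt{c}(\sigma^j(u_1)) = \mathtt{c}(\sigma^j(u_2))$ for every $j \in \{0, 1, \dotsc, k - 1\}$, and then multiply these equalities in the correct (left-to-right) order.

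Write the admissible sequence underlying $\mathtt{C}$ as $(x_0, x_1, \dotsc, x_k)$. The cylinder condition $u_i \in \mathtt{C}$ gives $\sigma^j(u_i) \in \interior(U_{x_j}) \subset R_{x_j}$ for all $0 \leq j \leq k$. Moreover, since $\sigma = (\proj_1 \circ \mathcal{P})|_U$ and $\proj_1$ sends $R_{x_{j + 1}}$ into $U_{x_{j + 1}}$, the condition $\sigma^{j + 1}(u_i) \in U_{x_{j + 1}}$ forces $\mathcal{P}(\sigma^j(u_i)) \in R_{x_{j + 1}}$. Thus $\sigma^j(u_i) \in R_{x_j} \cap \mathcal{P}^{-1}(R_{x_{j + 1}})$ for $i \in \{1, 2\}$, and \cref{lem:CocyclesLocallyConstant} applied with $x = x_j$ and $y = x_{j + 1}$ yields $\mathtt{c}(\sigma^j(u_1)) = \mathtt{c}(\sigma^j(u_2))$. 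Taking the ordered product over $j$ gives the desired identity.

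There is no substantive obstacle: the corollary is essentially bookkeeping, with the only nontrivial input being the local constancy of $\mathtt{c}$ already supplied by the preceding lemma. The one point to be careful about is ensuring that one really does land in the set $R_{x_j} \cap \mathcal{P}^{-1}(R_{x_{j + 1}})$ on which the lemma applies, rather than just in $R_{x_j}$; this is what the explicit use of $\sigma = (\proj_1 \circ \mathcal{P})|_U$ achieves. A tidier alternative presentation is induction on $k$: the case $k = 1$ is immediate from \cref{lem:CocyclesLocallyConstant}, and the inductive step uses $\mathtt{c}^{k + 1}(u) = \mathtt{c}^k(u) \cdot \mathtt{c}(\sigma^k(u))$ together with the observation that $\sigma$ maps $\mathtt{C}[x_0, \dotsc, x_{k + 1}]$ into the cylinder $\mathtt{C}[x_1, \dotsc, x_{k + 1}]$, so both factors agree on $u_1$ and $u_2$ by the inductive hypothesis and the base case.
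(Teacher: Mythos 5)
Your proof is correct and follows exactly the route the paper intends (the paper states the corollary without proof as an immediate consequence of \cref{lem:CocyclesLocallyConstant}): you verify from the cylinder condition and the disjointness of the rectangles that $\sigma^j(u_1), \sigma^j(u_2) \in R_{x_j} \cap \mathcal{P}^{-1}(R_{x_{j+1}})$ for each $0 \leq j \leq k-1$, apply the lemma termwise, and multiply in order. The care you take in deducing $\mathcal{P}(\sigma^j(u_i)) \in R_{x_{j+1}}$ from $\sigma^{j+1}(u_i) \in \interior(U_{x_{j+1}})$ via $\sigma = (\proj_1 \circ \mathcal{P})|_U$ is exactly the small point worth spelling out, and your inductive reformulation is an equally valid packaging of the same argument.
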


%It is clear from the definitions above that since the covering map $\tilde{G} \to G$ induces both the isomorphisms $\tilde{G}/\tilde{M} \simrightarrow G/M$ and $\tilde{\Gamma} \backslash \tilde{G}/\tilde{M} \simrightarrow \Gamma \backslash G/M$, so it also induces the map $\tilde{\Gamma} \to \Gamma$ given by $\tilde{\mathtt{c}}(u) \mapsto \mathtt{c}(u)$ for all $u \in R$. \\

For all ideals $\mathfrak{q} \subset \mathcal{O}_{\mathbb K}$, we have the canonical quotient map $\pi_{\mathfrak{q}}: \tilde{\mathbf{G}}(\mathcal{O}_{\mathbb K}) \to \tilde{\mathbf{G}}(\mathcal{O}_{\mathbb K}/\mathfrak{q})$ and we define the principal congruence subgroup of level $\mathfrak{q}$ to be $\ker(\pi_{\mathfrak{q}})$. We would like to define the congruence subgroup of $\tilde{\Gamma}$ of level $\mathfrak{q}$ to be the normal subgroup $\tilde{\Gamma}_{\mathfrak{q}} = \ker(\pi_{\mathfrak{q}}|_{\tilde{\Gamma}}) \lhd \tilde{\Gamma}$. However, we make a minor modification and assume as before that $\tilde{\Gamma}_{\mathfrak{q}} \lhd \tilde{\Gamma}$ contains $\ker(\tilde{\pi}) = \{e, -e\}$ as the only torsion elements, i.e., we define $\tilde{\Gamma}_{\mathfrak{q}} = \langle\ker(\pi_{\mathfrak{q}}|_{\tilde{\Gamma}}), \ker(\tilde{\pi})\rangle \lhd \tilde{\Gamma}$. Again we define $\Gamma_{\mathfrak{q}} = \tilde{\pi}(\tilde{\Gamma}_{\mathfrak{q}})$. For all \emph{nontrivial} $\mathfrak{q} \subset \mathcal{O}_{\mathbb K}$, also define the finite group $F_{\mathfrak{q}} = \Gamma_{\mathfrak{q}} \backslash \Gamma \cong \tilde{\Gamma}_{\mathfrak{q}} \backslash \tilde{\Gamma}$. By the strong approximation theorem of Weisfeiler \cite{Wei84}, there is a nontrivial proper ideal $\mathfrak{q}_0 \subset \mathcal{O}_{\mathbb K}$ such that for all ideals $\mathfrak{q} \subset \mathcal{O}_{\mathbb K}$ coprime to $\mathfrak{q}_0$, the map $\pi_{\mathfrak{q}}|_{\tilde{\Gamma}}$ is in fact surjective and hence induces the isomorphism $\overline{\pi_{\mathfrak{q}}|_{\tilde{\Gamma}}}: F_{\mathfrak{q}} \to \tilde{G}_{\mathfrak{q}}$ where we define the finite group $\tilde{G}_{\mathfrak{q}} = \{e, -e\} \backslash \tilde{\mathbf{G}}(\mathcal{O}_{\mathbb K}/\mathfrak{q})$, by a slight abuse of notation. Without loss of generality, we assume that for all $(y, z) \in \mathcal{A}^2$, the same ideal $\mathfrak{q}_0 \subset \mathcal{O}_{\mathbb K}$ is sufficient to apply both the strong approximation theorem and \cite[Corollary 6]{GV12} for the subgroups $\tilde{H}^p(y, z) < \tilde{\Gamma}$ which is to be introduced later. This condition will appear throughout the paper in order for the strong approximation theorem to apply. For all nontrivial ideals $\mathfrak{q} \subset \mathcal{O}_{\mathbb K}$, define the \emph{congruence cocycle} $\mathtt{c}_{\mathfrak{q}}: R \to F_{\mathfrak{q}}$ by $\mathtt{c}_{\mathfrak{q}} = \pi_{\Gamma_\mathfrak{q}} \circ \mathtt{c}$, i.e., $\mathtt{c}_{\mathfrak{q}}(x) = \Gamma_{\mathfrak{q}}\mathtt{c}(x)$ for all $x \in R$.

Let $\mathfrak{q} \subset \mathcal{O}_{\mathbb K}$ be a nontrivial ideal. Let $X_\mathfrak{q} = \Gamma_\mathfrak{q} \backslash \mathbb H^n$ be the \emph{congruence cover} of $X$ of level $\mathfrak{q}$. Note that we have the isometries $\T^1(X_\mathfrak{q}) \cong \Gamma_\mathfrak{q} \backslash G/M$ and $\F_{\SO}(X_\mathfrak{q}) \cong \Gamma_\mathfrak{q} \backslash G$. Recall that $m^{\mathrm{BMS}}|_G$ is left $\Gamma$-invariant, so in particular it is left $\Gamma_\mathfrak{q}$-invariant. Thus it descends to the measure $m^{\mathrm{BMS}}_\mathfrak{q}$ on $\Gamma_\mathfrak{q} \backslash G$ which, by right $M$-invariance, descends once more to the measure $m^{\mathrm{BMS}}_\mathfrak{q}$ on $\Gamma_\mathfrak{q} \backslash G/M$ by abuse of notation. We call both of these the \emph{congruence BMS measures}. Note that $m^{\mathrm{BMS}}_\mathfrak{q}(\Gamma_\mathfrak{q} \backslash G/M) = \#F_\mathfrak{q} \cdot m^{\mathrm{BMS}}(\Gamma \backslash G/M)$. Let $p_\mathfrak{q}: \T^1(X_\mathfrak{q}) \to \T^1(X)$ be the locally isometric covering map and $\Omega_\mathfrak{q} = {p_\mathfrak{q}}^{-1}(\Omega) = \supp\left(m^{\mathrm{BMS}}_\mathfrak{q}\right) \subset \Gamma_\mathfrak{q} \backslash G/M$. Now we need a Markov section on $\Omega_\mathfrak{q}$ compatible with the Markov section on $\Omega$. For all $x \in \mathcal{A}$, for all $g \in F_\mathfrak{q}$, we define $R_{x, g}^\mathfrak{q} = g\overline{R}_x \subset \T^1(X_\mathfrak{q})$ and similarly define $U_{x, g}^\mathfrak{q}$ and $S_{x, g}^\mathfrak{q}$. We also define $R^\mathfrak{q} = \bigsqcup_{x \in \mathcal{A}, g \in F_\mathfrak{q}} R_{x, g}^\mathfrak{q}$ and similarly define $U^\mathfrak{q}$ and their corresponding cores. To facilitate notation, we make the identification $R \times F_\mathfrak{q} \cong R^\mathfrak{q}$ via the isometry $(u, g) \mapsto g\overline{u}$ and also similar identifications for $U^\mathfrak{q}$ and their corresponding cores. Using this identification, define the measure $\nu_{R^\mathfrak{q}}$ on $R^\mathfrak{q}$ to be the pushforward of $\nu_R \times m_{F_\mathfrak{q}}$ where $m_{F_\mathfrak{q}}$ is simply the counting measure on $F_\mathfrak{q}$. Note that $\nu_{R^\mathfrak{q}}(R^\mathfrak{q}) = \#F_\mathfrak{q}$. It can be checked that
\begin{align*}
\mathcal{R}^\mathfrak{q} = \{R_{x, g}^\mathfrak{q}: x \in \mathcal{A}, g \in F_\mathfrak{q}\}
\end{align*}
is a Markov section of size $\hat{\delta}$. We have the natural Poincar\'{e} first return map $\mathcal{P}_\mathfrak{q}: R^\mathfrak{q} \to R^\mathfrak{q}$ defined by $\mathcal{P}_\mathfrak{q}(u, g) = (\mathcal{P}(u), g\mathtt{c}_\mathfrak{q}(u))$ for all $(u, g) \in R^\mathfrak{q}$. It is easy to check that the corresponding first return time map is $\tau_\mathfrak{q} = p_\mathfrak{q}^*(\tau) = \tau \circ p_\mathfrak{q}$. Note that $\tau_\mathfrak{q}$ is independent of the $F_\mathfrak{q}$ component. Like $R^\tau$ and the measure $\nu^\tau$, we define $R^{\mathfrak{q}, \tau}$ and the measure $\nu^{\mathfrak{q}, \tau}$ in a similar fashion. Again by abuse of notation, we have the measure $\nu^{\mathfrak{q}, \tau}$ on $\T^1(X_\mathfrak{q})$ supported on $\Omega_\mathfrak{q}$. Note that $\nu^{\mathfrak{q}, \tau}(\T^1(X_\mathfrak{q})) = \nu_{R^\mathfrak{q}}(\tau_\mathfrak{q}) = \#F_\mathfrak{q} \cdot \nu_R(\tau)$ and so together with \cref{thm:FlowEquilibriumStateEqualsBMS}, we have $\frac{\nu^{\mathfrak{q}, \tau}}{\#F_\mathfrak{q} \cdot \nu_R(\tau)} = \frac{m^{\mathrm{BMS}}_\mathfrak{q}}{m^{\mathrm{BMS}}_\mathfrak{q}(\Gamma_\mathfrak{q} \backslash G/M)}$. Finally, $\mathcal{P}_\mathfrak{q}$ induces the shift map $\sigma_\mathfrak{q}: U^\mathfrak{q} \to U^\mathfrak{q}$ defined by $\sigma_\mathfrak{q}(u, g) = (\sigma(u), g\mathtt{c}_\mathfrak{q}(u))$ for all $(u, g) \in U^\mathfrak{q}$.

\begin{definition}
For all $\xi \in \mathbb C$, for all nontrivial ideals $\mathfrak{q} \subset \mathcal{O}_{\mathbb K}$, we define the \emph{congruence transfer operator} $\mathcal{M}_{\xi\tau_\mathfrak{q}, \mathfrak{q}}: C(U^\mathfrak{q}, \mathbb C) \to C(U^\mathfrak{q}, \mathbb C)$ by
\begin{align*}
\mathcal{M}_{\xi\tau_\mathfrak{q}, \mathfrak{q}}(h)(u, g) &= \sum_{(u', g') \in {\sigma_\mathfrak{q}}^{-1}(u, g)} e^{\xi\tau_\mathfrak{q}(u', g')}h(u', g') \\
&= \sum_{u' \in \sigma^{-1}(u)} e^{\xi\tau_\mathfrak{q}(u', g\mathtt{c}_\mathfrak{q}(u')^{-1})}h(u', g\mathtt{c}_\mathfrak{q}(u')^{-1})
\end{align*}
for all $(u, g) \in \interior(U^\mathfrak{q})$, and then extend continuously to a function on $U^\mathfrak{q}$, for all $h \in C(U^\mathfrak{q}, \mathbb C)$.
\end{definition}

\begin{remark}
As before, for all $\xi \in \mathbb C$, for all nontrivial ideals $\mathfrak{q} \subset \mathcal{O}_{\mathbb K}$, the congruence transfer operator $\mathcal{M}_{\xi\tau_\mathfrak{q}, \mathfrak{q}}$ preserves $C^{\Lip(d)}(U^\mathfrak{q}, \mathbb C)$.
\end{remark}

Let $\mathfrak{q} \subset \mathcal{O}_{\mathbb K}$ be a nontrivial ideal. We make the identification $C(U^\mathfrak{q}, \mathbb C) \cong C(U, L^2(F_\mathfrak{q}, \mathbb C))$. Note that for all $u \in R$, we have the induced left $\mathbb C[F_\mathfrak{q}]$-module automorphism on $L^2(F_\mathfrak{q}, \mathbb C)$ given by $(\mathtt{c}_\mathfrak{q}(u)^{-1}\phi)(g) = \phi(g\mathtt{c}_\mathfrak{q}(u)^{-1})$ for all $g \in F_\mathfrak{q}$, for all $\phi \in L^2(F_\mathfrak{q}, \mathbb C)$ (where $L^2(F_\mathfrak{q}, \mathbb C)$ is viewed as a module using the left regular representation). Again for normalization, for all $\xi = a + ib \in \mathbb C$, we define $\mathcal{M}_{\xi, \mathfrak{q}}: C(U, L^2(F_\mathfrak{q}, \mathbb C)) \to C(U, L^2(F_\mathfrak{q}, \mathbb C))$ by
\begin{align}
\label{eqn:NormalizedCongruenceTransferOperator}
\mathcal{M}_{\xi, \mathfrak{q}}(H)(u) &= \sum_{u' \in \sigma^{-1}(u)} e^{(f^{(a)} + ib\tau)(u')} \mathtt{c}_\mathfrak{q}(u')^{-1} H(u') \\
&= \frac{1}{\lambda_a h_0(u)} \sum_{u' \in \sigma^{-1}(u)} e^{-(a + \delta_\Gamma - ib)\tau(u')} \mathtt{c}_\mathfrak{q}(u')^{-1} (h_0H)(u')
\end{align}
and for all $k \in \mathbb Z_{> 0}$, its $k$\textsuperscript{th} iteration is
\begin{align*}
\mathcal{M}_{\xi, \mathfrak{q}}^k(H)(u) = \sum_{u' \in \sigma^{-k}(u)} e^{(f_k^{(a)} + ib\tau_k)(u')} \mathtt{c}_\mathfrak{q}^k(u')^{-1} H(u')
\end{align*}
for all $u \in \interior(U)$, and then extend continuously to a function on $U$, for all $H \in C(U, L^2(F_\mathfrak{q}, \mathbb C))$.

\begin{remark}
As before, since $(\mathtt{c}_\mathfrak{q} \circ \zeta^+)|_{\hat{\Sigma}^+}$ is Lipschitz, there is a unique Lipschitz extension $\mathtt{c}_{\mathfrak{q}, \Sigma^+}: \Sigma^+ \to F_\mathfrak{q}$ (using the discrete metric on $F_\mathfrak{q}$) which we again note is distinct from $\mathtt{c}_\mathfrak{q} \circ \zeta^+$. Then replacing $\sigma|_U$ by $\sigma|_{\Sigma^+}$, $\tau$ by $\tau_{\Sigma^+}$, $f^{(a)}$ by $f_{\Sigma^+}^{(a)}$, $\mathtt{c}_\mathfrak{q}$ by $\mathtt{c}_{\mathfrak{q}, \Sigma^+}$ and using $h_0 \in C^{\Lip(d_\theta)}(\Sigma^+, \mathbb R)$ in \cref{eqn:NormalizedCongruenceTransferOperator}, we can similarly define, by abuse of notation, a corresponding normalized operator $\mathcal{M}_{\xi, \mathfrak{q}}: C(\Sigma^+, L^2(F_\mathfrak{q}, \mathbb C)) \to C(\Sigma^+, L^2(F_\mathfrak{q}, \mathbb C))$ for all $\xi \in \mathbb C$. Then $\mathcal{M}_{\xi, \mathfrak{q}} \circ (\zeta^+)^* = (\zeta^+)^* \circ \mathcal{M}_{\xi, \mathfrak{q}}$ for all $\xi \in \mathbb C$ similar to the final remark after \cref{def:TransferOperatorOriginal}. Again we leave it to the reader to make the appropriate adjustments to define the operators on $B(U, L^2(F_\mathfrak{q}, \mathbb C))$ as in \cref{def:TransferOperatorOriginal}.
\end{remark}

\begin{remark}
As above, for all $u \in R$, the cocycle $\mathtt{c}(u)^{-1}$ acts on $L^2(F_\mathfrak{q}, \mathbb C)$ by a left $\mathbb C[F_\mathfrak{q}]$-module automorphism and following definitions we see that the action is the same as that of $\mathtt{c}_\mathfrak{q}(u)^{-1}$. This justifies dropping the subscript of the cocycle in the definition of the transfer operator above whenever required. We prefer to keep the subscript in the \cref{sec:GeodesicFlowSmall|b|} so that the cocycle takes values in a finite group and we prefer to drop the subscript in \cref{sec:GeodesicFlowLarge|b|} where it is unnecessary.
\end{remark}

\begin{comment}
The cocycle $\tilde{\mathtt{c}}_q$ has a natural right action on $\tilde{G}_q$ via the isomorphism $\tilde{G}_q \cong \tilde{\Gamma}_q\backslash \tilde{\Gamma}$. This induces natural actions $\tilde{\mathtt{c}}_q: R \to \GL(\mathbb C[\tilde{G}_q])$ on the group algebra $\mathbb C[\tilde{G}_q]$ and also $\tilde{\mathtt{c}}_q: R \to \GL(L^2(\tilde{G}_q))$ on the $\mathbb C[\tilde{G}_q]$-module $L^2(\tilde{G}_q)$, which we also denote by $\tilde{\mathtt{c}}_q$ by slight abuse of notation, defined by its action on the standard basis elements, $\tilde{\mathtt{c}}_q(u)(g) = g\tilde{\mathtt{c}}_q(u)$ for all $u \in R$, for all $g \in \tilde{G}_q$.
\end{comment}

\subsection{Main technical theorem}
We introduce some inner products, norms and seminorms. Let $\mathfrak{q} \subset \mathcal{O}_{\mathbb K}$ be a nontrivial ideal and $H, H_1, H_2 \in C(U, L^2(F_\mathfrak{q}, \mathbb C))$. Let $\langle \cdot, \cdot \rangle$ and $\|\cdot\|_2$ denote the usual $L^2$ inner product and norm on any appropriate space respectively. Similarly, let $\|\cdot\|_\infty$ denote the usual $L^\infty$ norm (i.e., essential supremum norm) on any appropriate space. In particular, we have
\begin{align*}
\langle H_1, H_2 \rangle &= \int_U \langle H_1(u), H_2(u)\rangle \, d\nu_U(u) = \int_U \sum_{g \in F_\mathfrak{q}} H_1(u)(g) \cdot \overline{H_2(u)(g)} \, d\nu_U(u) \\
\|H\|_2 &= \left(\int_U {\|H(u)\|_2}^2 \, d\nu_U(u) \right)^{\frac{1}{2}}
\end{align*}
and $\|H\|_\infty = \sup_{u \in U} \|H(u)\|_2$ since $H \in C(U, L^2(F_\mathfrak{q}, \mathbb C)) \subset L^2(U, L^2(F_\mathfrak{q}, \mathbb C))$. For convenience we will also denote $|H| \in C(U, L^2(F_\mathfrak{q}, \mathbb R))$ and $\|H\| \in C(U, \mathbb R)$ to be the functions defined by $|H|(u) = |H(u)| \in L^2(F_\mathfrak{q}, \mathbb R)$ and $\|H\|(u) = \|H(u)\|_2$ for all $u \in U$, respectively. We use the notations $\Lip_{d_\theta}(H) = \Lip_{d_\theta}(H \circ \zeta^+)$ for the Lipschitz seminorm and $\|H\|_{\Lip(d_\theta)} = \|H \circ \zeta^+\|_{\Lip(d_\theta)} = \|H\|_\infty + \Lip_{d_\theta}(H)$ for the Lipschitz norm. Similarly, we use the notations
\begin{align*}
\Lip_d(H) = \sup\left\{\frac{\|H(u) - H(u')\|_2}{d(u, u')} \in \mathbb R_{>0}: u, u' \in U \text{ such that } u \neq u'\right\}
\end{align*}
for the Lipschitz seminorm and $\|H\|_{\Lip(d)} = \|H\|_\infty + \Lip_d(H)$ for the Lipschitz norm. We generalize this to another useful norm denoted by
\begin{align*}
\|H\|_{1, b} = \|H\|_\infty + \frac{1}{\max(1, |b|)} \Lip_d(H).
\end{align*}
We denote any operator norms simply by $\|\cdot\|_{\mathrm{op}}$.

\begin{remark}
We have $\|H\|_2 \leq \|H\|_\infty$ and $\|H\|_{\Lip(d_\theta)} \leq C_\theta \|H\|_{\Lip(d)}$ for some fixed $C_\theta > 0$.
\end{remark}

For all nontrivial ideals $\mathfrak{q} \subset \mathcal{O}_{\mathbb K}$, we define
\begin{align*}
\mathcal{V}_\mathfrak{q}(U) = C^{\Lip(d)}(U, L^2(F_\mathfrak{q}, \mathbb C)) = \{H \in C(U, L^2(F_\mathfrak{q}, \mathbb C)): \|H\|_{\Lip(d)} < \infty\}
\end{align*}
and when $\mathfrak{q}$ is coprime to $\mathfrak{q}_0$, we can similarly define
\begin{align*}
\mathcal{W}_\mathfrak{q}(U) = C^{\Lip(d)}(U, L_0^2(\tilde{G}_\mathfrak{q}, \mathbb C)) \subset \mathcal{V}_\mathfrak{q}(U)
\end{align*}
where $L_0^2(\tilde{G}_\mathfrak{q}, \mathbb C) = \left\{\phi \in L^2(\tilde{G}_\mathfrak{q}, \mathbb C): \sum_{g \in \tilde{G}_\mathfrak{q}} \phi(g) = 0\right\}$.

For all ideals $\mathfrak{q} \subset \mathcal{O}_{\mathbb K}$, we denote the ideal norm by $N_{\mathbb K}(\mathfrak{q}) = \#(\mathcal{O}_{\mathbb K}/\mathfrak{q})$ and we say $\mathfrak{q}$ is \emph{square free} if it is a nontrivial proper ideal without any square prime ideal factors.

Now we can state \cref{thm:TheoremGeodesicFlow} which is the main technical theorem regarding spectral bounds. The theorem is proved in \cref{sec:GeodesicFlowSmall|b|,sec:GeodesicFlowLarge|b|} as a simple consequence of \cref{thm:TheoremSmall|b|,thm:TheoremLarge|b|}.

\begin{theorem}
\label{thm:TheoremGeodesicFlow}
There exist $\eta > 0, C \geq 1, a_0 > 0$ and a nontrivial proper ideal $\mathfrak{q}_0' \subset \mathcal{O}_{\mathbb K}$ such that for all $\xi = a + ib \in \mathbb C$ with $|a| < a_0$, for all square free ideals $\mathfrak{q} \subset \mathcal{O}_{\mathbb K}$ coprime to $\mathfrak{q}_0\mathfrak{q}_0'$, for all integers $k \in \mathbb Z_{\geq 0}$, for all $H \in \mathcal{W}_\mathfrak{q}(U)$, we have
\begin{align*}
\left\|\mathcal{M}_{\xi, \mathfrak{q}}^k(H)\right\|_2 \leq CN_{\mathbb K}(\mathfrak{q})^C e^{-\eta k} \|H\|_{1, b}.
\end{align*}
\end{theorem}

\section{Spectral bounds for small $|b|$ using expander machinery}
\label{sec:GeodesicFlowSmall|b|}
In this section, we prove \cref{thm:TheoremSmall|b|}. We fix $b_0 > 0$ to be the one from \cref{thm:TheoremLarge|b|} where if we examine the proof of \cref{thm:Dolgopyat} it is clear that we can assume $b_0 = 1$.

\begin{theorem}
\label{thm:TheoremSmall|b|}
There exist $\eta > 0, C \geq 1, a_0 > 0$ and a nontrivial proper ideal $\mathfrak{q}_0' \subset \mathcal{O}_{\mathbb K}$ such that for all $\xi = a + ib \in \mathbb C$ with $|a| < a_0$ and $|b| \leq b_0$, for all square free ideals $\mathfrak{q} \subset \mathcal{O}_{\mathbb K}$ coprime to $\mathfrak{q}_0\mathfrak{q}_0'$, for all integers $k \in \mathbb Z_{\geq 0}$, for all $H \in \mathcal{W}_\mathfrak{q}(U)$, we have
\begin{align*}
\left\|\mathcal{M}_{\xi, \mathfrak{q}}^k(H)\right\|_2 \leq CN_{\mathbb K}(\mathfrak{q})^C e^{-\eta k} \|H\|_{\Lip(d)}.
\end{align*}
\end{theorem}
To prove this, we first make some reductions as in \cite{OW16}.

\subsection{Reductions}
Let $\mathfrak{q}, \mathfrak{q}' \subset \mathcal{O}_{\mathbb K}$ be ideals such that $\mathfrak{q} \subset \mathfrak{q}'$. We have the canonical quotient map $\pi_{\mathfrak{q}, \mathfrak{q}'}: \tilde{\mathbf{G}}(\mathcal{O}_{\mathbb K}/\mathfrak{q}) \to \tilde{\mathbf{G}}(\mathcal{O}_{\mathbb K}/\mathfrak{q}')$ which induces the pull back $\pi_{\mathfrak{q}, \mathfrak{q}'}^*: L^2(\tilde{\mathbf{G}}(\mathcal{O}_{\mathbb K}/\mathfrak{q}'), \mathbb C) \to L^2(\tilde{\mathbf{G}}(\mathcal{O}_{\mathbb K}/\mathfrak{q}), \mathbb C)$. Define $\hat{E}_{\mathfrak{q}'}^\mathfrak{q} = \pi_{\mathfrak{q}, \mathfrak{q}'}^*(L^2(\tilde{\mathbf{G}}(\mathcal{O}_{\mathbb K}/\mathfrak{q}'), \mathbb C)) \subset L^2(\tilde{\mathbf{G}}(\mathcal{O}_{\mathbb K}/\mathfrak{q}), \mathbb C)$. Define $\dot{E}_{\mathfrak{q}'}^\mathfrak{q} = \hat{E}_{\mathfrak{q}'}^\mathfrak{q} \cap \left(\bigoplus_{\mathfrak{q}' \subsetneq \mathfrak{q}''} \hat{E}_{\mathfrak{q}''}^\mathfrak{q}\right)^\perp$. Then, for all ideals $\mathfrak{q} \subset \mathcal{O}_{\mathbb K}$, we have the orthogonal decomposition
\begin{align*}
L_0^2(\tilde{\mathbf{G}}(\mathcal{O}_{\mathbb K}/\mathfrak{q}), \mathbb C) = \bigoplus_{\mathfrak{q} \subset \mathfrak{q}' \subsetneq \mathcal{O}_{\mathbb K}} \dot{E}_{\mathfrak{q}'}^\mathfrak{q}.
\end{align*}
\begin{remark}
We exclude $\mathfrak{q}' = \mathcal{O}_{\mathbb K}$ above because the subspace $\dot{E}_{\mathcal{O}_{\mathbb K}}^\mathfrak{q} \subset L^2(\tilde{\mathbf{G}}(\mathcal{O}_{\mathbb K}/\mathfrak{q}), \mathbb C)$ consists of constant functions.
\end{remark}
Similarly, using the same procedure with the induced quotient map $\pi_{\mathfrak{q}, \mathfrak{q}'}: \tilde{G}_\mathfrak{q} \to \tilde{G}_{\mathfrak{q}'}$ for ideals $\mathfrak{q}, \mathfrak{q}' \subset \mathcal{O}_{\mathbb K}$ such that $\mathfrak{q} \subset \mathfrak{q}'$, we can obtain a similar orthogonal decomposition
\begin{align*}
L_0^2(\tilde{G}_\mathfrak{q}, \mathbb C) = \bigoplus_{\mathfrak{q} \subset \mathfrak{q}' \subsetneq \mathcal{O}_{\mathbb K}} E_{\mathfrak{q}'}^\mathfrak{q}
\end{align*}
for all ideals $\mathfrak{q} \subset \mathcal{O}_{\mathbb K}$. Again let $\mathfrak{q}, \mathfrak{q}' \subset \mathcal{O}_{\mathbb K}$ be ideals such that $\mathfrak{q} \subset \mathfrak{q}'$. For all $\phi \in L^2(\tilde{G}_\mathfrak{q}, \mathbb C)$, define $\dot{\phi} \in L^2(\tilde{\mathbf{G}}(\mathcal{O}_{\mathbb K}/\mathfrak{q}), \mathbb C)$ by $\dot{\phi}(g) = \phi(\{e, -e\}g)$ for all $g \in \tilde{\mathbf{G}}(\mathcal{O}_{\mathbb K}/\mathfrak{q})$. Then we can describe $E_{\mathfrak{q}'}^\mathfrak{q}$ by
\begin{align*}
E_{\mathfrak{q}'}^\mathfrak{q} = \{\phi \in L^2(\tilde{G}_\mathfrak{q}, \mathbb C): \dot{\phi} \in \dot{E}_{\mathfrak{q}'}^\mathfrak{q}\}.
\end{align*}
In the relevant case when $\mathfrak{q}$ is coprime to $\mathfrak{q}_0$, the subspace $E_{\mathfrak{q}'}^\mathfrak{q} \subset L^2(\tilde{G}_\mathfrak{q}, \mathbb C)$ can also be thought of as consisting of ``new functions'' invariant under $\Gamma_{\mathfrak{q}'}$ but not invariant under $\Gamma_{\mathfrak{q}''}$ for any $\mathfrak{q}' \subsetneq \mathfrak{q}''$, using the isomorphism $\overline{\pi_\mathfrak{q}|_{\Gamma}}: F_\mathfrak{q} \to \tilde{G}_\mathfrak{q}$. Continuing the case when $\mathfrak{q}$ is coprime to $\mathfrak{q}_0$, we define $\mathcal{W}_{\mathfrak{q}'}^\mathfrak{q}(U) = \{H \in \mathcal{W}_\mathfrak{q}(U): H(u) \in E_{\mathfrak{q}'}^\mathfrak{q} \text{ for all } u \in U\}$, so that we have the orthogonal decomposition
\begin{align*}
\mathcal{W}_\mathfrak{q}(U) = \bigoplus_{\mathfrak{q} \subset \mathfrak{q}' \subsetneq \mathcal{O}_{\mathbb K}} \mathcal{W}_{\mathfrak{q}'}^\mathfrak{q}(U).
\end{align*}
Let $\mathfrak{q}, \mathfrak{q}' \subset \mathcal{O}_{\mathbb K}$ be ideals coprime to $\mathfrak{q}_0$ such that $\mathfrak{q} \subset \mathfrak{q}'$. We have the canonical projection operator $e_{\mathfrak{q}, \mathfrak{q}'}: \mathcal{W}_\mathfrak{q}(U) \to \mathcal{W}_{\mathfrak{q}'}^\mathfrak{q}(U)$ and we can define the canonical projection map $\proj_{\mathfrak{q}, \mathfrak{q}'} = (\pi_{\mathfrak{q}, \mathfrak{q}'}^*)^{-1}\big|_{E_{\mathfrak{q}'}^\mathfrak{q}}: E_{\mathfrak{q}'}^\mathfrak{q} \to E_{\mathfrak{q}'}^{\mathfrak{q}'}$ since $\pi_{\mathfrak{q}, \mathfrak{q}'}^*$ is injective and $E_{\mathfrak{q}'}^\mathfrak{q} \subset \pi_{\mathfrak{q}, \mathfrak{q}'}^*(L^2(\tilde{G}_{\mathfrak{q}'}, \mathbb C))$. This is equivalent to defining $(\proj_{\mathfrak{q}, \mathfrak{q}'}F)(g) = F(\tilde{g})$ for all $g \in \tilde{G}_{\mathfrak{q}'}$, for all $F \in E_{\mathfrak{q}'}^\mathfrak{q}$, where $\tilde{g}$ is any lift of $g$ with respect to quotient map $\pi_{\mathfrak{q}, \mathfrak{q}'}: \tilde{G}_\mathfrak{q} \to \tilde{G}_{\mathfrak{q}'}$. We use the same notation $\proj_{\mathfrak{q}, \mathfrak{q}'}: \mathcal{W}_{\mathfrak{q}'}^\mathfrak{q}(U) \to \mathcal{W}_{\mathfrak{q}'}^{\mathfrak{q}'}(U)$ for the induced projection map defined pointwise. The congruence transfer operator $\mathcal{M}_{\xi, \mathfrak{q}}$ preserves $\mathcal{W}_{\mathfrak{q}'}^\mathfrak{q}(U)$ for all $\xi \in \mathbb C$. The projection operator commutes with the congruence transfer operator, i.e., $e_{\mathfrak{q}, \mathfrak{q}'} \circ \mathcal{M}_{\xi, \mathfrak{q}} = \mathcal{M}_{\xi, \mathfrak{q}} \circ e_{\mathfrak{q}, \mathfrak{q}'}$ for all $\xi \in \mathbb C$, and the projection map is equivariant with respect to the congruence transfer operator, i.e., $\proj_{\mathfrak{q}, \mathfrak{q}'} \circ \mathcal{M}_{\xi, \mathfrak{q}} = \mathcal{M}_{\xi, \mathfrak{q}'} \circ \proj_{\mathfrak{q}, \mathfrak{q}'}$ for all $\xi \in \mathbb C$. By surjectivity of $\pi_{\mathfrak{q}, \mathfrak{q}'}$, we can denote $\spadesuit_{\mathfrak{q}, \mathfrak{q}'} = \#\ker(\pi_{\mathfrak{q}, \mathfrak{q}'}) = \frac{\#\tilde{G}_\mathfrak{q}}{\#\tilde{G}_{\mathfrak{q}'}}$ and by direct calculation it can be checked that $\|H\|_2 = \sqrt{\spadesuit_{\mathfrak{q}, \mathfrak{q}'}} \|\proj_{\mathfrak{q}, \mathfrak{q}'}(H)\|_2$ and $\|H\|_{\Lip(d)} = \sqrt{\spadesuit_{\mathfrak{q}, \mathfrak{q}'}} \|\proj_{\mathfrak{q}, \mathfrak{q}'}(H)\|_{\Lip(d)}$.

\begin{theorem}
\label{thm:Reduced'TheoremSmall|b|}
There exist $\eta > 0, C \geq 1, \eta > 0, a_0 > 0$ and $q_1 \in \mathbb Z_{>0}$ such that for all $\xi = a + ib \in \mathbb C$ with $|a| < a_0$ and $|b| \leq b_0$, for all square free ideals $\mathfrak{q} \subset \mathcal{O}_{\mathbb K}$ coprime to $\mathfrak{q}_0$ with $N_{\mathbb K}(\mathfrak{q}) > q_1$, for all integers $k \in \mathbb Z_{\geq 0}$, for all $H \in \mathcal{W}_\mathfrak{q}^\mathfrak{q}(U)$, we have
\begin{align*}
\left\|\mathcal{M}_{\xi, \mathfrak{q}}^k(H)\right\|_2 \leq CN_{\mathbb K}(\mathfrak{q})^C e^{-\eta k} \|H\|_{\Lip(d)}.
\end{align*}
\end{theorem}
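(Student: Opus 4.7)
The plan is to follow the expander graph approach of Bourgain-Kontorovich-Magee in the appendix of \cite{MOW17}, adapted to the higher-dimensional arithmetic setting of this paper. The central mechanism is to exploit that on the ``new'' subspace $\mathcal{W}_\mathfrak{q}^\mathfrak{q}$, after many iterations of $\mathcal{M}_{\xi, \mathfrak{q}}$, the congruence cocycle produces enough elements of $\tilde{G}_\mathfrak{q}$ (under the isomorphism $F_\mathfrak{q} \cong \tilde{G}_\mathfrak{q}$) to invoke a uniform $L^2$ spectral gap on associated Cayley graphs, replacing Dolgopyat's oscillation-based cancellations which are unavailable in the small $|b|$ regime.

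First I would bound $\|\mathcal{M}_{\xi, \mathfrak{q}}^{2k}(H)\|_2^2$ by expanding the definition of the iterated operator, yielding a double sum over pairs of preimages $(u_1', u_2') \in \sigma^{-2k}(u)^2$ of real-weighted inner products $\langle \mathtt{c}_\mathfrak{q}^{2k}(u_1')^{-1} H(u_1'),\ \mathtt{c}_\mathfrak{q}^{2k}(u_2')^{-1} H(u_2') \rangle$ with weights $e^{f_{2k}^{(a)}(u_1') + f_{2k}^{(a)}(u_2')}$ and oscillation $e^{ib(\tau_{2k}(u_1') - \tau_{2k}(u_2'))}$. Since $|b| \leq b_0$ is bounded, I can discard the oscillating factor at a bounded cost, and since $H$ is Lipschitz, I can use \cref{cor:CocyclesLocallyConstantCorollary} to group the preimages by their first $p$ and last $p$ symbols, with a controlled error absorbed into $\|H\|_{\Lip(d)}$. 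After this grouping, the problem reduces to bounding, uniformly over admissible sequences ending at $y$ and starting at $z$, the operator norm on $E_\mathfrak{q}^\mathfrak{q}$ of a convolution-type operator of the form $\phi \mapsto \frac{1}{|S_p(y,z)|} \sum_{g \in S_p(y,z)} g \cdot \phi$, where $S_p(y,z) \subset F_\mathfrak{q}$ is the multiset of cocycle products $\mathtt{c}^p(u_1) \mathtt{c}^p(u_2)^{-1}$ with $u_1, u_2$ ranging over the appropriate cylinders. These are precisely the generating sets of the subgroups $\tilde{H}^p(y,z) < \tilde{\Gamma}$ referenced in \cref{subsec:CocyclesAndCongruenceTransferOperators}.

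The main obstacle is precisely the input that the introduction advertises as the paper's new ingredient: establishing that $\tilde{H}^p(y,z)$ is Zariski dense in $\tilde{\mathbf G}$ for some fixed $p$. Granting this, the expander machinery of Golsefidy-Varj\'u \cite[Corollary 6]{GV12} produces, uniformly in square free $\mathfrak{q}$ coprime to $\mathfrak{q}_0$, a spectral gap $\eta_0 > 0$ for the corresponding Cayley graphs of $\tilde{G}_\mathfrak{q}$ acting on $E_\mathfrak{q}^\mathfrak{q}$. Combining this spectral gap with the perturbation estimate $|f^{(a)} - f^{(0)}| \leq A_f |a|$ and the classical fact that $\mathcal{L}_0$ has spectral radius $1$ with leading eigenspace spanned by $\chi_U$ (orthogonal to $E_\mathfrak{q}^\mathfrak{q}$), I obtain a strict contraction $\|\mathcal{M}_{\xi, \mathfrak{q}}^{2k_0}(H)\|_2 \leq (1 - \eta_0/2)\|H\|_{\Lip(d)}$ for $a_0$ sufficiently small and for a fixed large $k_0$. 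Iterating this contraction and controlling the increase in Lipschitz norm between iterates via standard Lasota-Yorke-type inequalities yields exponential decay $e^{-\eta k}$. The polynomial loss $N_\mathbb{K}(\mathfrak{q})^C$ and the lower bound $N_\mathbb{K}(\mathfrak{q}) > q_1$ absorb the initial $k < k_0$ iterations together with the passage from the Lipschitz scale to the finer cylinder scale on which the cocycle is constant, since $\|\mathcal{M}_{\xi, \mathfrak{q}}\|_{\mathrm{op}}$ may grow polynomially in $\#\tilde{G}_\mathfrak{q} \asymp N_\mathbb{K}(\mathfrak{q})^{\dim \tilde{\mathbf G}}$ before the spectral gap mechanism engages.
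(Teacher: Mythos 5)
Your overall philosophy is right (reduce to convolution operators built from the cocycle, feed the Zariski density of $\tilde H^p(y,z)$ into Golsefidy--Varj\'u, pay an $N_{\mathbb K}(\mathfrak q)^C$ for leftover iterations), but the central step has a genuine gap: the claimed uniform contraction $\|\mathcal M_{\xi,\mathfrak q}^{2k_0}(H)\|_2 \le (1-\eta_0/2)\|H\|_{\Lip(d)}$ after a \emph{fixed} number of iterations, obtained from the Cayley-graph spectral gap alone. For $\xi=a+ib$ with $b\neq 0$ the summands carry phases $e^{ib\tau_{2k}(u')}$ which vary by amounts $\gg 1$ across different preimages (even for $|b|\le b_0=1$), so "discarding the oscillating factor at a bounded cost" is not available: either you replace the phases by their absolute values term-by-term, which destroys exactly the group-convolution cancellation the expander gap is supposed to produce (after taking absolute values the relevant vector is $|H|$, no longer mean-zero, and no contraction on $E_\mathfrak q^\mathfrak q$ can be extracted), or you must keep the complex weights, in which case the per-block spectral-gap argument (the analogue of \cref{lem:EtaOperatorBound}, which singles out one good pair and bounds all other terms by their total mass) breaks down because it relies on the weights being positive and nearly flat (\cref{lem:NearlyFlat}). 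This is precisely why the paper does not get a constant gain per bounded block: the only phase-insensitive input is the quasirandomness bound \cref{lem:ConvolutionBoundOnE_q^q} (dimension lower bounds for nontrivial irreducible representations), which loses a factor $(\#\tilde G_\mathfrak q)^{1/2}$, and recouping that loss forces the expander machinery to run for $r\gtrsim \log N_{\mathbb K}(\mathfrak q)$ steps so as to flatten the measure to $\ell^2$-size $\|\cdot\|_1(\#\tilde G_\mathfrak q)^{-1/2}$ (\cref{lem:ExpanderMachineryBound}, \cref{lem:L2FlatteningLemma}). The net outcome is a gain $N_{\mathbb K}(\mathfrak q)^{-\kappa}$ per block of $s_\mathfrak q \asymp \log N_{\mathbb K}(\mathfrak q)$ iterations (\cref{thm:ReducedTheoremSmall|b|}), not a constant gain per fixed block; your proposal omits the quasirandomness ingredient entirely, and with it the actual source of decay on $E_\mathfrak q^\mathfrak q$.

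Two secondary points. First, the iteration scheme is left vague: your inequality compares $\|\cdot\|_2$ of the output with $\|\cdot\|_{\Lip(d)}$ of the input, and a bare "Lasota--Yorke" appeal does not close the loop; the paper's induction carries simultaneously an $L^\infty$ estimate and a Lipschitz estimate, each gaining $\tfrac12 N_{\mathbb K}(\mathfrak q)^{-\kappa}$ (\cref{lem:ReducedTheoremEstimate1}, \cref{lem:ReducedTheoremEstimate2}), where the Lipschitz step needs both the small-$|b|$ phase-variation bound \cref{lem:Small|b|Bound} and another application of \cref{lem:L2FlatteningLemma}; the stated theorem is then deduced by writing $k=js_\mathfrak q+m$ and paying the trivial operator-norm bound $N_{\mathbb K}(\mathfrak q)^{C_{\mathcal M}C_s}$ for the remainder. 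Second, the roles you assign to $N_{\mathbb K}(\mathfrak q)^C$ and $q_1$ are off: in the paper $q_1$ is needed so that the $N^{-1/2}$ gain beats the fixed constants in \cref{lem:ReducedTheoremEstimate1,lem:ReducedTheoremEstimate2}, and the $N^C$ comes from the $\le s_\mathfrak q \asymp \log N_{\mathbb K}(\mathfrak q)$ leftover iterations, not from a passage to a finer cylinder scale.
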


\begin{proof}[Proof that \cref{thm:Reduced'TheoremSmall|b|} implies \cref{thm:TheoremSmall|b|}]
Fix $\eta > 0, C_1 \geq 1, a_0 > 0$ and $q_1 \in \mathbb Z_{>0}$ to be the $\eta, C, a_0$ and $q_1$ from \cref{thm:Reduced'TheoremSmall|b|} respectively. Fix $C = C_1 + \frac{1}{2}$. Set $\mathfrak{q}_0' \subset \mathcal{O}_{\mathbb K}$ to be the product of all nontrivial prime ideals $\mathfrak{p} \subset \mathcal{O}_{\mathbb K}$ with $N_{\mathbb K}(\mathfrak{p}) \leq q_1$ so that if $\mathfrak{q} \subset \mathcal{O}_{\mathbb K}$ is an ideal coprime to $\mathfrak{q}_0\mathfrak{q}_0'$ and $\mathfrak{q} \subset \mathfrak{q}' \subsetneq \mathcal{O}_{\mathbb K}$ is a proper ideal, then $N_{\mathbb K}(\mathfrak{q}') > q_1$. Let $\xi = a + ib \in \mathbb C$ with $|a| < a_0$ and $|b| \leq b_0$. Let $\mathfrak{q} \subset \mathcal{O}_{\mathbb K}$ be an ideal coprime to $\mathfrak{q}_0\mathfrak{q}_0'$, $k \in \mathbb Z_{\geq 0}$ and $H \in \mathcal{W}_\mathfrak{q}(U)$. We have
\begin{align*}
{\left\|\mathcal{M}_{\xi, \mathfrak{q}}^k(H)\right\|_2}^2 &= \sum_{\mathfrak{q} \subset \mathfrak{q}' \subsetneq \mathcal{O}_{\mathbb K}} {\left\|e_{\mathfrak{q}, \mathfrak{q}'}(\mathcal{M}_{\xi, \mathfrak{q}}^k (H))\right\|_2}^2 \\
&= \sum_{\mathfrak{q} \subset \mathfrak{q}' \subsetneq \mathcal{O}_{\mathbb K}} \spadesuit_{\mathfrak{q}, \mathfrak{q}'} {\left\|\proj_{\mathfrak{q}, \mathfrak{q}'}(e_{\mathfrak{q}, \mathfrak{q}'}(\mathcal{M}_{\xi, \mathfrak{q}}^k(H)))\right\|_2}^2 \\
&= \sum_{\mathfrak{q} \subset \mathfrak{q}' \subsetneq \mathcal{O}_{\mathbb K}} \spadesuit_{\mathfrak{q}, \mathfrak{q}'} {\left\|\mathcal{M}_{\xi, \mathfrak{q}'}^k(\proj_{\mathfrak{q}, \mathfrak{q}'}(e_{\mathfrak{q}, \mathfrak{q}'}(H)))\right\|_2}^2.
\end{align*}
Since $\proj_{\mathfrak{q}, \mathfrak{q}'}(e_{\mathfrak{q}, \mathfrak{q}'}(H)) \in \mathcal{W}_{\mathfrak{q}'}^{\mathfrak{q}'}(U)$, we can now apply \cref{thm:Reduced'TheoremSmall|b|} to get
\begin{align*}
{\left\|\mathcal{M}_{\xi, \mathfrak{q}}^k(H)\right\|_2}^2 &\leq \sum_{\mathfrak{q} \subset \mathfrak{q}' \subsetneq \mathcal{O}_{\mathbb K}} {C_1}^2N_{\mathbb K}(\mathfrak{q}')^{2C_1} e^{-2\eta k} \spadesuit_{\mathfrak{q}, \mathfrak{q}'} {\left\|\proj_{\mathfrak{q}, \mathfrak{q}'}(e_{\mathfrak{q}, \mathfrak{q}'}(H))\right\|_{\Lip(d)}}^2 \\
&= \sum_{\mathfrak{q} \subset \mathfrak{q}' \subsetneq \mathcal{O}_{\mathbb K}} {C_1}^2N_{\mathbb K}(\mathfrak{q}')^{2C_1} e^{-2\eta k} {\left\|e_{\mathfrak{q}, \mathfrak{q}'}(H)\right\|_{\Lip(d)}}^2 \\
&\leq {C_1}^2N_{\mathbb K}(\mathfrak{q})^{2C_1} e^{-2\eta k} {\left\|H\right\|_{\Lip(d)}}^2 \sum_{\mathfrak{q} \subset \mathfrak{q}' \subsetneq \mathcal{O}_{\mathbb K}} 1 \\
&\leq C^2N_{\mathbb K}(\mathfrak{q})^{2C}e^{-2\eta k} {\left\|H\right\|_{\Lip(d)}}^2.
\end{align*}
\end{proof}

\begin{theorem}
\label{thm:ReducedTheoremSmall|b|}
There exist $C_s > 0, a_0 > 0$ and $\kappa \in (0, 1)$ and $q_1 \in \mathbb Z_{>0}$ such that for all $\xi = a + ib \in \mathbb C$ with $|a| < a_0$ and $|b| \leq b_0$, for all square free ideals $\mathfrak{q} \subset \mathcal{O}_{\mathbb K}$ coprime to $\mathfrak{q}_0$ with $N_{\mathbb K}(\mathfrak{q}) > q_1$, there exists an integer $s \in (0, C_s\log(N_{\mathbb K}(\mathfrak{q})))$ such that for all integers $j \in \mathbb Z_{\geq 0}$, for all $H \in \mathcal{W}_\mathfrak{q}^\mathfrak{q}(U)$, we have
\begin{align*}
\big\|\mathcal{M}_{\xi, \mathfrak{q}}^{js}(H)\big\|_2 \leq N_{\mathbb K}(\mathfrak{q})^{-j\kappa} \|H\|_{\Lip(d_\theta)}.
\end{align*}
\end{theorem}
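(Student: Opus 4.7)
The plan is to prove the required $L^2$-contraction on the ``new'' subspace $\mathcal{W}_\mathfrak{q}^\mathfrak{q}$ by choosing $s$ of order $\log N_{\mathbb K}(\mathfrak{q})$, reducing the $s$-fold iterate $\mathcal{M}_{\xi, \mathfrak{q}}^s$ to a convolution-type operator on $L_0^2(\tilde G_\mathfrak{q})$, and then invoking the Golsefidy-Varj\'{u} expander machinery. Since $|b| \leq b_0 = 1$ and all of $\tau, f^{(a)}$ are essentially Lipschitz (controlled by $T_0$), for any $u \in U$ and any cylinder $\mathtt{C}$ of length $s$ contained in $\sigma^{-s}(u)$, the weights $e^{(f_s^{(a)} + ib \tau_s)(u')}$ for $u' \in \mathtt{C}$ differ by a multiplicative factor $1 + O(\theta^s)$, and the cocycle $\mathtt{c}_\mathfrak{q}^s(u')$ is literally constant on $\mathtt{C}$ by \cref{cor:CocyclesLocallyConstantCorollary}. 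Thus $\mathcal{M}_{\xi, \mathfrak{q}}^s H(u)$ is, up to an error bounded by $C\theta^s \|H\|_{\Lip(d_\theta)}$, a weighted sum over level-$s$ admissible words of $\mathtt{c}_\mathfrak{q}^s(\mathtt{C})^{-1}$ applied to $H$ evaluated at cylinder centers. Choosing $s = \lfloor C_s \log N_{\mathbb K}(\mathfrak{q}) \rfloor$ with $C_s$ sufficiently large makes this ``smoothing error'' polynomially small in $N_{\mathbb K}(\mathfrak{q})$.

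The second step is to recognize the resulting main term as the action, pointwise in $u$, of a probability measure $\mu_{s, \mathfrak{q}, u}$ on $F_\mathfrak{q} \cong \tilde G_\mathfrak{q}$ supported on the image under $\pi_\mathfrak{q}$ of $\{\mathtt{c}^s(u') : u' \in \sigma^{-s}(u)\}$, with weights comparable, via the Gibbs property \cref{eqn:PropertyOfGibbsMeasures}, to the $\nu_U$-masses of the corresponding cylinders. For $H \in \mathcal{W}_\mathfrak{q}^\mathfrak{q}$, the value $H(u)$ lies in $E_\mathfrak{q}^\mathfrak{q} \subset L_0^2(\tilde G_\mathfrak{q})$, so one is free to apply bounds on the convolution norm restricted to $L_0^2$. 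Here the Golsefidy-Varj\'{u} theorem enters: for square free $\mathfrak{q}$ coprime to $\mathfrak{q}_0$ with $N_{\mathbb K}(\mathfrak{q}) > q_1$, and for probability measures on $\tilde G_\mathfrak{q}$ whose supports arise from a Zariski dense subgroup $\tilde H^p(y, z) < \tilde \Gamma$, the convolution operator on $L_0^2(\tilde G_\mathfrak{q})$ has norm $\leq N_{\mathbb K}(\mathfrak{q})^{-\kappa'}$ uniformly in $\mathfrak{q}$. Appealing to the main new ingredient of the paper (Zariski density of the $\tilde H^p(y, z)$) and a B\'{e}zout/combinatorial argument of Bourgain-Kontorovich-Magee type, one verifies that the image sets $\{\pi_\mathfrak{q}(\mathtt{c}^s(u'))\}$ equidistribute enough after $s$ steps for this spectral gap to apply to $\mu_{s, \mathfrak{q}, u}$.

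Combining the two steps gives $\|\mathcal{M}_{\xi, \mathfrak{q}}^s H\|_2 \leq N_{\mathbb K}(\mathfrak{q})^{-\kappa} \|H\|_{\Lip(d_\theta)}$ for a slightly smaller $\kappa \in (0, \kappa')$, after absorbing the $\theta^s$-error. Iterating $j$ times is not immediate because the right-hand side requires the Lipschitz norm; however, $\mathcal{M}_{\xi, \mathfrak{q}}^s$ preserves $\mathcal{W}_\mathfrak{q}^\mathfrak{q}$ (projection operators commute with the transfer operator), and a standard Lasota-Yorke inequality for $\mathcal{M}_{\xi, \mathfrak{q}}$ gives $\|\mathcal{M}_{\xi, \mathfrak{q}}^s H\|_{\Lip(d_\theta)} \lesssim \|H\|_{\Lip(d_\theta)}$, at worst with a polynomial-in-$N_{\mathbb K}(\mathfrak{q})$ factor that can be absorbed by reducing $\kappa$. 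Iterating $j$ times then yields $\|\mathcal{M}_{\xi, \mathfrak{q}}^{js} H\|_2 \leq N_{\mathbb K}(\mathfrak{q})^{-j\kappa} \|H\|_{\Lip(d_\theta)}$.

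The main obstacle, I expect, is cleanly interfacing the weighted, $\xi$-twisted transfer operator on the symbolic side with the unweighted Cayley-graph formulation for which Golsefidy-Varj\'{u} is stated. Concretely, one must show that the push-forward of the Gibbs-weighted sum over $\sigma^{-s}(u)$ to $\tilde G_\mathfrak{q}$ via $\pi_\mathfrak{q} \circ \mathtt{c}^s$ is comparable, uniformly in $u$, to a measure to which the expander gap applies, and that this comparability depends only on combinatorial data of the Markov section, not on $\mathfrak{q}$. Verifying the Zariski-density hypothesis for $\tilde H^p(y, z)$ inside $\tilde{\mathbf G}$ --- so that a single ideal $\mathfrak{q}_0$ suffices uniformly for all pairs $(y, z) \in \mathcal{A}^2$ --- is the crux announced as the new contribution of the paper, and it is this step that makes the reduction to the expander bound legitimate at all.
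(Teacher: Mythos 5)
Your outline correctly identifies the overall strategy (choose $s \asymp \log N_{\mathbb K}(\mathfrak{q})$, approximate $\mathcal{M}_{\xi,\mathfrak{q}}^s$ by convolutions on $\tilde G_\mathfrak{q}$, invoke Zariski density of $\tilde H^p(y,z)$ so that Golsefidy--Varj\'u applies), but the central step is asserted rather than proved, and as stated it is not what the expander machinery gives. \cite{GV12} provides a \emph{constant} spectral gap $\epsilon$ for the Cayley graphs $\Cay(\tilde G_\mathfrak{q}, \tilde S^p_\mathfrak{q})$ with respect to the fixed generating set $\tilde S^p(y,z)$; it does not directly yield an $N_{\mathbb K}(\mathfrak{q})^{-\kappa'}$ bound for convolution by the Gibbs-weighted, $e^{ib\tau_s}$-twisted measure $\mu_{s,\mathfrak{q},u}$. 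Two conversions are required, and your proposal supplies neither. First, to turn a constant gap into a power of $N_{\mathbb K}(\mathfrak{q})$ one must iterate it $\asymp \log N_{\mathbb K}(\mathfrak{q})$ times, which forces a decomposition of the measure into $\asymp r/l$ blocks; since the weights $e^{f_r^{(a)}}$ do not factor exactly over blocks, the paper must pass to the ``nearly flat'' block measures and control the error (\cref{lem:Estimate_f_ToRemoveDependence,lem:NearlyFlat,lem:EstimateNu,lem:EtaOperatorBound,lem:ExpanderMachineryBound}), and the group elements produced are differences of two cocycle products, which is exactly why $S^p(y,z)$ is defined as it is. Second, and more seriously, the expander argument only applies to \emph{nonnegative} measures: replacing $\mu$ by its modulus destroys the phases but also destroys membership of $|\phi|$ in $L_0^2$, so the gap cannot be applied to $\mu$ acting on $E_\mathfrak{q}^\mathfrak{q}$ at all. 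The paper resolves this with a second, independent ingredient you omit entirely: the quasirandomness bound \cref{lem:ConvolutionBoundOnE_q^q}, coming from lower bounds on degrees of nontrivial irreducible representations of $\tilde{\mathbf G}(\mathcal{O}_{\mathbb K}/\mathfrak{q})$, which bounds $\|\tilde\mu|_{E_\mathfrak{q}^\mathfrak{q}}\|_{\mathrm{op}}$ by $N_{\mathbb K}(\mathfrak{q})^{-1/2}(\#\tilde G_\mathfrak{q})^{1/2}\|\mu\|_2$; the expander iteration is then used only to flatten the positive envelope, $\|\nu\|_2 \lesssim \|\nu\|_1(\#\tilde G_\mathfrak{q})^{-1/2}$ (\cref{lem:L2FlatteningLemma}). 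Without this representation-theoretic input your step 2 has no proof.

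The iteration over $j$ is also flawed as written. If each application of $\mathcal{M}_{\xi,\mathfrak{q}}^s$ loses a polynomial factor $N_{\mathbb K}(\mathfrak{q})^{C'}$ in the Lipschitz norm, that factor compounds to $N_{\mathbb K}(\mathfrak{q})^{C'j}$ and cannot be ``absorbed by reducing $\kappa$''; and a standard Lasota--Yorke bound $\Lip(\mathcal{M}^s H) \le \theta^{s}\Lip(H) + C\|H\|_\infty$ with $C$ of order one is by itself not enough to conclude $\|\mathcal{M}^{js}H\|_2 \le N_{\mathbb K}(\mathfrak{q})^{-j\kappa}\|H\|_{\Lip(d_\theta)}$ without an additional interpolation argument. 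The paper instead proves genuine decay of \emph{both} components of the norm: $\|\mathcal{M}_{\xi,\mathfrak{q}}^{s_\mathfrak{q}}(H)\|_\infty \le \tfrac12 N_{\mathbb K}(\mathfrak{q})^{-\kappa_1}(\|H\|_\infty + \Lip_{d_\theta}(H))$ and $\Lip_{d_\theta}(\mathcal{M}_{\xi,\mathfrak{q}}^{s_\mathfrak{q}}(H)) \le \tfrac12 N_{\mathbb K}(\mathfrak{q})^{-\kappa_2}(\|H\|_\infty + \Lip_{d_\theta}(H))$ (\cref{lem:ReducedTheoremEstimate1,lem:ReducedTheoremEstimate2}); the Lipschitz estimate is not a soft Lasota--Yorke bound but requires splitting the difference into three terms and applying the flattening lemma again, to $|H|$ through $\hat\mu$, together with the small-$|b|$ phase estimate \cref{lem:Small|b|Bound} (this is also precisely where the restriction $|b|\le b_0$ enters, which your sketch does not use). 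Once both components contract by $N_{\mathbb K}(\mathfrak{q})^{-\kappa}$, the induction on $j$ and the trivial bound $\|\cdot\|_2 \le \|\cdot\|_\infty$ finish the proof. So your proposal is right about the skeleton and about the role of the Zariski density of $\tilde H^p(y,z)$, but it is missing the quasirandomness ingredient, the nearly-flat block decomposition, and a correct mechanism for iterating, each of which is essential.
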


\begin{proof}[Proof that \cref{thm:ReducedTheoremSmall|b|} implies \cref{thm:Reduced'TheoremSmall|b|}]
Fix $C_s, a_0 > 0$ and $\kappa \in (0, 1)$ and $q_1 \in \mathbb Z_{>0}$ to be the ones from \cref{thm:ReducedTheoremSmall|b|}. Fix
\begin{align*}
C_\mathcal{M} &= \max\left(0, \sup_{|\Re(\xi)| \leq a_0, \{0\} \subsetneq \mathfrak{q} \subset \mathcal{O}_{\mathbb K}} \log\left\|\mathcal{M}_{\xi, \mathfrak{q}}\right\|_{\mathrm{op}}\right) \\
&\leq \max\left(0, \sup_{|\Re(\xi)| \leq a_0} \log\left\|\mathcal{L}_\xi\right\|_{\mathrm{op}}\right) \leq \max(0, \log(Ne^{T_0}))
\end{align*}
where we use operator norms for operators on $L^2(U, L^2(F_\mathfrak{q}, \mathbb C))$ and $L^2(U, \mathbb R)$ respectively. Fix $C = \max(C_\mathcal{M}C_s + 1, C_\theta) \geq 1$ and $\eta = \frac{\kappa}{C_s} > 0$. Let $\xi = a + ib \in \mathbb C$ with $|a| < a_0$ and $|b| \leq b_0$. Let $\mathfrak{q} \subset \mathcal{O}_{\mathbb K}$ be an ideal coprime to $\mathfrak{q}_0$ with $N_{\mathbb K}(\mathfrak{q}) > q_1$ and $s \in (0, C_s\log(N_{\mathbb K}(\mathfrak{q})))$ be the corresponding integer provided by \cref{thm:ReducedTheoremSmall|b|}. Then $\left\|\mathcal{M}_{\xi, \mathfrak{q}}\right\|^m \leq N_{\mathbb K}(\mathfrak{q})^{C_\mathcal{M}C_s}$ for all integers $0 \leq m < s$. Let $k \in \mathbb Z_{\geq 0}$ and $H \in \mathcal{W}_\mathfrak{q}^\mathfrak{q}(U)$. Writing $k = js + m$ for some integers $j \in \mathbb Z_{\geq 0}$ and $0 \leq m < s$, and using \cref{thm:ReducedTheoremSmall|b|}, we have
\begin{align*}
\left\|\mathcal{M}_{\xi, \mathfrak{q}}^k(H)\right\|_2 &\leq \left\|\mathcal{M}_{\xi, \mathfrak{q}}\right\|^m \cdot \big\|\mathcal{M}_{\xi, \mathfrak{q}}^{js}(H)\big\|_2 \\
&\leq N_{\mathbb K}(\mathfrak{q})^{C_\mathcal{M}C_s} N_{\mathbb K}(\mathfrak{q})^{-j\kappa} \|H\|_{\Lip(d_\theta)} \\
&\leq N_{\mathbb K}(\mathfrak{q})^{C_\mathcal{M}C_s + \frac{m\kappa}{s}} e^{-(js + m) \frac{\kappa}{s}\log(N_{\mathbb K}(\mathfrak{q}))} \|H\|_{\Lip(d_\theta)} \\
&\leq N_{\mathbb K}(\mathfrak{q})^{C_\mathcal{M}C_s + 1} e^{-\frac{\kappa}{C_s} \cdot k} \|H\|_{\Lip(d_\theta)} \\
&= CN_{\mathbb K}(\mathfrak{q})^C e^{-\eta k} \|H\|_{\Lip(d)}.
\end{align*}
\end{proof}

The rest of the section is devoted to obtaining strong bounds which are crucial for the proof of \cref{thm:ReducedTheoremSmall|b|} in \cref{subsec:SupremumAndLipschitzBounds}.

\subsection{Approximating the transfer operator}
The aim of this subsection is to approximate the transfer operator by a convolution with a measure as in \cite{OW16}.

Let $\mathfrak{q} \subset \mathcal{O}_{\mathbb K}$ be an ideal coprime to $\mathfrak{q}_0$. Recall the property $\mathcal{M}_{\xi, \mathfrak{q}} \circ (\zeta^+)^* = (\zeta^+)^* \circ \mathcal{M}_{\xi, \mathfrak{q}}$ for all $\xi \in \mathbb C$ from a remark in \cref{subsec:CocyclesAndCongruenceTransferOperators}, and also recall that $\zeta^+$ is surjective. In light of these observations, without losing any information we can abuse notation slightly to interpret $\mathcal{M}_{\xi, \mathfrak{q}}(H) \in C(\Sigma^+, L^2(\tilde{G}_\mathfrak{q}, \mathbb C))$ for any $H \in C(U, L^2(\tilde{G}_\mathfrak{q}, \mathbb C))$ as the operator $\mathcal{M}_{\xi, \mathfrak{q}}: C(\Sigma^+, L^2(\tilde{G}_\mathfrak{q}, \mathbb C)) \to C(\Sigma^+, L^2(\tilde{G}_\mathfrak{q}, \mathbb C))$ applied to the pullback $H \in C(\Sigma^+, L^2(\tilde{G}_\mathfrak{q}, \mathbb C))$ with the map $(\zeta^+)^*$ suppressed. We will do this throughout the rest of this section. From the same remark, recall the map $\mathtt{c}_{\mathfrak{q}, \Sigma^+}: \Sigma^+ \to \tilde{G}_\mathfrak{q}$. By \cref{cor:CocyclesLocallyConstantCorollary}, the map can be described simply by $\mathtt{c}_{\mathfrak{q}, \Sigma^+}(x) = \mathtt{c}_\mathfrak{q}(u)$ for all $x \in \Sigma^+$, for any choice of $u \in \mathtt{C}[x_0, x_1]$ and hence we see that there is dependence only on the first two entries. Thus it makes sense to introduce the notation $\mathtt{c}_{\mathfrak{q}, \Sigma^+}(j, k)$ for all admissible pairs $(j, k)$ in the natural way. In particular, with this notation we have
\begin{align*}
\mathtt{c}_{\mathfrak{q}, \Sigma^+}^k(x) = \mathtt{c}_{\mathfrak{q}, \Sigma^+}(x_0, x_1) \mathtt{c}_{\mathfrak{q}, \Sigma^+}(x_1, x_2) \dotsb \mathtt{c}_{\mathfrak{q}, \Sigma^+}(x_{k - 1}, x_k)
\end{align*}
for all $x \in \Sigma^+$. For the rest of this section we will henceforth drop the subscript $\Sigma^+$ in $\tau_{\Sigma^+}$, $f_{\Sigma^+}^{(a)}$ and $\mathtt{c}_{\mathfrak{q}, \Sigma^+}$ but we reiterate that it should not be confused with $\tau \circ \zeta^+$, $f^{(a)} \circ \zeta^+$ and $\mathtt{c}_\mathfrak{q} \circ \zeta^+$ respectively.

We introduce some notations for admissible sequences to save space. We define $\alpha^j = (\alpha_j, \alpha_{j - 1}, \dotsc, \alpha_1)$ for all admissible sequences $(\alpha_j, \alpha_{j - 1}, \dotsc, \alpha_1)$, for all $j \in \mathbb Z_{>0}$. Also, when sequences are themselves written in a sequence, they are to be concatenated. For all $y \in \mathcal A$, denote $\omega(y) \in \Sigma^+$ to be any sequence such that $(y, \omega(y))$ is admissible. We extend the notation naturally to admissible sequences as well so that $\omega(\alpha^j) = \omega(\alpha_1)$ for all admissible sequences $\alpha^j$, for all $j \in \mathbb Z_{>0}$. For the rest of the section, any written sums over sequences will be understood to mean sums \emph{only} over \emph{admissible} sequences.

Let $\mathfrak{q} \subset \mathcal{O}_{\mathbb K}$ be an ideal coprime to $\mathfrak{q}_0$. For any measure $\mu$ on $\tilde{G}_\mathfrak{q}$ and $\phi \in L^2(\tilde{G}_\mathfrak{q}, \mathbb C)$ the convolution $\mu * \phi \in L^2(\tilde{G}_\mathfrak{q}, \mathbb C)$ is defined by
\begin{align*}
(\mu * \phi)(g) = \sum_{h \in \tilde{G}_\mathfrak{q}} \mu(h) \phi(gh^{-1})
\end{align*}
for all $g \in \tilde{G}_\mathfrak{q}$.

Now we present a bound which will be used often.

\begin{lemma}
\label{lem:SumExpf^aBound}
There exists $C > 1$ such that for all $|a| < a_0'$, for all $x \in \Sigma^+$, for all $k \in \mathbb Z_{>0}$, we have
\begin{align*}
\sum_{\alpha^k} e^{f_k^{(a)}(\alpha^k, x)} \leq C.
\end{align*}
\end{lemma}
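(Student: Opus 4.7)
The plan is to recognize the sum as an iterate of the normalized transfer operator applied to the constant function $\mathbf{1}$ and then compare it to the eigenfunction $h_a/h_0$ using the RPF fixed-point property.

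First I would observe that, by the definition of $\mathcal{L}_\xi$ on $\Sigma^+$ (with $\xi = a \in \mathbb{R}$) and the remark that $\mathcal{L}_{\xi\tau_{\Sigma^+}} \circ (\zeta^+)^* = (\zeta^+)^* \circ \mathcal{L}_{\xi\tau}$, the iterated operator applied to the constant $\mathbf{1}$ on $\Sigma^+$ is precisely
\begin{equation*}
\mathcal{L}_a^k(\mathbf{1})(x) \;=\; \sum_{y \in \sigma^{-k}(x)} e^{f_k^{(a)}(y)} \;=\; \sum_{\alpha^k} e^{f_k^{(a)}(\alpha^k, x)},
\end{equation*}
where the last equality parametrizes the $k$-fold preimages of $x \in \Sigma^+$ by admissible prefixes $\alpha^k$.

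Next I would invoke the normalization from \cref{thm:RPFonU} together with the normalization of $\mathcal{L}_a$ recorded after \eqref{eqn:fa}: the function $h_a/h_0$ (pulled back to $\Sigma^+$) satisfies $\mathcal{L}_a(h_a/h_0) = h_a/h_0$, hence $\mathcal{L}_a^k(h_a/h_0) = h_a/h_0$ for every $k \in \mathbb Z_{\geq 0}$. The plan is then to pinch $\mathbf{1}$ between multiples of this eigenfunction. By the Lipschitz dependence of $h_a$ on $a$ guaranteed by the cited perturbation theory, together with the strict positivity and continuity of $h_0$ on the compact space $\Sigma^+$, we may choose constants $0 < m \leq M < \infty$ such that $m \leq h_a(x)/h_0(x) \leq M$ uniformly for all $x \in \Sigma^+$ and all $|a| < a_0'$ (possibly shrinking $a_0'$ if needed so the Lipschitz estimate places $h_a$ within half the minimum of $h_0$, say).

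Using positivity of the transfer operator on nonnegative functions, the bound $\mathbf{1} \leq \tfrac{1}{m}(h_a/h_0)$ yields
\begin{equation*}
\sum_{\alpha^k} e^{f_k^{(a)}(\alpha^k, x)} \;=\; \mathcal{L}_a^k(\mathbf{1})(x) \;\leq\; \frac{1}{m}\,\mathcal{L}_a^k(h_a/h_0)(x) \;=\; \frac{h_a(x)/h_0(x)}{m} \;\leq\; \frac{M}{m},
\end{equation*}
and we set $C = M/m$. No step is a serious obstacle here; the only point demanding a little care is the uniformity of the constants $m, M$ over $|a| < a_0'$, which is exactly what the perturbation estimate $|h_a - h_0|_\infty \leq C|a|$ from the text is designed to supply.
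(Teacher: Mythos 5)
Your argument is correct, but it is not the route the paper takes. The paper's proof is a one-line comparison with the $a=0$ operator: it uses $|f^{(a)}-f^{(0)}|\leq A_f|a|$ to bound $\sum_{\alpha^k}e^{f_k^{(a)}(\alpha^k,x)}$ by a constant times $\sum_{\alpha^k}e^{f_k^{(0)}(\alpha^k,x)}=\mathcal{L}_0^k(\chi_U)(x)=1$, taking $C=e^{A_fa_0'}$. You instead identify the sum as $\mathcal{L}_a^k(\mathbf{1})(x)$ and sandwich $\mathbf{1}$ between multiples of the RPF eigenfunction $h_a/h_0$ of the normalized operator $\mathcal{L}_a$ (eigenvalue $1$), which gives power-boundedness of $\mathcal{L}_a^k$ on $\mathbf{1}$ uniformly in $k$ and in $|a|<a_0'$. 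Your route is in fact the more robust of the two: the pointwise comparison used in the paper controls each Birkhoff sum only up to $|f_k^{(a)}-f_k^{(0)}|\leq kA_f|a|$, so uniformity in $k$ is exactly the point where the eigenfunction (or some equivalent spectral input for $\mathcal{L}_a$) is needed, and your proof supplies it directly; what the paper's comparison buys is brevity and an explicit constant in terms of $A_f$. One small repair to your write-up: you should not "shrink $a_0'$," since $a_0'$ is a constant already fixed in the paper (it enters the definition of $T_0$ and later statements), and the lemma is quantified over $|a|<a_0'$ as fixed. The shrinking is also unnecessary: the map $a\mapsto h_a$ is Lipschitz (hence continuous) from $[-a_0',a_0']$ into $C(U,\mathbb R)$, each $h_a$ is strictly positive, and $U$ is compact, so $(a,u)\mapsto h_a(u)$ attains a positive minimum and a finite maximum on $[-a_0',a_0']\times U$; this yields the uniform constants $m,M$ for the sandwich $m\leq h_a/h_0\leq M$ without touching $a_0'$. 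With that adjustment your proof stands as a complete and valid alternative to the paper's.
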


\begin{proof}
Fix $C = e^{A_fa_0'} > 1$. Let $|a| < a_0'$. Then $|f^{(a)} - f^{(0)}| \leq A_f|a| < A_fa_0'$ and so we have
\begin{align*}
\sum_{\alpha^k} e^{f_k^{(a)}(\alpha^k, x)} \leq e^{A_fa_0'}\sum_{\alpha^k} e^{f_k^{(0)}(\alpha^k, x)} = e^{A_fa_0'}\mathcal{L}_0^k(\chi_U) = e^{A_fa_0'} = C.
\end{align*}
\end{proof}

We will denote the constant $C$ provided by \cref{lem:SumExpf^aBound} by $C_f$.

For all $\xi = a + ib \in \mathbb C$, for all ideals $\mathfrak{q} \subset \mathcal{O}_{\mathbb K}$ coprime to $\mathfrak{q}_0$, for all $x \in \Sigma^+$, for all integers $0 < r < s$, for all admissible sequences $(\alpha_s, \alpha_{s - 1}, \dotsc, \alpha_{r + 1})$, we define the measures
\begin{align*}
\mu_{(\alpha_s, \alpha_{s - 1}, \dotsc, \alpha_{r + 1})}^{\xi, \mathfrak{q}, x} &= \sum_{\alpha^r} e^{(f_s^{(a)} + ib\tau_s)(\alpha^s, x)} \delta_{\mathtt{c}_\mathfrak{q}^{r + 1}(\alpha_{r + 1}, \alpha^r, x)} \\
\nu_0^{a, \mathfrak{q}, x, r} &= \sum_{\alpha^r} e^{f_r^{(a)}(\alpha^r, x)} \delta_{\mathtt{c}_\mathfrak{q}^{r + 1}(\alpha_{r + 1}, \alpha^r, x)} \\
\hat{\mu}_{(\alpha_s, \alpha_{s - 1}, \dotsc, \alpha_{r + 1})}^{a, \mathfrak{q}, x} &= \sum_{\alpha^r} e^{f_s^{(a)}(\alpha^s, x)} \delta_{\mathtt{c}_\mathfrak{q}^{r + 1}(\alpha_{r + 1}, \alpha^r, x)} = e^{f_{s - r}^{(a)}(\alpha^s, x)} \nu_0^{a, \mathfrak{q}, x, r} \\
\nu_{(\alpha_s, \alpha_{s - 1}, \dotsc, \alpha_{r + 1})}^{a, \mathfrak{q}, x} &= e^{f_{s - r}^{(a)}(\alpha_s, \alpha_{s - 1}, \dotsc, \alpha_{r + 1}, \omega(\alpha_{r + 1}))} \nu_0^{a, \mathfrak{q}, x, r}
\end{align*}
on $\tilde{G}_\mathfrak{q}$ and also for all $H \in C(U, L^2(\tilde{G}_\mathfrak{q}, \mathbb C))$, define the function
\begin{align*}
\phi_{(\alpha_s, \alpha_{s - 1}, \dotsc, \alpha_{r + 1})}^{\mathfrak{q}, H} = \delta_{\mathtt{c}_\mathfrak{q}^{s - r - 1}(\alpha_s, \alpha_{s - 1}, \dotsc, \alpha_{r + 1}, \omega(\alpha_{r + 1}))} * H(\alpha_s, \alpha_{s - 1}, \dotsc, \alpha_{r + 1}, \omega(\alpha_{r + 1}))
%\phi_{(\alpha_s, \alpha_{s - 1}, \dotsc, \alpha_{r + 1})}^{q, H}(g) = H((\alpha_s, \alpha_{s - 1}, \dotsc, \alpha_{r + 1}, \omega(\alpha_{r + 1})), g\tilde{\mathtt{c}}_q^{s - r - 1}(\alpha_s, \alpha_{s - 1}, \dotsc, \alpha_{r + 1}, \omega(\alpha_{r + 1}))^{-1})
\end{align*}
in $L^2(\tilde{G}_\mathfrak{q}, \mathbb C)$ where we note that $\big\|\phi_{(\alpha_s, \alpha_{s - 1}, \dotsc, \alpha_{r + 1})}^{\mathfrak{q}, H}\big\|_2 \leq \|H\|_\infty$. Before we proceed, we record an easy lemma which relate the measures defined above.

\begin{lemma}
\label{lem:muHatLessThanCnu}
There exists $C > 0$ such that for all $\xi = a + ib \in \mathbb C$ with $|a| < a_0'$, for all ideals $\mathfrak{q} \subset \mathcal{O}_{\mathbb K}$ coprime to $\mathfrak{q}_0$, for all $x \in \Sigma^+$, for all integers $0 < r < s$, for all admissible sequences $(\alpha_s, \alpha_{s - 1}, \dotsc, \alpha_{r + 1})$, we have $\left|\mu_{(\alpha_s, \alpha_{s - 1}, \dotsc, \alpha_{r + 1})}^{\xi, \mathfrak{q}, x}\right| \leq \hat{\mu}_{(\alpha_s, \alpha_{s - 1}, \dotsc, \alpha_{r + 1})}^{a, \mathfrak{q}, x}$ and moreover we have both $\hat{\mu}_{(\alpha_s, \alpha_{s - 1}, \dotsc, \alpha_{r + 1})}^{a, \mathfrak{q}, x} \leq C\nu_{(\alpha_s, \alpha_{s - 1}, \dotsc, \alpha_{r + 1})}^{a, \mathfrak{q}, x}$ and $\nu_{(\alpha_s, \alpha_{s - 1}, \dotsc, \alpha_{r + 1})}^{a, \mathfrak{q}, x} \leq C\hat{\mu}_{(\alpha_s, \alpha_{s - 1}, \dotsc, \alpha_{r + 1})}^{a, \mathfrak{q}, x}$.
\end{lemma}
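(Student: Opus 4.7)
The first inequality is immediate: since $f_s^{(a)}$ is real-valued, the triangle inequality for complex measures gives
\begin{align*}
\left|\mu_{(\alpha_s, \dotsc, \alpha_{r+1})}^{\xi, \mathfrak{q}, x}\right| \leq \sum_{\alpha^r} \left|e^{(f_s^{(a)} + ib\tau_s)(\alpha^s, x)}\right| \delta_{\mathtt{c}_\mathfrak{q}^{r+1}(\alpha_{r+1}, \alpha^r, x)} = \hat{\mu}_{(\alpha_s, \dotsc, \alpha_{r+1})}^{a, \mathfrak{q}, x},
\end{align*}
so the only genuine content of the lemma is the two-sided comparison between $\hat{\mu}$ and $\nu$.

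For that comparison my plan is to rewrite both measures with a common base $\nu_0^{a, \mathfrak{q}, x, r}$ and then compare their scalar weights atom by atom. Using the cocycle identity
\begin{align*}
f_s^{(a)}(\alpha^s, x) = f_{s-r}^{(a)}(\alpha^s, x) + f_r^{(a)}(\alpha^r, x),
\end{align*}
which is just the splitting of the Birkhoff sum at time $s - r$ together with $\sigma^{s-r}(\alpha^s, x) = (\alpha^r, x)$, I would write
\begin{align*}
\hat{\mu}_{(\alpha_s, \dotsc, \alpha_{r+1})}^{a, \mathfrak{q}, x} &= \sum_{\alpha^r} e^{f_{s-r}^{(a)}(\alpha^s, x)} e^{f_r^{(a)}(\alpha^r, x)} \delta_{\mathtt{c}_\mathfrak{q}^{r+1}(\alpha_{r+1}, \alpha^r, x)}, \\
\nu_{(\alpha_s, \dotsc, \alpha_{r+1})}^{a, \mathfrak{q}, x} &= \sum_{\alpha^r} e^{f_{s-r}^{(a)}(\alpha_s, \dotsc, \alpha_{r+1}, \omega(\alpha_{r+1}))} e^{f_r^{(a)}(\alpha^r, x)} \delta_{\mathtt{c}_\mathfrak{q}^{r+1}(\alpha_{r+1}, \alpha^r, x)}.
\end{align*}
After this rewrite the two measures have the same atoms, with weights that differ only in the factor $e^{f_{s-r}^{(a)}(\cdot)}$, so it suffices to bound the quantity $\bigl|f_{s-r}^{(a)}(\alpha^s, x) - f_{s-r}^{(a)}(\alpha_s, \dotsc, \alpha_{r+1}, \omega(\alpha_{r+1}))\bigr|$ uniformly in $\alpha^r$ and $x$.

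This is where the Lipschitz property of $f^{(a)} = f_{\Sigma^+}^{(a)}$ on $\Sigma^+$ is exactly what is needed. The two sequences $(\alpha^s, x)$ and $(\alpha_s, \dotsc, \alpha_{r+1}, \omega(\alpha_{r+1}))$ in $\Sigma^+$ agree in their first $s - r$ coordinates, so for each $0 \leq j \leq s - r - 1$ their $j$-th shifts still share the first $s - r - j$ coordinates and therefore have $d_\theta$-distance at most $\theta^{s-r-j}$. Applying $\Lip_{d_\theta}(f^{(a)}) \leq T_0$ termwise and summing the geometric series,
\begin{align*}
\left|f_{s-r}^{(a)}(\alpha^s, x) - f_{s-r}^{(a)}(\alpha_s, \dotsc, \alpha_{r+1}, \omega(\alpha_{r+1}))\right| \leq T_0 \sum_{j=0}^{s-r-1} \theta^{s-r-j} \leq \frac{T_0 \theta}{1 - \theta}.
\end{align*}
Setting $C = \exp\bigl(T_0 \theta / (1 - \theta)\bigr)$, which is independent of $\xi$, $\mathfrak{q}$, $x$, $r$, $s$ and of the tuple $(\alpha_s, \dotsc, \alpha_{r+1})$, both remaining inequalities $\hat{\mu} \leq C \nu$ and $\nu \leq C \hat{\mu}$ follow simultaneously. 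No step here is a serious obstacle; the only item worth double-checking is that the Lipschitz estimate is taken for $f_{\Sigma^+}^{(a)}$ on $\Sigma^+$ (rather than the merely essentially Lipschitz $f^{(a)}$ on $U$), so that the geometric sum above is valid without incurring any cylinder-boundary loss.
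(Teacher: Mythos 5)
Your proposal is correct and follows essentially the same route as the paper's proof: the triangle inequality gives $|\mu| \leq \hat{\mu}$, the Birkhoff sum is split as $f_s^{(a)}(\alpha^s, x) = f_{s-r}^{(a)}(\alpha^s, x) + f_r^{(a)}(\alpha^r, x)$, and the comparison of $\hat{\mu}$ with $\nu$ reduces to the Lipschitz estimate $\bigl|f_{s-r}^{(a)}(\alpha^s, x) - f_{s-r}^{(a)}(\alpha_s, \dotsc, \alpha_{r+1}, \omega(\alpha_{r+1}))\bigr| \leq \frac{T_0\theta}{1-\theta}$ via the same geometric series, yielding the same constant $C = e^{T_0\theta/(1-\theta)}$. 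Your closing remark about working with $f_{\Sigma^+}^{(a)}$ on $\Sigma^+$ is exactly the convention the paper uses in this section, so there is no gap.
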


\begin{proof}
Fix $C = e^{\frac{T_0 \theta}{1 - \theta}}$. Let $\xi = a + ib \in \mathbb C$ with $|a| < a_0'$, $\mathfrak{q} \subset \mathcal{O}_{\mathbb K}$ be an ideal coprime to $\mathfrak{q}_0$, $x \in \Sigma^+$, $0 < r < s$ be integers and $(\alpha_s, \alpha_{s - 1}, \dotsc, \alpha_{r + 1})$ be an admissible sequence. Denote $\mu_{(\alpha_s, \alpha_{s - 1}, \dotsc, \alpha_{r + 1})}^{\xi, \mathfrak{q}, x}$ by $\mu$, $\hat{\mu}_{(\alpha_s, \alpha_{s - 1}, \dotsc, \alpha_{r + 1})}^{a, \mathfrak{q}, x}$ by $\hat{\mu}$, and $\nu_{(\alpha_s, \alpha_{s - 1}, \dotsc, \alpha_{r + 1})}^{a, \mathfrak{q}, x}$ by $\nu$. It is an easy computation to check $f_s^{(a)}(\alpha^s, x) = f_{s - r}^{(a)}(\alpha^s, x) + f_r^{(a)}(\alpha^r, x)$ and the first inequality of the lemma $|\mu| \leq \hat{\mu}$ from definitions. We also have
\begin{align*}
&\left|f_s^{(a)}(\alpha^s, x) - (f_{s - r}^{(a)}(\alpha_s, \alpha_{s - 1}, \dotsc, \alpha_{r + 1}, \omega(\alpha_{r + 1})) + f_r^{(a)}(\alpha^r, x))\right| \\
={}&\left|f_{s - r}^{(a)}(\alpha^s, x) - f_{s - r}^{(a)}(\alpha_s, \alpha_{s - 1}, \dotsc, \alpha_{r + 1}, \omega(\alpha_{r + 1}))\right| \\
\leq{}&\sum_{k = 0}^{s - r - 1}\left|f^{(a)}(\sigma^k(\alpha^s, x)) - f^{(a)}(\sigma^k(\alpha_s, \alpha_{s - 1}, \dotsc, \alpha_{r + 1}, \omega(\alpha_{r + 1})))\right| \\
\leq{}&\sum_{k = 0}^{s - r - 1}\Lip_{d_\theta}(f^{(a)}) \cdot d_\theta(\sigma^k(\alpha^s, x), \sigma^k(\alpha_s, \alpha_{s - 1}, \dotsc, \alpha_{r + 1}, \omega(\alpha_{r + 1}))) \\
\leq{}&\Lip_{d_\theta}(f^{(a)}) \sum_{k = 0}^{s - r - 1} \theta^{s - r - k} \\
\leq{}&\frac{T_0 \theta}{1 - \theta}.
\end{align*}
Hence the rest of the lemma follows by comparing $\hat{\mu}$ and $\nu$ and using the above computed estimate.
\end{proof}

Now we return to our goal of approximating the transfer operator.
%Might need |a| < a_0. Depends on whether the normalization as in MOW paper works.
\begin{lemma}
\label{lem:TransferOperatorConvolutionApproximation}
For all $\xi = a + ib \in \mathbb C$ with $|a| < a_0'$, for all ideals $\mathfrak{q} \subset \mathcal{O}_{\mathbb K}$ coprime to $\mathfrak{q}_0$, for all $x \in \Sigma^+$, for all integers $0 < r < s$, for all $H \in \mathcal{V}_\mathfrak{q}(U)$, we have
\begin{multline*}
\left\|\mathcal{M}_{\xi, \mathfrak{q}}^s(H)(x) - \sum_{\alpha_{r + 1}, \alpha_{r + 2}, \dotsc, \alpha_s} \mu_{(\alpha_s, \alpha_{s - 1}, \dotsc, \alpha_{r + 1})}^{\xi, \mathfrak{q}, x} * \phi_{(\alpha_s, \alpha_{s - 1}, \dotsc, \alpha_{r + 1})}^{\mathfrak{q}, H}\right\|_2 \\
\leq C_f \Lip_{d_\theta}(H)\theta^{s - r}.
\end{multline*}
\end{lemma}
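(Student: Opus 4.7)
The plan is to expand both sides into common notation by unwinding the definitions, identify them up to replacing the argument of $H$ via the cocycle identity, and then estimate the difference using the Lipschitz constant of $H$ together with \cref{lem:SumExpf^aBound}.

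First, I would write out
\begin{align*}
\mathcal{M}_{\xi, \mathfrak{q}}^s(H)(x) = \sum_{\alpha^s} e^{(f_s^{(a)} + ib\tau_s)(\alpha^s, x)} \mathtt{c}_\mathfrak{q}^s(\alpha^s, x)^{-1} H(\alpha^s, x),
\end{align*}
and split the sum over $\alpha^s = (\alpha_s, \dotsc, \alpha_1)$ as the outer sum over $(\alpha_{r+1}, \dotsc, \alpha_s)$ and the inner sum over $\alpha^r$. By \cref{cor:CocyclesLocallyConstantCorollary} the cocycle factors multiplicatively along the sequence, giving
\begin{align*}
\mathtt{c}_\mathfrak{q}^s(\alpha^s, x) = \mathtt{c}_\mathfrak{q}^{s-r-1}(\alpha_s, \dotsc, \alpha_{r+1}, \omega(\alpha_{r+1})) \cdot \mathtt{c}_\mathfrak{q}^{r+1}(\alpha_{r+1}, \alpha^r, x)
\end{align*}
since the first $s-r-1$ factors depend only on the admissible pairs among $\alpha_s, \dotsc, \alpha_{r+1}$ and hence may be read off from any admissible continuation, in particular $\omega(\alpha_{r+1})$.

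Next I would use that for $h \in \tilde{G}_\mathfrak{q}$ and $\psi \in L^2(\tilde{G}_\mathfrak{q},\mathbb{C})$ the left module action satisfies $h^{-1}\psi = \delta_h * \psi$, together with the associativity $\delta_a * \delta_b = \delta_{ba}$ of the paper's convolution, to rewrite $\mathtt{c}_\mathfrak{q}^s(\alpha^s, x)^{-1} H(\alpha_s, \dotsc, \alpha_{r+1}, \omega(\alpha_{r+1}))$ as $\delta_{\mathtt{c}_\mathfrak{q}^{r+1}(\alpha_{r+1},\alpha^r, x)} * \phi_{(\alpha_s,\dotsc,\alpha_{r+1})}^{\mathfrak{q}, H}$. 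Pulling the inner sum over $\alpha^r$ inside the convolution, this precisely assembles the measure $\mu_{(\alpha_s, \dotsc, \alpha_{r+1})}^{\xi, \mathfrak{q}, x}$, so
\begin{align*}
\sum_{\alpha_{r+1},\dotsc,\alpha_s} \mu_{(\alpha_s,\dotsc,\alpha_{r+1})}^{\xi,\mathfrak{q},x} * \phi_{(\alpha_s,\dotsc,\alpha_{r+1})}^{\mathfrak{q},H} = \sum_{\alpha^s} e^{(f_s^{(a)} + ib\tau_s)(\alpha^s, x)} \mathtt{c}_\mathfrak{q}^s(\alpha^s, x)^{-1} H(\alpha_s, \dotsc, \alpha_{r+1}, \omega(\alpha_{r+1})).
\end{align*}
Thus the difference in the statement equals exactly the same sum but with $H(\alpha^s, x)$ replaced by $H(\alpha_s, \dotsc, \alpha_{r+1}, \omega(\alpha_{r+1}))$ in the bracket.

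Finally, I would estimate this difference. The two sequences $(\alpha^s, x)$ and $(\alpha_s, \dotsc, \alpha_{r+1}, \omega(\alpha_{r+1}))$ in $\Sigma^+$ agree in the first $s - r$ coordinates (namely $\alpha_s, \alpha_{s-1}, \dotsc, \alpha_{r+1}$), so $d_\theta \leq \theta^{s-r}$. Since left multiplication by $\mathtt{c}_\mathfrak{q}^s(\alpha^s, x)^{-1}$ is a unitary permutation of the standard basis of $L^2(\tilde{G}_\mathfrak{q},\mathbb{C})$ and $|e^{ib\tau_s}| = 1$, the triangle inequality yields
\begin{align*}
\left\|\mathcal{M}_{\xi,\mathfrak{q}}^s(H)(x) - \sum_{\alpha_{r+1},\dotsc,\alpha_s} \mu * \phi\right\|_2 \leq \Lip_{d_\theta}(H)\, \theta^{s-r} \sum_{\alpha^s} e^{f_s^{(a)}(\alpha^s, x)},
\end{align*}
and applying \cref{lem:SumExpf^aBound} bounds the remaining sum by $C_f$. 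The only delicate step is the bookkeeping for the factorization of $\mathtt{c}_\mathfrak{q}^s$ and the matching associativity/ordering convention of the convolution, but once set up correctly the estimate reduces to a single Lipschitz-in-$d_\theta$ inequality.
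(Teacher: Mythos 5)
Your proposal is correct and follows essentially the same route as the paper's proof: expand $\mathcal{M}_{\xi,\mathfrak{q}}^s(H)(x)$, use the cocycle factorization $\mathtt{c}_\mathfrak{q}^s(\alpha^s,x)=\mathtt{c}_\mathfrak{q}^{s-r-1}(\alpha_s,\dotsc,\alpha_{r+1},\omega(\alpha_{r+1}))\,\mathtt{c}_\mathfrak{q}^{r+1}(\alpha_{r+1},\alpha^r,x)$ together with $\delta_a*\delta_b=\delta_{ba}$ to identify the convolution sum with the transfer operator sum up to replacing the argument of $H$, and then bound the difference by $\Lip_{d_\theta}(H)\theta^{s-r}$ times $\sum_{\alpha^s}e^{f_s^{(a)}(\alpha^s,x)}\leq C_f$ using the unitarity of the cocycle action. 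This matches the paper's argument step for step, including the distance estimate $d_\theta((\alpha^s,x),(\alpha_s,\dotsc,\alpha_{r+1},\omega(\alpha_{r+1})))\leq\theta^{s-r}$.
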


\begin{proof}
Let $\xi = a + ib \in \mathbb C$ with $|a| < a_0'$, $\mathfrak{q} \subset \mathcal{O}_{\mathbb K}$ be an ideal coprime to $\mathfrak{q}_0$, $x \in \Sigma^+$, $0 < r < s$ be integers and $H \in \mathcal{V}_\mathfrak{q}(U)$. We calculate that
\begin{align*}
&\sum_{\alpha_{r + 1}, \alpha_{r + 2}, \dotsc, \alpha_s} \mu_{(\alpha_s, \alpha_{s - 1}, \dotsc, \alpha_{r + 1})}^{\xi, \mathfrak{q}, x} * \phi_{(\alpha_s, \alpha_{s - 1}, \dotsc, \alpha_{r + 1})}^{\mathfrak{q}, H} \\
={}& \sum_{\alpha_{r + 1}, \alpha_{r + 2}, \dotsc, \alpha_s} \left(\sum_{\alpha^r} e^{(f_s^{(a)} + ib\tau_s)(\alpha^s, x)} \delta_{\mathtt{c}_\mathfrak{q}^{r + 1}(\alpha_{r + 1}, \alpha^r, x)}\right) \\
{}&* \left(\delta_{\mathtt{c}_\mathfrak{q}^{s - r - 1}(\alpha_s, \alpha_{s - 1}, \dotsc, \alpha_{r + 1}, \omega(\alpha_{r + 1}))} * H(\alpha_s, \alpha_{s - 1}, \dotsc, \alpha_{r + 1}, \omega(\alpha_{r + 1}))\right) \\
={}& \sum_{\alpha_{r + 1}, \alpha_{r + 2}, \dotsc, \alpha_s} \sum_{\alpha^r} e^{(f_s^{(a)} + ib\tau_s)(\alpha^s, x)} \delta_{\mathtt{c}_\mathfrak{q}^{s - r - 1}(\alpha_s, \alpha_{s - 1}, \dotsc, \alpha_{r + 1}, \omega(\alpha_{r + 1})) \mathtt{c}_\mathfrak{q}^{r + 1}(\alpha_{r + 1}, \alpha^r, x)} \\
{}&* H(\alpha_s, \alpha_{s - 1}, \dotsc, \alpha_{r + 1}, \omega(\alpha_{r + 1})) \\
={}& \sum_{\alpha^s} e^{(f_s^{(a)} + ib\tau_s)(\alpha^s, x)} \delta_{\mathtt{c}_\mathfrak{q}^s(\alpha^s, x)} * H(\alpha_s, \alpha_{s - 1}, \dotsc, \alpha_{r + 1}, \omega(\alpha_{r + 1})).
\end{align*}
\begin{comment}
={}& \sum_{\alpha^s} e^{(f_s^{(a)} + ib\tau_s)(\alpha^s, x)} \left(\delta_{\tilde{\mathtt{c}}_q^{r + 1}(\alpha_{r + 1}, \alpha^r, x)} * \phi_{(\alpha_s, \alpha_{s - 1}, \dotsc, \alpha_{r + 1})}^{q, H}\right)(g) \\
={}& \sum_{\alpha^s} e^{(f_s^{(a)} + ib\tau_s)(\alpha^s, x)} H((\alpha_s, \alpha_{s - 1}, \dotsc, \alpha_{r + 1}, \omega(\alpha_{r + 1})), g\tilde{\mathtt{c}}_q^{r + 1}(\alpha_{r + 1}, \alpha^r, x)^{-1} \tilde{\mathtt{c}}_q^{s - r - 1}(\alpha_s, \alpha_{s - 1}, \dotsc, \alpha_{r + 1}, \omega(\alpha_{r + 1}))^{-1}) \\
={}& \sum_{\alpha^s} e^{(f_s^{(a)} + ib\tau_s)(\alpha^s, x)} H((\alpha_s, \alpha_{s - 1}, \dotsc, \alpha_{r + 1}, \omega(\alpha_{r + 1})), g\tilde{\mathtt{c}}_q^s(\alpha^s, x)^{-1}).
\end{comment}
Thus
\begin{align*}
&\left\|\mathcal{M}_{\xi, \mathfrak{q}}^s(H)(x) - \sum_{\alpha_{r + 1}, \alpha_{r + 2}, \dotsc, \alpha_s} \mu_{(\alpha_s, \alpha_{s - 1}, \dotsc, \alpha_{r + 1})}^{\xi, \mathfrak{q}, x} * \phi_{(\alpha_s, \alpha_{s - 1}, \dotsc, \alpha_{r + 1})}^{\mathfrak{q}, H}\right\|_2 \\
={}& \left\|\sum_{\alpha^s} e^{(f_s^{(a)} + ib\tau_s)(\alpha^s, x)} \delta_{\mathtt{c}_\mathfrak{q}^s(\alpha^s, x)} * H(\alpha^s, x) \right.\\
{}&\left.- \sum_{\alpha^s} e^{(f_s^{(a)} + ib\tau_s)(\alpha^s, x)} \delta_{\mathtt{c}_\mathfrak{q}^s(\alpha^s, x)} * H(\alpha_s, \alpha_{s - 1}, \dotsc, \alpha_{r + 1}, \omega(\alpha_{r + 1}))\right\|_2 \\
={}& \sum_{\alpha^s} \left\|e^{(f_s^{(a)} + ib\tau_s)(\alpha^s, x)}\delta_{\mathtt{c}_\mathfrak{q}^s(\alpha^s, x)} * (H(\alpha^s, x) - H(\alpha_s, \alpha_{s - 1}, \dotsc, \alpha_{r + 1}, \omega(\alpha_{r + 1})))\right\|_2 \\
\leq{}& \sum_{\alpha^s} e^{f_s^{(a)}(\alpha^s, x)} \left\|H(\alpha^s, x) - H(\alpha_s, \alpha_{s - 1}, \dotsc, \alpha_{r + 1}, \omega(\alpha_{r + 1}))\right\|_2 \\
\leq{}& \sum_{\alpha^s} e^{f_s^{(a)}(\alpha^s, x)} \Lip_{d_\theta}(H) \cdot d_\theta((\alpha^s, x), (\alpha_s, \alpha_{s - 1}, \dotsc, \alpha_{r + 1}, \omega(\alpha_{r + 1}))) \\
\leq{}& C_f\Lip_{d_\theta}(H)\theta^{s - r}
\end{align*}
where we used the fact that convolutions with $\delta_{\mathtt{c}_\mathfrak{q}^s(\alpha^s, x)}$ preserves the norm and also \cref{lem:SumExpf^aBound}.
\end{proof}

We will use this approximation to study the convolution, rather than dealing with the transfer operator directly, and obtain strong bounds. This is the objective of \cref{subsec:L2FlatteningLemma} but first we need to establish an important fact in \cref{subsec:ZariskiDensityofH} which will be used in \cref{subsec:L2FlatteningLemma}.

\subsection{Zariski density of \texorpdfstring{$\tilde{H}^p(y, z)$}{Hp(y, z)} in \texorpdfstring{$\tilde{\mathbf{G}}$}{G}}
\label{subsec:ZariskiDensityofH}
In this subsection, we prove \cref{lem:Z-DenseInU-CoverG} which will be required to use the expander machinery of Golsefidy-Varj\'{u} \cite{GV12}.

For all $p \in \mathbb Z_{>0}$, for all $(y, z) \in \mathcal{A}^2$, define $H^p(y, z) = \langle S^p(y, z) \rangle < \Gamma < G$ where
\begin{align*}
&S^p(y, z) \\
={}&\left\{\prod_{j = 0}^p \mathtt{c}(\alpha_{p + 1 - j}, \alpha_{p - j}) \prod_{j = 0}^p \mathtt{c}(\tilde{\alpha}_{1 + j}, \tilde{\alpha}_j)^{-1} \in \Gamma: \alpha_{p + 1} = \tilde{\alpha}_{p + 1} = y, \alpha_0 = \tilde{\alpha}_0 = z\right\}.
\end{align*}
Define $\tilde{S}^p(y, z) = \tilde{\pi}^{-1}(S^p(y, z)) \subset \tilde{\Gamma} < \tilde{G}$ and $\tilde{H}^p(y, z) = \langle \tilde{S}^p(y, z) \rangle < \tilde{\Gamma} < \tilde{G}$. Since $\ker(\tilde{\pi}) = \{e, -e\}$, we note that $\tilde{H}^p(y, z) = \tilde{\pi}^{-1}(H^p(y, z))$.

\begin{lemma}
\label{lem:Z-DenseInU-CoverG}
There exists $p_0 \in \mathbb Z_{> 0}$ such that for all integers $p > p_0$, for all $(y, z) \in \mathcal A^2$, the subgroup $\tilde{H}^p(y, z)$ is Zariski dense in $\tilde{\mathbf{G}}$.
\end{lemma}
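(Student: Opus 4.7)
The plan is to reduce to showing Zariski density of the image $H^p(y,z) := \tilde{\pi}(\tilde{H}^p(y,z))$ in $\mathbf{G}$. Since $\tilde{\pi}$ is a surjective morphism of algebraic groups with finite central kernel and $\tilde{H}^p(y,z) = \tilde{\pi}^{-1}(H^p(y,z))$, the Zariski closure satisfies $\overline{\tilde{H}^p(y,z)} = \tilde{\pi}^{-1}(\overline{H^p(y,z)})$; hence $\tilde{H}^p(y,z)$ is Zariski dense in $\tilde{\mathbf{G}}$ iff $H^p(y,z)$ is Zariski dense in $\mathbf{G}$. Since $\mathbf{G}$ is almost simple, it will be enough to show that the identity component $L$ of $\overline{H^p(y,z)}$ equals $\mathbf{G}$.

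Next I would interpret the cocycle products geometrically. For an admissible sequence $\alpha^{p+1}$ from $y$ to $z$, the product $g_\alpha := \mathtt{c}(\alpha_{p+1},\alpha_p)\cdots\mathtt{c}(\alpha_1,\alpha_0) \in \Gamma$ is the unique element of $\Gamma$ sending the chosen lift $\overline{R}_y$ to the lift of $R_z$ reached by the $(p+1)$-st Poincar\'{e} return along the cylinder $\mathtt{C}[\alpha]$. A typical generator of $H^p(y,z)$ has the form $g_\alpha g_{\tilde{\alpha}}^{-1}$, so elements of $H^p(y,z)$ record $\Gamma$-displacements between two lifts of $R_z$ both reachable from $\overline{R}_y$ in $p+1$ Poincar\'{e} steps. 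The key structural step is a \emph{middle swap}: take $\alpha$ and $\tilde{\alpha}$ sharing a common prefix $\beta_L$, differing by a local modification in the interior (switching a transition $(j,k)$ for $(j,k')$), before reconverging to a common suffix $\beta_R$. A direct cancellation gives
\begin{equation*}
g_\alpha\, g_{\tilde{\alpha}}^{-1} \;=\; g_{\beta_L}\cdot t\cdot g_{\beta_L}^{-1},
\end{equation*}
where $t \in \Gamma$ depends only on the local swap, not on $\beta_L$ or $\beta_R$.

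The proof is then completed in two parts. First, arrange the swap so that $t \neq e$ and is non-central in $\mathbf{G}$: this uses convex cocompactness (cocycles are bounded below by injectivity-radius considerations, in particular $\hat{\delta} < \tfrac{1}{4}\Inj(\T^1(X))$, so generic local swaps do not produce trivial $t$) and the almost simplicity of $\mathbf{G}$ (its center is finite, so non-trivial generically means non-central). Second, show that as $\beta_L$ ranges over admissible prefixes, the set $\{g_{\beta_L}\} \subset \Gamma$ is Zariski dense in $\mathbf{G}$. Conjugating a non-central element of an almost simple algebraic group by a Zariski dense set yields a Zariski dense set, so this would force $L = \mathbf{G}$. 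Topological mixing of $T$ (some $T^{N_T}$ having strictly positive entries) guarantees the relevant admissible sequences exist once $p \geq N_T + O(1)$, and finiteness of $\mathcal{A}^2$ yields a uniform $p_0$.

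The principal obstacle is the second part above: proving Zariski density of the prefix set $\{g_{\beta_L}\}$. At first glance this is essentially a restatement of the theorem and risks circularity. The resolution must extract Zariski density from the Zariski density of $\Gamma$ by a genuinely geometric argument: the cocycle products along admissible loops based at $y$ generate a subgroup of $\Gamma$ whose cosets parametrize the $\Gamma$-orbits of lifts of $\overline{R}_y$ reachable via the flow, which is finite by convex cocompactness. This loop subgroup therefore has finite index in $\Gamma$ and is Zariski dense in $\mathbf{G}$; the prefix cocycle products then inherit Zariski density by a coset-exhaustion argument combined with topological mixing (concatenating loops at $y$ with a single admissible prefix to $j$ produces all cosets within $\{g_{\beta_L}\}$). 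Making this loop-to-prefix step rigorous, and ensuring uniformity of the threshold $p_0$ across $(y,z) \in \mathcal{A}^2$, is the technical heart of the lemma.
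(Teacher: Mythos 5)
Your opening reduction (Zariski density of $\tilde{H}^p(y,z)$ in $\tilde{\mathbf{G}}$ follows from that of $H^p(y,z)$ in $\mathbf{G}$) is fine and coincides with \cref{lem:Z-DenseInG}, and the middle-swap identity $g_\alpha g_{\tilde{\alpha}}^{-1}=g_{\beta_L}\,t\,g_{\beta_L}^{-1}$ is a correct reading of the generating set $S^p(y,z)$, since the cocycle depends only on adjacent pairs. But the engine of your argument does not work as stated. For a \emph{fixed} $p$ the words contributing to $S^p(y,z)$ have length exactly $p+1$, so the set of available prefixes $\{g_{\beta_L}\}$ is finite, and a finite set is never Zariski dense; your plan conflates letting $p\to\infty$ with proving density of $H^p(y,z)$ for the fixed $p$ in the statement (and the fixed $p$ is what the expander input \cref{lem:GV_Expander} later requires). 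Moreover the algebraic step is false as stated: conjugating a fixed non-central $t$ by a Zariski dense set does \emph{not} yield a Zariski dense set — all such conjugates lie in the closure of a single conjugacy class, a proper subvariety of $\mathbf{G}$. What is true is that a closed subgroup containing the \emph{entire} conjugacy class of a non-central element contains a nontrivial normal closed subgroup and hence equals $\mathbf{G}$ by almost simplicity; but to get the whole conjugacy class into $\overline{H^p(y,z)}$ you need precisely the density you have not proved. (Non-centrality is not the issue, since $\Gamma$ is torsion-free and the center is finite; but even arranging $t\neq e$ for some swap is part of the content of the lemma and your injectivity-radius remark does not yet give it: distinct admissible middle words give disjoint cylinders, not obviously distinct cocycle products.)

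The step you yourself flag as the technical heart is a genuine gap, and the sketched resolution is not an argument: all lifts of $R_y$ lie in a single $\Gamma$-orbit, so ``the $\Gamma$-orbits of lifts of $\overline{R}_y$ reachable via the flow'' does not parametrize cosets of the loop subgroup, and no finite-index (or Zariski density) statement for the loop-cocycle subgroup follows from convex cocompactness; such a statement is essentially of the same strength as the lemma itself — it is exactly the point that is trivial in the Schottky/continued-fraction settings of \cite{MOW17} but not here, as the introduction emphasizes. The paper's proof avoids all of this by a geometric contradiction: if $H^p(y,z)$ is not Zariski dense, then (elementarity or the Karpelevi\v{c}--Mostow theorem) its limit set is contained in a $0$- or $(n-2)$-dimensional sphere $\tilde{V}\subset\partial_\infty(\mathbb H^n)$; hyperbolicity of the flow together with \cref{lem:ProducingElementsofH,lem:P^p_in_R_z_ForLarge_p,lem:Far_v_0} manufactures, for every $u\in\overline{U}_y$ and every large $p$, an element $h\in S^p(y,z)$ whose orbit point $h\pi_1(u_0)$ is Euclidean-close to $u^+$, which forces $\tilde{V}$ to meet $B^{\mathrm{E}}_\epsilon(u^+)$ (\cref{lem:LimitSetInSphereNearOrigin,lem:LimitSetInSphereNearOriginLargeP_NZ}); choosing $n+1$ points of $\Lambda(\Gamma)$ not contained in any $(n-2)$-dimensional sphere (possible because $\Gamma$ is Zariski dense, via \cite{Win15}) then gives a contradiction, and uniformity of $p_0$ over the finitely many $(y,z)\in\mathcal A^2$ is immediate. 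To salvage your route you would need an independent proof that, for fixed large $p$, the finitely many conjugates $g_{\beta_L} t g_{\beta_L}^{-1}$ generate a group with normal (hence full) Zariski closure, and no such proof is available without an input of the paper's geometric kind.
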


To prove this, we first make some reductions.

\begin{lemma}
\label{lem:Z-DenseInG}
For all $(y, z) \in \mathcal A^2$, there exists $p_0 \in \mathbb Z_{> 0}$ such that for all integers $p > p_0$, the subgroup $H^p(y, z)$ is Zariski dense in $\mathbf{G}$.
\end{lemma}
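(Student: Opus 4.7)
The plan is to extract from $H^p(y,z)$ a subgroup generated by cocycles of closed loops at $y$ in the Markov graph, and then to prove these generate a Zariski dense subgroup of $\mathbf{G}$.

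First, I would fix an admissible path $\omega_0 = (y, x_1, \ldots, x_{k-1}, z)$ of some length $k \geq N_T$, which exists by topological mixing. For any two admissible loops $L$ and $L'$ at $y$ of common length $m = p + 1 - k$, the concatenations $L\omega_0$ and $L'\omega_0$ are admissible $y$-to-$z$ paths of length $p + 1$, and in the defining formula for $S^p(y,z)$ the cocycle contributions from $\omega_0$ cancel:
\[
\mathtt{c}^{p+1}(L\omega_0)\, \mathtt{c}^{p+1}(L'\omega_0)^{-1} = \mathtt{c}^m(L)\, \mathtt{c}^m(L')^{-1} \in S^p(y,z).
\]
Consequently $H^p(y,z)$ contains the ``$m$-chord subgroup''
\[
G_y^m := \bigl\langle \mathtt{c}(L)\mathtt{c}(L')^{-1} : L, L' \text{ admissible loops at } y \text{ of length } m\bigr\rangle.
\]

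Second, a splicing argument gives length flexibility. For $m - \ell \geq N_T$, topological mixing provides a common filler loop $L_\bullet$ at $y$ of length $m - \ell$, and the identity $\mathtt{c}(L_1 L_\bullet)\mathtt{c}(L_2 L_\bullet)^{-1} = \mathtt{c}(L_1)\mathtt{c}(L_2)^{-1}$ yields an inclusion $G_y^\ell \hookrightarrow G_y^m$. Hence the union $G_y^\star := \bigcup_\ell G_y^\ell$ lies in $G_y^m$ for all sufficiently large $m$. By the Noetherian property of the Zariski topology, once I show $\overline{G_y^\star}^{\mathrm{Zar}} = \mathbf{G}$, finitely many generators already suffice, and these all lie in a single $G_y^m \subseteq H^p(y, z)$ for every $p$ above some threshold $p_0$.

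Third, the heart of the matter is to prove that $\Gamma_y := \langle \mathtt{c}(L) : L \text{ admissible loop at } y \rangle$ is Zariski dense in $\mathbf{G}$, from which the same for $G_y^\star$ follows quickly: the identity $t \cdot \mathtt{c}(L_1)\mathtt{c}(L_2)^{-1} \cdot t^{-1} = \mathtt{c}(tL_1)\mathtt{c}(tL_2)^{-1}$ shows that $G_y^\star$ is normalized by the loop-cocycle semigroup $\mathcal{S}_y := \{\mathtt{c}(L)\}$ and hence by $\Gamma_y$. So $\overline{G_y^\star}^{\mathrm{Zar}}$ is a normal algebraic subgroup of $\overline{\Gamma_y}^{\mathrm{Zar}} = \mathbf{G}$, and being infinite (there are many pairwise distinct loop cocycles of each fixed large length by mixing), simplicity of $\mathfrak{so}(n,1)$ forces it to equal $\mathbf{G}$. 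The Zariski density of $\Gamma_y$ itself would then be established using density of periodic orbits of the geodesic flow in the non-wandering set $\Omega$ together with the Anosov closing/shadowing lemma, realizing every primitive hyperbolic conjugacy class of $\Gamma$ by a Markov-periodic orbit visiting $R_y$; Zariski density of $\Gamma$ and conjugation-invariance of the Zariski closure then give $\overline{\Gamma_y}^{\mathrm{Zar}} = \mathbf{G}$.

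The main obstacle is this last step. In the Oh--Winter $n = 2$ setting it is essentially trivial because the Markov coding is built from a free-product Schottky structure on $\Gamma$, so loops at $y$ directly spell arbitrary words in the generators; for general $n \geq 3$ and abstract Bowen--Ratner sections there is no such a priori freedom, and one must combine a genuine geometric argument (density of closed geodesics, closing lemma, topological mixing) with the algebraic structure of $\mathbf{G} \cong \mathrm{SO}_Q$ to rule out a strictly smaller Zariski closure. The introduction explicitly identifies this step as the main new ingredient of the paper.
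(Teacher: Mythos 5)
Your first two steps are fine: since the cocycle is locally constant (\cref{lem:CocyclesLocallyConstant,cor:CocyclesLocallyConstantCorollary}), the cancellation $\mathtt{c}^{p+1}(L\omega_0)\,\mathtt{c}^{p+1}(L'\omega_0)^{-1}=\mathtt{c}^m(L)\,\mathtt{c}^m(L')^{-1}$ is legitimate, and the splicing plus Noetherian stabilization correctly reduces \cref{lem:Z-DenseInG} to Zariski density of the loop/chord group at $y$. But the heart of the lemma is exactly the step you leave unproved, and the sketch you give for it does not work. First, the claim that the closing lemma realizes \emph{every} primitive hyperbolic conjugacy class of $\Gamma$ by a Markov-periodic orbit \emph{visiting $R_y$} is false: a closed geodesic in the convex core has no reason to enter the one fixed rectangle $R_y$; what loop cocycles at $y$ actually produce are only those hyperbolic elements whose axes cross $\overline{R}_y$, i.e.\ whose attracting fixed points lie in $\{u^+ : u \in \overline{U}_y\}$. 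Second, even granting conjugacy-class realization, ``conjugation-invariance of the Zariski closure'' does not give $\overline{\Gamma_y}^{\mathrm{Z}}=\mathbf{G}$: it is perfectly possible for every element of a Zariski dense group to be conjugate into a proper algebraic subgroup (every hyperbolic element is conjugate into a fixed maximal torus), so this inference needs an argument you do not supply. A further (repairable) slip: the normal-subgroup step invokes simplicity of $\mathfrak{so}(n,1)$, but normality takes place in the algebraic group $\mathbf{G}$, whose Lie algebra is $\mathfrak{so}_{n+1}(\mathbb{C})$; for $n=3$ this is $\mathfrak{sl}_2\oplus\mathfrak{sl}_2$, not simple, and one must use the $\mathbb{R}$-structure (complex conjugation swaps the two factors) to exclude a factor.

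For comparison, the paper never passes through loop groups; it argues geometrically by contradiction. If $H^p(y,z)$ is not Zariski dense, the Karpelevi\v{c}--Mostow theorem forces it to preserve a point pair or a totally geodesic hypersurface, so $\Lambda(H^p(y,z))$ lies in a $0$- or $(n-2)$-dimensional sphere $\tilde V$; then the Markov structure (\cref{lem:ProducingElementsofH,lem:P^p_in_R_z_ForLarge_p,lem:Far_v_0,lem:LimitSetInSphereNearOrigin}) manufactures, for every $u\in\overline{U}_y$ and every large $p$, elements of $S^p(y,z)$ whose orbit points force $\tilde V$ to pass within $\epsilon$ of $u^+$; finally, Zariski density of $\Gamma$ together with \cite[Proposition 3.12]{Win15} shows $\Lambda(\Gamma)$ near a suitable $w_0^+\in\{u^+:u\in\overline{U}_y\}$ is not contained in any $(n-2)$-sphere, and choosing $n+1$ such points gives the contradiction. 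If you wanted to complete your route, the natural fix is of the same nature: attracting fixed points of loop cocycles at $y$ are dense in $\{u^+:u\in\overline{U}_y\}$, which contains $\Lambda(\Gamma)\cap B$ for a small ball, hence is in no $(n-2)$-sphere, and Karpelevi\v{c}--Mostow then yields density of $\Gamma_y$ --- but that is precisely the geometric argument of the paper, which your proposal identifies as the main obstacle and does not carry out; as it stands the proposal therefore does not prove the lemma.
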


%Similar to Lemma 2.8 in the arXiv version of Mixing of frame flow for rank one locally symmetric spaces and measure classification by Dale Winter
\begin{proof}[Proof that \cref{lem:Z-DenseInG} implies \cref{lem:Z-DenseInU-CoverG}]
Let $(y, z) \in \mathcal A^2$. By \cref{lem:Z-DenseInG}, there is a $p_{(y, z)} \in \mathbb Z_{> 0}$ such that for all integers $p > p_{(y, z)}$, the subgroup $H^p(y, z) < G$ is Zariski dense in $\mathbf{G}$. If $\tilde{H}^p(y, z)$ is not Zariski dense in $\tilde{\mathbf{G}}$, then consider $\tilde{\pi}\Big(\overline{\tilde{H}^p(y, z)}^{\mathrm{Z}}\Big) \subset \mathbf{G}$ which is the image of a proper subvariety of $\tilde{\mathbf{G}}$ and hence must be a Zariski constructible set. But it cannot contain any Zariski open subset of $\mathbf{G}$ for dimensional reasons. Thus it is contained in a finite union of proper subvarieties of $\mathbf{G}$ which contradicts the Zariski density of $H^p(y, z)$ in $\mathbf{G}$. Now we can simply choose $p_0 = \max_{(y, z) \in \mathcal A^2} p_{(y, z)}$.
\end{proof}

Through an isometry, we will now view the hyperbolic space in the upper half space model $\mathbb H^n \cong \{(x_1, x_2, \dotsc, x_n) \in \mathbb R^n: x_n > 0\}$ with boundary at infinity $\partial_\infty(\mathbb H^n) \cong \{(x_1, x_2, \dotsc, x_n) \in \mathbb R^n: x_n = 0\} \cup \{\infty\} \cong \mathbb R^{n - 1} \cup \{\infty\}$ and we also use the isometry $\T(\mathbb H^n) \cong \mathbb H^n \times \mathbb R^n$. By a slight abuse of notation, sometimes we will make the distinction and sometimes we will make the identification which will always be clear from context. Let $(e_1, e_2, \dotsc, e_n)$ be the standard basis on $\mathbb R^n$. Let $B^{\mathrm{E}}_\epsilon(u) \subset \mathbb R^{n - 1}$ denote the open Euclidean ball of radius $\epsilon > 0$ centered at $u \in \mathbb R^{n - 1}$. Let $d_{\mathrm{E}}$ denote the Euclidean distance. By a sphere $\tilde{V} \subset \partial_\infty(\mathbb H^n)$ we mean that $\tilde{V} \subset \mathbb R^{n - 1} \cup \{\infty\}$ is a $(n - 2)$-dimensional sphere or an affine hyperplane when we use the upper half space model. Let $\pi_1: \mathbb H^n \times \mathbb R^n \to \mathbb H^n$ be the tangent bundle projection map and $\pi_2: \mathbb H^n \times \mathbb R^n \to \mathbb R^n$ be the projection onto the directional component. For convenience, we denote $\overline{\Omega} \subset \T^1(\mathbb H^n)$ to be the \emph{full preimage} of $\Omega$ under the covering map $\T^1(\mathbb H^n) \to \T^1(X)$. For all $j \in \mathcal{A}$, denote by the $\hat{\overline{R}}_j, \hat{\overline{U}}_j$ and $\hat{\overline{S}}_j$ the cores of $\overline{R}_j, \overline{U}_j$ and $\overline{S}_j$ respectively.

\begin{comment}
\begin{lemma}
\label{lem:LimitSetInSphereNearOriginLargeP_NZ}
Suppose $P_{\mathrm{NZ}}(y, z) = \{p \in \mathbb Z_{> 0}: H^p(y, z) \text{ is not Zariski dense}\}$ is unbounded for some $(y, z) \in \mathcal A^2$. Applying any appropriate isometry, we assume that the vectors in $\hat{\overline{U}}_y$ have direction $\pi_2(\hat{\overline{U}}_y) = -e_n$ and their basepoints lie on the hyperplane $\langle \pi_1(\hat{\overline{U}}_y), e_n\rangle = 1$. Then for all $\epsilon > 0$, there is a $p_0 \in \mathbb Z_{> 0}$ such that for all integers $p \in P_{\mathrm{NZ}}(y, z)$ with $p > p_0$, for all $u \in \hat{\overline{U}}_y$, we have that $\Lambda(H^p(y, z))$ is contained in a $0$ or $n - 2$ dimensional sphere $\tilde{V} \subset \partial \mathbb H^n$ such that $\tilde{V} \cap B^{\mathrm{E}}_\epsilon(u^+) \neq \varnothing$.
\end{lemma}
\end{comment}

\begin{lemma}
\label{lem:LimitSetInSphereNearOriginLargeP_NZ}
Let $(y, z) \in \mathcal A^2$. Applying any appropriate isometry, we assume that the vectors in $\overline{U}_y$ have direction $\pi_2(\overline{U}_y) = -e_n$ and their basepoints lie on the hyperplane $\langle \pi_1(\overline{U}_y), e_n\rangle = 1$. Then for all $\epsilon > 0$, there exists $p_0 \in \mathbb Z_{> 0}$ such that for all integers $p > p_0$, if $H^p(y, z)$ is not Zariski dense in $\mathbf{G}$, then for all $u \in \overline{U}_y$, we have that $\Lambda(H^p(y, z))$ is contained in a $0$-dimensional or $(n - 2)$-dimensional sphere $\tilde{V} \subset \partial_\infty(\mathbb H^n)$ such that $\tilde{V} \cap B^{\mathrm{E}}_\epsilon(u^+) \neq \varnothing$.
\end{lemma}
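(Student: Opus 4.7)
The plan is to show that for each $u \in \overline{U}_y$ and each $\epsilon > 0$, provided $p$ is large enough, one can exhibit a hyperbolic element $\gamma \in H^p(y,z)$ whose attracting fixed point $\gamma^+ \in \partial_\infty(\mathbb H^n)$ lies within Euclidean distance $\epsilon$ of $u^+$. Since $\gamma^+ \in \Lambda(H^p(y,z)) \subset \tilde V$, this immediately yields $\tilde V \cap B^{\mathrm E}_\epsilon(u^+) \neq \varnothing$, which is the desired conclusion.

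The construction uses a \emph{single-symbol swap}. Fix $u \in \hat{\overline{U}}_y$ with symbolic coding $(\beta_0, \beta_1, \dotsc)$, $\beta_0 = y$. By topological mixing of $T$ (using the positivity of $T^{N_T}$), for $p$ large we extend $(\beta_0, \dotsc, \beta_{p-1})$ to an admissible sequence $\beta = (\beta_0, \dotsc, \beta_{p-1}, \beta_p, z)$ of length $p+1$, and we choose an alternative admissible letter $\tilde{\beta}_p \neq \beta_p$ with $T_{\beta_{p-1}, \tilde{\beta}_p} = T_{\tilde{\beta}_p, z} = 1$; set $\tilde\beta = (\beta_0, \dotsc, \beta_{p-1}, \tilde\beta_p, z)$. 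Write $a = \prod_{j=0}^{p-2} \mathtt{c}(\beta_j, \beta_{j+1})$ and
\[
Y = \mathtt{c}(\beta_{p-1}, \beta_p)\,\mathtt{c}(\beta_p, z)\,\mathtt{c}(\tilde{\beta}_p, z)^{-1}\,\mathtt{c}(\beta_{p-1}, \tilde{\beta}_p)^{-1}.
\]
Then the generator $\gamma := g_\beta g_{\tilde\beta}^{-1} \in S^p(y,z) \subset H^p(y,z)$ factors as $\gamma = aYa^{-1}$, so $\gamma^+ = a \cdot Y^+$ whenever $Y$ is hyperbolic.

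Two geometric facts drive the argument. First, $a$ is a cocycle product of length $p-1$ satisfying $a \cdot \overline{w} = \overline{u}\,a_{\tau_{p-1}(u)}$ for the appropriate $\overline{w} \in \overline{R}_{\beta_{p-1}}$; since $\Gamma$ is torsion-free convex cocompact (no parabolics), $a$ is hyperbolic with translation length $\asymp \tau_{p-1}(u) \to \infty$. As a M\"obius transformation of $\partial_\infty(\mathbb H^n) \cong \mathbb R^{n-1}\cup\{\infty\}$, $a$ therefore contracts the complement of an exponentially shrinking neighborhood of its repelling fixed point into an exponentially shrinking neighborhood of $u^+$. Second, $Y$ is a bounded element of $\Gamma$ (word length independent of $p$); assuming $Y \neq e$, it is hyperbolic with $Y^+ \in \Lambda(\Gamma)$ fixed independently of $p$. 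Since the repelling direction of $a$ converges to $u^-$ and $Y^+$ can be arranged generically away from $u^-$, we conclude $a \cdot Y^+ \to u^+$ as $p \to \infty$. For $p$ sufficiently large this gives $|\gamma^+ - u^+| < \epsilon$.

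The main obstacle is Step~3: ensuring $Y \neq e$. The identity $Y = e$ would assert that the two 2-step cocycle products $\beta_{p-1} \to \beta_p \to z$ and $\beta_{p-1} \to \tilde\beta_p \to z$ coincide in $\Gamma$. If such a relation held for \emph{all} admissible swaps at every position $i \in \{p - N_T, \dotsc, p\}$, the cocycle values would satisfy a rigid system of identities inconsistent with the Zariski density of $\Gamma$ in $\mathbf G$; thus, after possibly shifting the swap position by a bounded amount (which only changes the length of $a$ by $O(1)$, preserving the contraction estimate), we can guarantee a nontrivial $Y$. This is precisely the point at which the Zariski density of $\Gamma$ must be leveraged — the analogue of the ``trivial'' Schottky argument in \cite{MOW17}, which becomes genuinely substantial in the Markov section setting and represents, as the introduction indicates, the key new technical ingredient of the present paper.
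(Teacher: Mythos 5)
There is a genuine gap, and it sits exactly where you flag it: the nontriviality of the swap element $Y$. Your entire construction rests on producing a nontrivial $\gamma = aYa^{-1} \in S^p(y, z)$, and the proposed remedy — that if every admissible swap at every position in a terminal window gave $Y = e$ then the cocycle values would satisfy ``a rigid system of identities inconsistent with the Zariski density of $\Gamma$'' — is an assertion, not an argument. Zariski density of $\Gamma$ does not visibly forbid coincidences among the finitely many short cocycle products attached to a fixed Markov section, and if all such swaps were trivial you would only have learned that your particular candidate generators vanish; that is perfectly consistent with the hypothesis of the lemma, which is precisely about the degenerate situation in which $H^p(y, z)$ may be very small. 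Note the conclusion is not vacuous then: in the elementary case the sphere is the two-point limit set itself and must meet $B^{\mathrm{E}}_\epsilon(u^+)$, so you genuinely need a nontrivial hyperbolic element with controlled fixed point, which is the statement you have not proved. The paper's proof avoids this issue altogether: it never certifies that any particular element of $H^p(y, z)$ is nontrivial or hyperbolic with a controlled axis. Instead, the non–Zariski-density hypothesis is used (elementary-case analysis plus the Karpelevi\v{c}--Mostow theorem) to place the whole orbit $H^p(y, z)\pi_1(u_0)$ on a hypersurface $V$ with $\partial_\infty(V) = \tilde{V}$, and the forward and backward Markov return dynamics (\cref{lem:ProducingElementsofH,lem:P^p_in_R_z_ForLarge_p,lem:Far_v_0}) produce an orbit point $h\pi_1(u_0) \in V$ Euclidean-close to $u_0^+$; elementary sphere geometry in \cref{lem:LimitSetInSphereNearOrigin} then forces $\tilde{V} \cap B^{\mathrm{E}}_\epsilon(u_0^+) \neq \varnothing$, and whether $h$ is trivial never matters.

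There are also secondary defects that would need repair even if $Y \neq e$ were granted. Topological mixing gives admissible connections from $\beta_{p-1}$ to $z$ in exactly $k$ steps only for $k \geq N_T$, not in two steps, and it certainly does not guarantee two distinct admissible middle letters, so even the existence of your swap is unjustified; shifting the swap position changes $Y$ into a longer word and does not create choices that are not there. The claim that $a = \mathtt{c}^{p-1}(u)$ is hyperbolic with translation length comparable to $\tau_{p-1}(u)$ is unjustified (a large displacement at one point does not bound the translation length from below), and the repelling Cartan direction of $a$ based in the compact region is near the backward endpoint of the lifted vector at the end of the coded block, not near $u^-$; the contraction you want should be phrased via the Cartan decomposition, after which you must still show that the chosen $Y^+$ stays uniformly away from the $p$-dependent repelling neighborhood — ``generically'' does not deliver this when only finitely many candidates for $Y$ exist. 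Finally, the containment of $\Lambda(H^p(y, z))$ in a $0$- or $(n-2)$-dimensional sphere is part of the conclusion and is used but never established in your proposal; in the paper this is the elementary/nonelementary case analysis via Karpelevi\v{c}--Mostow in \cref{lem:LimitSetInSphereNearOrigin}.
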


\begin{proof}[Proof that \cref{lem:LimitSetInSphereNearOriginLargeP_NZ} implies \cref{lem:Z-DenseInG}]
Suppose \cref{lem:Z-DenseInG} is false. Then the set
\begin{align*}
P_{\mathrm{NZ}}(y, z) = \{p \in \mathbb Z_{> 0}: H^p(y, z) \text{ is not Zariski dense in } \mathbf{G}\}
\end{align*}
is unbounded for some $(y, z) \in \mathcal A^2$. Apply any appropriate isometry to assume $\pi_2(\overline{U}_y) = -e_n$ and $\langle \pi_1(\overline{U}_y), e_n\rangle = 1$. By properness of $\overline{U}_y$, there are $w_0 \in \overline{\Omega}$ and $\delta_1 > 0$ such that $W_{\delta_1}^{\mathrm{su}}(w_0) \cap \overline{\Omega} \subset \overline{U}_y$. Consider the diffeomorphism $\Phi: W_{\delta_1}^{\mathrm{su}}(w_0) \to B_{\delta_2}^{\mathrm{E}}(w_0^+)$ defined by $\Phi(w) = w^+$ for all $w \in W_{\delta_1}^{\mathrm{su}}(w_0)$. Since $\Gamma$ is Zariski dense in $\mathbf{G}$, recalling that the set of attracting fixed points of hyperbolic elements of $\Gamma$ is dense in $\Lambda(\Gamma)$, we use the contrapositive of \cite[Proposition 3.12]{Win15} to conclude that $\Lambda(\Gamma) \cap B_{\delta_2}^{\mathrm{E}}(w_0^+) \subset \mathbb R^{n - 1}$ is not contained in any $(n - 2)$-dimensional sphere in $\partial_\infty(\mathbb H^n)$. Thus there is a set of distinct points $\{u_1^+, u_2^+, \dotsc, u_{n + 1}^+\} \subset \Lambda(\Gamma) \cap B_{\delta_2}^{\mathrm{E}}(w_0^+)$ such that it is not contained in any $(n - 2)$-dimensional sphere in $\partial_\infty(\mathbb H^n)$. But $\{(w_1, w_2, \dotsc, w_{n + 1}) \in (\mathbb R^{n - 1})^{n + 1}: \{w_1, w_2, \dotsc, w_{n + 1}\} \text{ is not contained in any } (n - 2)\text{-dimensional sphere in } \partial_\infty(\mathbb H^n)\} \subset (\mathbb R^{n - 1})^{n + 1}$ is an open subset. Hence there is a $\epsilon > 0$ such that if $w_j \in \Lambda(\Gamma) \cap B^{\mathrm{E}}_\epsilon(u_j^+) \neq \varnothing$ for all integers $1 \leq j \leq n + 1$, then $\{w_1, w_2, \dotsc, w_{n + 1}\}$ is also not contained in any $(n - 2)$-dimensional sphere in $\partial_\infty(\mathbb H^n)$. Now there are corresponding vectors $u_j = \Phi^{-1}(u_j^+) \in \overline{U}_y$ for all integers $1 \leq j \leq n + 1$.

\cref{lem:LimitSetInSphereNearOriginLargeP_NZ} provides a corresponding $p_0 \in \mathbb Z_{>0}$ for this $\epsilon$. Since $P_{\mathrm{NZ}}(y, z)$ is unbounded, we can fix some $p \in P_{\mathrm{NZ}}(y, z)$ with $p > p_0$ so that $H^p(y, z)$ is not Zariski dense in $\mathbf{G}$. Then \cref{lem:LimitSetInSphereNearOriginLargeP_NZ} implies that $\Lambda(H^p(y, z))$ is contained in a $0$-dimensional or $(n - 2)$-dimensional sphere $\tilde{V} \subset \partial_\infty(\mathbb H^n)$ such that $\tilde{V} \cap B^{\mathrm{E}}_\epsilon(u_j^+) \neq \varnothing$ for all integers $1 \leq j \leq n + 1$ which is a contradiction.
\end{proof}

\cref{lem:LimitSetInSphereNearOriginLargeP_NZ} is a consequence of the following lemmas put together.

\begin{lemma}
\label{lem:ProducingElementsofH}
Let $p \in \mathbb Z_{>0}$ and $(y, z) \in \mathcal A^2$. If $u_0 \in \hat{\overline{R}}_y$ such that $\mathcal P^{p + 1}(u_0) \in g_z\hat{\overline{R}}_z$ for some $g_z \in \Gamma$ and $v_0 \in g_z\hat{\overline{R}}_z$ such that $\mathcal P^{-(p + 1)}(v_0) \in h\hat{\overline{R}}_y$ for some $h \in \Gamma$, then in fact $h \in S^p(y, z) \subset H^p(y, z)$.
\end{lemma}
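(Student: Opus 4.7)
The strategy is to unravel both hypotheses using the defining cocycle identity, identify $g_z$ and $h$ explicitly in terms of $\mathtt{c}^{p+1}$ evaluated at two specific points of $R$, and then invoke the local constancy of the cocycle to recognise the resulting expression as an element of $S^p(y,z)$.

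First, I interpret $\mathcal{P}$ on lifts $\overline{R}$ in the natural way: for $\overline{u} \in \overline{R}_x$ with projection $u \in R_x$, the identity $\overline{u} a_{\tau(u)} = \mathtt{c}(u)\overline{\mathcal{P}(u)}$ gives $\mathcal{P}(\overline{u}) := \overline{u}a_{\tau(u)} \in \mathtt{c}(u)\overline{R}$, and iterating yields
\begin{equation*}
\mathcal{P}^{k}(\overline{u}) = \overline{u} a_{\tau_k(u)} = \mathtt{c}^k(u)\,\overline{\mathcal{P}^k(u)}
\end{equation*}
for all $k \in \mathbb Z_{\geq 0}$. Writing $u \in \hat{R}_y$ for the projection of $u_0 = \overline{u}$, the hypothesis $\mathcal{P}^{p+1}(u_0) \in g_z \hat{\overline{R}}_z$ combined with $\overline{\mathcal{P}^{p+1}(u)} \in \overline{R}_z$ and uniqueness of the $\Gamma$-translate (obtained from local injectivity of the covering $\T^1(\mathbb H^n) \to \T^1(X)$ together with properness of the $\overline{R}_z$) forces
\begin{equation*}
g_z = \mathtt{c}^{p+1}(u).
\end{equation*}

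Next I analyse $v_0$. Writing $v_0 = g_z \overline{v}$ for the unique $v \in \hat{R}_z$, and letting $w = \mathcal{P}^{-(p+1)}(v) \in \hat{R}_y$, the forward cocycle identity applied to $\overline{w}$ gives $\mathcal{P}^{p+1}(\overline{w}) = \mathtt{c}^{p+1}(w)\,\overline{v}$, so inverting on the lifted side yields
\begin{equation*}
\mathcal{P}^{-(p+1)}(v_0) = g_z\, \mathtt{c}^{p+1}(w)^{-1}\, \overline{w}.
\end{equation*}
Since $\overline{w} \in \overline{R}_y$ and the hypothesis places the left-hand side in $h\hat{\overline{R}}_y$, uniqueness again forces $h = g_z\, \mathtt{c}^{p+1}(w)^{-1}$. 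Substituting the value of $g_z$ from the previous paragraph,
\begin{equation*}
h = \mathtt{c}^{p+1}(u)\,\mathtt{c}^{p+1}(w)^{-1}.
\end{equation*}

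Finally I match this expression against the definition of $S^p(y,z)$. Let $(\alpha_{p+1}, \alpha_p, \dotsc, \alpha_0)$ be the symbolic itinerary of $u$ read so that $u \in \mathtt{C}[\alpha_{p+1}, \alpha_p, \dotsc, \alpha_0]$ (so $\alpha_{p+1}=y$, $\alpha_0=z$), and define $(\tilde\alpha_{p+1}, \dotsc, \tilde\alpha_0)$ analogously for $w$. By \cref{cor:CocyclesLocallyConstantCorollary}, $\mathtt{c}(\sigma^j(u))$ depends only on the pair $(\alpha_{p+1-j},\alpha_{p-j})$, so
\begin{equation*}
\mathtt{c}^{p+1}(u) = \prod_{j=0}^{p}\mathtt{c}(\alpha_{p+1-j},\alpha_{p-j}),\qquad \mathtt{c}^{p+1}(w)^{-1} = \prod_{j=0}^{p}\mathtt{c}(\tilde\alpha_{1+j},\tilde\alpha_j)^{-1},
\end{equation*}
where the inverse product is written in the reversed order to match the definition of $S^p(y,z)$. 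Plugging into the formula for $h$ exhibits it as an element of $S^p(y,z) \subset H^p(y,z)$, as required.

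The only real point to watch is the bookkeeping in the last paragraph: one must index the symbolic sequences so that forward cocycle products for $u$ and $w$ reassemble into the two factors of the product in the definition of $S^p(y,z)$ in the correct order, and in particular that the inverse of $\mathtt{c}^{p+1}(w)$ reverses to give exactly $\prod_{j=0}^p \mathtt{c}(\tilde\alpha_{1+j},\tilde\alpha_j)^{-1}$. Everything else is a direct computation from the definitions.
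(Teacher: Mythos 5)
Your proof is correct and follows essentially the same route as the paper: both arguments express $g_z$ as the forward cocycle product $\mathtt{c}^{p+1}$ along the itinerary of (the projection of) $u_0$, express the second factor as $\mathtt{c}^{p+1}$ along the itinerary of the projection of $\mathcal{P}^{-(p+1)}(v_0)$ (the paper does this via $u_0' = h^{-1}\mathcal{P}^{-(p+1)}(v_0)$ and $g_z' = h^{-1}g_z$), and then read off $h = \mathtt{c}^{p+1}(u)\,\mathtt{c}^{p+1}(w)^{-1} \in S^p(y,z)$. Your version just makes the lift/uniqueness bookkeeping (freeness of the $\Gamma$-action) explicit where the paper says "by definitions," which is fine.
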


\begin{proof}
Let $p \in \mathbb Z_{>0}$ and $(y, z) \in \mathcal A^2$. Suppose $u_0 \in \hat{\overline{R}}_y$ such that $\mathcal P^{p + 1}(u_0) \in g_z\hat{\overline{R}}_z$ for some $g_z \in \Gamma$ and $v_0 \in g_z\hat{\overline{R}}_z$ such that $\mathcal P^{-(p + 1)}(v_0) \in h\hat{\overline{R}}_y$ for some $h \in \Gamma$. Let $u_0' = h^{-1} \mathcal P^{-(p + 1)}(v_0) \in \hat{\overline{R}}_y$. Then $\mathcal P^{p + 1}(u_0') \in g_z'\hat{\overline{R}}_z$ where $g_z' = h^{-1}g_z$. Now by definitions, there are admissible sequences $(\alpha_{p + 1}, \alpha_p, \dotsc, \alpha_0)$ and $(\tilde{\alpha}_{p + 1}, \tilde{\alpha}_p, \dotsc, \tilde{\alpha}_0)$ with $\alpha_{p + 1} = \tilde{\alpha}_{p + 1} = y$ and $\alpha_0 = \tilde{\alpha}_0 = z$ where $g_z = \mathtt{c}^{p + 1}(\alpha_{p + 1}, \alpha_p, \dotsc, \alpha_0) = \prod_{j = 0}^p \mathtt{c}(\alpha_{p + 1 - j}, \alpha_{p - j})$ and $g_z' = \mathtt{c}^{p + 1}(\tilde{\alpha}_{p + 1}, \tilde{\alpha}_p, \dotsc, \tilde{\alpha}_0) = \prod_{j = 0}^p \mathtt{c}(\tilde{\alpha}_{p + 1 - j}, \tilde{\alpha}_{p - j})$. Thus $h = g_z{g_z'}^{-1} = \prod_{j = 0}^p \mathtt{c}(\alpha_{p + 1 - j}, \alpha_{p - j}) \prod_{j = 0}^p \mathtt{c}(\tilde{\alpha}_{1 + j}, \tilde{\alpha}_j)^{-1}$ where $\alpha_{p + 1} = \tilde{\alpha}_{p + 1} \allowbreak = y$ and $\alpha_0 = \tilde{\alpha}_0 = z$ which shows that $h \in S^p(y, z) \subset H^p(y, z)$ by definition.
\end{proof}

We will use the above procedure to produce elements of $S^p(y, z) \subset H^p(y, z)$.

\begin{lemma}
\label{lem:P^p_in_R_z_ForLarge_p}
Let $(y, z) \in \mathcal A^2$. For all $\epsilon > 0$, there exists $p_1 \in \mathbb Z_{>0}$ such that for all integers $p > p_1$, for all $u \in \overline{U}_y$, there exists $u_0 \in \hat{\overline{U}}_y$ with $d_{\mathrm{su}}(u, u_0) < \epsilon$ such that $\mathcal P^{p + 1}(u_0) \in \Gamma\hat{\overline{R}}_z$.
\end{lemma}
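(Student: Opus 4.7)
The plan combines topological mixing of the transition matrix $T$ with the exponential contraction of cylinders under $\sigma^{-1}$ coming from the Anosov property. Let $N_T \in \mathbb Z_{>0}$ be the topological mixing constant, so every entry of $T^k$ is positive for all $k \geq N_T$; equivalently, for every $(i, j) \in \mathcal{A}^2$ and every $k \geq N_T$ there is an admissible word of $k + 1$ symbols starting at $i$ and ending at $j$. Since $\sigma|_U$ is uniformly expanding on strong unstable leaves (with expansion rate bounded below by $e^{\underline{\tau}}$), cylinders shrink exponentially: there exist $c, \lambda > 0$ such that every cylinder $\mathtt{C}$ with $\len(\mathtt{C}) = k$ satisfies $\diam_{d_{\mathrm{su}}}(\mathtt{C}) \leq c e^{-\lambda k}$. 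Fix $p_1$ large enough that $c e^{-\lambda(p_1 - N_T)} < \epsilon/2$ and $p_1 \geq N_T$.

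Now fix $p > p_1$ and $u \in \overline{U}_y$, and let $u' = \Gamma u \in U_y$ denote its projection. The open cylinders of length $p + 1 - N_T$ starting with $y$ partition the dense residual core $\hat{U}_y \subset U_y$, so I may pick a point $v \in \hat{U}_y$ with $d_{\mathrm{su}}(v, u') < \epsilon/2$ sitting in a uniquely determined cylinder $\mathtt{C}[y, x_1, \ldots, x_{p + 1 - N_T}]$; by the diameter bound, every point of this cylinder then lies within $d_{\mathrm{su}}$-distance $\epsilon$ of $u'$. By topological mixing, $T^{N_T}_{x_{p + 1 - N_T}, z} > 0$, so I choose an admissible bridge $(x_{p + 1 - N_T}, \tilde{x}_{p + 2 - N_T}, \ldots, \tilde{x}_p, z)$, producing an admissible word $\alpha = (y, x_1, \ldots, x_{p + 1 - N_T}, \tilde{x}_{p + 2 - N_T}, \ldots, \tilde{x}_p, z)$ and a cylinder $\mathtt{C}_\alpha \subset \mathtt{C}[y, x_1, \ldots, x_{p + 1 - N_T}]$ of length $p + 1$ ending with symbol $z$.

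Select $u_0' \in \mathtt{C}_\alpha \cap \hat{R} \cap \hat{U}$, which is nonempty since $\hat{R}$ and $\hat{U}$ are each residual subsets and $\mathtt{C}_\alpha$ is a nonempty open set. Lift $u_0'$ through the homeomorphism $\overline{U}_y \to U_y$ to obtain $u_0 \in \hat{\overline{U}}_y$; the core property transfers because it is purely symbolic. Then $d_{\mathrm{su}}(u, u_0) = d_{\mathrm{su}}(u', u_0') < \epsilon$ by construction, and since $u_0'$ has symbol $z$ at position $p + 1$ and $u_0' \in \hat{R}$, we obtain $\mathcal{P}^{p + 1}(u_0') \in \hat{R}_z$. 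Lifting via the cocycle, $\mathcal{P}^{p + 1}(u_0) = u_0 a_{\tau_{p + 1}(u_0')} \in \mathtt{c}^{p + 1}(u_0') \hat{\overline{R}}_z \subset \Gamma \hat{\overline{R}}_z$, as required. The main technical input is the exponential diameter bound for cylinders, which rests on uniform expansion of $\sigma$ on strong unstable leaves (an Anosov-type estimate); the rest is routine symbolic dynamics combined with residuality of the cores and cocycle bookkeeping, with the only subtlety being that $u'$ may lie on the boundary of several cylinders, which is handled by the approximation through $v \in \hat{U}_y$.
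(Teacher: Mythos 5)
Your construction follows essentially the same route as the paper's proof: approximate $u$ by a point of the core, run the dynamics forward $p+1-N_T$ steps, use topological mixing of $T$ to append an admissible bridge of length $N_T$ ending at the symbol $z$, and control the distance back at $\overline{U}_y$ by exponential contraction. The paper phrases this geometrically --- it takes $u_1$ in the core near $u$, sets $u_2 = \mathcal{P}^{p+1-N_T}(u_1)$, picks a core point $u_3$ of the same rectangle with $\mathcal{P}^{N_T}(u_3)$ in a $\Gamma$-translate of $\hat{\overline{R}}_z$, and pulls back the bracket $[u_3,u_2]$ --- while you phrase it symbolically through cylinders and their diameter decay. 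Up to that translation the skeleton, the choice of $p_1$, and your distance estimate are all fine.

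The genuine gap is the selection step: you take $u_0' \in \mathtt{C}_\alpha \cap \hat{R} \cap \hat{U}$ and justify nonemptiness by saying that $\hat{R}$ and $\hat{U}$ are residual and $\mathtt{C}_\alpha$ is open. The part involving $\hat{U}$ is fine ($\hat{U}$ is residual in $U$ and $\mathtt{C}_\alpha$ is open in $U$), but $\hat{R}$ is residual in $R$, not in $U$, and $U$ is a slice of $R$ with empty interior in $R$ (each $U_j = [U_j, \text{center}]$), so Baire category in $R$ says nothing about $\hat{R} \cap \mathtt{C}_\alpha$; a residual subset of $R$ can in principle miss the slice $U_y$ entirely. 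Yet $u_0' \in \hat{R}$ is exactly what you use to conclude $\mathcal{P}^{p+1}(u_0) \in \Gamma\hat{\overline{R}}_z$ rather than merely a point of $\Gamma\overline{R}_z$ with the right forward itinerary, and this condition is not generic in the unstable slice: whether the backward $\mathcal{P}$-orbit of a point of $U_y$ stays in $\interior(R)$ is governed by its stable coordinate, which is the same (the center's) for every point of $U_y$, together with the backward itinerary, so genericity inside the cylinder $\mathtt{C}_\alpha$ only controls the forward/unstable data. This is precisely why the paper does not argue by genericity inside $\overline{U}_y$ at the far end: it secures the core membership of the image by bracketing with core points, obtaining $\mathcal{P}^{p+1}(u_0) = \mathcal{P}^{N_T}([u_3,u_2]) \in [v, g_z\hat{\overline{S}}_z] \subset g_z\hat{\overline{R}}_z$ with $v = \mathcal{P}^{N_T}(u_3)$ a genuine core point. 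To repair your write-up you would either have to prove that $\hat{R} \cap \mathtt{C}$ is dense in every cylinder $\mathtt{C} \subset U_y$ --- a statement about the backward orbit of the center's stable coordinate that does not follow from residuality of $\hat{R}$ in $R$ --- or replace the selection step by the paper's bracket construction.
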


\begin{proof}
Fix $c_0, \lambda_0, \epsilon_0$ to be the constants $c, \log(\lambda), q$ respectively from the Anosov property in \cite{Rat73}. Recall the trajectory isomorphism $\psi$ from \cite{Rat73}. Also recall $\underline{\tau} = \inf_{u' \in R} \tau(u') = \inf_{u' \in \overline{R}} \tau(u')$. Let $(y, z) \in \mathcal A^2$. Let $\epsilon > 0$. Fix an integer $p_1 \geq N_T$ such that ${c_0}^2 \hat{\delta} e^{\underline{\tau} - \lambda_0 \underline{\tau} (p + 1 - N_T)} < \frac{\epsilon}{2}$ and let $p > p_1$ be an integer. Let $u \in \overline{U}_y$. Since $\hat{\overline{U}}_y \subset \overline{U}_y$ is a residual set, there is a $u_1 \in \hat{\overline{U}}_y$ with $d_{\mathrm{su}}(u_1, u) < \frac{\epsilon}{2}$. Consider $u_2 = \mathcal P^{p + 1 - N_T}(u_1) \in g_{y'} \hat{\overline{R}}_{y'}$ for some $g_{y'} \in \Gamma$. Since $T$ is topologically mixing and $g_{y'}\hat{\overline{R}}_{y'} \subset g_{y'}\overline{R}_{y'}$ is a residual set, there is a $u_3 \in g_{y'}\hat{\overline{R}}_{y'}$ such that $v = \mathcal P^{N_T}(u_3) \in g_z\hat{\overline{R}}_z$ for some $g_z \in \Gamma$. Let
\begin{align*}
u_0 = \mathcal P^{-(p + 1 - N_T)}([u_3, u_2]) \in \mathcal P^{-(p + 1 - N_T)}([g_{y'}\hat{\overline{U}}_{y'}, u_2]) \subset [\hat{\overline{U}}_y, u_1] = \hat{\overline{U}}_y.
\end{align*}
Denote $w = [w_0, u_2] \in \overline{S}_{y'}$ where $w_0 \in \overline{R}_{y'}$ is the center. Then
\begin{align*}
\mathcal P^{p + 1}(u_0) &= (\mathcal P^{N_T} \circ \mathcal P^{p + 1 - N_T})(\mathcal P^{-(p + 1 - N_T)}([u_3, u_2])) = \mathcal P^{N_T}([u_3, u_2]) \\
&\in \mathcal P^{N_T}([u_3, g_{y'}\hat{\overline{S}}_{y'}]) \subset [v, g_z\hat{\overline{S}}_z] \subset g_z\hat{\overline{R}}_z
\end{align*}
as desired. Also
\begin{align}
\label{eqn:HyperbolicityComputation}
d_{\mathrm{su}}(u_1, u_0) &< c_0e^{-\lambda_0 \tau_{p + 1 - N_T}(u_1)}d_{\mathrm{su}}(u_2, {\psi_{u_2}}^{-1}([u_3, u_2])) \\
&< {c_0}^2e^{\underline{\tau} - \lambda_0 \tau_{p + 1 - N_T}(u_1)}d_{\mathrm{su}}({\psi_w}^{-1}(u_2), {\psi_w}^{-1}([u_3, u_2])) \\
&< {c_0}^2 \hat{\delta} e^{\underline{\tau} - \lambda_0 \underline{\tau} (p + 1 - N_T)} < \frac{\epsilon}{2}
\end{align}
since $p > p_1$. This implies $d_{\mathrm{su}}(u, u_0) < \epsilon$. See the construction of Markov sections in \cite{Rat73} for more details for the bounds.
\end{proof}

\begin{lemma}
\label{lem:Far_v_0}
Let $(y, z) \in \mathcal A^2$. There exist $B > 0$ and $p_2 \in \mathbb Z_{>0}$ such that for all $g_z \in \Gamma$, for all $v \in g_z\hat{\overline{R}}_z$, for all integers $p > p_2$, the following holds. Applying any appropriate isometry, we assume that $v = (e_n, -e_n)$. Then there exists $v_0 \in [v, g_z\hat{\overline{S}}_z]$ such that
\begin{enumerate}
\item \label{itm:BackwardLimitPointBound} $v_0^- \in B_B^{\mathrm{E}}(0)$
\item \label{itm:BackwardFlowToR_y} $\mathcal P^{-(p + 1)}(v_0) \in \Gamma\hat{\overline{R}}_y$.
\end{enumerate}
\end{lemma}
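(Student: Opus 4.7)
The strategy is to adapt the proof of \cref{lem:P^p_in_R_z_ForLarge_p} to backward iterations, with a preliminary geometric step ensuring that the slice $[v, g_z \hat{\overline{S}}_z]$ contains a point whose backward limit is uniformly bounded.

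For the geometric step, note that $\mathcal A$ is finite, each $\overline{S}_z$ is proper with positive $d_{\mathrm{ss}}$-diameter, and the bracket map $[v, \cdot]\colon g_z \overline{S}_z \to W^{\mathrm{ss}}(v)$ is uniformly bilipschitz on the compact set $\Omega$; thus there is a uniform $\delta' > 0$ with $\diam_{d_{\mathrm{ss}}}([v, g_z \overline{S}_z]) \geq \delta'$. Using the residuality of $g_z \hat{\overline{S}}_z$ in $g_z \overline{S}_z$, pick $v_1 \in [v, g_z \hat{\overline{S}}_z]$ with $d_{\mathrm{ss}}(v, v_1) \geq \delta'/2$. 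Under the isometric normalization $v = (e_n, -e_n)$, $W^{\mathrm{ss}}(v)$ is identified with the horosphere based at $0$ passing through $e_n$ (the Euclidean sphere of radius $1/2$ centered at $e_n/2$), equipped with inward unit normals. A direct calculation in these coordinates shows that a vector on this horosphere whose basepoint lies at Euclidean distance $\epsilon$ from $e_n$ has backward limit in $\mathbb R^{n - 1}$ of Euclidean magnitude comparable to $1/\epsilon$; since $d_{\mathrm{ss}}$ is comparable to the Euclidean distance near $e_n$ on the horosphere, we obtain $v_1^- \in B_{B_0}^{\mathrm{E}}(0)$ for some uniform $B_0 > 0$.

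For the main construction, paralleling \cref{lem:P^p_in_R_z_ForLarge_p} with backward iterations, let $v_2 = \mathcal P^{-(p + 1 - N_T)}(v_1) \in h_{y'} \hat{\overline{R}}_{y'}$ for some $h_{y'} \in \Gamma$ and $y' \in \mathcal A$. Topological mixing of $T$ applied to the pair $(y, y')$ produces $v_3 \in h_{y'} \hat{\overline{R}}_{y'}$ with $\mathcal P^{-N_T}(v_3) \in h \hat{\overline{R}}_y$ for some $h \in \Gamma$. Define $v_0 = \mathcal P^{p + 1 - N_T}([v_2, v_3])$. Iterating the forward Markov property $\mathcal P([u, \interior(S_j)]) \subset [\mathcal P(u), \interior(S_k)]$ yields $v_0 \in [v_1, g_z \hat{\overline{S}}_z] = [v, g_z \hat{\overline{S}}_z]$; iterating the backward Markov property $\mathcal P^{-1}([\interior(U_k), w]) \subset [\interior(U_j), \mathcal P^{-1}(w)]$ yields $\mathcal P^{-(p + 1)}(v_0) = \mathcal P^{-N_T}([v_2, v_3]) \in [\hat{\overline{U}}_y, \mathcal P^{-N_T}(v_3)] \subset h \hat{\overline{R}}_y \subset \Gamma \hat{\overline{R}}_y$, establishing (2).

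Finally, the Anosov hyperbolicity estimate (forward flow contracts $d_{\mathrm{ss}}$) gives $d_{\mathrm{ss}}(v_0, v_1) \leq c_0 \hat{\delta} e^{-\lambda_0 \underline{\tau}(p + 1 - N_T)}$, which is arbitrarily small for $p$ large. Since $v_1$ is uniformly separated from $v$ in $d_{\mathrm{ss}}$ by at least $\delta'/2$, the map $w \mapsto w^-$ is uniformly Lipschitz on a neighborhood of $v_1$ in $W^{\mathrm{ss}}(v)$, so fixing $p_2$ sufficiently large yields $d_{\mathrm{E}}(v_0^-, v_1^-) < 1$, whence $v_0^- \in B_B^{\mathrm{E}}(0)$ with $B = B_0 + 1$, establishing (1). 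The main technical concern is the uniformity of the various bilipschitz constants (for the bracket map and for $w \mapsto w^-$) as $g_z$ ranges over $\Gamma$ and $v$ over $g_z \hat{\overline{R}}_z$; this is handled by compactness of $\Omega$ together with finiteness of the Markov section data.
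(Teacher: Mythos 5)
Your proof is correct, but it is organized differently from the paper's. The paper fixes, in advance and independently of $v$ and $g_z$, two candidate stable coordinates $\tilde{w}_1, \tilde{w}_2 \in \hat{\overline{S}}_z$ at definite $d_{\mathrm{ss}}$-distance from each other, each satisfying $\mathcal P^{-(p+1)}(\tilde{w}_j) \in \Gamma\hat{\overline{R}}_y$ (by the backward analogue of \cref{lem:P^p_in_R_z_ForLarge_p}); given $v$, a dichotomy selects the candidate far from $v$'s stable coordinate, one sets $v_0 = [v, g_z\tilde{v}_0]$, and a compactness argument gives the uniform lower bound $d_{\mathrm{ss}}(v, v_0) > \delta$, after which the diffeomorphism $w \mapsto w^-$ on the fixed compact annulus yields $v_0^- \in B^{\mathrm{E}}_B(0)$. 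You instead choose a $v$-dependent point $v_1 \in [v, g_z\hat{\overline{S}}_z]$ uniformly far from $v$ and then correct it: pull back by $\mathcal P^{-(p+1-N_T)}$, replace the stable coordinate using topological mixing, and push forward, so that the corrected point $v_0$ retains the backward itinerary into $\Gamma\hat{\overline{R}}_y$ while lying exponentially close to $v_1$; the bound on $v_0^-$ is then transferred from $v_1^-$ by local Lipschitz continuity of $w \mapsto w^-$ away from $v$. This trades the paper's two-point dichotomy for applying the same mixing-plus-contraction correction mechanism directly to the $v$-dependent point, which is a legitimate and arguably cleaner arrangement of the same ingredients; note also that your argument still produces the annulus containment $\delta'/4 \leq d_{\mathrm{ss}}(v, v_0) \leq \epsilon_0$, which is what the paper actually reuses (via the sets $A^{\mathrm{ss}}_{\delta,\hat\delta}(v)$ and $A^{\mathrm{E}}_{\delta_2, B}(0)$) in the proof of \cref{lem:LimitSetInSphereNearOrigin}, so nothing downstream is lost. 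Two points should be made explicit rather than asserted: the positive $d_{\mathrm{ss}}$-diameter of $\overline{S}_z$ comes from $\Lambda(\Gamma)$ being perfect (nonelementarity), not from properness alone, exactly as in the paper's own proof; and the memberships $v_1, [v_2, v_3], v_0$ in the respective cores and $v_0 \in [v, g_z\hat{\overline{S}}_z]$ require the standard itinerary-tracking via both Markov inclusions (forward itinerary governed by the unstable coordinate, backward by the stable one), together with constancy of $\tau$ on stable slices so that $v_0$ and $v_1$ are compared at the same flow time in the contraction estimate; with Ratner's formulation of the Anosov property your constant $c_0\hat\delta e^{-\lambda_0\underline{\tau}(p+1-N_T)}$ may pick up the same bookkeeping factors as in \cref{eqn:HyperbolicityComputation}, which only affects the choice of $p_2$.
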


\begin{proof}
Let $(y, z) \in \mathcal A^2$. Let $w_0 \in \overline{R}_z$ be the center. Fix some distinct $w_1, w_2 \in \overline{S}_z \neq \varnothing$ which is possible using the definition of $\overline{S}_z$ and the fact that $\Lambda(\Gamma)$ is a nonempty perfect set (since $\Gamma$ is nonelementary). Let $\delta_1 = \frac{1}{5}d_{\mathrm{ss}}(w_1, w_2) > 0$. Now we make a modification to ensure that \cref{itm:BackwardFlowToR_y} of the lemma holds. By a similar argument as in the proof of \cref{lem:P^p_in_R_z_ForLarge_p} with minor modifications, we conclude that there is a $p_2 \in \mathbb Z_{>0}$ such that for any chosen $p > p_2$, for all $j \in \{1, 2\}$, there is a $\tilde{w}_j \in \hat{\overline{S}}_z$ with $d_{\mathrm{ss}}(\tilde{w}_j, w_j) < \delta_1$ such that $\mathcal P^{-(p + 1)}(\tilde{w}_j) \in \Gamma \hat{\overline{R}}_y$.

Now suppose $v \in g_z\hat{\overline{R}}_z$ for some $g_z \in \Gamma$. If $[w_0, {g_z}^{-1}v] \in W_{\delta_1}^{\mathrm{ss}}(\tilde{w}_1)$, then set $\tilde{v}_0 = \tilde{w}_2$ and otherwise set $\tilde{v}_0 = \tilde{w}_1$. Then $d_{\mathrm{ss}}([w_0, {g_z}^{-1}v], \tilde{v}_0) > \delta_1$ and also $\mathcal P^{-(p + 1)}(\tilde{v}_0) \in \Gamma \hat{\overline{R}}_y$. Set $v_0 = [v, g_z\tilde{v}_0] \in [v, g_z \hat{\overline{S}}_z]$. Now by left $G$-invariance we have $d_{\mathrm{ss}}([g_z w_0, v], g_z \tilde{v}_0) > \delta_1$ and we need to extend the bound to obtain a uniform lower bound for $d_{\mathrm{ss}}(v, v_0)$. Using the homeomorphism $\overline{U}_z \times \overline{S}_z \to \overline{R}_z$ defined by $(v', w') \mapsto [v', w']$ and the compactness of the sets $\overline{U}_z$ and $\overline{S}_z$, there is a $\delta \in (0, \delta_1)$ such that
\begin{align*}
\inf\left\{d_{\mathrm{ss}}([v', u], [v', u']): v' \in \overline{R}_z \text{ and } u, u' \in \overline{S}_z \text{ with } d_{\mathrm{ss}}(u, u') \geq \delta_1\right\} > \delta
\end{align*}
where $\delta$ only depends on the rectangle $\overline{R}_z$. This implies that $d_{\mathrm{ss}}(v, v_0) > \delta$. We can also check that
\begin{align*}
\mathcal P^{-(p + 1)}(v_0) = \mathcal P^{-(p + 1)}([v, g_z\tilde{v}_0]) \in \mathcal P^{-(p + 1)}([g_z \hat{\overline{U}}_z, g_z\tilde{v}_0]) \subset \Gamma \hat{\overline{R}}_y
\end{align*}
by the Markov property since $\mathcal P^{-(p + 1)}(g_z \tilde{v}_0) \in \Gamma \hat{\overline{R}}_y$. So \cref{itm:BackwardFlowToR_y} still holds.

Apply any appropriate isometry to assume $v = (e_n, -e_n)$. Then $v_0 \in A_{\delta, \hat{\delta}}^{\mathrm{ss}}(v) = W_{\hat{\delta}}^{\mathrm{ss}}(v) \setminus \overline{W_{\delta}^{\mathrm{ss}}(v)}^{\mathrm{ss}}$. Note that by homogeneity of $\mathbb H^n$ and compactness of $\overline{A_{\delta, \hat{\delta}}^{\mathrm{ss}}(v)}^{\mathrm{ss}} \subset \mathbb H^n \times \mathbb R^n$, the distance function $d_{\mathrm{ss}}|_{\overline{A_{\delta, \hat{\delta}}^{\mathrm{ss}}(v)}^{\mathrm{ss}}}$ and the \emph{Euclidean} distance function $d_{\mathrm{E}}|_{\overline{A_{\delta, \hat{\delta}}^{\mathrm{ss}}(v)}^{\mathrm{ss}}}$ (measuring the distance between basepoints of tangent vectors along the horosphere) are equivalent, where the implied constant is uniform in $g_z \in \Gamma$ and $v \in g_z \overline{R}_z$ (since we are assuming $v = (e_n, -e_n)$). Since $v^- = \infty$, there is a diffeomorphism $\Phi: \overline{A_{\delta, \hat{\delta}}^{\mathrm{ss}}(v)}^{\mathrm{ss}} \to \overline{A_{\delta_2, B}^{\mathrm{E}}(0)}$ for some $0 < \delta_2 < B$ defined by $\Phi(w) = w^-$ for all $w \in \overline{A_{\delta, \hat{\delta}}^{\mathrm{ss}}(v)}^{\mathrm{ss}}$, where $A_{\delta_2, B}^{\mathrm{E}}(0) = B_B^{\mathrm{E}}(0) \setminus \overline{B_{\delta_2}^{\mathrm{E}}(0)} \subset \mathbb R^{n - 1}$ and $B$ only depends on $\delta$. Thus $v_0^- \in B_B^{\mathrm{E}}(0)$ which is \cref{itm:BackwardLimitPointBound}.
\end{proof}

In the proof of \cref{lem:LimitSetInSphereNearOrigin}, we will also denote by $B^{\mathrm{E}}_\epsilon(u) \subset \mathbb R^n$ the open Euclidean ball of radius $\epsilon > 0$ centered at $u \in \mathbb R^n$ where we will explicitly show the containment to be clear and otherwise the notation will mean the open Euclidean ball in $\mathbb R^{n - 1}$ as defined before.

\begin{comment}
\begin{lemma}
\label{lem:LimitSetInSphereNearOrigin}
Suppose $u_0 \in \hat{\overline{R}}_y$ such that $v = \mathcal P^{p + 1}(u_0) \in \Gamma\hat{\overline{R}}_z$ for some $p \in \mathbb Z_{>0}$. Applying any appropriate isometry, we assume that $u_0 = (e_n, -e_n)$. Then for all $\epsilon > 0$, there is a $p_3 \in \mathbb Z_{> 0}$ such that if $p > p_3$ and $H^p(y, z)$ is not Zariski dense, then $\Lambda(H^p(y, z))$ is contained in a $n - 2$ dimensional sphere $\tilde{V} \subset \partial \mathbb H^n$ such that $d_{\mathrm{E}}(\tilde{V} \cap \mathbb R^{n - 1}, 0) < \epsilon$.
\end{lemma}
\end{comment}

\begin{lemma}
\label{lem:LimitSetInSphereNearOrigin}
Let $(y, z) \in \mathcal A^2$. For all $\epsilon > 0$, there exists $p_3 \in \mathbb Z_{> 0}$ such that the following holds. Suppose that $H^p(y, z)$ is not Zariski dense in $\mathbf{G}$ for some integer $p > p_3$. Suppose further that there exists $u_0 \in \hat{\overline{R}}_y$ such that $v = \mathcal P^{p + 1}(u_0) \in \Gamma\hat{\overline{R}}_z$. Applying any appropriate isometry, we assume that $u_0 = (e_n, -e_n)$. Then $\Lambda(H^p(y, z))$ is contained in a $0$-dimensional or $(n - 2)$-dimensional sphere $\tilde{V} \subset \partial_\infty(\mathbb H^n)$ such that $\tilde{V} \cap B_\epsilon^{\mathrm{E}}(0) \neq \varnothing$.
\end{lemma}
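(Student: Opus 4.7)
The plan is to locate a point of $\Lambda(H^p(y,z))$ within Euclidean distance $\epsilon$ of $0 = u_0^+$, and then invoke the non-Zariski-density assumption to force $\Lambda(H^p(y,z))$ into a generalized sphere $\tilde V$ which must therefore pass through this point. The candidate will be the attracting fixed point of a loxodromic element $h \in H^p(y,z)$ produced by \cref{lem:Far_v_0,lem:ProducingElementsofH}.

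First, set $v = \mathcal{P}^{p+1}(u_0) \in g_z\hat{\overline{R}}_z$ for some $g_z \in \Gamma$. In our normalization $u_0 = (e_n, -e_n)$, the vector $v$ sits on the downward vertical geodesic from $\infty$ to $0$, with basepoint $e^{-T}e_n$ where $T = \tau_{p+1}(u_0) \geq (p+1)\underline{\tau}$. Apply \cref{lem:Far_v_0} (requiring $p > p_2$) to produce $v_0 \in [v, g_z\hat{\overline{S}}_z]$ with $\mathcal{P}^{-(p+1)}(v_0) \in h\hat{\overline{R}}_y$ for some $h \in \Gamma$. The bound $v_0^- \in B_B^{\mathrm{E}}(0)$ in \cref{lem:Far_v_0} is stated after normalizing $v$ to $(e_n,-e_n)$; pulling back to our $u_0$-frame via the Euclidean dilation by $e^{-T}$ (the hyperbolic isometry fixing $0$ and $\infty$) converts this into $v_0^- \in B_{Be^{-T}}^{\mathrm{E}}(0)$. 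By \cref{lem:ProducingElementsofH}, $h \in S^p(y,z) \subset H^p(y,z)$.

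Second, I claim that $h$ is loxodromic with attracting fixed point $h^+$ at Euclidean distance $\leq C' e^{-\lambda_0 T}$ from $0$ for an absolute constant $C'$. Set $u_0' = h^{-1}\mathcal{P}^{-(p+1)}(v_0) \in \hat{\overline{R}}_y$, so that $hu_0' = \mathcal{P}^{-(p+1)}(v_0)$. Nontriviality of $h$ follows from $h = g_z g_z'^{-1}$ (with $g_z'$ the cocycle along the itinerary of $u_0'$) together with the uniform $d_{\mathrm{ss}}$-gap between $v_0$ and $v$ guaranteed by \cref{lem:Far_v_0}, which forces $u_0$ and $u_0'$ to lie in distinct cylinders and hence $g_z \neq g_z'$. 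Since $\Gamma$ is convex cocompact and torsion-free, $h$ is then loxodromic with fixed points $h^\pm \in \partial_\infty(\mathbb H^n)$. The vector $hu_0'$ has forward endpoint $v_0^+ = v^+ = u_0^+ = 0$ and backward endpoint $v_0^- \in B_{Be^{-T}}^{\mathrm{E}}(0)$, so its whole axis is pinched into a tiny Euclidean disk around $0$, while $u_0' \in \hat{\overline{R}}_y$ is macroscopic. Thus $h$ is an almost-closing hyperbolic loop of translation length $\approx T$ whose axis shadows the forward orbit of $u_0$ down to $u_0^+ = 0$. Combining the Anosov contraction rate $c_0 e^{-\lambda_0 T}$ of $\mathcal{P}^{-(p+1)}$ on strong unstable leaves (as in \cref{eqn:HyperbolicityComputation}) with the standard shadowing estimate identifying $h^+$ with the forward endpoint of the shadowed geodesic up to an $O(e^{-\lambda_0 T})$ error on $\partial_\infty$, one obtains the bound. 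Choose $p_3 \in \mathbb Z_{>0}$ so that $C'e^{-\lambda_0(p+1)\underline\tau} < \epsilon$ for all $p > p_3$, giving $h^+ \in B_\epsilon^{\mathrm{E}}(0)$.

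Third, since by hypothesis $H^p(y,z)$ is not Zariski dense in $\mathbf{G}$ and its loxodromic fixed points are dense in $\Lambda(H^p(y,z))$, the contrapositive of \cite[Proposition 3.12]{Win15}, invoked exactly as in the proof passage for \cref{lem:Z-DenseInG}, gives a $0$-dimensional or $(n-2)$-dimensional generalized sphere $\tilde V \subset \partial_\infty(\mathbb H^n)$ with $\Lambda(H^p(y,z)) \subset \tilde V$. As $h^+ \in \Lambda(H^p(y,z)) \subset \tilde V$ and $h^+ \in B_\epsilon^{\mathrm{E}}(0)$, we conclude $\tilde V \cap B_\epsilon^{\mathrm{E}}(0) \neq \varnothing$. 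The main obstacle is the middle step: quantifying $|h^+|_{\mathrm{E}} = O(e^{-\lambda_0 T})$, which requires carefully combining the Anosov backward-contraction rate on strong unstable leaves with the identification of $h$'s axis as an almost-closing geodesic loop of length $T \geq (p+1)\underline\tau$ shadowing the forward orbit of $u_0$.
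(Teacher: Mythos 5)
Your overall skeleton — produce $h \in S^p(y,z)$ via \cref{lem:ProducingElementsofH,lem:Far_v_0}, rescale by the dilation $\iota$, and then exploit non-Zariski-density — matches the paper's, and your bookkeeping of the rescaling (so that $v_0^- \in B^{\mathrm{E}}_{Be^{-T}}(0)$ in the $u_0$-frame) is fine. But the two steps that carry the actual content have genuine gaps. First, your key quantitative claim is that the \emph{attracting fixed point} $h^+$, hence a point of $\Lambda(H^p(y,z))$, lies within $O(e^{-\lambda_0 T})$ of $0$, justified by calling $h$ an ``almost-closing hyperbolic loop'' and invoking a ``standard shadowing estimate.'' That mechanism does not apply here: $h = g_z (g_z')^{-1}$ is a difference of cocycles along \emph{two different} admissible words from $y$ to $z$ (the backward leg through $v_0$ follows the itinerary of $\tilde{v}_0$, not that of $u_0$), so the orbit of $u_0$ is not nearly closed up by $h$ and the Anosov closing/shadowing lemma gives no identification of the axis of $h$ with a shadow of $u_0$'s orbit segment. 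What the construction actually yields is weaker: $h$ sends the two uniformly separated endpoints $u_0'^{\pm}$ (and the whole basepoint set $\pi_1(\overline{R}_y)$) into an $O(e^{-T})$-neighborhood of $0$. Upgrading that to ``$h^+$ is near $0$'' is a nontrivial M\"obius-geometry statement — one must rule out that the nearby fixed point is the repelling one and handle elements of bounded translation length, e.g.\ via cross-ratio/normal-form estimates — and no such argument appears in your proposal; the claimed rate $e^{-\lambda_0 T}$ in particular is unsubstantiated. The paper sidesteps all of this: it never locates a limit point near $0$, only the \emph{orbit point} $h\pi_1(u_0)$, which lies on the $H^p(y,z)$-invariant totally geodesic hypersurface $V$ through $e_n$, and then elementary Euclidean computations with the sphere (or affine hyperplane, or banana hypersurface in the elementary case) through $e_n$ and a point of $B^{\mathrm{E}}_{\epsilon_1}(0)$ force $\tilde{V} = \partial_\infty(V)$ to meet $B^{\mathrm{E}}_\epsilon(0)$.

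Second, your final step deduces $\Lambda(H^p(y,z)) \subset \tilde{V}$ for a $0$- or $(n-2)$-dimensional sphere from ``the contrapositive of \cite[Proposition 3.12]{Win15}.'' That citation runs in the wrong direction: the proposition (as used in the proof that \cref{lem:Z-DenseInG} follows from \cref{lem:LimitSetInSphereNearOriginLargeP_NZ}) gives ``fixed points contained in a proper subsphere $\Rightarrow$ not Zariski dense,'' equivalently ``Zariski dense $\Rightarrow$ not contained in a subsphere''; you need the converse implication ``not Zariski dense $\Rightarrow$ $\Lambda$ contained in a $0$- or $(n-2)$-sphere,'' which is exactly what the paper has to establish via the Karpelevi\v{c}--Mostow theorem in the nonelementary case, together with the separate elementary case where $\Lambda$ is a two-point set and the relevant invariant hypersurface is $\partial(\Banana(\tilde{V}_1,\tilde{V}_2))$. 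Your proposal omits this case analysis entirely, so even granting the fixed-point claim, the containment in $\tilde{V}$ is not justified as written. (Your nontriviality argument for $h$ is also slightly off — distinctness of the words does not follow from the $d_{\mathrm{ss}}$-gap between $v$ and $v_0$ alone — but $h \neq e$ does follow easily from the scale separation between $\overline{R}_y$ and $h\overline{R}_y$ for large $p$, so this is minor.)
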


\begin{proof}
%S^{n - 2}(c_S, r_S)
First we fix some notations and constants. Let $(y, z) \in \mathcal A^2$. Without loss of generality, let $\epsilon \in (0, 1)$. We will fix a $\epsilon_{1, 1} > 0$ which will be required for the case when $n > 2$ and $H^p(y, z)$ is elementary and a $\epsilon_{1, 2} > 0$ for the other cases as follows. Let $C = \partial_\infty(\mathbb H^n) \setminus B_\epsilon^{\mathrm{E}}(0)$. Define $\Banana(c_1, c_2) \subset \mathbb H^n$ to be the unique banana neighborhood of the geodesic with limit points $c_1, c_2 \in \partial_\infty(\mathbb H^n)$ such that $e_n \in \partial(\Banana(c_1, c_2))$. Consider the function $\varphi: C \times C \to \mathbb R$ defined by
\begin{align*}
&\varphi(c_1, c_2) \\
={}&\inf\Big\{u_n \in \mathbb R_{>0}: (u_1, u_2, \dotsc, u_n) \in \partial(\Banana(c_1, c_2)) \cap \Big(\overline{B_{\epsilon/2}^{\mathrm{E}}(0)} \times \mathbb R_{>0}\Big)\Big\}
\end{align*}
for all $(c_1, c_2) \in C \times C$. Fix $\epsilon_{1, 1}' = \inf_{(c_1, c_2) \in C \times C} \varphi(c_1, c_2)$. Then $\epsilon_{1, 1}' > 0$ because $C \times C$ is compact and $\varphi$ is positive continuous. Fix $\epsilon_{1, 1} = \min\left(\frac{\epsilon}{2}, \epsilon_{1, 1}'\right)$. Now fix
\begin{align*}
\epsilon_{1, 2} = \min\left(\frac{1}{2}, \frac{\epsilon^2}{3}, \frac{\epsilon^2}{12}, \frac{\epsilon}{\sqrt{6}}\right)
\end{align*}
where the bounds will become clear later. Fix $\epsilon_1 = \min(\epsilon_{1, 1}, \epsilon_{1, 2})$. Fix $B > 0$ and $p_2 \in \mathbb Z_{>0}$ to be the constants provided by \cref{lem:Far_v_0}. Fix $L_0 > 0$ such that $\frac{B}{L_0} < \frac{\epsilon_1}{2}$ and a $\epsilon_2 \in (0, B)$.

%See https://www.aimath.org/WWN/surfacegroups/strubel.pdf (specially Theorem 2.2.4 to see the connection to totally geodesic submanifolds from Mostow's theorem)
Suppose $H^p(y, z)$ is not Zariski dense in $\mathbf{G}$ for some $p \in \mathbb Z_{>0}$. Suppose $u_0 \in \hat{\overline{R}}_y$ such that $v = \mathcal P^{p + 1}(u_0) \in g_z\hat{\overline{R}}_z$ for some $g_z \in \Gamma$ and apply any appropriate isometry to assume $u_0 = (e_n, -e_n)$. First consider the case that $H^p(y, z)$ is elementary with the limit set $\Lambda(H^p(y, z)) = \tilde{V} = \{\tilde{V}_1, \tilde{V}_2\}$. For all $h' \in H^p(y, z)$, since $h'$ must be hyperbolic ($\Gamma$ is torsion-free convex cocompact), it preserves the hypersurface $V = \partial(\Banana({h'}^-, {h'}^+)) = \partial(\Banana(\tilde{V}_1, \tilde{V}_2)) \subset \mathbb H^n$ with boundary at infinity $\partial_\infty(V) = \tilde{V} \subset \partial_\infty(\mathbb H^n)$ which is a $0$-dimensional sphere. Hence the orbit $H^p(y, z)\pi_1(u_0)$ is contained in $V$. Now consider the case that $H^p(y, z)$ is nonelementary, and hence $\#\Lambda(H^p(y, z)) = \infty$, if $n > 2$ (recall that if $n = 2$, then being nonelementary is equivalent to being Zariski dense in $\mathbf{G}$). Since $\overline{H^p(y, z)}^{\mathrm{Z}} < \mathbf{G}$ is a nontrivial proper connected subgroup, the Karpelevi\v{c}-Mostow theorem \cite{DSO09,Kar53,Mos55} implies that $\overline{H^p(y, z)}^{\mathrm{Z}}$, and in particular $H^p(y, z)$, preserves a proper totally geodesic submanifold which in this case is a $(n - 1)$-dimensional Euclidean half sphere or a half affine hyperplane in $\mathbb H^n$ perpendicular to $\mathbb R^{n - 1}$. Consequently, $\Lambda(H^p(y, z))$ is contained in a unique $(n - 2)$-dimensional sphere $\tilde{V} \subset \partial_\infty(\mathbb H^n)$. This further implies that $H^p(y, z)$ preserves \emph{any} spherical hypersurface or half affine hyperplane in $\mathbb H^n$ with boundary at infinity $\tilde{V}$. In particular, the orbit $H^p(y, z)\pi_1(u_0)$ is contained in $V \subset \mathbb H^n$ which is the unique spherical hypersurface or half affine hyperplane with the corresponding spherical boundary at infinity $\partial_\infty(V) = \tilde{V} \subset \partial_\infty(\mathbb H^n)$.

For any of the cases, let $V \subset \mathbb H^n$ be the hypersurface containing the orbit $H^p(y, z)\pi_1(u_0)$ as described above. We will use \cref{lem:Far_v_0} to find a point of the orbit close to $0 \in \mathbb R^{n - 1}$. In order to apply \cref{lem:Far_v_0}, first suppose $p > p_2$ and define the isometry $\iota$ which simply scales by some factor $L > 0$ such that $\iota(v) = (e_n, -e_n)$.
%Take \tau_0 = inf_{u \in \overline{R}} \tau(u). Then the n^th base point coordinate of v is at least most e^{-(p + 1)\tau_0/2} and so the scaling to move v to (e_n, -e_n) is at least L = e^{(p + 1)\tau_0/2}.
It is clear that there is a $p_{3, 1} \in \mathbb Z_{>0}$ satisfying $e^{-\underline{\tau}(p_{3, 1} + 1)} < \frac{1}{L_0}$, so that for all $p > p_{3, 1}$ we have $L > L_0$. Since $v \in g_z\hat{\overline{R}}_z$ with $\iota(v) = (e_n, -e_n)$, we can apply \cref{lem:Far_v_0} to obtain a vector $v_0 \in g_z\hat{\overline{S}}_z$ which satisfies the properties listed in the lemma. Since $\pi_1(\overline{R}_y)$ is contained in some hyperbolic ball, so there is a $\delta_1 \in \left(0, \frac{\epsilon_2}{3}\right)$ such that if $d_{\mathrm{E}}(g\pi_1(\overline{R}_y), \partial_\infty(\mathbb H^n)) < \delta_1$ for some $g \in G$, then $g\pi_1(\overline{R}_y) \subset B_{\epsilon_2/3}^{\mathrm{E}}(c_g) \subset \mathbb R^n$ for some $c_g \in \mathbb R^n$. Note that $\overline{A_{\delta, \hat{\delta}}^{\mathrm{ss}}(\iota(v))}^{\mathrm{ss}}$ is compact and the geodesic flow is smooth and $\Phi\left(\overline{A_{\delta, \hat{\delta}}^{\mathrm{ss}}(\iota(v))}^{\mathrm{ss}}\right) = \overline{A_{\delta_2, B}^{\mathrm{E}}(0)}$, where we use the notations from the proof of \cref{lem:Far_v_0}. Hence there is a $p_{3, 2} \in \mathbb Z_{>0}$ such that if $p > p_{3, 2}$, then $d_{\mathrm{E}}(\pi_1(\mathcal P^{-(p + 1)}(\iota(v_0))), \iota(v_0^-)) < \delta_1$. Fix $p_3 = \max(p_2, p_{3, 1}, p_{3, 2})$ and assume $p > p_3$ henceforth. But recalling that $\mathcal P^{-(p + 1)}(\iota(v_0)) \in h\iota(\hat{\overline{R}}_y)$ for some $h \in H^p(y, z)$ by \cref{itm:BackwardFlowToR_y} of \cref{lem:Far_v_0} and \cref{lem:ProducingElementsofH}, we have $h\iota(\pi_1(\overline{R}_y)) \subset B_{\epsilon_2/3}^{\mathrm{E}}(c_h) \subset B_{\epsilon_2}^{\mathrm{E}}(\iota(v_0^-)) \subset \mathbb R^n$ for some $c_h \in \mathbb R^n$. In particular $h\iota(\pi_1(u_0)) \in B_{\epsilon_2}^{\mathrm{E}}(\iota(v_0^-)) \cap \iota(V) \subset \mathbb R^n$ with nonempty intersection. By \cref{itm:BackwardLimitPointBound} of \cref{lem:Far_v_0}, we have $v_0^- \in B_B^{\mathrm{E}}(0)$, so rescaling back by applying the inverse $\iota^{-1}$ gives $v_0^- \in B_{B/L}^{\mathrm{E}}(0)$ and $h\pi_1(u_0) \in B_{\epsilon_2/L}^{\mathrm{E}}(v_0^-) \cap V \subset \mathbb R^n$ which implies $h\pi_1(u_0) \in B_{\epsilon_1}^{\mathrm{E}}(0) \cap V \subset \mathbb R^n$. For ease of notation, let $w = h\pi_1(u_0)$ and write $w = (\tilde{w}, w_n) \in \mathbb R^{n - 1} \times \mathbb R_{>0}$.

Now consider the case that $H^p(y, z)$ is elementary so that $V$ is the hypersurface $\partial(\Banana(\tilde{V}_1, \tilde{V}_2))$. If $\tilde{V}_1, \tilde{V}_2 \notin B_\epsilon^{\mathrm{E}}(0)$, then the definition of $\epsilon_{1, 1}$ gives a contradiction to $w \in B_{\epsilon_1}^{\mathrm{E}}(0) \cap V \subset \mathbb R^n$. Hence either $\tilde{V}_1 \in B_\epsilon^{\mathrm{E}}(0)$ or $\tilde{V}_2 \in B_\epsilon^{\mathrm{E}}(0)$ which proves the lemma in this case.

Now consider the case that $H^p(y, z)$ is nonelementary if $n > 2$, and $V = S_{\mathrm{E}}^{n - 1}(c, r_{u_0}) \cap \mathbb H^n$ is a spherical hypersurface with center $c = (\tilde{c}, c_n) \in \mathbb R^{n - 1} \times \mathbb R_{>0}$ and radius $r_{u_0} > 0$ and with corresponding spherical boundary at infinity $\partial_\infty(V) = \tilde{V} = S_{\mathrm{E}}^{n - 2}(\tilde{c}, r_{\tilde{V}})$ with center $\tilde{c}$ and radius $r_{\tilde{V}} > 0$. Recalling that $\pi_1(u_0) = e_n = (0, 1) \in \mathbb R^{n - 1} \times \mathbb R_{>0}$, we have $e_n, w \in S_{\mathrm{E}}^{n - 1}(c, r_{u_0})$ with $\|w\| < \epsilon_1$ and $S_{\mathrm{E}}^{n - 1}(c, r_{u_0}) \cap \mathbb R^{n - 1} = \tilde{V} = S_{\mathrm{E}}^{n - 2}(\tilde{c}, r_{\tilde{V}})$. Now, there is a point $k \in \tilde{V}$ closest to $0 \in \mathbb R^{n - 1}$, i.e., satisfying $d_{\mathrm{E}}(k, 0) = \inf_{k' \in \tilde{V}} d_{\mathrm{E}}(k', 0)$ and such a point must be on the ray emanating from the center $\tilde{c}$ passing through $0 \in \mathbb R^{n - 1}$. Thus, using ${r_{u_0}}^2 = d_{\mathrm{E}}(c, e_n)^2 = \|\tilde{c}\|^2 + (1 - c_n)^2$ gives
\begin{align}
d_{\mathrm{E}}(k, 0) &= |d_{\mathrm{E}}(\tilde{c}, k) - d_{\mathrm{E}}(\tilde{c}, 0)| = |r_{\tilde{V}} - \|\tilde{c}\|| = \left|\sqrt{{r_{u_0}}^2 - {c_n}^2} - \|\tilde{c}\|\right| \\
&= \left|\sqrt{\|\tilde{c}\|^2 + (1 - c_n)^2 - {c_n}^2} - \|\tilde{c}\|\right| \\
\label{eqn:DistanceToOrigin}
&= \left|\sqrt{\|\tilde{c}\|^2 + 1 - 2c_n} - \|\tilde{c}\|\right|.
\end{align}
Now, to bound the right hand side, we will use the constraint on the center $c$ that it must lie on the hyperplane which is the perpendicular bisector of the line segment from $w$ to $e_n$. This forces the center to be of the form
\begin{align*}
c = (\tilde{c}, c_n) = \left(\tilde{c}, \frac{1 + w_n}{2} + \frac{\langle \tilde{w}, \tilde{c} \rangle}{1 - w_n} - \frac{\|\tilde{w}\|^2}{2(1 - w_n)}\right).
\end{align*}
If $\|\tilde{c}\| \geq 1$, then
\begin{align*}
d_{\mathrm{E}}(k, 0) &= \frac{|1 - 2c_n|}{\sqrt{\|\tilde{c}\|^2 + 1 - 2c_n} + \|\tilde{c}\|} \leq \frac{|1 - 2c_n|}{\|\tilde{c}\|} \\
&= \left|\frac{w_n}{\|\tilde{c}\|} + \frac{2\langle \tilde{w}, \tilde{c} \rangle}{\|\tilde{c}\|(1 - w_n)} - \frac{\|\tilde{w}\|^2}{\|\tilde{c}\|(1 - w_n)}\right| \leq |w_n| + \frac{2\|\tilde{w}\|}{1 - w_n} + \frac{\|\tilde{w}\|^2}{1 - w_n} \\
&< \epsilon_1 + \frac{2\epsilon_1}{1 - \epsilon_1} + \frac{{\epsilon_1}^2}{1 - \epsilon_1} < \frac{\epsilon^2}{3} + \frac{\epsilon^2}{3} + \frac{\epsilon^2}{3} = \epsilon^2 < \epsilon
\end{align*}
using $\|\tilde{w}\| < \epsilon_1, w_n \in (0, \epsilon_1)$ and the definition of $\epsilon_1$. Now suppose $\|\tilde{c}\| < 1$. Going back to \cref{eqn:DistanceToOrigin}, if either $\|\tilde{c}\| = 0$ or $\|\tilde{c}\|^2 + 1 - 2c_n = 0$, then $d_{\mathrm{E}}(k, 0) = \sqrt{|1 - 2c_n|}$. Now assuming both $\|\tilde{c}\|^2 + 1 - 2c_n > 0$ and $\|\tilde{c}\| > 0$, we have
\begin{align*}
d_{\mathrm{E}}(k, 0) = \frac{|1 - 2c_n|}{\sqrt{\|\tilde{c}\|^2 + 1 - 2c_n} + \|\tilde{c}\|}.
\end{align*}
If $1 - 2c_n = 0$, then trivially $d_{\mathrm{E}}(k, 0) = 0$. If $\|\tilde{c}\|^2 > 2c_n - 1 > 0$, then
\begin{align*}
d_{\mathrm{E}}(k, 0) = \frac{|1 - 2c_n|}{\sqrt{\|\tilde{c}\|^2 + 1 - 2c_n} + \|\tilde{c}\|} < \frac{|1 - 2c_n|}{\|\tilde{c}\|} < \frac{|1 - 2c_n|}{\sqrt{2c_n - 1}} = \sqrt{|1 - 2c_n|}.
\end{align*}
If $1 - 2c_n > 0$, then
\begin{align*}
d_{\mathrm{E}}(k, 0) = \frac{|1 - 2c_n|}{\sqrt{\|\tilde{c}\|^2 + 1 - 2c_n} + \|\tilde{c}\|} < \frac{|1 - 2c_n|}{\sqrt{\|\tilde{c}\|^2 + 1 - 2c_n}} < \frac{|1 - 2c_n|}{\sqrt{1 - 2c_n}} = \sqrt{|1 - 2c_n|}.
\end{align*}
So in any case,
\begin{align*}
d_{\mathrm{E}}(k, 0)^2 \leq |1 - 2c_n| = \left|w_n + \frac{2\langle \tilde{w}, \tilde{c} \rangle}{1 - w_n} - \frac{\|\tilde{w}\|^2}{1 - w_n}\right| \leq |w_n| + \frac{2\|\tilde{w}\| \cdot \|\tilde{c}\|}{1 - w_n} + \frac{\|\tilde{w}\|^2}{1 - w_n}.
\end{align*}
Finally, using $\|\tilde{c}\| < 1, \|\tilde{w}\| < \epsilon_1$ and $w_n \in (0, \epsilon_1)$ gives us the bound
\begin{align*}
d_{\mathrm{E}}(k, 0) < \sqrt{\epsilon_1 + \frac{2\epsilon_1}{1 - \epsilon_1} + \frac{{\epsilon_1}^2}{1 - \epsilon_1}} < \sqrt{\frac{\epsilon^2}{3} + \frac{\epsilon^2}{3} + \frac{\epsilon^2}{3}} = \epsilon
\end{align*}
which proves the lemma in this case.

Now consider the case that $H^p(y, z)$ is nonelementary if $n > 2$, and $V \subset \mathbb H^n$ is a half affine hyperplane containing both $\pi_1(u_0) = e_n$ and $w = h\pi_1(u_0)$, with corresponding boundary at infinity $\partial_\infty(V) = \tilde{V}$ where $\tilde{V} \cap \mathbb R^{n - 1} \subset \mathbb R^{n - 1}$ is also an affine hyperplane. Any affine hyperplane containing $e_n$ and $w$ must also contain the straight line through $e_n$ and $w$. Hence $\tilde{V}$ contains the intersection of that straight line with $\mathbb R^{n - 1}$ which we call $k$. It is easily to calculate that the point is $k = \frac{1}{1 - w_n}\tilde{w}$ and hence
\begin{align*}
d_{\mathrm{E}}(k, 0) = \|k\| = \frac{\|\tilde{w}\|}{1 - w_n} < \frac{\epsilon_1}{1 - \epsilon_1} < \frac{\epsilon^2}{3} < \epsilon
\end{align*}
which proves the lemma in this case.
\end{proof}

\begin{proof}[Proof of \cref{lem:LimitSetInSphereNearOriginLargeP_NZ}]
Let $(y, z) \in \mathcal A^2$. Applying any appropriate isometry, we assume that the vectors in $\overline{U}_y$ have direction $\pi_2(\overline{U}_y) = -e_n$ and their basepoints lie on the hyperplane $\langle \pi_1(\overline{U}_y), e_n\rangle = 1$. Let $\epsilon > 0$. There are $C, \delta > 0$ such that for all $u_1, u_2 \in \overline{U}_y$ with $d_{\mathrm{su}}(u_1, u_2) < \delta$, we have $\frac{1}{C}d_{\mathrm{su}}(u_1, u_2) \leq d_{\mathrm{E}}(u_1^+, u_2^+) \leq Cd_{\mathrm{su}}(u_1, u_2)$. Let $\epsilon' = \min(\frac{\epsilon}{2C}, \delta)$. Fix $p_1, p_3 \in \mathbb Z_{>0}$ to be the constants provided by \cref{lem:P^p_in_R_z_ForLarge_p,lem:LimitSetInSphereNearOrigin} corresponding to $\epsilon'$ and $\frac{\epsilon}{2}$ respectively. Fix $p_0 = \max(p_1, p_3)$ and let $p > p_0$ be an integer such that $H^p(y, z)$ is not Zariski dense in $\mathbf{G}$. Let $u \in \overline{U}_y$. Using \cref{lem:P^p_in_R_z_ForLarge_p}, we obtain the vector $u_0 \in \hat{\overline{U}}_y$ such that $d_{\mathrm{su}}(u_0, u) < \epsilon'$, which implies $d_{\mathrm{E}}(u_0^+, u^+) < \frac{\epsilon}{2}$, and $v = \mathcal P^{p + 1}(u_0) \in \Gamma \hat{\overline{R}}_z$. We remark that translations parallel to $\mathbb R^{n - 1}$ are both hyperbolic and Euclidean isometries and so applying such a translation preserves all hyperbolic and Euclidean properties that we are dealing with. Hence, without loss of generality, we assume that $u_0 = (e_n, -e_n)$. Now the hypotheses of \cref{lem:LimitSetInSphereNearOrigin} are satisfied and we may apply it to conclude that $\Lambda(H^p(y, z))$ is contained in a $0$-dimensional or $(n - 2)$-dimensional sphere $\tilde{V} \subset \partial_\infty(\mathbb H^n)$ such that $\tilde{V} \cap B_{\epsilon/2}^{\mathrm{E}}(u_0^+) \neq \varnothing$ and hence $\tilde{V} \cap B_\epsilon^{\mathrm{E}}(u^+) \neq \varnothing$.
\end{proof}

\subsection{\texorpdfstring{$L^2$}{L-2}-flattening lemma}
\label{subsec:L2FlatteningLemma}
In this subsection we fix any $p > p_0$ from \cref{lem:Z-DenseInU-CoverG} so that the lemma applies when it is needed in \cref{lem:GV_Expander}. The aim of this subsection is to prove the following $L^2$-flattening type lemma. The arguments here are expanded on \cite[Appendix]{MOW17}.

\begin{lemma}
\label{lem:L2FlatteningLemma}
There exist $C > 0, C_0 > 0$ and $l \in \mathbb Z_{>0}$ such that for all $\xi = a + ib \in \mathbb C$ with $|a| < a_0'$, for all square free ideals $\mathfrak{q} \subset \mathcal{O}_{\mathbb K}$ coprime to $\mathfrak{q}_0$, for all $x \in \Sigma^+$, for all integers $C_0 \log(N_{\mathbb K}(\mathfrak{q})) \leq r < s$ with $r \in l\mathbb Z$, for all admissible sequences $(\alpha_s, \alpha_{s - 1}, \dotsc, \alpha_{r + 1})$, for all $\phi \in E_\mathfrak{q}^\mathfrak{q}$ with $\|\phi\|_2 = 1$, we have both
\begin{align*}
\left\|\mu_{(\alpha_s, \alpha_{s - 1}, \dotsc, \alpha_{r + 1})}^{\xi, \mathfrak{q}, x} * \phi\right\|_2 \leq C N_{\mathbb K}(\mathfrak{q})^{-\frac{1}{2}} \left\|\nu_{(\alpha_s, \alpha_{s - 1}, \dotsc, \alpha_{r + 1})}^{a, \mathfrak{q}, x}\right\|_1
\end{align*}
and
\begin{align*}
\left\|\hat{\mu}_{(\alpha_s, \alpha_{s - 1}, \dotsc, \alpha_{r + 1})}^{a, \mathfrak{q}, x} * \phi\right\|_2 \leq C N_{\mathbb K}(\mathfrak{q})^{-\frac{1}{2}} \left\|\nu_{(\alpha_s, \alpha_{s - 1}, \dotsc, \alpha_{r + 1})}^{a, \mathfrak{q}, x}\right\|_1.
\end{align*}
\end{lemma}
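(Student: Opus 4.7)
The plan is to reduce both bounds to a uniform spectral gap for random walks on $\tilde{G}_\mathfrak{q}$ whose steps come from the generating sets $\pi_\mathfrak{q}(\tilde{S}^p(y,z))$, and then invoke the expander theorem of Golsefidy-Varj\'{u} \cite{GV12}, whose Zariski density hypothesis is guaranteed by \cref{lem:Z-DenseInU-CoverG}. To begin, the bound for $\mu_{(\alpha_s, \ldots, \alpha_{r+1})}^{\xi, \mathfrak{q}, x}$ follows at once from the bound for $\hat{\mu}_{(\alpha_s, \ldots, \alpha_{r+1})}^{a, \mathfrak{q}, x}$ via the pointwise inequality $|\mu_{(\alpha_s, \ldots, \alpha_{r+1})}^{\xi, \mathfrak{q}, x}| \leq \hat{\mu}_{(\alpha_s, \ldots, \alpha_{r+1})}^{a, \mathfrak{q}, x}$ from \cref{lem:muHatLessThanCnu} combined with $\|\mu * \phi\|_2 \leq \||\mu| * |\phi|\|_2$, so I would concentrate on the $\hat{\mu}$ bound.

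Opening the square gives $\|\hat{\mu} * \phi\|_2^2 = \langle (\hat{\mu}^\vee * \hat{\mu}) * \phi, \phi\rangle$ with $\hat{\mu}^\vee(g) = \overline{\hat{\mu}(g^{-1})}$. Unpacking the definitions, $\hat{\mu}^\vee * \hat{\mu}$ is a positive measure on $\tilde{G}_\mathfrak{q}$ whose support consists of products $\mathtt{c}_\mathfrak{q}^{r+1}(\alpha_{r+1}, \alpha^r, x) \cdot \mathtt{c}_\mathfrak{q}^{r+1}(\alpha_{r+1}, \tilde{\alpha}^r, x)^{-1}$ over pairs of admissible sequences sharing the common initial symbol $\alpha_{r+1}$. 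Choosing $l$ to be a convenient multiple of $p+1$ with $p > p_0$ from \cref{lem:Z-DenseInU-CoverG}, I would decompose the index range $\{0, 1, \ldots, r\}$ into $k = r/l$ consecutive blocks and stratify the pair $(\alpha^r, \tilde{\alpha}^r)$ by the values at the block boundaries. By the grouping in the definition of $\tilde{S}^p(y, z)$, each block factor is then of the form $\prod_{j=0}^p \mathtt{c}_\mathfrak{q}(\alpha_{p+1-j}, \alpha_{p-j}) \prod_{j=0}^p \mathtt{c}_\mathfrak{q}(\tilde{\alpha}_{1+j}, \tilde{\alpha}_j)^{-1}$ with matched endpoints $(y_i, z_i)$, which is exactly an element of $\pi_\mathfrak{q}(\tilde{S}^p(y_i, z_i))$. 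Summing over endpoint choices and using the Lipschitz bound on $f^{(a)}$ in the $d_\theta$-metric together with the two-sided comparison in \cref{lem:muHatLessThanCnu} to separate exponential weights from the group-theoretic content, I would dominate $(\hat{\mu}^\vee * \hat{\mu})|_{E^\mathfrak{q}_\mathfrak{q}}$ by $\|\nu_{(\alpha_s, \ldots, \alpha_{r+1})}^{a, \mathfrak{q}, x}\|_1^2$ times a $k$-fold iterated convolution by generators of the Cayley graphs $\Cay(\tilde{G}_\mathfrak{q}, \pi_\mathfrak{q}(\tilde{S}^p(y_i, z_i)))$. The Golsefidy-Varj\'{u} theorem now supplies a uniform spectral gap $0 < \rho < 1$ on $L_0^2(\tilde{G}_\mathfrak{q}, \mathbb{C}) \supset E^\mathfrak{q}_\mathfrak{q}$ across all square free $\mathfrak{q}$ coprime to $\mathfrak{q}_0$, so the iterated convolution contracts by $\rho^k$ on $E^\mathfrak{q}_\mathfrak{q}$. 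Taking $C_0 \geq l/\log(1/\rho)$ ensures $\rho^k \leq N_\mathbb{K}(\mathfrak{q})^{-1}$ whenever $r \geq C_0 \log N_\mathbb{K}(\mathfrak{q})$, and extracting the square root yields the desired $N_\mathbb{K}(\mathfrak{q})^{-1/2}$ bound.

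The main obstacle will be making the block decomposition rigid enough: the admissibility condition couples adjacent blocks, so $\hat{\mu}^\vee * \hat{\mu}$ is not literally a product of simpler convolutions but only dominated by one after summing over the interface symbols. The positivity of $\hat{\mu}$, the two-sided comparison of \cref{lem:muHatLessThanCnu}, and the uniform Lipschitz bound encoded in $T_0$ are what allow this separation. A secondary subtlety is that the generating set depends on the endpoint pair $(y, z)$, so one needs the spectral gap uniformly across all admissible pairs, which is precisely why \cref{lem:Z-DenseInU-CoverG} quantifies over $(y, z)$ and why the paper assumed in the background setup that the same $\mathfrak{q}_0$ works for all such pairs.
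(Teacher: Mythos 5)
Your plan breaks down at two specific points. First, the reduction of the bound for $\mu_{(\alpha_s, \ldots, \alpha_{r+1})}^{\xi, \mathfrak{q}, x}$ to the bound for $\hat{\mu}_{(\alpha_s, \ldots, \alpha_{r+1})}^{a, \mathfrak{q}, x}$ via $\|\mu * \phi\|_2 \leq \| |\mu| * |\phi| \|_2$ is not available: $|\phi|$ is nonnegative and nonzero, hence has nonzero mean, so $|\phi| \notin E_\mathfrak{q}^\mathfrak{q}$ (not even in $L_0^2(\tilde{G}_\mathfrak{q}, \mathbb C)$), and the $\hat{\mu}$ bound is simply false for such test functions --- taking $\phi$ constant of unit norm gives $\|\hat{\mu} * \phi\|_2 = \|\hat{\mu}\|_1 \asymp \|\nu\|_1$, with no gain of $N_{\mathbb K}(\mathfrak{q})^{-1/2}$. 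Nor can you run your expander argument on $\mu$ directly, because the phases $e^{ib\tau_s}$ destroy the positivity and near-flatness that the block argument requires. This is exactly why the paper has a second, independent input that your proposal never uses: \cref{lem:ConvolutionBoundOnE_q^q}, a quasirandomness bound coming from dimension lower bounds for nontrivial irreducible representations of $\tilde{\mathbf{G}}(\mathcal{O}_{\mathbb K}/\mathfrak{q})$, valid for arbitrary complex measures but only on the new space $E_\mathfrak{q}^\mathfrak{q}$, giving $\|\tilde{\mu}|_{E_\mathfrak{q}^\mathfrak{q}}\|_{\mathrm{op}} \leq C N_{\mathbb K}(\mathfrak{q})^{-1/2} (\#\tilde{G}_\mathfrak{q})^{1/2} \|\mu\|_2$. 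The paper then bounds $\|\mu\|_2 \leq C\|\nu\|_2$ by \cref{lem:muHatLessThanCnu} and uses the expander machinery only to flatten $\|\nu\|_2 \leq 2\|\nu\|_1/(\#\tilde{G}_\mathfrak{q})^{1/2}$, by testing \cref{lem:ExpanderMachineryBound} against $\delta_e - \frac{1}{\#\tilde{G}_\mathfrak{q}}\chi_{\tilde{G}_\mathfrak{q}}$. Without a substitute for this representation-theoretic step, the first inequality (used later, e.g.\ in \cref{lem:ReducedTheoremEstimate1}) is out of reach of your argument.

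Second, the factorization you propose after opening the square globally fails by noncommutativity. Writing the two cocycle words in blocks, $a = a_1 a_2 \dotsm a_k$ and $b = b_1 b_2 \dotsm b_k$, the group element in the support of $\hat{\mu}^{\vee} * \hat{\mu}$ is $a b^{-1} = a_1 \dotsm a_k b_k^{-1} \dotsm b_1^{-1}$, which is not the product of the block differences $a_i b_i^{-1}$; moreover, stratifying only by block-boundary symbols leaves the two length-$l$ subwords differing at every position, so a block difference would not lie in $\pi_\mathfrak{q}(\tilde{S}^p(y_i, z_i))$ for the fixed $p$ anyway. The paper's mechanism is structurally different: $\nu_0$ is approximated (\cref{lem:EstimateNu}) by $\nu_1$, a sum over interface words of convolutions of positive, nearly flat measures $\eta_j$ in which only a $p$-window varies while the interface segments are shared; the square is opened \emph{per block} (\cref{lem:EtaOperatorBound}), and only there do matched-endpoint differences in $S^p$ appear, with \cref{lem:GV_Expander} supplying one expanding generator and \cref{lem:NearlyFlat} converting that into a per-block operator-norm contraction on $L_0^2(\tilde{G}_\mathfrak{q}, \mathbb C)$, iterated to give \cref{lem:ExpanderMachineryBound}. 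Once that is in place, your shortcut of choosing $C_0$ proportional to $1/\log(1/\rho)$ does yield the second inequality, since $\hat{\mu}$ is a scalar multiple of $\nu_0$ --- but that is precisely the half of the lemma that never needed $E_\mathfrak{q}^\mathfrak{q}$, and the oscillatory half remains unproved. What your plan does share with the paper is the block structure with $p$-windows, the role of \cref{lem:Z-DenseInU-CoverG} in making Golsefidy-Varj\'{u} applicable uniformly over the finitely many endpoint pairs $(y, z)$, and the logarithmic threshold on $r$.
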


The proof uses two tools. The first is \cref{lem:ConvolutionBoundOnE_q^q} derived from lower bounds of nontrivial irreducible representations of Chevalley groups. The second is the expander machinery of Golsefidy-Varj\'{u} \cite{GV12} which we cannot use directly in its raw form but culminates in \cref{lem:ExpanderMachineryBound}. Due to \cref{lem:muHatLessThanCnu}, we focus on $\nu_0^{a, \mathfrak{q}, x, r}$ and our goal is to use \cite{GV12} to obtain bounds on the operator norm but this requires the measure to be what we call ``nearly flat''. Although $\nu_0^{a, \mathfrak{q}, x, r}$ is not nearly flat, it suffices to estimate $\nu_0^{a, \mathfrak{q}, x, r}$ by $\nu_1^{a, \mathfrak{q}, x, r}$ which \emph{breaks up} into convolutions of nearly flat measures. The following is the procedure to do exactly that.

Let $r \in \mathbb Z_{> 0}$ with factorization $r = r'l$ for some $r' \in \mathbb Z_{> 0}$ and some integer $l > 1$, and $\alpha^r$ be an admissible sequence. To better facilitate manipulations of sequences, we introduce the following additional notations. Define
\begin{align*}
\alpha_j^l &= (\alpha_{jl}, \alpha_{jl - 1}, \dotsc, \alpha_{(j - 1)l + 1}) \\
\alpha_j^{(k)_1} &= (\alpha_{jl}, \alpha_{jl - 1}, \dotsc, \alpha_{jl - k + 1}) \\
\alpha_j^{(k)_2} &= (\alpha_{jl - (l - k)}, \alpha_{jl - (l - k) - 1}, \dotsc, \alpha_{(j - 1)l + 1})
\end{align*}
for all integers $1 \leq j \leq r'$ and $1 \leq k \leq l - 1$. For example, with these notations and conventions we have $\alpha^r = (\alpha_{r'}^l, \alpha_{r' - 1}^l, \dotsc,  \alpha_1^l) = (\alpha_r, \alpha_{r - 1}, \dotsc, \alpha_1)$ and $\alpha_j^l = (\alpha_j^{(k)_1}, \alpha_j^{(l - k)_2})$ for all integers $1 \leq j \leq r'$. We also have $\sigma^k(\alpha^j) = \alpha^{j - k}$ for all integers $1 \leq j \leq r$ and $0 \leq k \leq j - 1$.

For all $a \in \mathbb R$, for all $x \in \Sigma^+$, for all $r \in \mathbb Z_{>0}$ with factorization $r = r'l$ with $l > p$, for all admissible sequences $\alpha^r$, we compute that
\begin{align*}
f_r^{(a)}(\alpha^r, x) ={}&\sum_{k = 0}^{r - 1} f^{(a)}(\sigma^k(\alpha^r, x)) = \sum_{k = 0}^{r - 1} f^{(a)}(\alpha^{r - k}, x) \\
={}&\sum_{k = 0}^{p - 1} f^{(a)}(\alpha^{r - j}, x) + \sum_{j = 0}^{r' - 3} \sum_{k = 0}^{l - 1} f^{(a)}(\alpha^{r - p - (jl + k)}, x) \\
{}&+ \sum_{k = 0}^{2l - p - 1} f^{(a)}(\alpha^{2l - p - k}, x) \\
={}&\sum_{k = 0}^{p - 1} f^{(a)}(\sigma^k(\alpha^r, x)) + \sum_{j = 0}^{r' - 3} \sum_{k = 0}^{l - 1} f^{(a)}(\sigma^k(\alpha^{r - p - jl}, x)) \\
{}&+ \sum_{k = 0}^{2l - p - 1} f^{(a)}(\sigma^k(\alpha^{2l - p}, x)) \\
={}&f_p^{(a)}(\alpha^r, x) + \sum_{j = 0}^{r' - 3} f_l^{(a)}(\alpha^{r - p - jl}, x) + f_{2l - p}^{(a)}(\alpha^{2l - p}, x) \\
={}&f_{2l - p}^{(a)}(\alpha^{2l - p}, x) + \sum_{j = 2}^{r' - 1} f_l^{(a)}(\alpha^{(j + 1)l - p}, x) + f_p^{(a)}(\alpha^r, x).
\end{align*}
We can estimate each term in the sum above so that in the $j$\textsuperscript{th} term, we remove dependence on $\alpha_k^{(p)_1}$ for all distinct integers $1 \leq j, k \leq r'$.

\begin{remark}
Such an estimate is not required for $j = 1$ since the first term does not have any dependence on $\alpha_k^{(p)_1}$ for all $2 \leq k \leq r'$.
\end{remark}

\begin{lemma}
\label{lem:Estimate_f_ToRemoveDependence}
There exists $C > 0$ such that for all $|a| < a_0'$, for all $x \in \Sigma^+$, for all $r \in \mathbb Z_{>0}$ with factorization $r = r'l$ with $l > p$, for all admissible sequences $\alpha^r$, we have
\begin{align*}
\left|f_l^{(a)}(\alpha^{(j + 1)l - p}, x) - f_l^{(a)}(\alpha_{j + 1}^{(l - p)_2}, \alpha_j^l, \omega(\alpha_j^l))\right| \leq C \theta^l
\end{align*}
for all integers $2 \leq j \leq r' - 1$ and
\begin{align*}
\left|f_p^{(a)}(\alpha^r, x) - f_p^{(a)}(\alpha_{r'}^l, \omega(\alpha_{r'}^l))\right| \leq C \theta^l.
\end{align*}
\end{lemma}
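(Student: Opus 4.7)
The plan is to prove both inequalities by the standard telescoping argument for Birkhoff sums of a Lipschitz potential, using the global bound $\sup_{|a| \leq a_0'} \Lip_{d_\theta}(f_{\Sigma^+}^{(a)}) \leq T_0$ chosen earlier. In both cases, the two sequences being compared agree on a long initial segment by design, and I only need to count correctly how long that segment is after each shift, then sum a geometric series.

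For the first inequality, write $y_1 = (\alpha^{(j+1)l-p}, x)$ and $y_2 = (\alpha_{j+1}^{(l-p)_2}, \alpha_j^l, \omega(\alpha_j^l))$. Unrolling the notation, both sequences have the entry $\alpha_{(j+1)l-p-i}$ in position $i$ for every $0 \leq i \leq 2l-p-1$: the first $l-p$ positions come from $\alpha_{j+1}^{(l-p)_2}$, and the next $l$ from $\alpha_j^l$, matching the initial block of $\alpha^{(j+1)l-p}$ (using $j \geq 2$ so that $(j+1)l-p \geq 2l-p$). Hence $\sigma^k y_1$ and $\sigma^k y_2$ agree on their first $2l-p-k$ coordinates, giving $d_\theta(\sigma^k y_1, \sigma^k y_2) \leq \theta^{2l-p-k}$ for $0 \leq k \leq l-1$. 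Telescoping yields
\[
\bigl|f_l^{(a)}(y_1) - f_l^{(a)}(y_2)\bigr| \leq \sum_{k=0}^{l-1} T_0 \, \theta^{2l-p-k} \leq \frac{T_0}{1-\theta}\,\theta^{l-p+1} = \frac{T_0\,\theta^{1-p}}{1-\theta}\cdot\theta^l.
\]

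For the second inequality, write $y_1 = (\alpha^r, x)$ and $y_2 = (\alpha_{r'}^l, \omega(\alpha_{r'}^l))$. Since $r = r'l$, both have entry $\alpha_{r-i}$ in position $i$ for $0 \leq i \leq l-1$, so they agree on their first $l$ coordinates. Because $p < l$, for each $0 \leq k \leq p-1$ we get $d_\theta(\sigma^k y_1, \sigma^k y_2) \leq \theta^{l-k}$, and the same telescoping argument gives
\[
\bigl|f_p^{(a)}(y_1) - f_p^{(a)}(y_2)\bigr| \leq \sum_{k=0}^{p-1} T_0 \, \theta^{l-k} \leq T_0\, p\, \theta^{l-p+1} = T_0\, p\, \theta^{1-p}\cdot \theta^l.
\]
Taking $C = \max\!\left(\frac{T_0\,\theta^{1-p}}{1-\theta},\ T_0\, p\, \theta^{1-p}\right)$, which is finite since $p$ is the fixed integer from \cref{lem:Z-DenseInU-CoverG}, establishes both bounds.

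The only non-routine point is the bookkeeping for the compressed notation $\alpha_j^{(k)_1}, \alpha_j^{(k)_2}, \alpha_j^l$: I expect the main (mild) obstacle to be verifying that the two sequences really do share the claimed $2l-p$, respectively $l$, initial coordinates. Once the index identities are confirmed, the Lipschitz estimate together with the exponential decay of $d_\theta$ under shifts collapses everything to two geometric sums.
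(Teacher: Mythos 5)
Your proof is correct and follows essentially the same route as the paper: verify that each pair of sequences shares a prefix of length $2l-p$ (respectively $l$), apply the Lipschitz bound $\Lip_{d_\theta}(f^{(a)}) \leq T_0$ termwise to the Birkhoff sum, and sum the resulting powers of $\theta$. The only cosmetic difference is that you bound the second sum by $T_0\,p\,\theta^{l-p+1}$ instead of the geometric bound $\frac{T_0\,\theta^{l-p+1}}{1-\theta}$ used in the paper; both give an admissible constant.
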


\begin{proof}
Fix $C = \frac{T_0 \theta^{1 - p}}{1 - \theta}$. Let $|a| < a_0'$, $x \in \Sigma^+$, $r \in \mathbb Z_{>0}$ with factorization $r = r'l$ with $l > p$, and $\alpha^r$ be an admissible sequence. We make the estimate
\begin{align*}
&\left|f_l^{(a)}(\alpha^{(j + 1)l - p}, x) - f_l^{(a)}(\alpha_{j + 1}^{(l - p)_2}, \alpha_j^l, \omega(\alpha_j^l))\right| \\
={}&\left|f_l^{(a)}(\alpha_{j + 1}^{(l - p)_2}, \alpha_j^l, \alpha_{j - 1}^{l}, \dotsc, \alpha_1^{l}, x) - f_l^{(a)}(\alpha_{j + 1}^{(l - p)_2}, \alpha_j^l, \omega(\alpha_j^l))\right| \\
\leq{}&\sum_{k = 0}^{l - 1} \left|f^{(a)}(\sigma^k(\alpha_{j + 1}^{(l - p)_2}, \alpha_j^l, \alpha_{j - 1}^{l}, \dotsc, \alpha_1^{l}, x)) - f^{(a)}(\sigma^k(\alpha_{j + 1}^{(l - p)_2}, \alpha_j^l, \omega(\alpha_j^l)))\right| \\
\leq{}&\sum_{k = 0}^{l - 1} \Lip_{d_\theta}(f^{(a)}) \cdot d_\theta(\sigma^k(\alpha_{j + 1}^{(l - p)_2}, \alpha_j^l, \alpha_{j - 1}^{l}, \dotsc, \alpha_1^{l}, x), \sigma^k(\alpha_{j + 1}^{(l - p)_2}, \alpha_j^l, \omega(\alpha_j^l))) \\
\leq{}&\Lip_{d_\theta}(f^{(a)}) \sum_{k = 0}^{l - 1} \theta^{2l - p - k} \\
\leq{}&\frac{T_0 \theta^{l - p + 1}}{1 - \theta} \\
={}&C \theta^l
\end{align*}
for all integers $2 \leq j \leq r' - 1$. Similarly
\begin{align*}
&\left|f_p^{(a)}(\alpha^r, x) - f_p^{(a)}(\alpha_{r'}^l, \omega(\alpha_{r'}^l))\right| \\
={}&\left|f_p^{(a)}(\alpha_{r'}^l, \alpha^{(r' - 1)l}, x) - f_p^{(a)}(\alpha_{r'}^l, \omega(\alpha_{r'}^l))\right| \\
\leq{}&\sum_{k = 0}^{p - 1} \left|f^{(a)}(\sigma^k(\alpha_{r'}^l, \alpha^{(r' - 1)l}, x)) - f^{(a)}(\sigma^k(\alpha_{r'}^l, \omega(\alpha_{r'}^l)))\right| \\
\leq{}&\sum_{k = 0}^{p - 1} \Lip_{d_\theta}(f^{(a)}) \cdot d_\theta(\sigma^k(\alpha_{r'}^l, \alpha^{(r' - 1)l}, x), \sigma^k(\alpha_{r'}^l, \omega(\alpha_{r'}^l))) \\
\leq{}&\Lip_{d_\theta}(f^{(a)}) \sum_{k = 0}^{p - 1} \theta^{l - k} \\
\leq{}&\frac{T_0 \theta^{l - p + 1}}{1 - \theta} \\
={}&C \theta^l.
\end{align*}
\end{proof}

To make sense of the notations in what follows, we make the convention that $\alpha_j^{(l - p)_2}$ is the empty sequence for all admissible sequences $\alpha^r$, for all $j \in \{0, r' + 1\}$. In light of the calculations and \cref{lem:Estimate_f_ToRemoveDependence} above, for all $a \in \mathbb R$, for all ideals $\mathfrak{q} \subset \mathcal{O}_{\mathbb K}$ coprime to $\mathfrak{q}_0$, for all $x \in \Sigma^+$, for all $r \in \mathbb Z_{>0}$ with factorization $r = r'l$ with $l > p$, for all integers $0 \leq j \leq r'$, for all admissible sequences $\alpha^r$, define the coefficients
\begin{align*}
E_{j, (\alpha_{j + 1}^{(l - p)_2}, \alpha_j^l)}^{a, x, r, r'} =
\begin{cases}
e^{f_{2l - p}^{(a)}(\alpha^{2l - p}, x)}, & j = 1 \\
e^{f_l^{(a)}(\alpha_{j + 1}^{(l - p)_2}, \alpha_j^l, \omega(\alpha_j^l))}, & 2 \leq j \leq r' - 1 \\
e^{f_p^{(a)}(\alpha_{r'}^l, \omega(\alpha_{r'}^l))}, & j = r'
\end{cases}
\end{align*}
and the measures
\begin{align*}
\eta_{j, (\alpha_{j + 1}^{(l - p)_2}, \alpha_j^{(l - p)_2})}^{a, \mathfrak{q}, x, r, r'} =
\begin{cases}
\delta_{\mathtt{c}_\mathfrak{q}(\alpha_1, x)}, & j = 0 \\
\sum_{\alpha_j^{(p)_1}} E_{j, (\alpha_{j + 1}^{(l - p)_2}, \alpha_j^l)}^{a, x, r, r'} \delta_{\mathtt{c}_\mathfrak{q}^l(\alpha_{jl + 1}, \alpha^{jl}, x)}, & 1 \leq j \leq r'
\end{cases}
\end{align*}
where we show the dependence of the admissible choices of $\alpha_j^{(p)_1}$ on $\alpha_{j + 1}^{(l - p)_2}$ and $\alpha_j^{(l - p)_2}$ (or more precisely only on the last entry of $\alpha_{j + 1}^{(l - p)_2}$ and the first entry of $\alpha_j^{(l - p)_2}$). The measures above satisfy a property as shown in \cref{lem:NearlyFlat} which we call \emph{nearly flat}.

\begin{lemma}
\label{lem:NearlyFlat}
There exists $C > 1$ such that for all $|a| < a_0'$, for all $x \in \Sigma^+$, for all $r \in \mathbb Z_{>0}$ with factorization $r = r'l$ with $l > p$, for all integers $0 \leq j \leq r'$, for all pairs of admissible sequences $(\alpha_{j + 1}^{(l - p)_2}, \alpha_j^l)$ and $(\tilde{\alpha}_{j + 1}^{(l - p)_2}, \tilde{\alpha}_j^l)$ with $(\alpha_{j + 1}^{(l - p)_2}, \alpha_j^{(l - p)_2}) = (\tilde{\alpha}_{j + 1}^{(l - p)_2}, \tilde{\alpha}_j^{(l - p)_2})$, we have
\begin{align*}
\frac{E_{j, (\tilde{\alpha}_{j + 1}^{(l - p)_2}, \tilde{\alpha}_j^l)}^{a, x, r, r'}}{E_{j, (\alpha_{j + 1}^{(l - p)_2}, \alpha_j^l)}^{a, x, r, r'}} \leq C.
\end{align*}
\end{lemma}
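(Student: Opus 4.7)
The plan is to reduce each of the three cases in the definition of $E$ to a uniform bound on the logarithm of the ratio, using only the Lipschitz constant and sup norm of $f^{(a)}$ on $\Sigma^+$. Concretely, I would aim to show that
\begin{align*}
\left|\log E_{j, (\tilde\alpha_{j+1}^{(l-p)_2}, \tilde\alpha_j^l)}^{a,x,r,r'} - \log E_{j, (\alpha_{j+1}^{(l-p)_2}, \alpha_j^l)}^{a,x,r,r'}\right| \leq T_0\left(\frac{\theta}{1-\theta} + 2p\right),
\end{align*}
which gives the lemma with $C = \exp(T_0(\theta/(1-\theta) + 2p))$, uniformly in $a$, $x$, $r$, $r'$, $j$ and the sequences.

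Let $y$ and $\tilde y$ denote the two sequences appearing in the exponent in the definition of the corresponding $E$-coefficient, i.e.\ $(\alpha_2^{(l-p)_2}, \alpha_1^l, x)$ versus $(\tilde\alpha_2^{(l-p)_2}, \tilde\alpha_1^l, x)$ when $j=1$; $(\alpha_{j+1}^{(l-p)_2}, \alpha_j^l, \omega(\alpha_j^l))$ versus $(\tilde\alpha_{j+1}^{(l-p)_2}, \tilde\alpha_j^l, \omega(\tilde\alpha_j^l))$ when $2 \leq j \leq r'-1$; and $(\alpha_{r'}^l, \omega(\alpha_{r'}^l))$ versus $(\tilde\alpha_{r'}^l, \omega(\tilde\alpha_{r'}^l))$ when $j = r'$. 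The first observation is that since $\omega$ depends only on the last entry of its input and that last entry lies in the common block $\alpha_j^{(l-p)_2} = \tilde\alpha_j^{(l-p)_2}$, the tails are literally identical. Reading off positions, $y$ and $\tilde y$ therefore agree outside the $p$ coordinates occupied by $\alpha_j^{(p)_1}$ versus $\tilde\alpha_j^{(p)_1}$: namely positions $\{l-p, \ldots, l-1\}$ in the cases $j = 1$ and $2 \leq j \leq r'-1$, and positions $\{0, \ldots, p-1\}$ in the case $j = r'$.

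Expanding $f_m^{(a)}(y) - f_m^{(a)}(\tilde y) = \sum_{k=0}^{m-1}(f^{(a)}(\sigma^k y) - f^{(a)}(\sigma^k \tilde y))$ for the appropriate $m \in \{2l-p, l, p\}$, I would estimate each summand by the minimum of $T_0 \cdot d_\theta(\sigma^k y, \sigma^k \tilde y)$ (from $\Lip_{d_\theta}(f^{(a)}) \leq T_0$) and $2T_0$ (from $\|f^{(a)}\|_\infty \leq T_0$). The $k$ lying inside the $p$-element differing window contribute at most $2T_0$ each, giving a total of $2pT_0$. For the remaining $k$, the first coordinate where $\sigma^k y$ and $\sigma^k \tilde y$ disagree has index $\geq 1$, and these indices are distinct positive integers as $k$ varies; a geometric-series bound then yields $T_0\sum_{n \geq 1} \theta^n = T_0\theta/(1-\theta)$. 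Summing gives the claimed bound in all three cases. For $j = 1$ one must additionally note that when $k \geq l$ the shifted sequences $\sigma^k y$ and $\sigma^k \tilde y$ coincide (positions $\geq l$ agree regardless of the ambient $x$), so those terms contribute nothing; modulo this minor bookkeeping the three cases run in parallel.

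The only real work is position-tracking to confirm that the differing positions are exactly $\alpha_j^{(p)_1}$ versus $\tilde\alpha_j^{(p)_1}$ and to verify that $\omega(\alpha_j^l) = \omega(\tilde\alpha_j^l)$; no new analytic input beyond the uniform Lipschitz/sup bounds encapsulated in $T_0$ is needed. This is the reason $p$ can be fixed (as it was in the previous subsection) without the constant $C$ blowing up as $l, r, r' \to \infty$, which is precisely the flatness needed to feed $\eta_j^{a,\mathfrak{q},x,r,r'}$ into the Golsefidy--Varj\'{u} expander machinery downstream.
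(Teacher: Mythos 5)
Your proposal is correct and follows essentially the same route as the paper's proof: termwise estimates on $f^{(a)}$ using the uniform Lipschitz/sup bounds in $T_0$, with the orbit sum split into the length-$p$ differing window (where you take $2T_0$ per term versus the paper's $T_0$, which only changes the constant to $e^{T_0(\theta/(1-\theta)+2p)}$ in place of $e^{T_0(\theta/(1-\theta)+p)}$), a geometric series $T_0\theta/(1-\theta)$ before the window, and zero contribution for $k \geq l$ in the $j=1$ case, together with the observation that $\omega(\alpha_j^l)=\omega(\tilde\alpha_j^l)$ since the last entries lie in the common block. No gaps; the position bookkeeping you describe matches the paper's case analysis exactly.
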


\begin{proof}
Fix $C = e^{T_0\left(\frac{\theta}{1 - \theta} + p\right)} > 1$. Let $|a| < a_0'$, $x \in \Sigma^+$, $r \in \mathbb Z_{>0}$ with factorization $r = r'l$ with $l > p$. Let $0 \leq j \leq r'$ be an integer and $(\alpha_{j + 1}^{(l - p)_2}, \alpha_j^l)$ and $(\tilde{\alpha}_{j + 1}^{(l - p)_2}, \tilde{\alpha}_j^l)$ be two pairs of admissible sequences with $(\alpha_{j + 1}^{(l - p)_2}, \alpha_j^{(l - p)_2}) = (\tilde{\alpha}_{j + 1}^{(l - p)_2}, \tilde{\alpha}_j^{(l - p)_2})$. We have the elementary calculation
\begin{align*}
&\left|f_{2l - p}^{(a)}(\alpha^{2l - p}, x) - f_{2l - p}^{(a)}(\tilde{\alpha}^{2l - p}, x)\right| \\
\leq{}&\sum_{k = 0}^{2l - p - 1} \left|f^{(a)}(\sigma^k(\alpha^{2l - p}, x)) - f^{(a)}(\sigma^k(\tilde{\alpha}^{2l - p}, x))\right| \\
\leq{}&\sum_{k = 0}^{l - p - 1} \Lip_{d_\theta}(f^{(a)}) \cdot d_\theta(\sigma^k(\alpha^{2l - p}, x), \sigma^k(\tilde{\alpha}^{2l - p}, x)) \\
{}&+ \sum_{k = l - p}^{l - 1} \Lip_{d_\theta}(f^{(a)}) \cdot d_\theta(\sigma^k(\alpha^{2l - p}, x), \sigma^k(\tilde{\alpha}^{2l - p}, x)) \\
{}&+ \sum_{k = l}^{2l - p - 1} \Lip_{d_\theta}(f^{(a)}) \cdot d_\theta(\sigma^k(\alpha^{2l - p}, x), \sigma^k(\tilde{\alpha}^{2l - p}, x)) \\
\leq{}&\Lip_{d_\theta}(f^{(a)}) \left(\sum_{k = 0}^{l - p - 1} \theta^{l - p - k} + \sum_{k = l - p}^{l - 1} 1 + 0\right) \\
\leq{}&T_0\left(\frac{\theta}{1 - \theta} + p\right) \\
={}&\log(C).
\end{align*}
Similarly
\begin{align*}
&\left|f_l^{(a)}(\alpha_{j + 1}^{(l - p)_2}, \alpha_j^l, \omega(\alpha_j^l)) - f_l^{(a)}(\tilde{\alpha}_{j + 1}^{(l - p)_2}, \tilde{\alpha}_j^l, \omega(\tilde{\alpha}_j^l))\right| \\
\leq{}&\sum_{k = 0}^{l - 1} \left|f^{(a)}(\sigma^k(\alpha_{j + 1}^{(l - p)_2}, \alpha_j^l, \omega(\alpha_j^l))) - f^{(a)}(\sigma^k(\tilde{\alpha}_{j + 1}^{(l - p)_2}, \tilde{\alpha}_j^l, \omega(\tilde{\alpha}_j^l)))\right| \\
\leq{}&\sum_{k = 0}^{l - p - 1} \Lip_{d_\theta}(f^{(a)}) \cdot d_\theta(\sigma^k(\alpha_{j + 1}^{(l - p)_2}, \alpha_j^l, \omega(\alpha_j^l)), \sigma^k(\tilde{\alpha}_{j + 1}^{(l - p)_2}, \tilde{\alpha}_j^l, \omega(\tilde{\alpha}_j^l))) \\
&{}+ \sum_{k = l - p}^{l - 1} \Lip_{d_\theta}(f^{(a)}) \cdot d_\theta(\sigma^k(\alpha_{j + 1}^{(l - p)_2}, \alpha_j^l, \omega(\alpha_j^l)), \sigma^k(\tilde{\alpha}_{j + 1}^{(l - p)_2}, \tilde{\alpha}_j^l, \omega(\tilde{\alpha}_j^l))) \\
=&\Lip_{d_\theta}(f^{(a)}) \left(\sum_{k = 0}^{l - p - 1} \theta^{l - p - k} + \sum_{k = l - p}^{l - 1} 1\right) \\
\leq{}&T_0\left(\frac{\theta}{1 - \theta} + p\right) \\
={}&\log(C).
\end{align*}
Similarly again
\begin{align*}
&\left|f_p^{(a)}(\alpha_{r'}^l, \omega(\alpha_{r'}^l)) - f_p^{(a)}(\tilde{\alpha}_{r'}^l, \omega(\tilde{\alpha}_{r'}^l))\right| \\
\leq{}&\sum_{k = 0}^{p - 1} \left|f^{(a)}(\sigma^k(\alpha_{r'}^l, \omega(\alpha_{r'}^l))) - f^{(a)}(\sigma^k(\tilde{\alpha}_{r'}^l, \omega(\tilde{\alpha}_{r'}^l)))\right| \\
\leq{}&\Lip_{d_\theta}(f^{(a)}) \sum_{k = 0}^{p - 1} d_\theta(\sigma^k(\alpha_{r'}^l, \omega(\alpha_{r'}^l)), \sigma^k(\tilde{\alpha}_{r'}^l, \omega(\tilde{\alpha}_{r'}^l))) \\
\leq{}&\Lip_{d_\theta}(f^{(a)}) \sum_{k = 0}^{p - 1} 1 \\
={}&T_0 p \\
<{}&\log(C).
\end{align*}
The lemma now follows from definitions.
\end{proof}

For all $a \in \mathbb R$, for all ideals $\mathfrak{q} \subset \mathcal{O}_{\mathbb K}$ coprime to $\mathfrak{q}_0$, for all $x \in \Sigma^+$, for all $r \in \mathbb Z_{>0}$ with factorization $r = r'l$ with $l > p$, we also define the measure
\begin{align*}
\nu_1^{a, \mathfrak{q}, x, r, r'} = \sum_{\alpha_1^{(l - p)_2}, \alpha_2^{(l - p)_2}, \dotsc, \alpha_{r'}^{(l - p)_2}} \mathop{\bigast}\limits_{j = 0}^{r'} \eta_{j, (\alpha_{j + 1}^{(l - p)_2}, \alpha_j^{(l - p)_2})}^{a, \mathfrak{q}, x, r, r'}
\end{align*}
which in particular consists of convolutions of nearly flat measures. \cref{lem:EstimateNu} shows that we can estimate $\nu_0^{a, \mathfrak{q}, x, r}$ with $\nu_1^{a, \mathfrak{q}, x, r, r'}$ up to a multiplicative constant depending on the factorization $r = r'l$ and vice versa.

\begin{lemma}
\label{lem:EstimateNu}
There exists $C > 0$ such that for all $|a| < a_0'$, for all ideals $\mathfrak{q} \subset \mathcal{O}_{\mathbb K}$ coprime to $\mathfrak{q}_0$, for all $x \in \Sigma^+$, for all $r \in \mathbb Z_{>0}$ with factorization $r = r'l$ with $l > p$, we have $\nu_0^{a, \mathfrak{q}, x, r} \leq e^{r' C\theta^l}\nu_1^{a, \mathfrak{q}, x, r, r'}$ and $\nu_1^{a, \mathfrak{q}, x, r, r'} \leq e^{r' C\theta^l} \nu_0^{a, \mathfrak{q}, x, r}$.
\end{lemma}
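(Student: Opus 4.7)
The plan is to compare $\nu_0^{a, \mathfrak{q}, x, r}$ with $\nu_1^{a, \mathfrak{q}, x, r, r'}$ term by term using the additive decomposition of $f_r^{(a)}(\alpha^r, x)$ carried out just before \cref{lem:Estimate_f_ToRemoveDependence} and the pointwise estimates of \cref{lem:Estimate_f_ToRemoveDependence}. First I would start from the identity
\begin{align*}
f_r^{(a)}(\alpha^r, x) = f_{2l - p}^{(a)}(\alpha^{2l - p}, x) + \sum_{j = 2}^{r' - 1} f_l^{(a)}(\alpha^{(j + 1)l - p}, x) + f_p^{(a)}(\alpha^r, x)
\end{align*}
already computed in the text. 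Applying \cref{lem:Estimate_f_ToRemoveDependence} to each of the $r' - 1$ middle summands and to the final $f_p^{(a)}$ summand with the constant $C_0 > 0$ supplied by that lemma yields
\begin{align*}
\left|f_r^{(a)}(\alpha^r, x) - \log\!\left(E_{1, \alpha_2^{(l-p)_2}\!,\alpha_1^l}^{a,x,r,r'} \cdot \prod_{j = 2}^{r'} E_{j, (\alpha_{j+1}^{(l-p)_2}, \alpha_j^l)}^{a,x,r,r'}\right)\right| \leq r'C_0 \theta^l,
\end{align*}
where I adopt the convention that $\alpha_{r'+1}^{(l-p)_2}$ and $\alpha_0^{(l-p)_2}$ are empty sequences. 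Setting $C = C_0$, exponentiation gives the two-sided bound $e^{-r'C\theta^l} \leq e^{f_r^{(a)}(\alpha^r, x)} / \prod_{j=1}^{r'} E_{j,\cdot}^{a,x,r,r'} \leq e^{r'C\theta^l}$, valid for every admissible sequence $\alpha^r$.

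Next I would match the cocycle deltas on both sides. Using $\delta_{g_1} * \delta_{g_2} = \delta_{g_1 g_2}$ and the factorization
\begin{align*}
\mathtt{c}_\mathfrak{q}^{r+1}(\alpha_{r+1}, \alpha^r, x) = \mathtt{c}_\mathfrak{q}^l(\alpha_{r'l+1}, \alpha^{r'l}, x) \cdots \mathtt{c}_\mathfrak{q}^l(\alpha_{l+1}, \alpha^l, x) \cdot \mathtt{c}_\mathfrak{q}(\alpha_1, x),
\end{align*}
which follows because $\mathtt{c}_\mathfrak{q}^k$ is a concatenation of one-step cocycles, the iterated convolution $\bigast_{j=0}^{r'} \eta_{j,\cdot}^{a,\mathfrak{q},x,r,r'}$ in the appropriate order is exactly
\begin{align*}
\sum_{\alpha_1^{(p)_1}, \dotsc, \alpha_{r'}^{(p)_1}} \left(\prod_{j=1}^{r'} E_{j, (\alpha_{j+1}^{(l-p)_2}, \alpha_j^l)}^{a,x,r,r'}\right) \delta_{\mathtt{c}_\mathfrak{q}^{r+1}(\alpha_{r+1}, \alpha^r, x)},
\end{align*}
for each fixed choice of the outer indices $\alpha_1^{(l-p)_2}, \dotsc, \alpha_{r'}^{(l-p)_2}$. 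Summing over those outer indices recombines $\nu_1^{a, \mathfrak{q}, x, r, r'}$ into a single sum over \emph{all} admissible $\alpha^r$ with the product of $E_j$'s as the weight, while $\nu_0^{a,\mathfrak{q},x,r}$ is the analogous sum with weight $e^{f_r^{(a)}(\alpha^r,x)}$. Combining this with the two-sided exponential bound above and using positivity of both measures then gives $\nu_0^{a,\mathfrak{q},x,r} \leq e^{r'C\theta^l}\nu_1^{a,\mathfrak{q},x,r,r'}$ and the reverse inequality, as required.

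The main obstacle, and essentially the only one, is the combinatorial bookkeeping: verifying that the partition of the indices $(\alpha_r, \dotsc, \alpha_1)$ into the ``middle'' parts $\alpha_j^{(l-p)_2}$ (summed outside) and the ``edge'' parts $\alpha_j^{(p)_1}$ (summed inside each $\eta_j$) exactly reproduces the single sum over all admissible $\alpha^r$, and that the convolution order of the $\eta_j$ matches the concatenation order of the cocycle. Once these are pinned down carefully using the notational conventions introduced for $\alpha_j^l = (\alpha_j^{(p)_1}, \alpha_j^{(l-p)_2})$, the proof reduces to the exponential estimate derived from \cref{lem:Estimate_f_ToRemoveDependence}.
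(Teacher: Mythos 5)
Your proposal is correct and follows essentially the same route as the paper: decompose $f_r^{(a)}(\alpha^r, x)$ into blocks, control the replacement by the coefficients $E_{j, \cdot}$ via \cref{lem:Estimate_f_ToRemoveDependence} with total error $r'C\theta^l$, rewrite $\nu_1^{a, \mathfrak{q}, x, r, r'}$ as a single sum over admissible $\alpha^r$ with weight $\prod_{j} E_{j,\cdot}$ and delta at $\mathtt{c}_\mathfrak{q}^{r+1}(\alpha_{r+1}, \alpha^r, x)$, and compare with $\nu_0^{a, \mathfrak{q}, x, r}$. The only quibble is the convention slip $\delta_{g_1} * \delta_{g_2} = \delta_{g_1 g_2}$ (with the paper's convolution convention one gets $\delta_{g_2 g_1}$, which is precisely why the order $\bigast_{j = 0}^{r'}$ starting from $\eta_0 = \delta_{\mathtt{c}_\mathfrak{q}(\alpha_1, x)}$ reproduces $\mathtt{c}_\mathfrak{q}^{r+1}(\alpha_{r+1}, \alpha^r, x)$), but your hedge about the appropriate ordering and the final identity you state are correct, so this does not affect the argument.
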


\begin{proof}
Fix $C > 0$ to be the one from \cref{lem:Estimate_f_ToRemoveDependence}. Let $|a| < a_0'$, $\mathfrak{q} \subset \mathcal{O}_{\mathbb K}$ be an ideal coprime to $\mathfrak{q}_0$, $x \in \Sigma^+$, and $r \in \mathbb Z_{>0}$ with factorization $r = r'l$ with $l > p$. Denote $\nu_0^{a, \mathfrak{q}, x, r}$ by $\nu_0$, $\nu_1^{a, \mathfrak{q}, x, r, r'}$ by $\nu_1$, $\eta_{j, (\alpha_{j + 1}^{(l - p)_2}, \alpha_j^{(l - p)_2})}^{a, \mathfrak{q}, x, r, r'}$ by $\eta_{j, (\alpha_{j + 1}^{(l - p)_2}, \alpha_j^{(l - p)_2})}$, and $E_{j, (\alpha_{j + 1}^{(l - p)_2}, \alpha_j^l)}^{a, x, r, r'}$ by $E_{j, (\alpha_{j + 1}^{(l - p)_2}, \alpha_j^l)}$. First we have
\begin{align*}
\nu_0 &= \sum_{\alpha^r} e^{f_r^{(a)}(\alpha^r, x)} \delta_{\mathtt{c}_\mathfrak{q}^{r + 1}(\alpha_{r + 1}, \alpha^r, x)} \\
&= \sum_{\alpha^r} e^{f_{2l - p}^{(a)}(\alpha^{2l - p}, x) + \sum_{j = 2}^{r' - 1} f_l^{(a)}(\alpha^{(j + 1)l - p}, x) + f_p^{(a)}(\alpha^r, x)} \delta_{\mathtt{c}_\mathfrak{q}^{r + 1}(\alpha_{r + 1}, \alpha^r, x)}.
\end{align*}
Now by construction, for fixed sequences $\alpha_1^{(l - p)_2}, \alpha_2^{(l - p)_2}, \dotsc, \alpha_{r'}^{(l - p)_2}$, the terms $E_{j, (\alpha_{j + 1}^{(l - p)_2}, \alpha_j^l)}$ and
\begin{align*}
\mathtt{c}_\mathfrak{q}^l(\alpha_{jl + 1}, \alpha^{jl}, x) = \mathtt{c}_\mathfrak{q}^l(\alpha_{jl + 1}, \alpha_j^{(p)_1}, \alpha_j^{(l - p)_2}, \omega(\alpha_j^{(l - p)_2}))
\end{align*}
depend only on the choice of $\alpha_j^{(p)_1}$ and not on $\alpha_k^{(p)_1}$ for all distinct integers $1 \leq j, k \leq r'$. We also note that $\mathtt{c}_\mathfrak{q}(\alpha_1, x)$ depends only on the choice of $\alpha_1^{(l - p)_2}$ since $l - p \geq 1$. So we can do the manipulations
\begin{align*}
\nu_1 ={}&\sum_{\alpha_1^{(l - p)_2}, \alpha_2^{(l - p)_2}, \dotsc, \alpha_{r'}^{(l - p)_2}} \mathop{\bigast}\limits_{j = 0}^{r'} \eta_{j, (\alpha_{j + 1}^{(l - p)_2}, \alpha_j^{(l - p)_2})} \\
={}&\sum_{\alpha_1^{(l - p)_2}, \alpha_2^{(l - p)_2}, \dotsc, \alpha_{r'}^{(l - p)_2}} \delta_{\mathtt{c}_\mathfrak{q}(\alpha_1, x)} * \left(\mathop{\bigast}\limits_{j = 1}^{r'} \sum_{\alpha_j^{(p)_1}} E_{j, (\alpha_{j + 1}^{(l - p)_2}, \alpha_j^l)} \delta_{\mathtt{c}_\mathfrak{q}^l(\alpha_{jl + 1}, \alpha^{jl}, x)}\right) \\
={}&\sum_{\alpha_1^{(l - p)_2}, \alpha_2^{(l - p)_2}, \dotsc, \alpha_{r'}^{(l - p)_2}} \delta_{\mathtt{c}_\mathfrak{q}(\alpha_1, x)} \\
{}&* \left(\sum_{\alpha_1^{(p)_1}, \alpha_2^{(p)_1}, \dotsc, \alpha_{r'}^{(p)_1}} \left(\mathop{\bigast}\limits_{j = 1}^{r'} E_{j, (\alpha_{j + 1}^{(l - p)_2}, \alpha_j^l)} \delta_{\mathtt{c}_\mathfrak{q}^l(\alpha_{jl + 1}, \alpha^{jl}, x)} \right)\right) \\
={}&\sum_{\alpha_1^{(l - p)_2}, \alpha_2^{(l - p)_2}, \dotsc, \alpha_{r'}^{(l - p)_2}} \left(\sum_{\alpha_1^{(p)_1}, \alpha_2^{(p)_1}, \dotsc, \alpha_{r'}^{(p)_1}} \left(\prod_{j = 1}^{r'} E_{j, (\alpha_{j + 1}^{(l - p)_2}, \alpha_j^l)} \right)\right. \\
{}&\left.\cdot \delta_{\mathtt{c}_\mathfrak{q}(\alpha_1, x)} * \delta_{\mathtt{c}_\mathfrak{q}^l(\alpha_{l + 1}, \alpha^{l}, x)} * \delta_{\mathtt{c}_\mathfrak{q}^l(\alpha_{2l + 1}, \alpha^{2l}, x)} * \dotsb * \delta_{\mathtt{c}_\mathfrak{q}^l(\alpha_{r + 1}, \alpha^{r'l}, x)} \rule{0cm}{0.8cm}\right) \\
={}&\sum_{\alpha^r} \left(\prod_{j = 1}^{r'} E_{j, (\alpha_{j + 1}^{(l - p)_2}, \alpha_j^l)}\right) \delta_{\mathtt{c}_\mathfrak{q}^{r + 1}(\alpha_{r + 1}, \alpha^r, x)}.
\end{align*}
Hence the lemma follows by comparing the above two expressions for $\nu_0$ and $\nu_1$ and using \cref{lem:Estimate_f_ToRemoveDependence}.
\end{proof}

For all ideals $\mathfrak{q} \subset \mathcal{O}_{\mathbb K}$ coprime to $\mathfrak{q}_0$, for all $x \in \Sigma^+$, for all $r \in \mathbb Z_{>0}$ with factorization $r = r'l$ with $l > p$, for all integers $0 \leq j \leq r'$, for all pairs of admissible sequences $(\alpha_{j + 1}^{(l - p)_2}, \alpha_j^l)$ and $(\tilde{\alpha}_{j + 1}^{(l - p)_2}, \tilde{\alpha}_j^l)$ with $(\alpha_{j + 1}^{(l - p)_2}, \alpha_j^{(l - p)_2}) = (\tilde{\alpha}_{j + 1}^{(l - p)_2}, \tilde{\alpha}_j^{(l - p)_2})$, we calculate that
\begin{align*}
\mathtt{c}_\mathfrak{q}^l(\alpha_{jl + 1}, \alpha^{jl}, x) &= \mathtt{c}_\mathfrak{q}(\alpha_{jl + 1}, \alpha_{jl}) \mathtt{c}_\mathfrak{q}(\alpha_{jl}, \alpha_{jl - 1}) \dotsb \mathtt{c}_\mathfrak{q}(\alpha_{(j - 1)l + 2}, \alpha_{(j - 1)l + 1}) \\
&= \prod_{k = 0}^{l - 1} \mathtt{c}_\mathfrak{q}(\alpha_{jl + 1 - k}, \alpha_{jl - k}) \\
&= \prod_{k = 0}^p \mathtt{c}_\mathfrak{q}(\alpha_{jl + 1 - k}, \alpha_{jl - k}) \prod_{k = p + 1}^{l - 1} \mathtt{c}_\mathfrak{q}(\alpha_{jl + 1 - k}, \alpha_{jl - k}).
\end{align*}
We write the product in the final form because
\begin{align*}
\mathtt{c}_\mathfrak{q}(\tilde{\alpha}_{jl + 1 - k}, \tilde{\alpha}_{jl - k}) = \mathtt{c}_\mathfrak{q}(\alpha_{jl + 1 - k}, \alpha_{jl - k})
\end{align*}
for all $p + 1 \leq k \leq l - 1$. We use this to obtain
\begin{align*}
&\mathtt{c}_\mathfrak{q}^l(\alpha_{jl + 1}, \alpha^{jl}, x) \mathtt{c}_\mathfrak{q}^l(\tilde{\alpha}_{jl + 1}, \tilde{\alpha}^{jl}, x)^{-1} \\
={}&\left(\prod_{k = 0}^p \mathtt{c}_\mathfrak{q}(\alpha_{jl + 1 - k}, \alpha_{jl - k}) \prod_{k = p + 1}^{l - 1} \mathtt{c}_\mathfrak{q}(\alpha_{jl + 1 - k}, \alpha_{jl - k})\right) \\
{}&\cdot \left(\prod_{k = 0}^p \mathtt{c}_\mathfrak{q}(\tilde{\alpha}_{jl + 1 - k}, \tilde{\alpha}_{jl - k}) \prod_{k = p + 1}^{l - 1} \mathtt{c}_\mathfrak{q}(\tilde{\alpha}_{jl + 1 - k}, \tilde{\alpha}_{jl - k})\right)^{-1} \\
={}&\prod_{k = 0}^p \mathtt{c}_\mathfrak{q}(\alpha_{jl + 1 - k}, \alpha_{jl - k}) \prod_{k = p + 1}^{l - 1} \mathtt{c}_\mathfrak{q}(\alpha_{jl + 1 - k}, \alpha_{jl - k}) \\
{}&\cdot \left(\prod_{k = p + 1}^{l - 1} \mathtt{c}_\mathfrak{q}(\alpha_{jl + 1 - k}, \alpha_{jl - k})\right)^{-1} \left(\prod_{k = 0}^p \mathtt{c}_\mathfrak{q}(\tilde{\alpha}_{jl + 1 - k}, \tilde{\alpha}_{jl - k})\right)^{-1} \\
={}&\prod_{k = 0}^p \mathtt{c}_\mathfrak{q}(\alpha_{jl + 1 - k}, \alpha_{jl - k}) \prod_{k = 0}^p \mathtt{c}_\mathfrak{q}(\tilde{\alpha}_{jl - p + 1 + k}, \tilde{\alpha}_{jl - p + k})^{-1}.
\end{align*}

Now by \cref{lem:Z-DenseInU-CoverG}, we can use the result of Golsefidy-Varj\'{u} \cite{GV12} to obtain the following lemma regarding spectral gap.

\begin{lemma}
\label{lem:GV_Expander}
There exists $\epsilon \in (0, 1)$ such that for all $r \in \mathbb Z_{>0}$ with factorization $r = r'l$ with $l > p$, for all integers $0 \leq j \leq r'$, for all pairs of admissible sequences $(\alpha_{j + 1}^{(l - p)_2}, \alpha_j^{(l - p)_2})$, for all square free ideals $\mathfrak{q} \subset \mathcal{O}_{\mathbb K}$ coprime to $\mathfrak{q}_0$, for all $\phi \in L_0^2(\tilde{G}_\mathfrak{q}, \mathbb C)$ with $\|\phi\|_2 = 1$, there exist admissible sequences $(\beta_{jl + 1}, \beta_j^{(p)_1}, \beta_{jl - p})$ and $(\tilde{\beta}_{jl + 1}, \tilde{\beta}_j^{(p)_1}, \tilde{\beta}_{jl - p})$ with $\beta_{jl + 1} = \tilde{\beta}_{jl + 1} = \alpha_{jl + 1}$ and $\beta_{jl - p} = \tilde{\beta}_{jl - p} = \alpha_{jl - p}$ such that
\begin{align*}
\|\delta_g * \phi - \phi\|_2 \geq \epsilon
\end{align*}
where $g = \prod_{k = 0}^p \mathtt{c}_\mathfrak{q}(\beta_{jl + 1 - k}, \beta_{jl - k}) \prod_{k = 0}^p \mathtt{c}_\mathfrak{q}(\tilde{\beta}_{jl - p + 1 + k}, \tilde{\beta}_{jl - p + k})^{-1}$.
\end{lemma}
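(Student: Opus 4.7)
The plan is to identify the elements $g$ in the statement as precisely the images in $\tilde{G}_\mathfrak{q}$ of $\tilde{S}^p(y,z)$ for $y = \alpha_{jl+1}$ and $z = \alpha_{jl-p}$, and then to invoke the expander machinery of Golsefidy-Varj\'{u} \cite[Corollary 6]{GV12}, whose Zariski density hypothesis is furnished by \cref{lem:Z-DenseInU-CoverG}.

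For the identification, I would first observe that the endpoints $\alpha_{jl+1}$ and $\alpha_{jl-p}$ are pinned by the input data $(\alpha_{j+1}^{(l-p)_2}, \alpha_j^{(l-p)_2})$, being respectively the last entry of the former and the first entry of the latter. As the admissible sequences $(\beta_{jl+1}, \beta_j^{(p)_1}, \beta_{jl-p})$ and $(\tilde{\beta}_{jl+1}, \tilde{\beta}_j^{(p)_1}, \tilde{\beta}_{jl-p})$ range freely subject to the pinned endpoints, the resulting $g$ — after reindexing the products to match the definition of $S^p(y,z)$ given at the start of \cref{subsec:ZariskiDensityofH} — traces out exactly the image of $\tilde{S}^p(y,z)$ under the canonical quotient to $\tilde{G}_\mathfrak{q}$. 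Thus $g$ ranges over a generating set of $\pi_\mathfrak{q}(\tilde{H}^p(y,z)) \subset \tilde{G}_\mathfrak{q}$.

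Having fixed $p > p_0$ in this subsection, \cref{lem:Z-DenseInU-CoverG} gives Zariski density of $\tilde{H}^p(y,z)$ in $\tilde{\mathbf{G}}$ for every $(y,z) \in \mathcal{A}^2$. Combined with the standing global hypotheses on $\tilde{\Gamma}$ (in particular the adjoint-trace condition and containment in $\tilde{\mathbf{G}}(\mathcal{O}_\mathbb{K})$), the symmetric finite set $\tilde{S}^p(y,z) \cup \tilde{S}^p(y,z)^{-1}$ fulfils the hypotheses of \cite[Corollary 6]{GV12}. Applying that result, together with the joint choice of $\mathfrak{q}_0$ fixed in \cref{subsec:CocyclesAndCongruenceTransferOperators}, yields a spectral gap constant $\kappa > 0$ — independent of the square free ideal $\mathfrak{q}$ coprime to $\mathfrak{q}_0$, and (by finiteness of $\mathcal{A}^2$) of the pair $(y,z)$ — such that the normalized averaging operator $T$ over the image of $\tilde{S}^p(y,z) \cup \tilde{S}^p(y,z)^{-1}$ in $\tilde{G}_\mathfrak{q}$ has operator norm at most $1 - \kappa$ on $L_0^2(\tilde{G}_\mathfrak{q}, \mathbb{C})$.

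To conclude, for any unit $\phi \in L_0^2(\tilde{G}_\mathfrak{q}, \mathbb{C})$, the spectral gap gives $\|T\phi - \phi\|_2 \geq \kappa$, and the triangle inequality then forces some $h$ in the symmetric generating set with $\|\delta_h * \phi - \phi\|_2 \geq \kappa$. Since convolution by $\delta_{h^{-1}}$ is an isometry, $\|\delta_{h^{-1}} * \phi - \phi\|_2 = \|\delta_h * \phi - \phi\|_2$, so we may assume $h$ lies in the image of $\tilde{S}^p(y,z)$ itself; by the first step this is realised by sequences $(\beta_{jl+1}, \beta_j^{(p)_1}, \beta_{jl-p})$, $(\tilde{\beta}_{jl+1}, \tilde{\beta}_j^{(p)_1}, \tilde{\beta}_{jl-p})$ of the required form, so taking $\epsilon := \min(\kappa, 1/2)$ finishes the proof. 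The main obstacle is the verification of the Zariski density hypothesis for $\tilde{H}^p(y,z)$ — rather than merely for $\tilde{\Gamma}$ — required by \cite[Corollary 6]{GV12}; this was the principal geometric input of the section, supplied by \cref{lem:Z-DenseInU-CoverG}, and once it is available the rest of the argument is standard bookkeeping that packages the spectral gap of \cite{GV12} into the Kazhdan-type estimate stated here.
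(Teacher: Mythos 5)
Your proposal is correct and follows essentially the same route as the paper: identify the elements $g$ with the generating set $S^p(\alpha_{jl+1}, \alpha_{jl-p})$, invoke \cref{lem:Z-DenseInU-CoverG} together with the choice of $\mathfrak{q}_0$ (strong approximation) to apply \cite[Corollary 6]{GV12} uniformly over the finitely many pairs $(y,z) \in \mathcal{A}^2$ and over square free $\mathfrak{q}$, and then pigeonhole the spectral gap of the averaging operator on $L_0^2(\tilde{G}_\mathfrak{q}, \mathbb{C})$ to extract a single $g$ with $\|\delta_g * \phi - \phi\|_2 \geq \epsilon$. The only differences are cosmetic: the paper works with the adjacency operator and graph Laplacian rather than the normalized averaging operator, your symmetrization/isometry step is harmless but unnecessary since $S^p(y,z)$ is already symmetric, and the paper additionally records in a remark that passing from $\tilde{\mathbf{G}}(\mathcal{O}_{\mathbb K}/\mathfrak{q})$ to the quotient $\tilde{G}_\mathfrak{q} = \{e,-e\} \backslash \tilde{\mathbf{G}}(\mathcal{O}_{\mathbb K}/\mathfrak{q})$ preserves the expander property, a minor technicality your write-up glosses over.
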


\begin{proof}
Uniformity of $\epsilon$ with respect to $r \in \mathbb Z_{>0}$ with factorization $r = r'l$ with $l > p$, integers $0 \leq j \leq r'$, and pairs of admissible sequences $(\alpha_{j + 1}^{(l - p)_2}, \alpha_j^{(l - p)_2})$ is trivial since it only depends on the first entry $\alpha_{jl + 1} \in \mathcal A$ of $\alpha_{j + 1}^{(l - p)_2}$ and the last entry $\alpha_{jl - p} \in \mathcal A$ of $\alpha_j^{(l - p)_2}$ and there are only a finite number of such pairs. So let $r \in \mathbb Z_{>0}$ with factorization $r = r'l$ with $l > p$, $0 \leq j \leq r'$ be an integer and $(\alpha_{j + 1}^{(l - p)_2}, \alpha_j^{(l - p)_2})$ be a pair of admissible sequences. Let $\tilde{S}^p = \tilde{S}^p(\alpha_{jl + 1}, \alpha_{jl - p})$ and $\tilde{H}^p = \langle \tilde{S}^p \rangle = \tilde{H}^p(\alpha_{jl + 1}, \alpha_{jl - p})$. for all ideals $\mathfrak{q} \subset \mathcal{O}_{\mathbb K}$, let $\tilde{S}^p_\mathfrak{q} = \{e, -e\} \backslash \pi_\mathfrak{q}(\tilde{S}^p)$ and $\tilde{H}^p_\mathfrak{q} = \{e, -e\} \backslash \pi_\mathfrak{q}(\tilde{H}^p) = \langle\tilde{S}^p_\mathfrak{q}\rangle$. Recalling the strong approximation theorem, \cref{lem:Z-DenseInU-CoverG} implies that $\tilde{H}^p_\mathfrak{q} = \tilde{G}_\mathfrak{q}$ for all ideals $\mathfrak{q} \subset \mathcal{O}_{\mathbb K}$ coprime to $\mathfrak{q}_0$. Hence again by \cref{lem:Z-DenseInU-CoverG}, we can use \cite[Corollary 6]{GV12} to further conclude that the Cayley graphs $\Cay(\tilde{G}_\mathfrak{q}, \tilde{S}^p_\mathfrak{q}) = \Cay(\{e, -e\} \backslash \tilde{\mathbf{G}}(\mathcal{O}_\mathbb{K}/\mathfrak{q}), \{e, -e\} \backslash \pi_\mathfrak{q}(\tilde{S}^p))$ form a family of expanders with respect to square free ideals $\mathfrak{q} \subset \mathcal{O}_{\mathbb K}$ coprime to $\mathfrak{q}_0$. For all square free ideals $\mathfrak{q} \subset \mathcal{O}_{\mathbb K}$ coprime to $\mathfrak{q}_0$, let $A_\mathfrak{q}: L^2(\tilde{G}_\mathfrak{q}, \mathbb C) \to L^2(\tilde{G}_\mathfrak{q}, \mathbb C)$ be the adjacency operator defined by
\begin{align*}
A_\mathfrak{q}(\phi)(g) = \sum_{h \in \tilde{S}^p_\mathfrak{q}} \phi(gh) = \sum_{h \in \tilde{S}^p_\mathfrak{q}} \phi(gh^{-1}) = \sum_{h \in \tilde{S}^p_\mathfrak{q}} (\delta_h * \phi)(g)
\end{align*}
for all $g \in \tilde{G}_\mathfrak{q}$ which we rewrite as $A_\mathfrak{q}(\phi) = \sum_{h \in \tilde{S}^p_\mathfrak{q}} \delta_h * \phi$ and note that it is self-adjoint. Its largest eigenvalue is $\lambda_1(A_\mathfrak{q}) = |\tilde{S}^p_\mathfrak{q}| = k$ with corresponding eigenvectors being the constant functions. We can choose $\epsilon \in (0, 1)$ coming from the expander property so that for all square free ideals $\mathfrak{q} \subset \mathcal{O}_{\mathbb K}$ coprime to $\mathfrak{q}_0$, the next largest eigenvalue is $\lambda_2(A_\mathfrak{q}) \leq (1 - \epsilon)k$. This corresponds to the graph Laplacian $\Delta_\mathfrak{q}  = \Id_{L^2(\tilde{G}_\mathfrak{q}, \mathbb C)} - \frac{1}{k}A_\mathfrak{q}$ having smallest eigenvalue $\lambda_1(\Delta_\mathfrak{q}) = 0$ with corresponding eigenvectors being the constant functions and having the next smallest eigenvalue $\lambda_2(\Delta_\mathfrak{q}) \geq \epsilon$ for all square free ideals $\mathfrak{q} \subset \mathcal{O}_{\mathbb K}$ coprime to $\mathfrak{q}_0$. Thus for all square free ideals $\mathfrak{q} \subset \mathcal{O}_{\mathbb K}$ coprime to $\mathfrak{q}_0$, for all $\phi \in L_0^2(\tilde{G}_\mathfrak{q}, \mathbb C)$ with $\|\phi\|_2 = 1$, we have $\|\Delta_\mathfrak{q}(\phi)\|_2 = \left\|\frac{1}{k}A_\mathfrak{q}(\phi) - \phi\right\|_2 \geq \epsilon$ which implies
\begin{align*}
\sum_{h \in \tilde{S}^p_\mathfrak{q}} \|\delta_h * \phi - \phi\|_2 \geq \left\|\sum_{h \in \tilde{S}^p_\mathfrak{q}} (\delta_h * \phi - \phi)\right\|_2 \geq k\epsilon
\end{align*}
and so there is a $g \in \tilde{S}^p_\mathfrak{q}$ such that $\|\delta_g * \phi - \phi\|_2 \geq \epsilon$. But $\tilde{S}^p \subset \tilde{H}^p < \tilde{\Gamma}$ and recall the induced isomorphisms $\overline{\pi_{\mathfrak{q}}|_{\tilde{\Gamma}}}: \tilde{\Gamma}_\mathfrak{q} \backslash \tilde{\Gamma} \to \tilde{G}_\mathfrak{q}$ and $\overline{\tilde{\pi}}: \tilde{\Gamma}_\mathfrak{q} \backslash \tilde{\Gamma} \to \Gamma_\mathfrak{q} \backslash \Gamma$. Following these isomorphisms, we can see that $g$ is in fact of the form
\begin{align*}
g = \prod_{k = 0}^p \mathtt{c}_\mathfrak{q}(\beta_{jl + 1 - k}, \beta_{jl - k}) \prod_{k = 0}^p \mathtt{c}_\mathfrak{q}(\tilde{\beta}_{jl - p + 1 + k}, \tilde{\beta}_{jl - p + k})^{-1}.
\end{align*}
\end{proof}

\begin{remark}
Although in the above proof we only know a priori that the graphs $\Cay(\tilde{\mathbf{G}}(\mathcal{O}_\mathbb{K}/\mathfrak{q}), \pi_\mathfrak{q}(\tilde{S}^p))$ form a family of expanders with respect to square free ideals $\mathfrak{q} \subset \mathcal{O}_{\mathbb K}$ coprime to $\mathfrak{q}_0$, it is easy to see using the Cheeger constant formulation for a family of expanders that left quotients by $\{e, -e\}$ preserve this property.
\end{remark}

Let $\mathfrak{q} \subset \mathcal{O}_{\mathbb K}$ be an ideal coprime to $\mathfrak{q}_0$. For all measures $\eta$ on $\tilde{G}_\mathfrak{q}$, we define $\tilde{\eta}$ to be the operator $\tilde{\eta}: L^2(\tilde{G}_\mathfrak{q}, \mathbb C) \to L^2(\tilde{G}_\mathfrak{q}, \mathbb C)$ acting by the convolution $\tilde{\eta}(\phi) = \eta * \phi$ for all $\phi \in L^2(\tilde{G}_\mathfrak{q}, \mathbb C)$ and $\tilde{\eta}^*$ will be its adjoint operator as usual. We also define $\eta^*$ to be the measure on $\tilde{G}_\mathfrak{q}$ defined by $\eta^*(g) = \overline{\eta(g^{-1})}$ for all $g \in \tilde{G}_\mathfrak{q}$. It is then easy to see that these two operations commute, i.e., $\tilde{\eta}^* = \widetilde{\eta^*}$ or $\tilde{\eta}^*(\phi) = \eta^* * \phi$ for all $\phi \in L^2(\tilde{G}_\mathfrak{q}, \mathbb C)$ because
\begin{align*}
\langle \tilde{\eta}^*(\phi), \psi \rangle &= \langle \phi, \tilde{\eta}(\psi) \rangle = \sum_{g \in \tilde{G}_\mathfrak{q}} \phi(g) \overline{(\eta * \psi)(g)} = \sum_{g, h \in \tilde{G}_\mathfrak{q}} \phi(g) \overline{\eta(h^{-1})\psi(gh)} \\
&= \sum_{g, h \in \tilde{G}_\mathfrak{q}} \overline{\eta(h^{-1})} \phi(gh^{-1}) \overline{\psi(g)} = \sum_{g \in \tilde{G}_\mathfrak{q}} (\eta^* * \phi)(g) \overline{\psi(g)} \\
&= \langle \eta^* * \phi, \psi \rangle = \langle \widetilde{\eta^*}(\phi), \psi \rangle
\end{align*}
for all $\phi, \psi \in L^2(\tilde{G}_\mathfrak{q}, \mathbb C)$.

\begin{lemma}
\label{lem:EtaOperatorBound}
There exists $C \in (0, 1)$ such that for all $|a| < a_0'$, for all square free ideals $\mathfrak{q} \subset \mathcal{O}_{\mathbb K}$ coprime to $\mathfrak{q}_0$, for all $x \in \Sigma^+$, for all $r \in \mathbb Z_{>0}$ with factorization $r = r'l$ with $l > p$, for all integers $1 \leq j \leq r'$, for all pairs of admissible sequences $(\alpha_{j + 1}^{(l - p)_2}, \alpha_j^{(l - p)_2})$, for all $\phi \in L_0^2(\tilde{G}_\mathfrak{q}, \mathbb C)$ with $\|\phi\|_2 = 1$, we have
\begin{align*}
\left\|\eta_{j, (\alpha_{j + 1}^{(l - p)_2}, \alpha_j^{(l - p)_2})}^{a, \mathfrak{q}, x, r, r'} * \phi\right\|_2 \leq C \left\|\eta_{j, (\alpha_{j + 1}^{(l - p)_2}, \alpha_j^{(l - p)_2})}^{a, \mathfrak{q}, x, r, r'}\right\|_1.
\end{align*}
\end{lemma}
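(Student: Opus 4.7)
The plan is to derive an explicit algebraic identity relating $\|\eta\|_1^2 - \|\eta * \phi\|_2^2$ to a weighted sum of squared differences $\|\delta_g * \phi - \phi\|_2^2$, then invoke \cref{lem:GV_Expander} to make a single such summand uniformly large, and finally use the near-flatness from \cref{lem:NearlyFlat} to convert this into a multiplicative gain over $\|\eta\|_1^2$. Fix the data as in the statement and abbreviate $\eta := \eta_{j, (\alpha_{j+1}^{(l-p)_2}, \alpha_j^{(l-p)_2})}^{a, \mathfrak{q}, x, r, r'} = \sum_\alpha E_\alpha \delta_{c_\alpha}$, where $\alpha$ ranges over admissible choices of $\alpha_j^{(p)_1}$ compatible with the fixed endpoints, $E_\alpha := E_{j, (\alpha_{j+1}^{(l-p)_2}, \alpha_j^l)}^{a, x, r, r'} > 0$, and $c_\alpha := \mathtt{c}_\mathfrak{q}^l(\alpha_{jl+1}, \alpha^{jl}, x) \in \tilde{G}_\mathfrak{q}$. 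In particular $\|\eta\|_1 = \sum_\alpha E_\alpha$, and the number of admissible $\alpha$ is at most $N := |\mathcal{A}|^p$, a constant.

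First, using $\delta_a * \delta_b = \delta_{ba}$ and the isometry property of convolution by $\delta_g$ on $L^2(\tilde{G}_\mathfrak{q}, \mathbb{C})$, I would expand $\|\eta * \phi\|_2^2 = \sum_{\alpha, \tilde\alpha} E_\alpha E_{\tilde\alpha}\,\langle \phi, \delta_{c_{\tilde\alpha} c_\alpha^{-1}} * \phi\rangle$. Subtracting this from $\|\eta\|_1^2 = \sum_{\alpha, \tilde\alpha} E_\alpha E_{\tilde\alpha}\|\phi\|_2^2$, taking real parts (the left-hand side is real, so the pairing $(\alpha, \tilde\alpha) \leftrightarrow (\tilde\alpha, \alpha)$ symmetrizes the sum), and applying the polarization identity $2(\|\phi\|_2^2 - \Re\langle \phi, \delta_g * \phi\rangle) = \|\delta_g * \phi - \phi\|_2^2$ yields the key identity
\[
\|\eta\|_1^2 - \|\eta * \phi\|_2^2 = \tfrac{1}{2}\sum_{\alpha, \tilde\alpha} E_\alpha E_{\tilde\alpha}\,\bigl\|\delta_{c_{\tilde\alpha} c_\alpha^{-1}} * \phi - \phi\bigr\|_2^2.
\]

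Next, the explicit product formula for $c_{\tilde\alpha} c_\alpha^{-1}$ computed immediately before \cref{lem:GV_Expander} shows that these group elements are precisely those handled by the expander lemma. Applying \cref{lem:GV_Expander} to the endpoint pair $(\alpha_{jl+1}, \alpha_{jl-p})$ and to $\phi$ produces admissible $\beta_j^{(p)_1}$ and $\tilde\beta_j^{(p)_1}$ such that the element $g := c_\beta c_{\tilde\beta}^{-1}$ satisfies $\|\delta_g * \phi - \phi\|_2 \geq \epsilon$. Retaining only the single term with $\tilde\alpha = \beta$, $\alpha = \tilde\beta$ and discarding all other (non-negative) terms, then invoking \cref{lem:NearlyFlat} to get $E_\beta E_{\tilde\beta} \geq C_0^{-2}(\max_\alpha E_\alpha)^2 \geq C_0^{-2} N^{-2}\|\eta\|_1^2$, gives
\[
\|\eta\|_1^2 - \|\eta * \phi\|_2^2 \geq \tfrac{\epsilon^2}{2 C_0^2 |\mathcal{A}|^{2p}}\,\|\eta\|_1^2,
\]
and a square root produces the lemma with $C := \bigl(1 - \tfrac{\epsilon^2}{2 C_0^2 |\mathcal{A}|^{2p}}\bigr)^{1/2} \in (0, 1)$.

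The conceptual heart of the argument is the algebraic identity in the second paragraph: one must recognize that the group elements $c_{\tilde\alpha} c_\alpha^{-1}$ produced by the square-expansion of $\|\eta * \phi\|_2^2$ match exactly the form of the elements for which \cref{lem:GV_Expander} supplies a spectral-gap lower bound. Once this matching is visible, the remaining steps are routine: the expander estimate yields an $\epsilon^2$-sized gain on a single off-diagonal pair, and the near-flatness of the weights $E_\alpha$ together with the uniform bound $N \leq |\mathcal{A}|^p$ on the support cardinality upgrades that single-term gain into a multiplicative improvement over the trivial convolution bound $\|\eta * \phi\|_2 \leq \|\eta\|_1$.
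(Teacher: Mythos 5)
Your proof is correct and follows essentially the same route as the paper: expand $\|\eta * \phi\|_2^2$ as a double sum over pairs of $(p)_1$-blocks, use \cref{lem:GV_Expander} to make a single off-diagonal term $\|\delta_g * \phi - \phi\|_2$ uniformly large, and use \cref{lem:NearlyFlat} together with the bound $N \leq |\mathcal{A}|^p$ on the number of blocks to turn that into a multiplicative gain over $\|\eta\|_1^2$. The only cosmetic difference is that you package the estimate as an exact defect identity for $\|\eta\|_1^2 - \|\eta*\phi\|_2^2$, whereas the paper bounds $\langle \tilde{\eta}^*\tilde{\eta}(\phi), \phi\rangle$ term by term via Cauchy--Schwarz; the two are equivalent.
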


\begin{proof}
Fix $\epsilon \in (0, 1)$ to be the one from \cref{lem:GV_Expander} and $C_0 > 1$ to be the $C$ from \cref{lem:NearlyFlat}. Fix $C = \sqrt{1 - \frac{\epsilon^2}{2{C_0}^2N^{2p}}} \in (0, 1)$. Let $|a| < a_0'$, $\mathfrak{q} \subset \mathcal{O}_{\mathbb K}$ be a square free ideal coprime to $\mathfrak{q}_0$, $x \in \Sigma^+, r \in \mathbb Z_{>0}$ with factorization $r = r'l$ with $l > p$, $1 \leq j \leq r'$ be an integer, $(\alpha_{j + 1}^{(l - p)_2}, \alpha_j^{(l - p)_2})$ be a pair of admissible sequences and $\phi \in L_0^2(\tilde{G}_\mathfrak{q}, \mathbb C)$ with $\|\phi\|_2 = 1$. Denote $\eta_{j, (\alpha_{j + 1}^{(l - p)_2}, \alpha_j^{(l - p)_2})}^{a, \mathfrak{q}, x, r, r'}$ by $\eta$ and $E_{j, (\alpha_{j + 1}^{(l - p)_2}, \alpha_j^l)}^{a, x, r, r'}$ by $E_{\alpha_j^{(p)_1}}$ for all admissible sequences $(\alpha_{jl + 1}, \alpha_j^{(p)_1}, \alpha_{jl - p})$. Recalling that $E_{\alpha_j^{(p)_1}} \in \mathbb R$, we can define the measure
\begin{align*}
A &= \eta^* * \eta = \left(\sum_{\alpha_j^{(p)_1}} \overline{E_{\alpha_j^{(p)_1}}} \delta_{\mathtt{c}_\mathfrak{q}^l(\alpha_{jl + 1}, \alpha^{jl}, x)^{-1}}\right) * \left(\sum_{\alpha_j^{(p)_1}} E_{\alpha_j^{(p)_1}} \delta_{\mathtt{c}_\mathfrak{q}^l(\alpha_{jl + 1}, \alpha^{jl}, x)}\right) \\
&= \sum_{\alpha_j^{(p)_1}, \tilde{\alpha}_j^{(p)_1}} E_{\alpha_j^{(p)_1}} E_{\tilde{\alpha}_j^{(p)_1}} \delta_{\mathtt{c}_\mathfrak{q}^l(\tilde{\alpha}_{jl + 1}, \tilde{\alpha}^{jl}, x)^{-1}} * \delta_{\mathtt{c}_\mathfrak{q}^l(\alpha_{jl + 1}, \alpha^{jl}, x)} \\
&= \sum_{\alpha_j^{(p)_1}, \tilde{\alpha}_j^{(p)_1}} E_{\alpha_j^{(p)_1}} E_{\tilde{\alpha}_j^{(p)_1}} \delta_{\mathtt{c}_\mathfrak{q}^l(\alpha_{jl + 1}, \alpha^{jl}, x) \mathtt{c}_\mathfrak{q}^l(\tilde{\alpha}_{jl + 1}, \tilde{\alpha}^{jl}, x)^{-1}}
\end{align*}
where $(\tilde{\alpha}_{jl + 1}, \tilde{\alpha}_j^{(p)_1}, \tilde{\alpha}_j^{(l - p)_2}, \tilde{\alpha}^{(j - 1)l}) = (\alpha_{jl + 1}, \tilde{\alpha}_j^{(p)_1}, \alpha_j^{(l - p)_2}, \alpha^{(j - 1)l})$ henceforth. To begin estimating $\|\eta * \phi\|_2$, we first use definitions and properties mentioned above to get
\begin{align*}
0 \leq {\|\eta * \phi\|_2}^2 = \langle \eta * \phi, \eta * \phi \rangle = \langle \tilde{\eta}^*(\eta * \phi), \phi \rangle = \langle \eta^* * \eta * \phi, \phi \rangle = \langle\tilde{A}(\phi), \phi \rangle.
\end{align*}
The calculations also show that $\tilde{A} = \tilde{\eta}^* \tilde{\eta}$ is a self-adjoint positive semidefinite operator. Moreover, it suffices to show $\langle\tilde{A}(\phi), \phi \rangle \leq C^2\|A\|_1$ since
\begin{align*}
\|A\|_1 = \sum_{\alpha_j^{(p)_1}, \tilde{\alpha}_j^{(p)_1}} E_{\alpha_j^{(p)_1}} E_{\tilde{\alpha}_j^{(p)_1}} = \left(\sum_{\alpha_j^{(p)_1}} E_{\alpha_j^{(p)_1}}\right)^2 = {\|\eta\|_1}^2.
\end{align*}
Before we estimate $\langle\tilde{A}(\phi), \phi \rangle$, we first use \cref{lem:GV_Expander} to obtain a
\begin{align*}
g &= \mathtt{c}_\mathfrak{q}^l(\beta_{jl + 1}, \beta^{jl}, x) \mathtt{c}_\mathfrak{q}^l(\tilde{\beta}_{jl + 1}, \tilde{\beta}^{jl}, x)^{-1} \\
&= \prod_{k = 0}^p \mathtt{c}_\mathfrak{q}(\beta_{jl + 1 - k}, \beta_{jl - k}) \prod_{k = 0}^p \mathtt{c}_\mathfrak{q}(\tilde{\beta}_{jl - p + 1 + k}, \tilde{\beta}_{jl - p + k})^{-1}
\end{align*}
for some admissible sequences $(\beta_{jl + 1}, \beta_j^{(p)_1}, \beta_{jl - p})$ and $(\tilde{\beta}_{jl + 1}, \tilde{\beta}_j^{(p)_1}, \tilde{\beta}_{jl - p})$ with $\beta_{jl + 1} = \tilde{\beta}_{jl + 1} = \alpha_{jl + 1}$ and $\beta_{jl - p} = \tilde{\beta}_{jl - p} = \alpha_{jl - p}$ such that $\|\delta_g * \phi - \phi\|_2 \geq \epsilon$. Expanding the norm, we have ${\|\delta_g * \phi\|_2}^2 - 2\Re \langle \delta_g * \phi, \phi\rangle + {\|\phi\|_2}^2 \geq \epsilon^2$. Thus
\begin{align*}
\Re \langle \delta_g * \phi, \phi\rangle \leq 1 - \frac{\epsilon^2}{2} \in (0, 1)
\end{align*}
using the fact that $\|\delta_g * \phi\|_2 = \|\phi\|_2 = 1$. Now we use this inequality to begin estimating $\langle\tilde{A}(\phi), \phi \rangle$. Since $\tilde{A}$ is self-adjoint, we have
\begin{align*}
&\langle \tilde{A}(\phi), \phi\rangle = \Re \langle \tilde{A}(\phi), \phi\rangle \\
={}& \Re \left\langle \sum_{\alpha_j^{(p)_1}, \tilde{\alpha}_j^{(p)_1}} E_{\alpha_j^{(p)_1}} E_{\tilde{\alpha}_j^{(p)_1}} \delta_{\mathtt{c}_\mathfrak{q}^l(\alpha_{jl + 1}, \alpha^{jl}, x) \mathtt{c}_\mathfrak{q}^l(\tilde{\alpha}_{jl + 1}, \tilde{\alpha}^{jl}, x)^{-1}} * \phi, \phi\right\rangle \\
={}&\sum_{\alpha_j^{(p)_1}, \tilde{\alpha}_j^{(p)_1}} E_{\alpha_j^{(p)_1}} E_{\tilde{\alpha}_j^{(p)_1}} \Re \langle \delta_{\mathtt{c}_\mathfrak{q}^l(\alpha_{jl + 1}, \alpha^{jl}, x) \mathtt{c}_\mathfrak{q}^l(\tilde{\alpha}_{jl + 1}, \tilde{\alpha}^{jl}, x)^{-1}} * \phi, \phi\rangle \\
\leq{}&E_{\beta_j^{(p)_1}} E_{\tilde{\beta}_j^{(p)_1}} \Re \langle \delta_g * \phi, \phi\rangle \\
{}&+ \sum_{(\alpha_j^{(p)_1}, \tilde{\alpha}_j^{(p)_1}) \neq (\beta_j^{(p)_1}, \tilde{\beta}_j^{(p)_1})} E_{\alpha_j^{(p)_1}} E_{\tilde{\alpha}_j^{(p)_1}} |\langle \delta_{\mathtt{c}_\mathfrak{q}^l(\alpha_{jl + 1}, \alpha^{jl}, x) \mathtt{c}_\mathfrak{q}^l(\tilde{\alpha}_{jl + 1}, \tilde{\alpha}^{jl}, x)^{-1}} * \phi, \phi\rangle| \\
\leq{}&\left(1 - \frac{\epsilon^2}{2}\right) E_{\beta_j^{(p)_1}} E_{\tilde{\beta}_j^{(p)_1}} \\
{}&+ \sum_{(\alpha_j^{(p)_1}, \tilde{\alpha}_j^{(p)_1}) \neq (\beta_j^{(p)_1}, \tilde{\beta}_j^{(p)_1})} E_{\alpha_j^{(p)_1}} E_{\tilde{\alpha}_j^{(p)_1}} \|\delta_{\mathtt{c}_\mathfrak{q}^l(\alpha_{jl + 1}, \alpha^{jl}, x) \mathtt{c}_\mathfrak{q}^l(\tilde{\alpha}_{jl + 1}, \tilde{\alpha}^{jl}, x)^{-1}} * \phi\|_2 \|\phi\|_2 \\
={}&\left(1 - \frac{\epsilon^2}{2}\right) E_{\beta_j^{(p)_1}} E_{\tilde{\beta}_j^{(p)_1}} + \sum_{(\alpha_j^{(p)_1}, \tilde{\alpha}_j^{(p)_1}) \neq (\beta_j^{(p)_1}, \tilde{\beta}_j^{(p)_1})} E_{\alpha_j^{(p)_1}} E_{\tilde{\alpha}_j^{(p)_1}} \\
={}&\sum_{\alpha_j^{(p)_1}, \tilde{\alpha}_j^{(p)_1}} E_{\alpha_j^{(p)_1}} E_{\tilde{\alpha}_j^{(p)_1}} - \frac{\epsilon^2}{2} E_{\beta_j^{(p)_1}} E_{\tilde{\beta}_j^{(p)_1}} \\
={}&\|A\|_1 - \frac{\epsilon^2}{2} E_{\beta_j^{(p)_1}} E_{\tilde{\beta}_j^{(p)_1}}.
\end{align*}
Finally, \cref{lem:NearlyFlat} gives
\begin{align*}
\|A\|_1 = \sum_{\alpha_j^{(p)_1}, \tilde{\alpha}_j^{(p)_1}} E_{\alpha_j^{(p)_1}} E_{\tilde{\alpha}_j^{(p)_1}} \leq {C_0}^2\sum_{\alpha_j^{(p)_1}, \tilde{\alpha}_j^{(p)_1}} E_{\beta_j^{(p)_1}} E_{\tilde{\beta}_j^{(p)_1}} \leq {C_0}^2 N^{2p}E_{\beta_j^{(p)_1}} E_{\tilde{\beta}_j^{(p)_1}}
\end{align*}
and thus
\begin{align*}
\langle \tilde{A}(\phi), \phi\rangle \leq \left(1 - \frac{\epsilon^2}{2{C_0}^2N^{2p}}\right)\|A\|_1 = C^2\|A\|_1.
\end{align*}
\end{proof}

\begin{lemma}
\label{lem:ExpanderMachineryBound}
There exists $l_0 \in \mathbb Z_{>0}$ such that for all integers $l > l_0$, there exists $C \in (0, 1)$ such that for all $|a| < a_0'$, for all square free ideals $\mathfrak{q} \subset \mathcal{O}_{\mathbb K}$ coprime to $\mathfrak{q}_0$, for all $x \in \Sigma^+$, for all $r \in \mathbb Z_{>0}$ with factorization $r = r'l$, for all admissible sequences $(\alpha_s, \alpha_{s - 1}, \dotsc, \alpha_{r + 1})$, for all $\phi \in L_0^2(\tilde{G}_\mathfrak{q}, \mathbb C)$ with $\|\phi\|_2 = 1$, we have
\begin{align*}
\left\|\nu_{(\alpha_s, \alpha_{s - 1}, \dotsc, \alpha_{r + 1})}^{a, \mathfrak{q}, x} * \phi\right\|_2 \leq C^r \left\|\nu_{(\alpha_s, \alpha_{s - 1}, \dotsc, \alpha_{r + 1})}^{a, \mathfrak{q}, x}\right\|_1.
\end{align*}
\end{lemma}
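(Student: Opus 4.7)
The plan is to reduce to a bound on $\|\nu_0^{a, \mathfrak{q}, x, r} * \phi\|_2$ and then exploit the approximate convolution structure of $\nu_0^{a, \mathfrak{q}, x, r}$ via \cref{lem:EstimateNu}. First I would note that $\nu_{(\alpha_s, \ldots, \alpha_{r+1})}^{a, \mathfrak{q}, x}$ is $\nu_0^{a, \mathfrak{q}, x, r}$ multiplied by the positive scalar $e^{f_{s-r}^{(a)}(\alpha_s, \ldots, \alpha_{r+1}, \omega(\alpha_{r+1}))}$, which cancels on both sides of the desired inequality, so it suffices to prove $\|\nu_0 * \phi\|_2 \leq C^r \|\nu_0\|_1$ where I abbreviate $\nu_0 := \nu_0^{a, \mathfrak{q}, x, r}$ and $\nu_1 := \nu_1^{a, \mathfrak{q}, x, r, r'}$.

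For $\nu_1$, the definition realizes it as a sum over $(\alpha_1^{(l-p)_2}, \ldots, \alpha_{r'}^{(l-p)_2})$ of the $(r'+1)$-fold convolutions $\bigast_{j = 0}^{r'} \eta_{j, (\alpha_{j+1}^{(l-p)_2}, \alpha_j^{(l-p)_2})}^{a, \mathfrak{q}, x, r, r'}$. Applied to a unit $\phi \in L_0^2(\tilde{G}_\mathfrak{q}, \mathbb C)$, the Dirac factor $\tilde{\eta}_0$ is an $L^2$-isometry preserving the subspace $L_0^2$, while each $\tilde{\eta}_j$ with $1 \leq j \leq r'$ also preserves $L_0^2$ (convolution with any measure preserves the zero-sum property) and contracts it with operator norm at most $C_0 \|\eta_j\|_1$ for the $C_0 \in (0, 1)$ supplied by \cref{lem:EtaOperatorBound}. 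Iterating by associativity of convolution and summing with the triangle inequality yields $\|\nu_1 * \phi\|_2 \leq C_0^{r'} \|\nu_1\|_1$.

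To transfer this bound to $\nu_0$, I would use the pointwise comparisons $\nu_0 \leq e^{r' C_1 \theta^l}\nu_1$ and $\nu_1 \leq e^{r' C_1 \theta^l}\nu_0$ of \cref{lem:EstimateNu}, combined with a block-level argument that re-runs the proof of \cref{lem:EtaOperatorBound} using the true weights $e^{f_r^{(a)}}$ in place of the factored $\prod_j E_j$. Since \cref{lem:Estimate_f_ToRemoveDependence} shows that the replacement costs at most a factor $e^{O(\theta^l)}$ per block, the per-block contraction constant degrades from $C_0$ to $C_0 \cdot e^{O(\theta^l)}$, producing $\|\nu_0 * \phi\|_2 \leq (C_0 e^{O(\theta^l)})^{r'} \|\nu_0\|_1$. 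Choosing $l_0$ large enough that $C_0 \cdot e^{O(\theta^l)} < 1$ for all $l > l_0$ (possible since $C_0 < 1$ and $\theta^l \to 0$) and setting $C = (C_0 e^{O(\theta^l)})^{1/l} \in (0, 1)$ then converts the $r'$-exponential into $C^{r' l} = C^r$.

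The main obstacle is this final transfer. A naive splitting $\|\nu_0 * \phi\|_2 \leq \|\nu_1 * \phi\|_2 + \|(\nu_0 - \nu_1) * \phi\|_2$ combined with $\|\nu_0 - \nu_1\|_1 \leq (e^{r'C_1\theta^l} - 1)\|\nu_1\|_1$ would produce an \emph{additive} error of order $r' \theta^l \cdot \|\nu_1\|_1$ that grows in $r'$ and destroys the $C_0^{r'}$ decay. The remedy is to absorb the block-level approximation of \cref{lem:Estimate_f_ToRemoveDependence} into the per-block analysis so that the $e^{O(\theta^l)}$ slack multiplies across blocks rather than accumulating; the perturbed per-block weights still satisfy the nearly flat bound of \cref{lem:NearlyFlat} with a slightly worse constant, so the spectral-gap input from the Golsefidy-Varj\'{u} expander machinery that underlies \cref{lem:EtaOperatorBound} carries over with only mild loss.
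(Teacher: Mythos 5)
Your opening moves coincide with the paper's: the scalar factor $e^{f_{s-r}^{(a)}(\alpha_s,\dotsc,\alpha_{r+1},\omega(\alpha_{r+1}))}$ cancels, and the bound $\|\nu_1^{a,\mathfrak{q},x,r,r'}*\phi\|_2 \leq C_2^{r'}\|\nu_1^{a,\mathfrak{q},x,r,r'}\|_1$ (triangle inequality over the $(l-p)_2$-data, the Dirac factor $\eta_0$ acting isometrically, and iterating \cref{lem:EtaOperatorBound} on vectors that stay in $L_0^2$) is exactly the paper's computation. Your observation that the naive additive splitting produces an error of size roughly $r'\theta^l\|\nu_1\|_1$, which does not decay in $r'$ and so destroys the conclusion, is also correct. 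The gap is in the transfer step, which is the entire content of the lemma: ``re-running the proof of \cref{lem:EtaOperatorBound} with the true weights $e^{f_r^{(a)}}$'' is not a small perturbation of that proof. With the true weights the coefficient of $\alpha^r$ does not factor over blocks, so $\nu_0^{a,\mathfrak{q},x,r}$ is not a sum, over the $(l-p)_2$-data, of convolutions of block measures: the weight attached to block $j$ depends on the symbols of the inner blocks and on $x$. Consequently there is no fixed operator at each level to iterate. \cref{lem:NearlyFlat} with a worse constant does not repair this, because the cancellation mechanism in \cref{lem:EtaOperatorBound} requires summing the block data against one fixed vector; if you condition on the other blocks and peel, then either you take norms after the first block and forfeit the gain from the remaining $r'-1$ blocks, or you compare the conditional block measures to the unconditional ones and reintroduce an additive error of size $O(\theta^l)$ per level, which accumulates to exactly the non-decaying error you flagged. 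So the key assertion ``the per-block contraction degrades only to $C_2 e^{O(\theta^l)}$'' is left unproved, and the natural implementations of your sketch do not deliver it.

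For comparison, the paper avoids any modification of \cref{lem:EtaOperatorBound}: it brackets the exact $\nu_1$-computation by two applications of \cref{lem:EstimateNu} used multiplicatively, namely $\|\nu_0*\phi\|_2 \leq e^{r'C_1\theta^l}\|\nu_1*\phi\|_2$ at the start and $\|\nu_1\|_1 \leq e^{r'C_1\theta^l}\|\nu_0\|_1$ at the end, so the total loss is the factor $e^{2r'C_1\theta^l}$; this is beaten by $C_2^{r'}$ once $l > l_0$ with $2C_1\theta^l + \log C_2 < 0$, which is precisely where $l_0$ comes from and why $C = e^{\frac{1}{l}(2C_1\theta^l - C_3)}$ with $C_3 = -\log C_2$. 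Note, however, that the first of these comparisons is itself the delicate point you located: pointwise domination $\nu_0 \leq e^{r'C_1\theta^l}\nu_1$ of positive measures does not formally imply domination of $\|\cdot*\phi\|_2$ for a sign-changing $\phi$ (the two measures sit on the same atoms with comparable coefficients, but the cancellation patterns can differ), so any complete write-up must either justify this norm comparison or reorganize the argument (for instance through quantities such as $\|\nu_0\|_2 \leq e^{r'C_1\theta^l}\|\nu_1\|_2$, where positivity does give the comparison). In short, you have correctly identified the crux, but your proposal leaves it unresolved, and the mechanism you suggest is not the one the paper uses.
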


\begin{proof}
Fix $C_1 > 0$ to be the $C$ from \cref{lem:EstimateNu} and $C_2 \in (0, 1)$ to be the $C$ from \cref{lem:EtaOperatorBound}. Fix $C_3 = -\log(C_2) > 0$. There is an integer $l_0 \geq p$ such that $2C_1 \theta^l - C_3 < 0$ for all integers $l > l_0$. Fix an integer $l > l_0$ and $C = e^{\frac{1}{l}(2C_1 \theta^l - C_3)} \in (0, 1)$. Let $|a| < a_0'$, $\mathfrak{q} \subset \mathcal{O}_{\mathbb K}$ be a square free ideal coprime to $\mathfrak{q}_0$, $x \in \Sigma^+$, $r \in \mathbb Z_{>0}$ with factorization $r = r'l$, $(\alpha_s, \alpha_{s - 1}, \dotsc, \alpha_{r + 1})$ be an admissible sequence and $\phi \in L_0^2(\tilde{G}_\mathfrak{q}, \mathbb C)$ with $\|\phi\|_2 = 1$. Denote $\nu_{(\alpha_s, \alpha_{s - 1}, \dotsc, \alpha_{r + 1})}^{a, \mathfrak{q}, x}$ by $\nu$, $\nu_0^{a, \mathfrak{q}, x, r}$ by $\nu_0$, $\nu_1^{a, \mathfrak{q}, x, r, r'}$ by $\nu_1$, $\eta_{j, (\alpha_{j + 1}^{(l - p)_2}, \alpha_j^{(l - p)_2})}^{a, \mathfrak{q}, x, r, r'}$ by $\eta_{j, (\alpha_{j + 1}^{(l - p)_2}, \alpha_j^{(l - p)_2})}$, and $E_{j, (\alpha_{j + 1}^{(l - p)_2}, \alpha_j^l)}^{a, x, r, r'}$ by $E_{j, (\alpha_{j + 1}^{(l - p)_2}, \alpha_j^l)}$. Using \cref{lem:EstimateNu} and then \cref{lem:EtaOperatorBound} repeatedly $r'$ times, we have
\begin{align*}
&\|\nu_0 * \phi\|_2 \\
\leq{}&e^{r'C_1 \theta^l}\|\nu_1 * \phi\|_2 \leq e^{r'C_1 \theta^l}\sum_{\alpha_1^{(l - p)_2}, \alpha_2^{(l - p)_2}, \dotsc, \alpha_{r'}^{(l - p)_2}} \left\|\mathop{\bigast}\limits_{j = 0}^{r'} \eta_{j, (\alpha_{j + 1}^{(l - p)_2}, \alpha_j^{(l - p)_2})} * \phi \right\|_2 \\
\leq{}&e^{r'C_1 \theta^l} {C_2}^{r'}\sum_{\alpha_1^{(l - p)_2}, \alpha_2^{(l - p)_2}, \dotsc, \alpha_{r'}^{(l - p)_2}} \prod_{j = 1}^{r'} \left\|\eta_{j, (\alpha_{j + 1}^{(l - p)_2}, \alpha_j^{(l - p)_2})}\right\|_1 \\
={}&\left(e^{C_1 \theta^l - C_3}\right)^{r'} \sum_{\alpha_1^{(l - p)_2}, \alpha_2^{(l - p)_2}, \dotsc, \alpha_{r'}^{(l - p)_2}} \prod_{j = 1}^{r'} \sum_{\alpha_j^{(p)_1}} E_{j, (\alpha_{j + 1}^{(l - p)_2}, \alpha_j^l)}.
\end{align*}
Note that in the above calculations $\eta_{0, (\alpha_1^{(l - p)_2}, \alpha_0^{(l - p)_2})} = \delta_{\mathtt{c}_\mathfrak{q}(\alpha_1, x)}$ which preserves the norm when taking convolutions. As mentioned earlier, for fixed sequences $\alpha_1^{(l - p)_2}, \alpha_2^{(l - p)_2}, \dotsc, \alpha_{r'}^{(l - p)_2}$, the term $E_{j, (\alpha_{j + 1}^{(l - p)_2}, \alpha_j^l)}$ depends only on the choice of $\alpha_j^{(p)_1}$ and not on $\alpha_k^{(p)_1}$ for all distinct integers $1 \leq j, k \leq r'$. Hence we can commute the inner sum and product to get
\begin{align*}
&\|\nu_0 * \phi\|_2 \\
\leq{}&\left(e^{\frac{1}{l}(C_1 \theta^l - C_3)}\right)^r \sum_{\alpha_1^{(l - p)_2}, \alpha_2^{(l - p)_2}, \dotsc, \alpha_{r'}^{(l - p)_2}} \left(\sum_{\alpha_1^{(p)_1}, \alpha_2^{(p)_1}, \dotsc, \alpha_{r'}^{(p)_1}} \prod_{j = 1}^{r'} E_{j, (\alpha_{j + 1}^{(l - p)_2}, \alpha_j^l)}\right) \\
={}&\left(e^{\frac{1}{l}(C_1 \theta^l - C_3)}\right)^r \sum_{\alpha^r} \prod_{j = 1}^{r'} E_{j, (\alpha_{j + 1}^{(l - p)_2}, \alpha_j^l)}.
\end{align*}
Recognizing this sum to be $\|\nu_1\|_1$, we use \cref{lem:EstimateNu} once again to get
\begin{align*}
\|\nu_0 * \phi\|_2 \leq \left(e^{\frac{1}{l}(C_1 \theta^l - C_3)}\right)^r \|\nu_1\|_1 \leq \left(e^{\frac{1}{l}(2C_1 \theta^l - C_3)}\right)^r \|\nu_0\|_1 = C^r \|\nu_0\|_1.
\end{align*}
Since $e^{f_{s - r}^{(a)}(\alpha_s, \alpha_{s - 1}, \dotsc, \alpha_{r + 1}, \omega(\alpha_{r + 1}))} > 0$, it follows that $\|\nu * \phi\|_2 \leq C^r \|\nu\|_1$.
\end{proof}

Let $\mathfrak{q} \subset \mathcal{O}_{\mathbb K}$ be a square free ideal coprime to $\mathfrak{q}_0$ and $\mu$ be a complex measure on $\tilde{G}_\mathfrak{q}$. We note that $E_\mathfrak{q}^\mathfrak{q}$ is a $\tilde{\mu}$-invariant submodule of the left $\mathbb C[\tilde{G}_\mathfrak{q}]$-module $L^2(\tilde{G}_\mathfrak{q}, \mathbb C)$. Let $\dot{\mu}$ denote the measure on $\tilde{\mathbf{G}}(\mathcal{O}_{\mathbb K}/\mathfrak{q})$ defined by $\dot{\mu}(g) = \mu(\{e, -e\}g)$ for all $g \in \tilde{\mathbf{G}}(\mathcal{O}_{\mathbb K}/\mathfrak{q})$ and $\tilde{\dot{\mu}}: L^2(\tilde{\mathbf{G}}(\mathcal{O}_{\mathbb K}/\mathfrak{q}), \mathbb C) \to L^2(\tilde{\mathbf{G}}(\mathcal{O}_{\mathbb K}/\mathfrak{q}), \mathbb C)$ denote the operator acting by convolution as before. Similar to above, $\dot{E}_\mathfrak{q}^\mathfrak{q}$ is a $\tilde{\dot{\mu}}$-invariant submodule of the left $\mathbb C[\tilde{\mathbf{G}}(\mathcal{O}_{\mathbb K}/\mathfrak{q})]$-module $L^2(\tilde{\mathbf{G}}(\mathcal{O}_{\mathbb K}/\mathfrak{q}), \mathbb C)$.

%For sufficiently large q! Add this later. This is from the irrep bound from Seitz. Our approximation is for large q. Also may be sufficient to just use q^{-1/2} for all n.
\begin{lemma}
\label{lem:ConvolutionBoundOnE_q^q}
There exists $C > 0$ such that for all square free ideals $\mathfrak{q} \subset \mathcal{O}_{\mathbb K}$ coprime to $\mathfrak{q}_0$, for all complex measures $\mu$ on $\tilde{G}_\mathfrak{q}$, we have
\begin{align*}
\|\tilde{\mu}|_{E_\mathfrak{q}^\mathfrak{q}}\|_{\mathrm{op}} \leq C N_{\mathbb K}(\mathfrak{q})^{-\frac{1}{2}} (\#\tilde{G}_\mathfrak{q})^{\frac{1}{2}} \|\mu\|_2.
\end{align*}
\end{lemma}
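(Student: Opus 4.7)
The plan is to reduce to Fourier analysis on the finite group $G := \tilde{\mathbf G}(\mathcal O_{\mathbb K}/\mathfrak q)$ and then invoke classical lower bounds on dimensions of irreducible representations of finite groups of Lie type. First I would lift from $\tilde G_\mathfrak q$ to $G$ via $\phi \mapsto \dot\phi$: this is (up to a fixed scalar) an isometry, intertwines $\tilde\mu$ with $\tilde{\dot\mu}$, and identifies $E_\mathfrak q^\mathfrak q$ with $\dot E_\mathfrak q^\mathfrak q$, so it suffices to prove the analogous estimate for $\tilde{\dot\mu}|_{\dot E_\mathfrak q^\mathfrak q}$. Since $\mathfrak q$ is square-free, the Chinese remainder theorem gives $G \cong \prod_{i=1}^k G_i$ where $G_i := \tilde{\mathbf G}(\mathcal O_{\mathbb K}/\mathfrak p_i)$ and $\mathfrak q = \mathfrak p_1 \cdots \mathfrak p_k$. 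Unwinding the definitions of $\hat E$ and $\dot E$ in the tensor product decomposition $L^2(G) = \bigotimes_i L^2(G_i) = \bigotimes_i (\mathbb C \oplus L^2_0(G_i))$ yields $\dot E_\mathfrak q^\mathfrak q = \bigotimes_{i=1}^k L^2_0(G_i,\mathbb C)$. In particular, every irreducible subrepresentation of $\dot E_\mathfrak q^\mathfrak q$ is an external tensor product $\pi = \pi_1 \boxtimes \cdots \boxtimes \pi_k$ with each $\pi_i$ a \emph{nontrivial} irreducible representation of $G_i$.

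Next I would apply the Plancherel formula on $G$: for any measure $\dot\mu$ on $G$,
\[ \|\dot\mu\|_2^2 \;=\; \tfrac{1}{\#G} \sum_{\pi \in \widehat G} \dim(\pi)\,\|\hat{\dot\mu}(\pi)\|_{\mathrm{HS}}^2, \qquad \hat{\dot\mu}(\pi) = \sum_{g \in G} \dot\mu(g)\pi(g). \]
Convolution preserves each $\pi$-isotypic subspace of the left regular representation and acts there by $\hat{\dot\mu}(\pi)$ tensored with the identity on a $\dim\pi$-dimensional multiplicity space, so $\|\tilde{\dot\mu}|_{\pi\text{-isotypic}}\|_{\mathrm{op}} = \|\hat{\dot\mu}(\pi)\|_{\mathrm{op}} \leq \|\hat{\dot\mu}(\pi)\|_{\mathrm{HS}}$. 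Retaining only the $\pi$ term in Plancherel gives $\|\hat{\dot\mu}(\pi)\|_{\mathrm{HS}}^2 \leq (\#G/\dim\pi)\|\dot\mu\|_2^2$, so taking the maximum over $\pi$ contributing to $\dot E_\mathfrak q^\mathfrak q$,
\[ \|\tilde{\dot\mu}|_{\dot E_\mathfrak q^\mathfrak q}\|_{\mathrm{op}}^2 \;\leq\; \frac{\#G}{\min_{\pi}\dim(\pi)}\,\|\dot\mu\|_2^2. \]

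The hard part will be the uniform lower bound $\min_\pi \dim(\pi) \geq c\,N_{\mathbb K}(\mathfrak q)$. Using $\dim \pi = \prod_i \dim \pi_i$ with each $\pi_i$ a nontrivial irreducible representation of the finite group of Lie type $G_i$, the classical Landazuri-Seitz bound (refined by Seitz-Zalesskii) gives $\dim \pi_i \geq c_0\,N_{\mathbb K}(\mathfrak p_i)^{r}$ for constants $c_0 > 0$ and $r \geq 1$ depending only on the Lie type of $\tilde{\mathbf G}$. When the Lie rank of $\tilde{\mathbf G}$ is at least two (automatic when $n \geq 3$) one has $r \geq 2$, so the tensor product is comfortably bounded below by $c_0^k N_{\mathbb K}(\mathfrak q)^r \gg N_{\mathbb K}(\mathfrak q)$ once $\mathfrak q_0$ is enlarged to absorb the finitely many small primes. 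In the rank-one case $n = 2$ one has the sharp bound $\dim \pi_i \geq (N_{\mathbb K}(\mathfrak p_i)-1)/2$ and one must further enlarge $\mathfrak q_0$ so that only primes of sufficiently large norm contribute, forcing $c_0$ per prime factor to be close to $1$ and keeping $c_0^k$ bounded below uniformly in $\mathfrak q$. Combining this dimension bound with the Plancherel inequality above and passing back through the lifting $\phi \mapsto \dot\phi$ (which relates $\#G$ to $\#\tilde G_\mathfrak q$ by a factor of two and $\|\dot\mu\|_2$ to $\|\mu\|_2$ by a factor of $\sqrt 2$) yields the claimed inequality.
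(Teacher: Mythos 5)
Your overall architecture coincides with the paper's: lift to $\tilde{\mathbf G}(\mathcal O_{\mathbb K}/\mathfrak q)$ via $\phi\mapsto\dot\phi$, use the Chinese remainder theorem to see that every irreducible constituent of $\dot E^{\mathfrak q}_{\mathfrak q}$ is nontrivial at each prime factor, invoke minimal-degree bounds for the finite groups of Lie type, and convert the dimension lower bound into an operator-norm bound. Your Plancherel/isotypic computation is just a repackaging of the paper's trace argument for $\tilde{\dot\mu}^*\tilde{\dot\mu}$ (an eigenvalue occurring with multiplicity at least $\dim V$ is bounded by $\tr(\tilde{\dot\mu}^*\tilde{\dot\mu})/\dim V$), and your factor-of-two bookkeeping for the central quotient matches the paper's, so those parts are fine.

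The gap is in what you rightly call the hard part, the uniform bound $\min_\pi\dim\pi\geq c\,N_{\mathbb K}(\mathfrak q)$, specifically your treatment of the rank-one situation. For $\SL_2(\mathbb F_p)$ the minimal nontrivial degree is $(p-1)/2$, so the per-prime ratio $\dim\pi_i/N_{\mathbb K}(\mathfrak p_i)$ is at most about $\tfrac12$ for \emph{every} prime, however large; discarding small primes cannot force it ``close to $1$'' as you assert. Hence over $k=\omega(\mathfrak q)$ prime factors the product of the per-prime bounds is only of size $2^{-k}N_{\mathbb K}(\mathfrak q)$, and $2^{-k}$ is not bounded below uniformly in $\mathfrak q$; what your argument actually yields is $\dim\pi\gg_\epsilon N_{\mathbb K}(\mathfrak q)^{1-\epsilon}$ (since $2^{k}\ll_\epsilon N_{\mathbb K}(\mathfrak q)^{\epsilon}$), i.e.\ the exponent $-\tfrac12+\epsilon$ rather than the stated $-\tfrac12$. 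This case cannot be waved away here: $n=2$ makes $\tilde{\mathbf G}$ a form of $\SL_2$, and even for $n=3$ the residue groups at split primes have $\SL_2(\mathbb F_q)$ direct factors on which a constituent may concentrate, with minimal degree again about $q/2$ — so your claim that Lie rank at least two (for $n\geq 3$) guarantees a per-prime exponent $r\geq 2$ fails precisely there. The paper closes this step by quoting the per-prime degree bound from \cite[Proposition 4.2]{KS13} and multiplying, asserting $\deg\rho>C_1N_{\mathbb K}(\mathfrak q)$ outright; to repair your version you would need either a per-prime bound whose constant can be taken $\geq 1$ (which excluding small primes does give when the per-prime degree grows like $N_{\mathbb K}(\mathfrak p)^{r}$ with $r>1$, but not in the rank-one cases), or to accept the loss $2^{\omega(\mathfrak q)}$, which would still suffice for the downstream flattening estimates but does not prove the lemma with exponent exactly $-\tfrac12$. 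A minor further point: $\mathfrak q_0$ is fixed by strong approximation before this lemma, so ``enlarging $\mathfrak q_0$'' is not available inside its proof; the exclusion of small-norm primes happens only later, through $\mathfrak q_0'$.
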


\begin{proof}
Let $\mathfrak{q} \subset \mathcal{O}_{\mathbb K}$ be a square free ideal coprime to $\mathfrak{q}_0$ and $\mu$ be a complex measure on $\tilde{G}_\mathfrak{q}$. It will be fruitful to first work with $\tilde{\dot{\mu}}$. One way to calculate the operator norm is using the equation
\begin{align*}
\|\tilde{\dot{\mu}}|_{\dot{E}_\mathfrak{q}^\mathfrak{q}}\|_{\mathrm{op}} = \max_{\lambda \in \Lambda(\tilde{\dot{\mu}}^*\tilde{\dot{\mu}}|_{\dot{E}_\mathfrak{q}^\mathfrak{q}})} \sqrt{\lambda}
\end{align*}
where $\Lambda(\tilde{\dot{\mu}}^*\tilde{\dot{\mu}}|_{\dot{E}_\mathfrak{q}^\mathfrak{q}})$ is the set of eigenvalues of the self-adjoint positive semidefinite operator $\tilde{\dot{\mu}}^*\tilde{\dot{\mu}}|_{\dot{E}_\mathfrak{q}^\mathfrak{q}}$ which is diagonalizable with nonnegative eigenvalues. Since $\tilde{\dot{\mu}}^*\tilde{\dot{\mu}}|_{\dot{E}_\mathfrak{q}^\mathfrak{q}}: \dot{E}_\mathfrak{q}^\mathfrak{q} \to \dot{E}_\mathfrak{q}^\mathfrak{q}$ is a left $\mathbb C[\tilde{\mathbf{G}}(\mathcal{O}_{\mathbb K}/\mathfrak{q})]$-module homomorphism, its eigenspaces are submodules of $\dot{E}_\mathfrak{q}^\mathfrak{q}$ and hence must contain at least one irreducible submodule $V$ which corresponds to an irreducible representation $\rho: \tilde{\mathbf{G}}(\mathcal{O}_{\mathbb K}/\mathfrak{q}) \to \GL(V)$. Now suppose we have the prime ideal factorization $\mathfrak{q} = \prod_{j = 1}^k \mathfrak{p}_j$ for some $k \in \mathbb Z_{>0}$ and for some prime ideals $\mathfrak{p}_1, \mathfrak{p}_2, \dotsc, \mathfrak{p}_k \subset \mathcal{O}_{\mathbb K}$. Then the Chinese remainder theorem gives $\mathcal{O}_{\mathbb K}/\mathfrak{q} \cong \prod_{j = 1}^k \mathcal{O}_{\mathbb K}/\mathfrak{p}_j \cong \prod_{j = 1}^k \mathbb F_{N_{\mathbb K}(\mathfrak{p}_j)}$ and hence $\tilde{\mathbf{G}}(\mathcal{O}_{\mathbb K}/\mathfrak{q}) \cong \prod_{j = 1}^k \tilde{\mathbf{G}}(\mathcal{O}_{\mathbb K}/\mathfrak{p}_j) \cong \prod_{j = 1}^k \tilde{\mathbf{G}}(\mathbb F_{N_{\mathbb K}(\mathfrak{p}_j)})$. Thus we have $\rho = \bigotimes_{j = 1}^k \rho_j$ where $\rho_j: \tilde{\mathbf{G}}(\mathbb F_{N_{\mathbb K}(\mathfrak{p}_j)}) \to \GL(V_j)$ for some complex vector space $V_j$ is an irreducible representation for all integers $1 \leq j \leq k$. The significance of using $\dot{E}_\mathfrak{q}^\mathfrak{q}$ is that its definition and $V \subset \dot{E}_\mathfrak{q}^\mathfrak{q}$ forces $\rho_j$ to be nontrivial for all integers $1 \leq j \leq k$. Now we use bounds directly from \cite{KS13}. However, we note that the bounds originate from \cite{Lan72,LS74,SZ93} which gives lower bounds on the degrees of nontrivial irreducible projective representations of Chevalley groups. By Schur's lemma, projective irreducible representations correspond to irreducible representations of central group extensions as in our case.
%In our case the extension from $\PSO(n + 1, \mathbb F_{{p_j}^{w_j}})$ to $\tilde{G}_{{p_j}^{w_j}}$ is indeed central since the kernel of the cover $\tilde{\mathbf G} \to \mathbf G = \SO(n + 1, \mathbb C)$ is the fundamental group of $G$ which is abelian for $n > 6$. Otherwise use exceptional Lie isomorphisms (see Wikipedia end of page of Spin groups), to reduce to $\PSL_2({p_j}^{w_j})$ case or $\PSU_4({p_j}^{w_j})$. Then follow as in Kelmer and Silberman.
From the proof of \cite[Proposition 4.2]{KS13}, there is a $C_1 > 0$ independent of anything such that $\deg(\rho_j) = \dim(V_j) > C_1 N_{\mathbb K}(\mathfrak{p}_j)^{n - 2} > C_1 N_{\mathbb K}(\mathfrak{p}_j)$ for all integers $1 \leq j \leq k$ if $n \geq 6$ and $\deg(\rho_j) = \dim(V_j) > C_1 N_{\mathbb K}(\mathfrak{p}_j)$ for all integers $1 \leq j \leq k$ if $n < 6$. In any case, we conclude that $\deg(\rho) = \dim(V) > C_1 N_{\mathbb K}(\mathfrak{q})$. Thus for all $\lambda \in \Lambda(\tilde{\dot{\mu}}^*\tilde{\dot{\mu}}|_{\dot{E}_\mathfrak{q}^\mathfrak{q}})$, we have
\begin{align*}
C_1 N_{\mathbb K}(\mathfrak{q})\lambda &\leq \tr(\tilde{\dot{\mu}}^*\tilde{\dot{\mu}}) = \sum_{g \in \tilde{\mathbf{G}}(\mathcal{O}_{\mathbb K}/\mathfrak{q})} \langle \tilde{\dot{\mu}}^*\tilde{\dot{\mu}}(\delta_g), \delta_g \rangle = \sum_{g \in \tilde{\mathbf{G}}(\mathcal{O}_{\mathbb K}/\mathfrak{q})} {\|\dot{\mu} * \delta_g\|_2}^2 \\
&= \#\tilde{\mathbf{G}}(\mathcal{O}_{\mathbb K}/\mathfrak{q}) \cdot {\|\dot{\mu}\|_2}^2.
\end{align*}
Hence,
\begin{align*}
{\|\tilde{\dot{\mu}}|_{\dot{E}_\mathfrak{q}^\mathfrak{q}}\|_{\mathrm{op}}}^2 = \max_{\lambda \in \Lambda(\tilde{\dot{\mu}}^*\tilde{\dot{\mu}}|_{\dot{E}_\mathfrak{q}^\mathfrak{q}})} \lambda \leq {C_1}^{-1} N_{\mathbb K}(\mathfrak{q})^{-1} \#\tilde{\mathbf{G}}(\mathcal{O}_{\mathbb K}/\mathfrak{q}) \cdot {\|\dot{\mu}\|_2}^2.
\end{align*}
Now we convert this to a bound for $\|\tilde{\mu}|_{E_\mathfrak{q}^\mathfrak{q}}\|_{\mathrm{op}}$. Let $\phi \in E_\mathfrak{q}^\mathfrak{q}$. Then $\dot{\phi} \in \dot{E}_\mathfrak{q}^\mathfrak{q}$ and also $\tilde{\dot{\mu}}(\dot{\phi}) = 2\dot{\psi}$ if $e \pmod{\mathfrak{q}} \neq -e \pmod{\mathfrak{q}}$ in $\tilde{\mathbf{G}}(\mathcal{O}_{\mathbb K}/\mathfrak{q})$ and $\tilde{\dot{\mu}}(\dot{\phi}) = \dot{\psi}$ otherwise, where $\psi = \tilde{\mu}(\phi)$. In any case, $\|\tilde{\mu}(\phi)\|_2 = \|\psi\|_2 \leq \|\dot{\psi}\|_2 \leq \|\tilde{\dot{\mu}}(\dot{\phi})\|_2$. Being careful of similar cases, the above bound gives
\begin{align*}
&{\|\tilde{\dot{\mu}}(\dot{\phi})\|_2}^2 \leq {C_1}^{-1} N_{\mathbb K}(\mathfrak{q})^{-1} \#\tilde{\mathbf{G}}(\mathcal{O}_{\mathbb K}/\mathfrak{q}) \cdot {\|\dot{\mu}\|_2}^2  {\|\dot{\phi}\|_2}^2 \\
\implies{}&{\|\tilde{\mu}(\phi)\|_2}^2 \leq {C_1}^{-1} N_{\mathbb K}(\mathfrak{q})^{-1} \cdot 2\#\tilde{G}_\mathfrak{q} \cdot 2{\|\mu\|_2}^2 \cdot 2{\|\phi\|_2}^2 \\
\implies{}&\|\tilde{\mu}(\phi)\|_2 \leq C N_{\mathbb K}(\mathfrak{q})^{-\frac{1}{2}} (\#\tilde{G}_\mathfrak{q})^{\frac{1}{2}} \|\mu\|_2 \|\phi\|_2
\end{align*}
where $C = \sqrt{\frac{8}{C_1}}$ which is independent of $\mathfrak{q}$ and $\mu$. Hence
\begin{align*}
\|\tilde{\mu}|_{E_\mathfrak{q}^\mathfrak{q}}\|_{\mathrm{op}} \leq C N_{\mathbb K}(\mathfrak{q})^{-\frac{1}{2}} (\#\tilde{G}_\mathfrak{q})^{\frac{1}{2}} \|\mu\|_2.
\end{align*}
\end{proof}

Now we prove \cref{lem:L2FlatteningLemma} by starting with \cref{lem:ConvolutionBoundOnE_q^q} obtained from the lower bounds of nontrivial irreducible representations of Chevalley groups, and then using \cref{lem:ExpanderMachineryBound} obtained from the exapansion machinery to continue to bound the right hand side by the $L^1$ norm and also remove the growth contributed by $\#\tilde{G}_\mathfrak{q}$ essentially by fiat.

\begin{proof}[Proof of \cref{lem:L2FlatteningLemma}]
Fix $C_1 > 0$ to be the $C$ from \cref{lem:ConvolutionBoundOnE_q^q} and $C_2 > 0$ to be the $C$ from \cref{lem:muHatLessThanCnu}. Fix any integer $l > l_0$ where $l_0 \in \mathbb Z_{>0}$ is the constant provided by \cref{lem:ExpanderMachineryBound} and fix $C_3 \in (0, 1)$ to be the corresponding $C$ from the same lemma. Fix $C = 2C_1C_2 > 0$ and $C_0 = -\frac{c}{2\log(C_3)} > 0$ where $c > 0$ depends on $n$ and is such that $\#\tilde{G}_\mathfrak{q} \leq N_{\mathbb K}(\mathfrak{q})^{c}$ for all nontrivial ideals $\mathfrak{q} \subset \mathcal{O}_{\mathbb K}$. Let $\xi = a + ib \in \mathbb C$ with $|a| < a_0'$, $\mathfrak{q} \subset \mathcal{O}_{\mathbb K}$ be a square free ideal coprime to $\mathfrak{q}_0$, $x \in \Sigma^+$, $C_0 \log(N_{\mathbb K}(\mathfrak{q})) \leq r < s$ be integers with $r \in l\mathbb Z$, $(\alpha_s, \alpha_{s - 1}, \dotsc, \alpha_{r + 1})$ be an admissible sequence and $\phi \in E_\mathfrak{q}^\mathfrak{q}$ with $\|\phi\|_2 = 1$. Let $\mu$ denote either $\mu_{(\alpha_s, \alpha_{s - 1}, \dotsc, \alpha_{r + 1})}^{\xi, \mathfrak{q}, x}$ or $\hat{\mu}_{(\alpha_s, \alpha_{s - 1}, \dotsc, \alpha_{r + 1})}^{a, \mathfrak{q}, x}$ and $\nu$ denote $\nu_{(\alpha_s, \alpha_{s - 1}, \dotsc, \alpha_{r + 1})}^{a, \mathfrak{q}, x}$. Applying \cref{lem:ConvolutionBoundOnE_q^q} to $\mu$ and then using \cref{lem:muHatLessThanCnu} gives
\begin{align*}
\|\mu * \phi\|_2 \leq C_1C_2 N_{\mathbb K}(\mathfrak{q})^{-\frac{1}{2}}(\#\tilde{G}_\mathfrak{q})^{\frac{1}{2}} \|\nu\|_2.
\end{align*}
By choice of $r$ and the function $\varphi = \delta_e - \frac{1}{\#\tilde{G}_\mathfrak{q}}\chi_{\tilde{G}_\mathfrak{q}} \in L_0^2(\tilde{G}_\mathfrak{q}, \mathbb C)$, which satisfies $\|\varphi\|_2 \leq 1$, we can use \cref{lem:ExpanderMachineryBound} to get
\begin{align*}
\|\nu\|_2 &= \|\nu * \delta_e\|_2 \leq \left\|\nu * \frac{1}{\#\tilde{G}_\mathfrak{q}}\chi_{\tilde{G}_\mathfrak{q}}\right\|_2 +\|\nu * \varphi\|_2 \\
&\leq \frac{\|\nu\|_1}{(\#\tilde{G}_\mathfrak{q})^{\frac{1}{2}}} + {C_3}^r\|\nu\|_1 \leq 2\frac{\|\nu\|_1}{(\#\tilde{G}_\mathfrak{q})^{\frac{1}{2}}}.
\end{align*}
Using this bound in the previous inequality and recalling $C = 2C_1C_2$, we have
\begin{align*}
\|\mu * \phi\|_2 \leq C N_{\mathbb K}(\mathfrak{q})^{-\frac{1}{2}} \|\nu\|_1.
\end{align*}
\end{proof}

\subsection{\texorpdfstring{$L^\infty$}{L-infinity} and Lipschitz bounds and proof of \texorpdfstring{\cref{thm:ReducedTheoremSmall|b|}}{\autoref{thm:ReducedTheoremSmall|b|}}}
\label{subsec:SupremumAndLipschitzBounds}
In this subsection we use \cref{lem:L2FlatteningLemma} to prove \cref{lem:ReducedTheoremEstimate1,lem:ReducedTheoremEstimate2} which is then used to prove \cref{thm:ReducedTheoremSmall|b|} by induction as in \cite{OW16}. We start with fixing some notations and easy bounds.

Let $\mathfrak{q} \subset \mathcal{O}_{\mathbb K}$ be a nontrivial proper ideal. Fix integers
\begin{align*}
r_\mathfrak{q} \in [C_0 \log(N_{\mathbb K}(\mathfrak{q})), C_0 \log(N_{\mathbb K}(\mathfrak{q})) + l)
\end{align*}
with $r_\mathfrak{q} \in l\mathbb Z$ and
\begin{align*}
s_\mathfrak{q} \in \left(r_\mathfrak{q} - \frac{\log(N_{\mathbb K}(\mathfrak{q})) + \log(4C_1C_f)}{\log(\theta)}, C_s\log(N_{\mathbb K}(\mathfrak{q}))\right)
\end{align*}
where we fix $C_0$ and $l$ to be constants from \cref{lem:L2FlatteningLemma}, $C_1$ to be the constant from \cref{lem:Small|b|Bound} and $C_s = C_0 - \frac{1}{\log(\theta)} + \frac{l}{\log(2)} - \frac{\log(4C_1C_f)}{\log(\theta)\log(2)} + \frac{1}{\log(2)}$ so that there is enough room for the integer $s_\mathfrak{q}$ to exist. These definitions of constants ensure that $C_0 \log(N_{\mathbb K}(\mathfrak{q})) \leq r_\mathfrak{q} < s_\mathfrak{q}$ and $4C_1C_f \theta^{s_\mathfrak{q} - r_\mathfrak{q}} \leq N_{\mathbb K}(\mathfrak{q})^{-1}$. For all $\xi = a + ib \in \mathbb C$ with $|a| < a_0'$, for all square free ideals $\mathfrak{q} \subset \mathcal{O}_{\mathbb K}$ coprime to $\mathfrak{q}_0$, for all $x \in \Sigma^+$, for all integers $C_0 \log(N_{\mathbb K}(\mathfrak{q})) \leq r < s$ with $r \in l\mathbb Z$, for all admissible sequences $(\alpha_s, \alpha_{s - 1}, \dotsc, \alpha_{r + 1})$, we have
\begin{align*}
\left\|\nu_{(\alpha_s, \alpha_{s - 1}, \dotsc, \alpha_{r + 1})}^{a, \mathfrak{q}, x}\right\|_1 &= e^{f_{s - r}^{(a)}(\alpha_s, \alpha_{s - 1}, \dotsc, \alpha_{r + 1}, \omega(\alpha_{r + 1}))} \left(\sum_{\alpha^r} e^{f_r^{(a)}(\alpha^r, x)}\right) \\
&\leq C_f e^{f_{s - r}^{(a)}(\alpha_s, \alpha_{s - 1}, \dotsc, \alpha_{r + 1}, \omega(\alpha_{r + 1}))}
\end{align*}
by \cref{lem:SumExpf^aBound} and hence \cref{lem:L2FlatteningLemma} implies that for all $\phi \in E_\mathfrak{q}^\mathfrak{q}$ we have
\begin{align*}
\|\mu * \phi\|_2 \leq CC_f N_{\mathbb K}(\mathfrak{q})^{-\frac{1}{2}} e^{f_{s - r}^{(a)}(\alpha_s, \alpha_{s - 1}, \dotsc, \alpha_{r + 1}, \omega(\alpha_{r + 1}))} \|\phi\|_2
\end{align*}
where $\mu$ denotes either $\mu_{(\alpha_s, \alpha_{s - 1}, \dotsc, \alpha_{r + 1})}^{\xi, \mathfrak{q}, x}$ or $\hat{\mu}_{(\alpha_s, \alpha_{s - 1}, \dotsc, \alpha_{r + 1})}^{a, \mathfrak{q}, x}$ and $C$ is the constant from the same lemma. We will use this in \cref{lem:ReducedTheoremEstimate1,lem:ReducedTheoremEstimate2}. We now start with the $L^\infty$ bound.

\begin{lemma}
\label{lem:ReducedTheoremEstimate1}
There exist $\kappa_1 \in (0, 1)$ and $q_{1,1} \in \mathbb Z_{>0}$ such that for all $\xi = a + ib \in \mathbb C$ with $|a| < a_0'$, for all square free ideals $\mathfrak{q} \subset \mathcal{O}_{\mathbb K}$ coprime to $\mathfrak{q}_0$ with $N_{\mathbb K}(\mathfrak{q}) > q_{1,1}$, for all $H \in \mathcal{W}_\mathfrak{q}^\mathfrak{q}(U)$, we have
\begin{align*}
\big\|\mathcal{M}_{\xi, \mathfrak{q}}^{s_\mathfrak{q}}(H)\big\|_\infty \leq \frac{1}{2}N_{\mathbb K}(\mathfrak{q})^{-\kappa_1}(\|H\|_\infty + \Lip_{d_\theta}(H)).
\end{align*}
\end{lemma}

\begin{proof}
There are $q_{1,1} \in \mathbb Z_{>0}$ and $\epsilon \in (0, 1)$ such that $\frac{1}{2} - \frac{\log(2C{C_f}^2)}{\log(q)} > \epsilon$ for all integers $q > q_{1,1}$. Fix any $\kappa_1 \in (0, \epsilon)$. Let $\xi = a + ib \in \mathbb C$ with $|a| < a_0'$, $\mathfrak{q} \subset \mathcal{O}_{\mathbb K}$ be a square free ideal coprime to $\mathfrak{q}_0$ with $N_{\mathbb K}(\mathfrak{q}) > q_{1,1}$, $H \in \mathcal{W}_\mathfrak{q}^\mathfrak{q}(U)$ and $x \in \Sigma^+$. Denote $r_\mathfrak{q}$ by $r$ and $s_\mathfrak{q}$ by $s$. Now using the approximation from \cref{lem:TransferOperatorConvolutionApproximation} and then \cref{lem:L2FlatteningLemma}, we have
\begin{align*}
&\left\|\mathcal{M}_{\xi, \mathfrak{q}}^s(H)(x)\right\|_2 \\
\leq{}&\left\|\sum_{\alpha_{r + 1}, \alpha_{r + 2}, \dotsc, \alpha_s} \mu_{(\alpha_s, \alpha_{s - 1}, \dotsc, \alpha_{r + 1})}^{\xi, \mathfrak{q}, x} * \phi_{(\alpha_s, \alpha_{s - 1}, \dotsc, \alpha_{r + 1})}^{\mathfrak{q}, H}\right\|_2 + C_f \Lip_{d_\theta}(H)\theta^{s - r} \\
\leq{}&\sum_{\alpha_{r + 1}, \alpha_{r + 2}, \dotsc, \alpha_s} \left\|\mu_{(\alpha_s, \alpha_{s - 1}, \dotsc, \alpha_{r + 1})}^{\xi, \mathfrak{q}, x} * \phi_{(\alpha_s, \alpha_{s - 1}, \dotsc, \alpha_{r + 1})}^{\mathfrak{q}, H}\right\|_2 + C_f \Lip_{d_\theta}(H)\theta^{s - r} \\
\leq{}&\sum_{\alpha_{r + 1}, \alpha_{r + 2}, \dotsc, \alpha_s} CC_f N_{\mathbb K}(\mathfrak{q})^{-\frac{1}{2}} e^{f_{s - r}^{(a)}(\alpha_s, \alpha_{s - 1}, \dotsc, \alpha_{r + 1}, \omega(\alpha_{r + 1}))} \left\|\phi_{(\alpha_s, \alpha_{s - 1}, \dotsc, \alpha_{r + 1})}^{\mathfrak{q}, H}\right\|_2 \\
{}&+ C_f \Lip_{d_\theta}(H)\theta^{s - r} \\
\leq{}&CC_f N_{\mathbb K}(\mathfrak{q})^{-\frac{1}{2}} \|H\|_\infty \left(\sum_{\alpha_{r + 1}, \alpha_{r + 2}, \dotsc, \alpha_s} e^{f_{s - r}^{(a)}(\alpha_s, \alpha_{s - 1}, \dotsc, \alpha_{r + 1}, \omega(\alpha_{r + 1}))}\right) \\
{}&+ C_f \Lip_{d_\theta}(H)\theta^{s - r} \\
\leq{}&C{C_f}^2 N_{\mathbb K}(\mathfrak{q})^{-\frac{1}{2}} \|H\|_\infty + C_f \Lip_{d_\theta}(H)\theta^{s - r} \\
\leq{}&\frac{1}{2}N_{\mathbb K}(\mathfrak{q})^{-\kappa_1}(\|H\|_\infty + \Lip_{d_\theta}(H)).
\end{align*}
since $C{C_f}^2 N_{\mathbb K}(\mathfrak{q})^{-\frac{1}{2}} \leq \frac{1}{2}N_{\mathbb K}(\mathfrak{q})^{-\kappa_1}$ and
\begin{align*}
C_f \theta^{s - r} \leq C_1C_f \theta^{s - r} \leq \frac{1}{4} N_{\mathbb K}(\mathfrak{q})^{-1} \leq \frac{1}{2} N_{\mathbb K}(\mathfrak{q})^{-\kappa_1}
\end{align*}
by definitions of the constants.
\end{proof}

Recalling that we already fixed $b_0 = 1$, we now record an estimate.

\begin{lemma}
\label{lem:Small|b|Bound}
There exists $C > 1$ such that for all $\xi = a + ib \in \mathbb C$ with $|a| < a_0'$ and $|b| \leq b_0$, for all $x, y \in \Sigma^+$, for all $s \in \mathbb Z_{>0}$, for all admissible sequences $\alpha^s$, we have
\begin{align*}
\left|1 - e^{(f_s^{(a)} + ib\tau_s)(\alpha^s, y) - (f_s^{(a)} + ib\tau_s)(\alpha^s, x)}\right| \leq C d_\theta(x, y).
\end{align*}
\end{lemma}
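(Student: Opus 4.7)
The plan is to reduce the lemma to a standard Lipschitz bound for Birkhoff sums followed by the elementary complex inequality $|1 - e^z| \leq |z|e^{|z|}$. Since throughout this section the symbols $\tau$, $f^{(a)}$, and $\mathtt{c}_\mathfrak{q}$ denote their $\Sigma^+$-versions $\tau_{\Sigma^+}$, $f^{(a)}_{\Sigma^+}$, and $\mathtt{c}_{\mathfrak{q},\Sigma^+}$, we may use the uniform bound $\max(\Lip_{d_\theta}(\tau), \sup_{|a|\leq a_0'}\Lip_{d_\theta}(f^{(a)})) \leq T_0$ coming from the choice of $T_0$ fixed after \cref{thm:RPFonU}.

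First I would estimate each Birkhoff sum separately. For any admissible $\alpha^s$ and any $x, y \in \Sigma^+$, the sequences $\sigma^k(\alpha^s, x)$ and $\sigma^k(\alpha^s, y)$ agree on the first $s - k$ coordinates (which come from $\alpha^s$), and thereafter agree exactly as $x$ and $y$ do. Hence
\begin{align*}
d_\theta(\sigma^k(\alpha^s, x), \sigma^k(\alpha^s, y)) \leq \theta^{s-k} d_\theta(x, y)
\end{align*}
for all $0 \leq k \leq s - 1$. Summing the telescoping Lipschitz estimate gives
\begin{align*}
|f_s^{(a)}(\alpha^s, y) - f_s^{(a)}(\alpha^s, x)| \leq T_0 \sum_{k=0}^{s-1} \theta^{s-k} d_\theta(x, y) \leq \frac{T_0 \theta}{1 - \theta} d_\theta(x, y),
\end{align*}
and the identical argument yields the same bound for $|\tau_s(\alpha^s, y) - \tau_s(\alpha^s, x)|$. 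Using $|b| \leq b_0 = 1$, the combined bound is
\begin{align*}
\left|(f_s^{(a)} + ib\tau_s)(\alpha^s, y) - (f_s^{(a)} + ib\tau_s)(\alpha^s, x)\right| \leq M \, d_\theta(x, y),
\end{align*}
where $M = \tfrac{2T_0\theta}{1-\theta}$, a constant that is uniform in $s$, in the admissible word $\alpha^s$, in $|a| < a_0'$, and in $|b| \leq b_0$.

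Finally, set $z = (f_s^{(a)} + ib\tau_s)(\alpha^s, y) - (f_s^{(a)} + ib\tau_s)(\alpha^s, x)$. Since $d_\theta(x, y) \leq 1$ we have $|z| \leq M$, so from $|1 - e^z| \leq |z|e^{|z|}$ we obtain $|1 - e^z| \leq M e^M d_\theta(x, y)$. Fixing $C = \max(M e^M, 2) > 1$ concludes the proof. There is no substantive obstacle here; this is purely a telescoping/geometric-sum computation exploiting the contraction of $\sigma$ in the $d_\theta$-metric together with the a priori Lipschitz bound on the normalized potential built into $T_0$.
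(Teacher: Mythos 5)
Your proposal is correct and follows essentially the same route as the paper: a term-by-term Lipschitz estimate on the Birkhoff sums using $d_\theta(\sigma^k(\alpha^s, x), \sigma^k(\alpha^s, y)) \leq \theta^{s-k}d_\theta(x,y)$, summed geometrically, followed by the elementary bound $|1 - e^z| \leq |z|e^{|z|}$ (the paper uses the marginally sharper $e^{|\Re z|}|z|$, which only changes the constant).
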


\begin{proof}
Fix $C > \max\left(1, (1 + b_0)\frac{T_0 \theta}{1 - \theta}e^{\frac{T_0 \theta}{1 - \theta}}\right)$. Let $\xi = a + ib \in \mathbb C$ with $|a| < a_0'$ and $|b| \leq b_0$. Let $x, y \in \Sigma^+$, $s \in \mathbb Z_{>0}$ and $\alpha^s$ be an admissible sequence. We calculate that
\begin{align*}
\left|f_s^{(a)}(\alpha^s, y) - f_s^{(a)}(\alpha^s, x)\right| &\leq \sum_{j = 0}^{s - 1} \left|f^{(a)}(\sigma^j(\alpha^s, y)) - f^{(a)}(\sigma^j(\alpha^s, x))\right| \\
&\leq \sum_{j = 0}^{s - 1} \Lip_{d_\theta}(f^{(a)}) \cdot d_\theta(\sigma^j(\alpha^s, y), \sigma^j(\alpha^s, x)) \\
&\leq \Lip_{d_\theta}(f^{(a)}) \sum_{j = 0}^{s - 1} \theta^{s - j} d_\theta(x, y) \\
&\leq \frac{T_0 \theta}{1 - \theta}d_\theta(x, y).
\end{align*}
In the same way, we have a similar bound $|\tau_s(\alpha^s, y) - \tau_s(\alpha^s, x)| \leq \frac{T_0 \theta}{1 - \theta}d_\theta(x, y)$. Thus, using $d_\theta(x, y) \leq 1$, we have
\begin{align*}
&\left|1 - e^{(f_s^{(a)} + ib\tau_s)(\alpha^s, y) - (f_s^{(a)} + ib\tau_s)(\alpha^s, x)}\right| \\
\leq{}&e^{\left|f_s^{(a)}(\alpha^s, y) - f_s^{(a)}(\alpha^s, x)\right|} \left|(f_s^{(a)} + ib\tau_s)(\alpha^s, y) - (f_s^{(a)} + ib\tau_s)(\alpha^s, x)\right| \\
\leq{}&e^{\frac{T_0 \theta}{1 - \theta}}\left(\frac{T_0 \theta}{1 - \theta}d_\theta(x, y) + \frac{b_0T_0 \theta}{1 - \theta}d_\theta(x, y)\right) \\
\leq{}&Cd_\theta(x, y).
\end{align*}
\end{proof}

\begin{remark}
This is the reason the approach of Bourgain-Gamburd-Sarnak is restricted to small $|b|$.
\end{remark}

Now we can take care of the Lipschitz bound and although \cref{lem:TransferOperatorConvolutionApproximation} cannot be used directly, we can use similar albeit more intricate estimates.

\begin{lemma}
\label{lem:ReducedTheoremEstimate2}
There exist $\kappa_2 \in (0, 1)$ and $q_{1,2} \in \mathbb Z_{>0}$ such that for all $\xi = a + ib \in \mathbb C$ with $|a| < a_0'$ and $|b| \leq b_0$, for all square free ideals $\mathfrak{q} \subset \mathcal{O}_{\mathbb K}$ coprime to $\mathfrak{q}_0$ with $N_{\mathbb K}(\mathfrak{q}) > q_{1,2}$, for all $H \in \mathcal{W}_\mathfrak{q}^\mathfrak{q}(U)$, we have
\begin{align*}
\Lip_{d_\theta}(\mathcal{M}_{\xi, \mathfrak{q}}^{s_\mathfrak{q}}(H)) \leq \frac{1}{2}N_{\mathbb K}(\mathfrak{q})^{-\kappa_2}(\|H\|_\infty + \Lip_{d_\theta}(H)).
\end{align*}
\end{lemma}

\begin{proof}
There are $q_{1,2} \in \mathbb Z_{>0}$ and $\epsilon \in (0, 1)$ such that $\frac{1}{2} - \frac{\log(4CC_1{C_f}^2)}{\log(q)} > \epsilon$ for all integers $q > q_{1,2}$. Fix any $\kappa_2 \in (0, \epsilon)$. Let $\xi = a + ib \in \mathbb C$ with $|a| < a_0'$ and $|b| \leq b_0$. Let $\mathfrak{q} \subset \mathcal{O}_{\mathbb K}$ be a square free ideal coprime to $\mathfrak{q}_0$ with $N_{\mathbb K}(\mathfrak{q}) > q_{1,2}$, $H \in \mathcal{W}_\mathfrak{q}^\mathfrak{q}(U)$ and $x, y \in \Sigma^+$. Denote $r_\mathfrak{q}$ by $r$ and $s_\mathfrak{q}$ by $s$. First suppose that $d_\theta(x, y) = 1$. Then from the proof of \cref{lem:ReducedTheoremEstimate1}, we can simply estimate as
\begin{align*}
&\left\|\mathcal{M}_{\xi, \mathfrak{q}}^s(H)(x) - \mathcal{M}_{\xi, \mathfrak{q}}^s(H)(y)\right\|_2 \\
\leq{}&\left\|\mathcal{M}_{\xi, \mathfrak{q}}^s(H)(x)\right\|_2 + \left\|\mathcal{M}_{\xi, \mathfrak{q}}^s(H)(y)\right\|_2 \\
\leq{}&\left(2C{C_f}^2 N_{\mathbb K}(\mathfrak{q})^{-\frac{1}{2}} \|H\|_\infty + 2C_f \Lip_{d_\theta}(H)\theta^{s - r}\right)d_\theta(x, y).
\end{align*}
Now suppose $d_\theta(x, y) < 1$. Then of course $x_0 = y_0$ and hence all the sums which will appear are over the same set of admissible sequences and moreover $\delta_{\mathtt{c}_\mathfrak{q}^s(\alpha^s, x)} = \delta_{\mathtt{c}_\mathfrak{q}^s(\alpha^s, y)}$. Thus we have
\begin{align*}
&\left\|\mathcal{M}_{\xi, \mathfrak{q}}^s(H)(x) - \mathcal{M}_{\xi, \mathfrak{q}}^s(H)(y)\right\|_2 \\
\leq{}&\left\|\sum_{\alpha^s} e^{(f_s^{(a)} + ib\tau_s)(\alpha^s, x)} \delta_{\mathtt{c}_\mathfrak{q}^s(\alpha^s, x)} * H(\alpha^s, x) \right. \\
{}&\left.- \sum_{\alpha^s} e^{(f_s^{(a)} + ib\tau_s)(\alpha^s, y)} \delta_{\mathtt{c}_\mathfrak{q}^s(\alpha^s, y)} * H(\alpha^s, y)\right\|_2 \\
\leq{}& \left\|\sum_{\alpha^s} e^{(f_s^{(a)} + ib\tau_s)(\alpha^s, y)}\delta_{\mathtt{c}_\mathfrak{q}^s(\alpha^s, x)} * \left(H(\alpha^s, x) - H(\alpha^s, y)\right)\right\|_2 \\
&{}+ \left\|\sum_{\alpha^s} \left(e^{(f_s^{(a)} + ib\tau_s)(\alpha^s, x)} - e^{(f_s^{(a)} + ib\tau_s)(\alpha^s, y)}\right) \right. \\
{}&\left.\cdot\delta_{\mathtt{c}_\mathfrak{q}^s(\alpha^s, x)} * (H(\alpha^s, x) - H(\alpha_s, \alpha_{s - 1}, \dotsc, \alpha_{r + 1}, \omega(\alpha_{r + 1})))\rule{0cm}{0.6cm}\right\|_2 \\
&{}+ \left\|\sum_{\alpha^s} \left(e^{(f_s^{(a)} + ib\tau_s)(\alpha^s, x)} - e^{(f_s^{(a)} + ib\tau_s)(\alpha^s, y)}\right) \right. \\
{}&\left.\cdot\delta_{\mathtt{c}_\mathfrak{q}^s(\alpha^s, x)} * H(\alpha_s, \alpha_{s - 1}, \dotsc, \alpha_{r + 1}, \omega(\alpha_{r + 1}))\rule{0cm}{0.6cm}\right\|_2 \\
={}&K_1 + K_2 + K_3.
\end{align*}
We easily estimate the first term $K_1$ as
\begin{align*}
K_1 &\leq \sum_{\alpha^s} e^{f_s^{(a)}(\alpha^s, x)} \|H(\alpha^s, x) - H(\alpha^s, y)\|_2 \\
&\leq \Lip_{d_\theta}(H) \theta^s d_\theta(x, y) \sum_{\alpha^s} e^{f_s^{(a)}(\alpha^s, x)} \leq C_f \Lip_{d_\theta}(H) \theta^s d_\theta(x, y).
\end{align*}
Next we estimate the second term $K_2$ as
\begin{align*}
K_2 &\leq \sum_{\alpha^s} \left|e^{(f_s^{(a)} + ib\tau_s)(\alpha^s, x)}\right| \cdot \left|1 - e^{(f_s^{(a)} + ib\tau_s)(\alpha^s, y) - (f_s^{(a)} + ib\tau_s)(\alpha^s, x)}\right| \\
&\cdot \|H(\alpha^s, x) - H(\alpha_s, \alpha_{s - 1}, \dotsc, \alpha_{r + 1}, \omega(\alpha_{r + 1}))\|_2 \\
&\leq C_1 \Lip_{d_\theta}(H) \theta^{s - r} d_\theta(x, y) \sum_{\alpha^s} e^{f_s^{(a)}(\alpha^s, x)} \\
&\leq C_1C_f \Lip_{d_\theta}(H) \theta^{s - r} d_\theta(x, y).
\end{align*}
Finally, using \cref{lem:L2FlatteningLemma}, we estimate the third and last term $K_3$ as
\begin{align*}
&K_3 \\
\leq{}&\left\|\, \left|\sum_{\alpha^s} \left(e^{(f_s^{(a)} + ib\tau_s)(\alpha^s, x)} - e^{(f_s^{(a)} + ib\tau_s)(\alpha^s, y)}\right) \right.\right. \\
{}&\left.\left.\cdot\delta_{\mathtt{c}_\mathfrak{q}^s(\alpha^s, x)} * H(\alpha_s, \alpha_{s - 1}, \dotsc, \alpha_{r + 1}, \omega(\alpha_{r + 1}))\rule{0cm}{0.6cm}\right| \,\right\|_2 \\
\leq{}&\left\|\sum_{\alpha^s} \left|e^{(f_s^{(a)} + ib\tau_s)(\alpha^s, x)}\right| \cdot \left|1 - e^{(f_s^{(a)} + ib\tau_s)(\alpha^s, y) - (f_s^{(a)} + ib\tau_s)(\alpha^s, x)}\right| \right. \\
{}&\left.\cdot\delta_{\mathtt{c}_\mathfrak{q}^s(\alpha^s, x)} * |H|(\alpha_s, \alpha_{s - 1}, \dotsc, \alpha_{r + 1}, \omega(\alpha_{r + 1}))\rule{0cm}{0.6cm}\right\|_2 \\
\leq{}&C_1 d_\theta(x, y) \sum_{\alpha_{r + 1}, \alpha_{r + 2}, \dotsc, \alpha_s} \left\|\sum_{\alpha^r} e^{f_s^{(a)}(\alpha^s, x)} \delta_{\mathtt{c}_\mathfrak{q}^{r + 1}(\alpha_{r + 1}, \alpha^r, x)} * \phi_{(\alpha_s, \alpha_{s - 1}, \dotsc, \alpha_{r + 1})}^{\mathfrak{q}, |H|}\right\|_2 \\
\leq{}&C_1 d_\theta(x, y) \sum_{\alpha_{r + 1}, \alpha_{r + 2}, \dotsc, \alpha_s} \left\|\hat{\mu}_{(\alpha_s, \alpha_{s - 1}, \dotsc, \alpha_{r + 1})}^{a, \mathfrak{q}, x} * \phi_{(\alpha_s, \alpha_{s - 1}, \dotsc, \alpha_{r + 1})}^{\mathfrak{q}, |H|}\right\|_2 \\
\leq{}&C_1 d_\theta(x, y) \sum_{\alpha_{r + 1}, \alpha_{r + 2}, \dotsc, \alpha_s} CC_f N_{\mathbb K}(\mathfrak{q})^{-\frac{1}{2}} e^{f_{s - r}^{(a)}(\alpha_s, \alpha_{s - 1}, \dotsc, \alpha_{r + 1}, \omega(\alpha_{r + 1}))} \\
{}&\cdot\left\|\phi_{(\alpha_s, \alpha_{s - 1}, \dotsc, \alpha_{r + 1})}^{\mathfrak{q}, |H|}\right\|_2 \\
\leq{}&CC_1C_f N_{\mathbb K}(\mathfrak{q})^{-\frac{1}{2}} \|\, |H| \,\|_\infty d_\theta(x, y) \left(\sum_{\alpha_{r + 1}, \alpha_{r + 2}, \dotsc, \alpha_s} e^{f_{s - r}^{(a)}(\alpha_s, \alpha_{s - 1}, \dotsc, \alpha_{r + 1}, \omega(\alpha_{r + 1}))}\right) \\
\leq{}&CC_1{C_f}^2 N_{\mathbb K}(\mathfrak{q})^{-\frac{1}{2}} \|H\|_\infty d_\theta(x, y).
\end{align*}
So combining all three estimates, we have
\begin{align*}
&\left\|\mathcal{M}_{\xi, \mathfrak{q}}^s(H)(x) - \mathcal{M}_{\xi, \mathfrak{q}}^s(H)(y)\right\|_2 \\
\leq{}&\left(CC_1{C_f}^2 N_{\mathbb K}(\mathfrak{q})^{-\frac{1}{2}} \|H\|_\infty + (C_f \theta^s + C_1C_f \theta^{s - r})\Lip_{d_\theta}(H)\right) d_\theta(x, y).
\end{align*}
Thus, for both cases $d_\theta(x, y) = 1$ and $d_\theta(x, y) < 1$, we have
\begin{align*}
&\left\|\mathcal{M}_{\xi, \mathfrak{q}}^s(H)(x) - \mathcal{M}_{\xi, \mathfrak{q}}^s(H)(y)\right\|_2 \\
\leq{}&\left(2CC_1{C_f}^2 N_{\mathbb K}(\mathfrak{q})^{-\frac{1}{2}} \|H\|_\infty + 2C_1C_f\Lip_{d_\theta}(H)\theta^{s - r}\right) d_\theta(x, y).
\end{align*}
Since the above holds for all $x, y \in \Sigma^+$, we have
\begin{align*}
\Lip_{d_\theta}(\mathcal{M}_{\xi, \mathfrak{q}}^s(H)) &\leq 2CC_1{C_f}^2 N_{\mathbb K}(\mathfrak{q})^{-\frac{1}{2}} \|H\|_\infty + 2C_1C_f\Lip_{d_\theta}(H)\theta^{s - r} \\
&\leq \frac{1}{2}N_{\mathbb K}(\mathfrak{q})^{-\kappa_2}(\|H\|_\infty + \Lip_{d_\theta}(H))
\end{align*}
since $2CC_1{C_f}^2 N_{\mathbb K}(\mathfrak{q})^{-\frac{1}{2}} \leq \frac{1}{2}N_{\mathbb K}(\mathfrak{q})^{-\kappa_2}$ and $2C_1C_f\theta^{s - r} \leq \frac{1}{2} N_{\mathbb K}(\mathfrak{q})^{-1} \leq \frac{1}{2} N_{\mathbb K}(\mathfrak{q})^{-\kappa_2}$ by definitions of the constants.
\end{proof}

\begin{proof}[Proof of \cref{thm:ReducedTheoremSmall|b|}]
Fix $\kappa_1, \kappa_2 \in (0, 1)$ and $q_{1,1}, q_{1,2} \in \mathbb Z_{>0}$ to be the constants from \cref{lem:ReducedTheoremEstimate1,lem:ReducedTheoremEstimate2}. Recall the constant $C_s$ and that we already fixed $b_0 = 1$. Fix $a_0 = a_0'$, $\kappa = \min(\kappa_1, \kappa_2) \in (0, 1)$ and $q_1 = \max(q_{1,1}, q_{1,2}) \in \mathbb Z_{>0}$. Let $\xi = a + ib \in \mathbb C$ with $|a| < a_0$ and $|b| \leq b_0$. Let $\mathfrak{q} \subset \mathcal{O}_{\mathbb K}$ be a square free ideal coprime to $\mathfrak{q}_0$ with $N_{\mathbb K}(\mathfrak{q}) > q_1$. Denote $s_\mathfrak{q}$ by $s$. Note that \cref{lem:ReducedTheoremEstimate1,lem:ReducedTheoremEstimate2} together give
\begin{align*}
\left\|\mathcal{M}_{\xi, \mathfrak{q}}^s(H)\right\|_\infty + \Lip_{d_\theta}(\mathcal{M}_{\xi, \mathfrak{q}}^s(H)) \leq N_{\mathbb K}(\mathfrak{q})^{-\kappa}(\|H\|_\infty + \Lip_{d_\theta}(H))
\end{align*}
for all $H \in \mathcal{W}_\mathfrak{q}^\mathfrak{q}(U)$. Now let $j \in \mathbb Z_{\geq 0}$ and $H \in \mathcal{W}_\mathfrak{q}^\mathfrak{q}(U)$. Then by induction we have
\begin{align*}
\big\|\mathcal{M}_{\xi, \mathfrak{q}}^{js}(H)\big\|_2 &\leq \big\|\mathcal{M}_{\xi, \mathfrak{q}}^{js}(H)\big\|_\infty \leq N_{\mathbb K}(\mathfrak{q})^{-j\kappa}(\|H\|_\infty + \Lip_{d_\theta}(H)) \\
&= N_{\mathbb K}(\mathfrak{q})^{-j\kappa}\|H\|_{\Lip(d_\theta)}.
\end{align*}
\end{proof}

\section{Spectral bounds for large $|b|$ using Dolgopyat's method}
\label{sec:GeodesicFlowLarge|b|}
In this section we will use Dolgopyat's method \cite{Dol98} to prove the following \cref{thm:TheoremLarge|b|}. We will closely follow \cite{OW16,Sto11}.

\begin{theorem}
\label{thm:TheoremLarge|b|}
There exist $\eta > 0, C > 0, a_0 > 0$ and $b_0 > 0$ such that for all $\xi = a + ib \in \mathbb C$ with $|a| < a_0$ and $|b| > b_0$, for all nontrivial ideals $\mathfrak{q} \subset \mathcal{O}_{\mathbb K}$, for all $k \in \mathbb Z_{\geq 0}$, for all $H \in \mathcal{V}_\mathfrak{q}(U)$, we have
\begin{align*}
\left\|\mathcal{M}_{\xi, \mathfrak{q}}^k(H)\right\|_2 \leq C e^{-\eta k} \|H\|_{1, b}.
\end{align*}
\end{theorem}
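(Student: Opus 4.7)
\textbf{Proof proposal for \cref{thm:TheoremLarge|b|}.} The plan is to adapt Dolgopyat's method as carried out by Stoyanov in \cite{Sto11} and Oh-Winter in \cite{OW16}, using the fact that the congruence cocycle $\mathtt{c}_\mathfrak{q}$ is locally constant (\cref{lem:CocyclesLocallyConstant} and \cref{cor:CocyclesLocallyConstantCorollary}) so that the argument reduces pointwise to the scalar case. Concretely, I will work with $|\mathcal{M}_{\xi, \mathfrak{q}}^k(H)| \in C(U, \mathbb R_{\geq 0})$ and show it is dominated by iterates of certain Dolgopyat operators $\mathcal{N}_{J, a}$ acting on scalar-valued nonnegative Lipschitz functions, where the family of operators and the contraction rate are independent of $\mathfrak{q}$.

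The first step is to produce the geometric input. Working on the strong unstable leaves intersected with the Markov rectangles, I would establish a local non-integrability condition (LNIC) for the temporal function $\Delta\tau$ associated to the geodesic flow (this is classical for geodesic flows on convex cocompact hyperbolic manifolds by Chernov-Dolgopyat-Stoyanov). Combined with a non-concentration property for the PS measure on the limit set, the LNIC guarantees that for any sufficiently long admissible cylinder and any two admissible preimage branches, the map $u \mapsto b(\tau_N(\xi_1(u)) - \tau_N(\xi_2(u)))$ oscillates on scale $\gtrsim 1$ across a sub-cylinder of length $N + K$ whenever $|b| > b_0$ (with $N \asymp \log|b|$ and $K$ fixed). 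Using this, I select for each admissible pair of branches a collection of ``dense'' sub-cylinders $J$ on which one of the two branches can be compressed: these are exactly the sets used to build the Dolgopyat operators
\begin{align*}
\mathcal{N}_{J, a}(h)(u) = \mathcal{L}_{-(a+\delta_\Gamma)\tau}^N\bigl((1 - \mu \chi_J)h\bigr)(u) \cdot \lambda_a^{-N}/h_0(u),
\end{align*}
with $\mu > 0$ small and fixed.

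The core step is the iteration lemma. Define the cone $\mathcal{K}_{E, b}$ of nonnegative $h \in C^{\Lip(d)}(U, \mathbb R)$ with $\Lip_d(h) \leq E |b| \, h$ on each $U_j$. I will prove that for $|b| > b_0$ and $|a|$ small, for every $h \in \mathcal{K}_{E, b}$ and every $H \in \mathcal{V}_\mathfrak{q}$ with $|H(u)| \leq h(u)$ pointwise, there exists an admissible choice $J$ (depending only on $H$ and $b$) such that
\begin{align*}
|\mathcal{M}_{\xi, \mathfrak{q}}^N(H)(u)| \leq \mathcal{N}_{J, a}(h)(u) \qquad \text{for all } u \in U,
\end{align*}
and moreover $\mathcal{N}_{J, a}(h) \in \mathcal{K}_{E, b}$. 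The crucial observation making this independent of $\mathfrak{q}$ is that for each preimage branch $u'$ of $u$ of depth $N$, the value $\mathtt{c}_\mathfrak{q}^N(u')^{-1}$ is constant on each cylinder of length $N$ by \cref{cor:CocyclesLocallyConstantCorollary}, and the action $g \mapsto g \mathtt{c}_\mathfrak{q}^N(u')^{-1}$ is an isometry of $L^2(F_\mathfrak{q}, \mathbb C)$. Hence the usual Cauchy-Schwarz / phase-cancellation argument bounding $\bigl|\sum_{u' \in \sigma^{-N}(u)} e^{(f_N^{(a)} + ib\tau_N)(u')} \mathtt{c}_\mathfrak{q}^N(u')^{-1} H(u')\bigr|$ proceeds branch-by-branch exactly as for the scalar $\mathcal{L}_\xi^N$, with the $L^2(F_\mathfrak{q}, \mathbb C)$-norm simply replacing the absolute value.

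The final step is the standard $L^2$-contraction: using that each $J$ is $\epsilon$-dense in the sense that $\nu_U(\mathtt{C} \cap J) \geq \epsilon \nu_U(\mathtt{C})$ for all cylinders $\mathtt{C}$ of length $\leq $ some fixed $L$, a Gibbs-measure computation gives $\|\mathcal{N}_{J, a}(h)\|_2 \leq \rho \|h\|_2$ with $\rho = \rho(\mu, \epsilon, L) \in (0, 1)$ independent of $J$, $\mathfrak{q}$, and $b$. Iterating the cone lemma $k$ times starting from $h_0 = \|H\|_{1, b} \chi_U \in \mathcal{K}_{E, b}$ (which absorbs the initial Lipschitz norm through the $\frac{1}{\max(1, |b|)}$ factor) yields $\||\mathcal{M}_{\xi, \mathfrak{q}}^{Nk}(H)|\|_2 \leq \rho^k \|H\|_{1, b}$, from which \cref{thm:TheoremLarge|b|} follows with $\eta = -\frac{\log\rho}{N}$ after absorbing the first $N-1$ iterations into $C$ via the trivial bound $\|\mathcal{M}_{\xi, \mathfrak{q}}\|_{\mathrm{op}} \leq N e^{T_0}$. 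The main technical obstacle is the detailed verification that the choice of $J$ and the constants $\mu, \epsilon, E$ can be made simultaneously consistent with the cone preservation and the LNIC-induced cancellation; but since the cocycle drops out of the picture pointwise on each sub-cylinder, this is essentially a repackaging of the arguments in \cite[\S 5--7]{Sto11}.
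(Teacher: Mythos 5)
Your plan follows essentially the same route as the paper: Stoyanov's LNIC-based Dolgopyat operators of the form $\mathcal{L}_a^m(\beta_J h)$ acting on a cone of $E|b|$-log-Lipschitz functions, a simultaneous domination/cone-preservation lemma for $\big\|\mathcal{M}_{\xi, \mathfrak{q}}^m(H)\big\|_2$ made uniform in $\mathfrak{q}$ by local constancy and unitarity of the cocycle, an $L^2$-contraction via density of $J$ and the Gibbs property of $\nu_U$, and iteration starting from the constant function $\|H\|_{1, b}\chi_U$, exactly as in the paper's reduction of \cref{thm:TheoremLarge|b|} to \cref{thm:Dolgopyat}. The one caveat is that the cancellation is not literally ``branch-by-branch'' at a single point: as in \cref{lem:chiLessThan1}, one must compare two base points lying in well-separated subcylinders and conclude that the angle between the two cocycle-twisted summands is bounded below at one of the two points, which works precisely because the cocycle values $\mathtt{c}_{l, j, \ell}(b)$ coincide at both points and act unitarily --- the mechanism you correctly isolate.
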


%Define both open and closed cylinders. Define them on U!!!! not on hat{U}!! Follow Styanov.
\subsection{Dolgopyat's method, preliminary lemmas, and constants}
As in \cite[Section 5]{Sto11}, we define a new distance function which is crucial for the argument. Let $D$ be a new distance function on $U$ defined by
\begin{align*}
D(u, u') =
\begin{cases}
\displaystyle\min_{\substack{u, u' \in \overline{\mathtt{C}}\\ \mathtt{C} \text{ is a cylinder}}} \diam_d(\overline{\mathtt{C}}), & u, u' \in U_j \text{ for some } j \in \mathcal{A} \\
1, & \text{otherwise}
\end{cases}
\end{align*}
for all $u, u' \in U$.

\begin{remark}
The above definition makes sense since $\diam_d(U_j) \leq \hat{\delta} < 1$ for all $j \in \mathcal{A}$. Note that $d(u, u') \leq D(u, u')$ for all $u, u' \in U$. Finally, it is important to observe that $C^{\Lip(D)}(U, \mathbb R) \subset B(U, \mathbb R)$ but $C^{\Lip(D)}(U, \mathbb R) \not\subset C(U, \mathbb R)$. Moreover, $h$ is measurable with respect to $\nu_U$ for all $h \in C^{\Lip(D)}(U, \mathbb R)$.
\end{remark}

We define the cones
\begin{align*}
K_B(U) ={}&\{h \in C^{\Lip(D)}(U, \mathbb R): h > 0, |h(u) - h(u')| \leq Bh(u)D(u, u') \\
{}&\text{ for all } u, u' \in U_j, \text{ for all } j \in \mathcal A\} \\
\tilde{K}_B(U) ={}&\Big\{h \in C^{\Lip(D)}(U, \mathbb R): h > 0, e^{-B D(u, u')} \leq \frac{h(u)}{h(u')} \leq e^{B D(u, u')} \\
{}&\text{ for all } u, u' \in U_j, \text{ for all } j \in \mathcal A\Big\}.
\end{align*}

\begin{remark}
If $h \in K_B(U)$, then using the convexity of $-\log$, we can derive that $h$ is $\log$-Lipschitz with respect to $D$, i.e., $|(\log \circ h)(u) - (\log \circ h)(u')| \leq B D(u, u')$ for all $u, u' \in U_j$, for all $j \in \mathcal{A}$. It follows that $K_B(U) \subset \tilde{K}_B(U)$, however the reverse containment is not true.
\end{remark}

The following \cref{thm:Dolgopyat} captures the mechanism of Dolgopyat's method \cite{Dol98}.

\begin{theorem}
\label{thm:Dolgopyat}
There exist $m \in \mathbb Z_{>0}, \eta \in (0, 1), a_0 > 0, b_0 > 0, E > \max\left(1, \frac{1}{b_0}\right)$ and a set of operators $\{\mathcal{N}_{a, J}: C^{\Lip(D)}(U, \mathbb R) \to C^{\Lip(D)}(U, \mathbb R): |a| < a_0, J \in \mathcal{J}(b), |b| > b_0\}$, where $\mathcal{J}(b)$ is some finite set for all $|b| > b_0$, such that
\begin{enumerate}
\item\label{itm:DolgopyatProperty1}	$\mathcal{N}_{a, J}(K_{E|b|}(U)) \subset K_{E|b|}(U)$ for all $|a| < a_0$, for all $J \in \mathcal{J}(b)$, for all $|b| > b_0$
\item\label{itm:DolgopyatProperty2}	$\|\mathcal{N}_{a, J}(h)\|_2 \leq \eta \|h\|_2$ for all $h \in K_{E|b|}(U)$, for all $|a| < a_0$, for all $J \in \mathcal{J}(b)$, for all $|b| > b_0$
\item\label{itm:DolgopyatProperty3}	for all $\xi = a + ib \in \mathbb C$ with $|a| < a_0$ and $|b| > b_0$, for all nontrivial ideals $\mathfrak{q} \subset \mathcal{O}_{\mathbb K}$, if $H \in C(U, L^2(F_\mathfrak{q}, \mathbb C))$ and $h \in K_{E|b|}(U)$ satisfy
\begin{enumerate}[label=(1\alph*), ref=\theenumi(1\alph*)]
\item\label{itm:DominatedByh}	$\|H(u)\|_2 \leq h(u)$ for all $u \in U$
\item\label{itm:LogLipschitzh}	$\|H(u) - H(u')\|_2 \leq E|b|h(u)D(u, u')$ for all $u, u' \in U_j$, for all $j \in \mathcal{A}$,
\end{enumerate}
then there exists $J \in \mathcal{J}(b)$ such that
\begin{enumerate}[label=(2\alph*), ref=\theenumi(2\alph*)]
\item\label{itm:DominatedByDolgopyat}	$\big\|\mathcal{M}_{\xi, \mathfrak{q}}^m(H)(u)\big\|_2 \leq \mathcal{N}_{a, J}(h)(u)$ for all $u \in U$
\item\label{itm:LogLipschitzDolgopyat}	$\big\|\mathcal{M}_{\xi, \mathfrak{q}}^m(H)(u) - \mathcal{M}_{\xi, \mathfrak{q}}^m(H)(u')\big\|_2 \leq E|b|\mathcal{N}_{a, J}(h)(u)D(u, u')$ for all $u, u' \in U_j$, for all $j \in \mathcal{A}$.
\end{enumerate}
\end{enumerate}
\end{theorem}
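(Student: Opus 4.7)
The plan is to follow the Dolgopyat mechanism as developed by Stoyanov \cite{Sto11} and adapted to the congruence setting by Oh--Winter \cite{OW16}. The decisive structural input is \cref{lem:CocyclesLocallyConstant} (and \cref{cor:CocyclesLocallyConstantCorollary}): since $\mathtt{c}_\mathfrak{q}^m(u')^{-1}$ acts as a fixed unitary element of the left regular representation on $L^2(F_\mathfrak{q}, \mathbb C)$ and is constant on every length-$m$ cylinder, it cannot interfere with the scalar phase oscillations $e^{ib\tau_m(u')}$ that drive the cancellation. Consequently, every geometric estimate appearing in the scalar proof of Dolgopyat/Stoyanov survives pointwise in the $L^2(F_\mathfrak{q}, \mathbb C)$-norm, and the cone $K_{E|b|}(U)$ of scalar log-Lipschitz densities (with respect to the new metric $D$) becomes the natural comparison object for majorizing $\|\mathcal{M}_{\xi,\mathfrak{q}}^m(H)(\cdot)\|_2$ uniformly in $\mathfrak{q}$.

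First I would import the quantitative non-integrability condition of the temporal distance function on $\Omega$. Since $\Gamma < G$ is Zariski dense and convex cocompact, the stable and unstable horospherical foliations satisfy the Stoyanov NLI condition, which provides, for each $|b| > b_0$, a finite collection of pairs of sub-cylinders inside cylinders of length roughly $\lceil \log|b| \rceil$ on which the phase $b(\tau_m(u'_1) - \tau_m(u'_2))$ varies through a full period. Non-concentration of $\nu_U^{\mathrm{PS}}$ on lower-dimensional sets (the Gibbs property \cref{eqn:PropertyOfGibbsMeasures}) guarantees that one sub-cylinder of each pair carries a definite $\nu_U$-mass; this will be crucial for the $L^2$-decay. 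The index set $\mathcal{J}(b)$ will enumerate choices of one sub-cylinder from each pair, and for each $J \in \mathcal{J}(b)$ I would define a cutoff $\psi_J \in C^{\Lip(D)}(U,\mathbb R)$ taking values in $[1 - \mu_0, 1]$ (with $\mu_0 > 0$ small and independent of $b$) that equals $1 - \mu_0$ on the sub-cylinders selected by $J$. The operator is then
\begin{equation*}
\mathcal{N}_{a,J}(h)(u) \;=\; \mathcal{L}_a^m(\psi_J \cdot h)(u) \;=\; \sum_{u' \in \sigma^{-m}(u)} e^{f_m^{(a)}(u')} \psi_J(u')\, h(u').
\end{equation*}

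Properties (1) and (2) are the scalar part. Cone invariance (1) follows from the standard computation that $\mathcal{L}_a^m$ tames $D$-log-Lipschitz constants by a factor of $\theta^m$ plus a bounded error coming from $\Lip_D^{\mathrm e}(\tau)$ and $\Lip_D^{\mathrm e}(f^{(a)})$, provided $E$ and $m$ are chosen so that $\theta^m E|b| + O(1) < E|b|$; since we are free to take $|b| > b_0$ large (and we may also shrink $a_0$), this is elementary book-keeping. The $L^2$-contraction in (2) uses $\mathcal{L}_a^*(\nu_U) = \nu_U$ (after normalization) together with $\psi_J^2 \le 1 - 2\mu_0 \chi_J + \mu_0^2$; since $\nu_U(J) \ge c_0 > 0$ uniformly over $J \in \mathcal{J}(b)$ and $|b| > b_0$ by the non-concentration estimate above, one obtains $\|\mathcal{N}_{a,J}(h)\|_2^2 \le (1 - 2\mu_0 c_0 + \mu_0^2)\|h\|_2^2$, which yields the desired $\eta < 1$.

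The main obstacle — and the heart of Dolgopyat's method — is property (3): given $H$ and $h$ satisfying \eqref{itm:DominatedByh}--\eqref{itm:LogLipschitzh}, one must \emph{select} $J \in \mathcal{J}(b)$ so that $\mathcal{M}_{\xi,\mathfrak{q}}^m(H)$ is pointwise dominated by $\mathcal{N}_{a,J}(h)$ with log-Lipschitz bound. For each pair of sub-cylinders $(\mathtt C_1, \mathtt C_2)$ from the NLI construction, denote by $v_i$ the corresponding branch of the transfer sum at $u$; the idea is to use the identity $\|v_1 + v_2\|_2 \le \max\bigl(\|v_1\|_2 + (1-2\mu_0)\|v_2\|_2,\, (1-2\mu_0)\|v_1\|_2 + \|v_2\|_2\bigr)$ which holds when the scalar phases $e^{ib\tau_m(u'_1)}$ and $e^{ib\tau_m(u'_2)}$ are sufficiently out of alignment \emph{and} the log-Lipschitz hypothesis on $H$ forces $\mathtt{c}_\mathfrak{q}^m(u'_1)^{-1}H(u'_1)$ and $\mathtt{c}_\mathfrak{q}^m(u'_2)^{-1}H(u'_2)$ to be nearly collinear as vectors in $L^2(F_\mathfrak{q},\mathbb C)$ — here is exactly where \cref{cor:CocyclesLocallyConstantCorollary} is used, since it implies that $\mathtt{c}_\mathfrak{q}^m$ and hence the twisted vectors are constant on each length-$m$ cylinder, so the Lipschitz oscillation of $\mathtt{c}_\mathfrak{q}^m(u')^{-1}H(u')$ across a pair of sub-cylinders is controlled purely by the Lipschitz seminorm of $H$. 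One then chooses $J$ to select, in each pair, the sub-cylinder for which the majorization uses the $(1-2\mu_0)$ factor. The verification of \eqref{itm:LogLipschitzDolgopyat} is a parallel but more delicate calculation, comparing increments at two points $u, u' \in U_j$ using \cref{lem:Small|b|Bound}-style bounds for the phase and the log-Lipschitz bound on $h$; the additional error $\Lip_D(\mathcal{L}_a^m)$ absorbed into $E|b|$ via the cone hypothesis closes the estimate.
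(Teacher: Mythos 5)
Your outline follows the same route as the paper (Stoyanov's LNIC, the cone $K_{E|b|}(U)$, operators $\mathcal{N}_{a,J}=\mathcal{L}_a^m(\beta_J\,\cdot\,)$ for a finite family of cutoffs, and the observation via \cref{cor:CocyclesLocallyConstantCorollary} that the cocycle acts as a fixed unitary on each length-$m$ cylinder), but the crucial selection step behind \cref{itm:DominatedByDolgopyat} is not set up correctly. In the paper the index set is $\Xi(b)=\{(k,j,\ell)\}$, recording the sub-cylinder $\mathtt{D}_k(b)\subset\mathtt{C}_l(b)$, the branch $j\in\{1,2\}$, and the LNIC direction $\ell$, and the cutoff $\beta_J$ equals $1-\mu$ exactly on the branch images $\mathtt{X}_{j,k}^\ell(b)=v_j^\ell(\sigma^{m_1}(\mathtt{D}_k(b)))$, i.e.\ at the level of the preimages $u'\in\sigma^{-m}(u)$. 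You let $J$ record only ``one sub-cylinder from each pair'' and put $\psi_J=1-\mu_0$ on those sub-cylinders; but in $\mathcal{N}_{a,J}(h)(u)=\sum_{u'\in\sigma^{-m}(u)}e^{f_m^{(a)}(u')}\psi_J(u')h(u')$ the cutoff is evaluated at the preimages, so your discount is charged to whichever branches happen to land in the selected sub-cylinders, not to the two LNIC branches $v_1^\ell(u),v_2^\ell(u)$ over base points $u$ of the selected sub-cylinder, which is where the cancellation is actually proved. With that placement the pointwise domination in \cref{itm:DominatedByDolgopyat} does not follow from the strengthened triangle inequality; and even under the charitable reading in which your ``pairs'' are the branch-image cylinders, your index omits which of the two base sub-cylinders $\mathtt{Z}_k(b),\mathtt{Z}_{k'}(b)$ carries the angular gain, which is equally essential (the gain is only established on one of them). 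The needed bookkeeping is exactly the paper's dense subsets $J\subset\Xi(b)$ together with \cref{lem:chiLessThan1,lem:DominatedByDolgopyat}.

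The cancellation mechanism itself is also underspecified. The hypothesis \cref{itm:LogLipschitzh} does not make $\mathtt{c}^m(u_1')^{-1}H(u_1')$ and $\mathtt{c}^m(u_2')^{-1}H(u_2')$ ``nearly collinear''; what it controls, and only when $\|H\|_2$ is comparable to $h$, is the variation of the direction of $H(v_j^\ell(\cdot))$ as the base point moves across the pair $\mathtt{Z}_k(b),\mathtt{Z}_{k'}(b)$ (the bound is of size $E|b|\,h/\|H\|_2$ times the diameter), while \cref{lem:LNIC_Output} moves the relative phase of the two branches by at least $\tfrac{\delta_0\rho\epsilon_1}{16}$; only then are $V_1,V_2$ at a definite angle on one of the two sub-cylinders. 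To make this rigorous you need the dichotomy of \cref{lem:HTrappedByh}: either $\|H(v_j^\ell(u))\|_2\le\tfrac{3}{4}h(v_j^\ell(u))$ on the sub-cylinder, in which case the discount is justified with no oscillation at all, or $\|H\|_2\ge\tfrac{1}{4}h$ there, which is what makes the directional Lipschitz control effective and, combined with the uniform comparison of the two branch weights (the $16A_0$ bound), caps the norm ratio so that \cref{lem:StrongTriangleInequality} gives a gain uniform in $u$, $b$ and $\mathfrak{q}$; without the ratio bound the gain in the strengthened triangle inequality degenerates, and without the small-$\|H\|/h$ alternative the angle estimate fails. Finally, a smaller point: in \cref{itm:DolgopyatProperty2} a measure lower bound for the selected set does not by itself give $\|\mathcal{N}_{a,J}(h)\|_2^2\le(1-2\mu_0c_0+\mu_0^2)\|h\|_2^2$; one must use $h\in K_{E|b|}(U)$ (so $h$ is essentially constant at scale $1/|b|$) together with the Gibbs property, which is what the paper's $(t,C)$-dense sets and \cref{lem:WDenseInequality} provide.
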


\begin{proof}[Proof that \cref{thm:Dolgopyat} implies \cref{thm:TheoremLarge|b|}]
Fix $m \in \mathbb Z_{>0}, a_0 > 0, b_0 > 0, E > \max\left(1, \frac{1}{b_0}\right)$ to be the ones from \cref{thm:Dolgopyat}. Let $\tilde{\eta} \in (0, 1)$ be the $\eta$ in \cref{thm:Dolgopyat}. Fix $\eta = \frac{-\log(\tilde{\eta})}{m} > 0$. Let $\xi = a + ib \in \mathbb C$ with $|a| < a_0$ and $|b| > b_0$. Let $\mathfrak{q} \subset \mathcal{O}_{\mathbb K}$ be a nontrivial ideal, $k \in \mathbb Z_{\geq 0}$, and $H \in \mathcal{V}_\mathfrak{q}(U)$. The theorem is trivial if $H = 0$, so suppose that $H \neq 0$. First set $h_0 \in K_{E|b|}(U)$ to be the positive constant function defined by $h_0(u) = \|H\|_{1, b}$ for all $u \in U$. Then $H$ and $h_0$ satisfy \cref{itm:DominatedByh,itm:LogLipschitzh} in \cref{thm:Dolgopyat}. Thus, given $h_j \in K_{E|b|}(U)$ for any $j \in \mathbb Z_{\geq 0}$, \cref{thm:Dolgopyat} provides a $J_j \in \mathcal{J}(b)$ and we inductively obtain $h_{j + 1} = \mathcal{N}_{a, J_j}(h_j) \in K_{E|b|}(U)$. Then $\big\|\mathcal{M}_{\xi, \mathfrak{q}}^{jm}(H)(u)\big\|_2 \leq h_j(u)$ for all $u \in U$ and hence $\big\|\mathcal{M}_{\xi, \mathfrak{q}}^{jm}(H)\big\|_2 \leq \|h_j\|_2 \leq \tilde{\eta}^j\|h_0\|_2 = \tilde{\eta}^j\|H\|_{1, b}$ for all $j \in \mathbb Z_{\geq 0}$. Fix
\begin{align*}
C_{\mathcal{M}} &= \max\left(1, \sup_{|\Re(\xi)| \leq a_0, \{0\} \subsetneq \mathfrak{q} \subset \mathcal{O}_{\mathbb K}} \left\|\mathcal{M}_{\xi, \mathfrak{q}}\right\|_{\mathrm{op}}\right) \\
&\leq \max\left(1, \sup_{|\Re(\xi)| \leq a_0} \left\|\mathcal{L}_\xi\right\|_{\mathrm{op}}\right) \leq \max(1, Ne^{T_0})
\end{align*}
where we use operator norms for operators on $L^2(U, L^2(F_\mathfrak{q}, \mathbb C))$ and $L^2(U, \mathbb R)$ respectively. Fix $C = {C_{\mathcal{M}}}^m \tilde{\eta}^{-1}$. Then writing $k = jm + l$ for some integers $j \in \mathbb Z_{\geq 0}$ and $0 \leq l < m$, we have
\begin{align*}
\left\|\mathcal{M}_{\xi, \mathfrak{q}}^k(H)\right\|_2 &= \big\|\mathcal{M}_{\xi, \mathfrak{q}}^{jm + l}(H)\big\|_2 \\
&\leq {C_{\mathcal{M}}}^l\big\|\mathcal{M}_{\xi, \mathfrak{q}}^{jm}(H)\big\|_2 \\
&\leq {C_{\mathcal{M}}}^l\tilde{\eta}^j\|H\|_{1, b} \\
&\leq {C_{\mathcal{M}}}^l\tilde{\eta}^{-1} \cdot e^{-\eta(j + 1)m}\|H\|_{1, b} \\
&\leq Ce^{-\eta k} \|H\|_{1, b}.
\end{align*}
\end{proof}

The subsequent subsections are devoted to the proof of \cref{thm:Dolgopyat}. We continue with the preliminaries. The following \cref{lem:SigmaHyperbolicity} is derived from the hyperbolicity of the geodesic flow akin to \cref{eqn:HyperbolicityComputation}.

\begin{lemma}
\label{lem:SigmaHyperbolicity}
There exist $c_0 \in (0, 1)$ and $\kappa_1 > \kappa_2 > 1$ such that for all $j \in \mathbb Z_{\geq 0}$, we have both
%used the gather environment so that both equations are centered.
\begin{gather*}
c_0 {\kappa_2}^j d(u, u') \leq d(\sigma^j(u), \sigma^j(u')) \leq {c_0}^{-1}{\kappa_1}^j d(u, u') \\
c_0 {\kappa_2}^j D(u, u') \leq D(\sigma^j(u), \sigma^j(u')) \leq {c_0}^{-1}{\kappa_1}^j D(u, u')
\end{gather*}
for all $u, u' \in \mathtt{C}$, for all cylinders $\mathtt{C} \subset U$ with $\len(\mathtt{C}) = j$.
\end{lemma}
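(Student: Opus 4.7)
The plan is to establish the first pair of inequalities (for the metric $d$) using the hyperbolicity of the geodesic flow on strong unstable leaves, and then derive the second pair (for $D$) as a short symbolic consequence by tracking how cylinders evolve under iterates of $\sigma$.

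\textbf{Step 1 ($d$-metric bounds).} Since $\mathbb H^n$ has constant sectional curvature $-1$, the geodesic flow acts by exact expansion on strong unstable leaves: $d_{\mathrm{su}}(ua_t, u'a_t) = e^t\, d_{\mathrm{su}}(u, u')$ for $u, u'$ on a common strong unstable leaf. For $u, u' \in \mathtt C[x_0, x_1]$ the first return map is $\mathcal P(u) = u a_{\tau(u)}$, and $\sigma(u) = \proj_1(\mathcal P(u))$ with $\proj_1$ the projection along the strong stable direction. The map $\sigma|_{\mathtt C[x_0, x_1]}$ is smooth and expanding; the Jacobian, with respect to $d_{\mathrm{su}}$, is comparable to $e^{\tau(u)}$. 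A standard bounded distortion argument (itinerary-wise H\"{o}lder control of $\log |\sigma'|$) then upgrades this to
\[
C^{-1} e^{\tau_j(u)}\, d(u, u') \leq d(\sigma^j(u), \sigma^j(u')) \leq C\, e^{\tau_j(u)}\, d(u, u')
\]
for some $C > 0$ depending only on the Markov section, uniformly in $j$ and in $u, u'$ lying in a common cylinder of length $j$; this is already contained in the Anosov hyperbolicity estimates of \cite{Rat73}. Combining with $j\underline\tau \leq \tau_j(u) \leq j\overline\tau$ yields the first pair of inequalities, with $c_0 = C^{-1}$, $\kappa_2 = e^{\underline\tau}$ and $\kappa_1 = e^{\overline\tau}$.

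\textbf{Step 2 ($D$-metric bounds).} Let $u, u' \in \mathtt C$ with $\len(\mathtt C) = j$, and let $\mathtt C'$ be the smallest closed cylinder containing both $u$ and $u'$. Then $\mathtt C' \subset \mathtt C$ and $\len(\mathtt C') \geq j$. By the Markov property, $\sigma^j(\mathtt C')$ is itself a cylinder of length $\len(\mathtt C') - j$, it contains $\sigma^j(u)$ and $\sigma^j(u')$, and it is the smallest such cylinder, because shifting the itineraries by $j$ preserves the index at which they first disagree. Hence
\[
D(u, u') = \diam_d(\overline{\mathtt C'}), \qquad D(\sigma^j(u), \sigma^j(u')) = \diam_d(\overline{\sigma^j(\mathtt C')}).
\]
Applying the already-proved pointwise $d$-metric inequalities from Step 1 to arbitrary pairs $v, v' \in \overline{\mathtt C'}$ (all of which lie inside the common length-$j$ cylinder $\mathtt C$) and taking suprema gives
\[
c_0 \kappa_2^j \diam_d(\overline{\mathtt C'}) \leq \diam_d(\overline{\sigma^j(\mathtt C')}) \leq c_0^{-1} \kappa_1^j \diam_d(\overline{\mathtt C'}),
\]
which is exactly the $D$-metric version with the same constants.

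\textbf{Main difficulty.} The only substantive work is in Step 1: justifying that the varying return time $\tau$, the projection $\proj_1$, and the composition over $j$ steps together contribute only a bounded multiplicative distortion, independent of $j$ and the base point, rather than an unwanted factor of the form $C^j$. This bounded distortion is classical for Markov codings of Anosov flows (see \cite{Bow70, Rat73}); alternatively one can reprove it here using the explicit hyperbolic distance formulas together with the compactness of the finitely many rectangles, plus the fact that $\tau$ is essentially Lipschitz on each cylinder of length $1$. Step 2 requires no further geometric input.
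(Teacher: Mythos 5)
Your argument is correct and follows essentially the same route the paper takes: the $d$-metric inequalities are exactly the Anosov-type expansion estimates from Ratner's Markov section construction (the paper invokes them via the hyperbolicity computation in \cref{lem:P^p_in_R_z_ForLarge_p}), and the paper's remark after the lemma likewise notes that the $D$-metric line follows from the $d$-metric line, which is what your Step 2 carries out by matching smallest containing cylinders under $\sigma^j$.
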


\begin{remark}
The second line of inequalities follow from the first. We fix constants $c_0 \in (0, 1)$ and $\kappa_1 > \kappa_2 > 1$ as in \cref{lem:SigmaHyperbolicity} for the rest of the section and use these inequalities without further comments.
\end{remark}

Here we recall \cite[Proposition 3.3]{Sto11}.

\begin{lemma}
\label{lem:CylinderDiameterBound}
There exist $p_0 \in \mathbb Z_{>0}, \rho \in (0, 1)$ such that for all $l \in \mathbb Z_{\geq 0}$, for all cylinders $\mathtt{C}$ with $\len(\mathtt{C}) = l$, for all subcylinders $\mathtt{C}', \mathtt{C}'' \subset \mathtt{C}$ with $\len(\mathtt{C}') = l + 1$ and $\len(\mathtt{C}'') = l + p_0$ respectively, we have
\begin{align*}
\diam_d(\mathtt{C}'') \leq \rho\diam_d(\mathtt{C}) \leq \diam_d(\mathtt{C}').
\end{align*}
\end{lemma}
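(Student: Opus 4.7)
The plan is to transfer everything to the ``base level'' $\interior(U_{x_l})$ via the bijection $\sigma^l \colon \interior(\mathtt{C}) \to \interior(U_{x_l})$ provided by the Markov property, which is bi-Lipschitz by \cref{lem:SigmaHyperbolicity}. Under this map, $\sigma^l(\mathtt{C}'')$ is a length-$p_0$ cylinder in $\interior(U_{x_l})$, while $\sigma^l(\mathtt{C}')$ is one of only finitely many length-$1$ cylinders in $\interior(U_{x_l})$ corresponding to admissible pairs $(x_l, x_{l+1})$. At the base level, iterating \cref{lem:SigmaHyperbolicity} shows $\diam_d(\sigma^l(\mathtt{C}''))$ decays geometrically in $p_0$, while $\diam_d(\sigma^l(\mathtt{C}'))$ admits a uniform positive lower bound from the finiteness of the alphabet.

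For the upper inequality $\diam_d(\mathtt{C}'') \leq \rho \diam_d(\mathtt{C})$, I would apply \cref{lem:SigmaHyperbolicity} with $j = l + p_0$ to the cylinder $\mathtt{C}''$ to obtain $\diam_d(\mathtt{C}'') \leq c_0^{-1} \kappa_2^{-(l + p_0)} \hat\delta$, and apply it with $j = l$ to $\mathtt{C}$ together with $\sigma^l(\mathtt{C}) = \interior(U_{x_l})$ to obtain $\diam_d(\mathtt{C}) \geq c_0 \kappa_1^{-l} \delta_0$, where $\delta_0 = \min_{j \in \mathcal A} \diam_d(U_j) > 0$. For the lower inequality $\rho \diam_d(\mathtt{C}) \leq \diam_d(\mathtt{C}')$, a symmetric argument with $\mathtt{C}'$ of length $l+1$ yields $\diam_d(\mathtt{C}') \geq c_0 \kappa_1^{-(l+1)} \delta_0$ and $\diam_d(\mathtt{C}) \leq c_0^{-1} \kappa_2^{-l} \hat\delta$.

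The main obstacle is that the naive ratios obtained this way carry a drift of $(\kappa_1/\kappa_2)^l$ which is not uniform in $l$, a consequence of the mismatch $\kappa_1 > \kappa_2$ in the two-sided Lipschitz bounds of \cref{lem:SigmaHyperbolicity}. The remedy is a bounded-distortion upgrade for the inverse branches of $\sigma^l$ restricted to cylinders of length $l$: since the roof $\tau$ is (essentially) Lipschitz along the unstable foliation, a standard telescoping argument over iterates with geometric $\theta^j$ tails produces a distortion constant $C_d > 0$, independent of $l$, such that for any $u, u', v, v'$ lying in a common length-$l$ cylinder the ratio $\frac{d(u, u')/d(\sigma^l u, \sigma^l u')}{d(v, v')/d(\sigma^l v, \sigma^l v')}$ lies in $[C_d^{-1}, C_d]$. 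With this upgrade the drift $(\kappa_1/\kappa_2)^l$ is absorbed into $C_d$, so $\diam_d(\mathtt{C})$ is comparable (up to $C_d$) to a single scale factor times $\diam_d(U_{x_l})$, and the two inequalities of the lemma follow by fixing $p_0$ large enough that $C_d \kappa_2^{-p_0} < 1$ and setting $\rho = \max\!\left(C_d \kappa_2^{-p_0},\, C_d^{-1} \kappa_1^{-1} \delta_0/\hat\delta\right) \in (0, 1)$.
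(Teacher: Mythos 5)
The paper states \cref{lem:CylinderDiameterBound} without proof (it is imported from the Markov-section/Stoyanov literature), so there is no in-paper argument to compare against; judged on its own, your proposal has the right skeleton but a genuine gap at its pivotal step. You correctly diagnose that the naive sandwich from \cref{lem:SigmaHyperbolicity} leaves an uncontrolled drift $(\kappa_1/\kappa_2)^l$, and that what is needed is a distortion estimate for $\sigma^l$ on length-$l$ cylinders that is uniform in $l$. The gap is that you assert this two-sided ratio-distortion constant $C_d$ as a consequence of ``$\tau$ is essentially Lipschitz plus a standard telescoping argument.'' That derivation does not go through as stated: Lipschitzness of $\tau$ and telescoping give bounded oscillation of the Birkhoff sums $\tau_l$ over a cylinder, but by itself this says nothing about how $\sigma^l = (\proj_1\circ\mathcal P)^l$ distorts ratios of $d_{\mathrm{su}}$-distances. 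To convert control of $\tau_l$ into control of distance ratios you must use (i) that in constant curvature the time-$t$ geodesic flow scales the strong unstable (horospherical) metric conformally by a factor depending only on $t$, so the expansion of $\sigma^l$ is essentially $e^{\tau_l(\cdot)}$, and (ii) that the stable/weak-stable holonomies hidden in each application of $\proj_1\circ\mathcal P$ have distortions whose product over the $l$ steps is uniformly bounded (a second telescoping argument, about holonomies, not about $\tau$). Neither ingredient is mentioned, and neither is automatic: for general Anosov flows the ratio-distortion property you invoke is exactly Stoyanov's ``regular distortion along unstable manifolds,'' which is an additional property he has to verify (it holds for geodesic flows in constant curvature) rather than a formal consequence of a Lipschitz roof function. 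As written, the central claim of your proof is therefore unsupported, although it is true in this setting and the argument is repairable by supplying (i) and (ii) or by citing the regular-distortion property directly.

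Two smaller points. First, the final parameter selection is off: you need to choose $p_0$ so that the upper-ratio constant (your $C_d\kappa_2^{-p_0}$, which should also carry the factors $c_0^{-1}\hat\delta/\delta_0$) is at most the lower-ratio constant $C_d^{-1}c_0\kappa_1^{-1}\delta_0/\hat\delta$; only then does $\rho=\max(\cdot,\cdot)$ equal the lower constant and satisfy both inequalities simultaneously. Requiring merely $C_d\kappa_2^{-p_0}<1$ does not guarantee $\rho\,\diam_d(\mathtt C)\le\diam_d(\mathtt C')$. Second, the positivity of $\delta_0=\min_j\diam_d(U_j)$ (and of the minimum of the diameters of the finitely many one-cylinders, which is what the lower inequality really uses) deserves a sentence: it holds because each $U_j$ is proper with nonempty interior in $\Omega\cap W^{\mathrm{su}}$ and $\Lambda(\Gamma)$ is a perfect set, so no $U_j$ or nonempty one-cylinder degenerates to a point.
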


Anytime Dolgopyat's method is employed, some form of non-integrability condition is required. We will utilize the modern \emph{local non-integrability condition} as defined by Stoyanov in \cite[Section 2]{Sto11}. More precisely, we will record the main lemma of Stoyanov \cite[Lemma 4.2]{Sto11} in \cref{lem:LNIC} which is the output of the local non-integrability condition in a convenient form for Dolgopyat's method.

\begin{remark}
We recall the well known but important facts that for all Riemannian manifolds, the geodesic flow is a contact flow and moreover, if it has pinched negative sectional curvature, then the geodesic flow is also Anosov. Hence in our case, the geodesic flow for $X$ is a \emph{contact Anosov} flow which ensures that the local non-integrability condition is satisfied by \cite[Corollary 6.2]{Sto11}. See \cite[Subsection 1.1]{Sto11} and \cite[Section 6]{Sto11} for more details.
\end{remark}

We prepare with some notations and definitions. Let the Whitney sum $\T(X) = E^+ \oplus E^0 \oplus E^-$ be the Anosov splitting of the tangent bundle corresponding to the expanding, flow, and contracting directions. We recall \cite[Definition 4.1]{Sto11}. Let $z_1 \in R_1$ be the center. We have the diffeomorphism $\exp_{z_1}^{\mathrm{su}}: V_0 \to W_0$ for some convex open set $V_0 \subset E^+(z_1)$ and open set $W_0 \subset W_{\epsilon_0}^{\mathrm{su}}(z_1)$ such that $\overline{U_0'}^{\mathrm{su}} \subset \interior^{\mathrm{su}}(U_1)$ where $U_0' = W_0 \cap \Omega$. Fix $\theta_0 \in (0, 1)$ from the local non-integrability condition in \cite[Section 2]{Sto11}, fix any $\theta_1 \in (\theta_0, 1)$ and finally fix $p_1 \in \mathbb Z_{>0}$ such that $\theta_0 < \theta_1 - 32\rho^{p_1 - 1}$.

\begin{definition}
Let $z_1 \in R_1$ be the center. We define the following.
\begin{enumerate}
\item
Let $\eta \in E^+(z_1)$ with $\|\eta\| = 1$. For all closed cylinders $\mathtt{C} \subset U_0'$, we say that \emph{a separation by an $\eta$-plane occurs} in $\mathtt{C}$ if there exist $u, v \in \mathtt{C}$ with $d(u, v) \geq \frac{1}{2} \diam_d(\mathtt{C})$ such that
\begin{align*}
\left\langle\frac{(\exp_{z_1}^{\mathrm{su}})^{-1}(v) - (\exp_{z_1}^{\mathrm{su}})^{-1}(u)}{\|(\exp_{z_1}^{\mathrm{su}})^{-1}(v) - (\exp_{z_1}^{\mathrm{su}})^{-1}(u)\|}, \eta\right\rangle \geq \theta_1.
\end{align*}
Define $\mathcal{S}_\eta$ to be the set of closed cylinders $\mathtt{C} \subset U_0'$ such that a separation by an $\eta$-plane occurs.
\item Let $V \subset U_0'$ be a finite union of cylinders, $\delta > 0$, and $\eta \in E^+(z_1)$ with $\|\eta\| = 1$. Let $\mathcal{C}$ be the set of maximal closed cylinders $\mathtt{C} \subset \overline{V}$ with $\diam_d(\mathtt{C}) \leq \delta$. Then we define
\begin{align*}
M_\eta^\delta(V) = \cup(\mathcal{C} \cap \mathcal{S}_\eta) \subset U_1.
\end{align*}
\end{enumerate}
\end{definition}

%We refer the reader to that paper for the proof and \cite[Definition 4.1]{Sto11} for notations not defined here. We do not included them here because we do not need to use them directly.

%Intersection with \Omega is redundant since U_0 subset U_1 which is subset of \Omega.
%LNIC = local nonintegrability condition. The lemma is the output of LNIC called main lemma in Stoyanov.
\begin{lemma}
\label{lem:LNIC}
Let $z_1 \in R_1$ be the center. There exist integers $1 \leq m_1 \leq m_0$, tangent vectors $\eta_1, \eta_2, \dotsc, \eta_{\ell_0} \in E^+(z_1)$, and $U_0 \subset U_1$ which is a finite nonempty union of cylinders of length $m_1$ such that $\mathcal U = \sigma^{m_1}(U_0)$ is dense in $U$ and we have
\begin{enumerate}
\item for all integers $m \geq m_0$, for all $j \in \{1, 2\}$, for all integers $1 \leq \ell \leq \ell_0$, there exist Lipschitz sections $v_j^{\ell}: U \to U$ of $\sigma^m$ (i.e., $\sigma^m(v_j^{\ell}(u)) = u$ for all $u \in \mathcal U$) and $v_j^{\ell}(\mathcal U)$ is a finite union of cylinders of length $m$
\item $\overline{v_j^{\ell}(U)} \cap \overline{v_{j'}^{\ell'}(U)} = \varnothing$ for all $(j, \ell), (j', \ell') \in \{1, 2\} \times \{1, 2, \dotsc, \ell_0\}$ with $(j, \ell) \neq (j', \ell')$
\item there exits $\delta_0 > 0$ such that for all integers $1 \leq \ell \leq \ell_0$, for all $s \in (\exp_{z_1}^{\mathrm{su}})^{-1}(U_0)$, for all $0 < |t| \leq \delta_0$, for all $\eta \in E^+(z_1)$ with $\|\eta\| = 1$ and $\langle \eta, \eta_\ell \rangle \geq \theta_0$ such that $s + t\eta \in (\exp_{z_1}^{\mathrm{su}})^{-1}(U_0)$, we have
\begin{multline*}
\frac{1}{t}\left((\tau_m \circ  v_2^{\ell} \circ \sigma^{m_1} - \tau_m \circ  v_1^{\ell} \circ \sigma^{m_1})(\exp_{z_1}^{\mathrm{su}}(s + t\eta)) \right.\\
\left.- (\tau_m \circ  v_2^{\ell} \circ \sigma^{m_1} - \tau_m \circ  v_1^{\ell} \circ \sigma^{m_1})(\exp_{z_1}^{\mathrm{su}}(s))\right) \geq \frac{\delta_0}{2}
\end{multline*}
\item \label{itm:CylinderInM} for all cylinders $\mathtt{C} \subset U_0$, there exists $\delta' > 0$ such that
\begin{align*}
\mathtt{C} \subset \bigcup_{\ell = 1}^{\ell_0} M_{\eta_\ell}^\delta(\mathtt{C})
\end{align*}
for all $\delta \in (0, \delta']$.
\end{enumerate}
\end{lemma}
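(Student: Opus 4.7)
The plan is to treat this as an explicit incarnation of the local non-integrability condition (LNIC) for the geodesic flow, which for manifolds of negative curvature is ultimately a statement that the stable and unstable foliations are not jointly integrable. Concretely, the quantity $\tau_m \circ v_2^\ell - \tau_m \circ v_1^\ell$ evaluated at nearby points measures a \emph{temporal distance}, i.e., the difference in flow times between two backward-orbits that terminate near the same forward point. In hyperbolic geometry this temporal distance has a clean expression in terms of Busemann functions (the Gromov product of four ideal boundary points), and it is well known to have a non-trivial first-order variation as one moves the endpoint in any transverse unstable direction. The point of \cref{lem:LNIC} is to package this single classical geometric fact into the discrete symbolic machinery of Markov sections.

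First I would construct the sections. For $m$ large, the preimages of $\sigma^m$ partition $\mathcal{U}$ into cylinders of length $m$, and on each such cylinder the restriction $\sigma^m|_{\mathtt{C}} \to \mathcal{U}$ is a Lipschitz homeomorphism with uniformly bounded distortion, owing to the hyperbolicity estimates in \cref{lem:SigmaHyperbolicity}. Inverting these gives Lipschitz sections. I would select $v_1^\ell, v_2^\ell$ from this family in such a way that: (i) the images $v_j^\ell(U)$ lie in pairwise distinct cylinders (so their closures are automatically disjoint because different cylinders have disjoint interiors and only meet on their lower-dimensional boundaries, which can be avoided by enlarging $m$ further), and (ii) the two endpoints reached by $v_1^\ell$ and $v_2^\ell$ from a common $u$ differ in the unstable direction by approximately $\eta_\ell$, where $\{\eta_\ell\}$ is a finite spanning collection of unit unstable vectors at a fixed base $w_0 \in U_0$ chosen so that every direction in $E^{\mathrm{u}}(w_0)$ is within a small angle of some $\eta_\ell$.

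The core step is the derivative estimate. Rewriting $(\tau_m \circ v_2^\ell - \tau_m \circ v_1^\ell)(u)$ as a Busemann-theoretic temporal distance $\beta_\cdot(\cdot, \cdot)$ between the two preimage orbits, the derivative along the direction $\eta \in B_\ell$ equals, up to lower-order terms controlled by the exponential contraction of $\sigma^{-m}$, a matrix coefficient of the holonomy map between stable leaves. Non-integrability of the stable/unstable distributions on a negatively curved manifold gives a strictly positive lower bound for this coefficient along some direction at $w_0$. Because $w_0$ is fixed and the $\eta_\ell$ were chosen to exhaust a neighborhood of directions, picking $\delta_0$ sufficiently small lets me transport this pointwise bound at $w_0$ to a uniform bound on the preimage $r^{-1}(U_0)$ of a small cylinder $U_0 \subset U_1$, using continuity (and Lipschitz regularity) of both the holonomy and the sections $v_j^\ell$. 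Here shrinking $U_0$ is what forces the introduction of the auxiliary $\sigma^{m_1}$: one first drops down to a small cylinder $U_0$ where the derivative is nearly constant, then returns to $\mathcal{U}$ via $\sigma^{m_1}$.

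Finally, property \cref{itm:CylinderInM} is a direct compactness/continuity consequence. Each set $M_{\eta_\ell}^\delta(\mathtt{C})$ consists of points in $\mathtt{C}$ whose unstable direction is within angle $O(\delta)$ of $\eta_\ell$; since $\{\eta_\ell\}$ $\delta'$-covers the unit sphere in $E^{\mathrm{u}}(w_0)$ for some $\delta' > 0$, the union covers $\mathtt{C}$ for all $\delta \leq \delta'$. The principal obstacle, and the reason this is deferred to \cite{Sto11}, is the combinatorial bookkeeping in the first two steps: one must simultaneously arrange the Lipschitz sections, the disjointness, and the choice of direction $\eta_\ell$ in a way that remains compatible across \emph{all} cylinders $\mathtt{C} \subset U_0$ simultaneously and all iterations $m \geq m_0$, while ensuring the constant $\delta_0$ is uniform. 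The geometric input (non-integrability) is the classical part; the engineering of the Markov sections to make it usable is the technical heart.
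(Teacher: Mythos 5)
The first thing to say is that the paper does not prove \cref{lem:LNIC} at all: it is quoted verbatim as \cite[Lemma 4.2]{Sto11}, with the proof (and even the definitions of $r$, $B_\ell$, $E^{\mathrm{u}}(w_0)$ and $M_{\eta_\ell}^\delta(\mathtt{C})$) deferred entirely to Stoyanov's paper. So there is no in-paper argument to match; your proposal has to be judged as a reconstruction of Stoyanov's proof, and as such it captures the right classical heuristic (temporal distance/non-joint-integrability, inverse branches of $\sigma^m$ as Lipschitz sections, exponential control of tail errors) but has genuine gaps at exactly the points that make the lemma nontrivial.

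The most serious gap is your treatment of \cref{itm:CylinderInM}. The set $M_{\eta_\ell}^\delta(\mathtt{C})$ is not an angular condition on single points that can be covered by taking $\{\eta_\ell\}$ to be a fine net of the unit sphere: as its use in \cref{lem:chiLessThan1} shows, membership requires the existence of a \emph{partner} point $u'$ in the same cylinder, lying in the limit-set trace, at distance comparable to $\diam_d(\mathtt{C})$, whose separation direction $r^{-1}(u)-r^{-1}(u')$ makes inner product at least the fixed threshold $\theta_1$ with one of the \emph{specific} vectors $\eta_\ell$. Those vectors are not free parameters: they must simultaneously be directions along which the derivative bound in the third item holds with a uniform $\delta_0$, so you cannot first prove the derivative bound "along some direction at $w_0$" and separately take a fine net for the covering. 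The compatibility between the finitely many LNIC directions and the directions actually spanned by pairs of limit-set points inside \emph{every} cylinder $\mathtt{C}\subset U_0$ (the limit set is a fractal proper subset of the unstable leaf when $\Gamma$ is convex cocompact, so this is where the geometry of $\Lambda(\Gamma)$ — its non-concentration near proper subspheres — enters) is the heart of Stoyanov's argument and is absent from your sketch. Two further points: the vectors $\eta_\ell$ are directions of variation of the base point $s\in r^{-1}(U_0)$ (the increment $t\eta$ in the difference quotient), not the unstable separation of the branch endpoints $v_1^\ell(u),v_2^\ell(u)$ as your construction suggests — the branches are instead pinned inside two fixed, definitely separated cylinders so that the temporal distance being differentiated is bounded away from degeneracy; and the bound must hold with one $\delta_0$ uniformly for all $m\ge m_0$, all $s\in r^{-1}(U_0)$ and all $\eta\in B_\ell$, which requires the geometric-series error estimates to be run against that fixed separation rather than a continuity argument "near $w_0$". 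In short, the sketch reproduces the folklore mechanism but does not engage with the quantified statement actually being cited, so as a proof of \cref{lem:LNIC} it is incomplete.
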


Fix $m_0, m_1, \ell_0, U_0, \mathcal{U}, \delta_0$ as in \cref{lem:LNIC}. Since $U_0$ is a finite union of cylinders, by \cref{itm:CylinderInM} in \cref{lem:LNIC}, we can fix a $\delta_1 > 0$ such that
\begin{align}
\label{eqn:U_0InUnionM_eta_ell}
U_0 \subset \bigcup_{\ell = 1}^{\ell_0} M_{\eta_\ell}^\delta(U_0)
\end{align}
for all $\delta \in (0, \delta_1]$.

The following is a useful Lasota--Yorke \cite{LY73} type of lemma.

\begin{lemma}
\label{lem:PreliminaryLogLipschitz}
There exists $A_0 > 0$ such that for all $\xi = a + ib \in \mathbb C$ with $|a| < a_0'$ and $|b| > 1$, for all nontrivial ideals $\mathfrak{q} \subset \mathcal{O}_{\mathbb K}$, for all $k \in \mathbb Z_{\geq 0}$, we have
\begin{enumerate}
\item\label{itm:PreliminaryLogLipschitzProperty1}	if $h \in K_B(U)$ for some $B > 0$, then we have $\mathcal{L}_a^k(h) \in K_{B'}(U)$ where $B' = A_0\left(\frac{B}{{\kappa_2}^k} + 1\right)$
\item\label{itm:PreliminaryLogLipschitzProperty1_tilde}	if $h \in \tilde{K}_B(U)$ for some $B > 0$, then we have $\mathcal{L}_a^k(h) \in \tilde{K}_{B'}(U)$ where $B' = A_0\left(\frac{B}{{\kappa_2}^k} + 1\right)$
\item\label{itm:PreliminaryLogLipschitzProperty2}	if $H \in C(U, L^2(F_\mathfrak{q}, \mathbb C))$ and $h \in B(U, \mathbb R)$ satisfy
\begin{align*}
\|H(u) - H(u')\|_2 \leq Bh(u)D(u, u')
\end{align*}
for all $u, u' \in U_j$, for all $j \in \mathcal A$, for some $B > 0$, then for all $j \in \mathcal A$, for all $u, u' \in U_j$, we have
\begin{align*}
\left\|\mathcal{M}_{\xi, \mathfrak{q}}^k(H)(u) - \mathcal{M}_{\xi, \mathfrak{q}}^k(H)(u')\right\|_2 \leq A_0\left(\frac{B}{{\kappa_2}^k}\mathcal{L}_a^k(h)(u) + |b|\mathcal{L}_a^k\|H\|(u)\right)D(u, u').
\end{align*}
\end{enumerate}
\end{lemma}

\begin{proof}
Fix $A_0 > 2{c_0}^{-1} e^{\frac{T_0}{c_0(\kappa_2 - 1)}} \max\left(1, \frac{T_0}{\kappa_2 - 1}\right)$. Let $\xi = a + ib \in \mathbb C$ with $|a| < a_0'$ and $|b| > 1$, $\mathfrak{q} \subset \mathcal{O}_{\mathbb K}$ be a nontrivial ideal, and $k \in \mathbb Z_{\geq 0}$. To prove \cref{itm:PreliminaryLogLipschitzProperty1}, let $h \in K_B(U)$ for some $B > 0$. We consider $u, u' \in \interior(U_j)$ for some $j \in \mathcal A$ and refer the reader to \cref{def:TransferOperatorOriginal} for the case when $u \in \partial(U_j)$ or $u' \in \partial(U_j)$. Let $v \in \sigma^{-k}(u)$ and $\mathtt{C}$ be the cylinder with $\len(\mathtt{C}) = k$ containing $v$. Since $\sigma^k|_{\mathtt{C}}: \mathtt{C} \to \interior(U_j)$ is a homeomorphism, there is a unique element which we denote by $\varphi(v) = (\sigma^k|_{\mathtt{C}})^{-1}(u') \in \mathtt{C}$ with $\sigma^k(\varphi(v)) = u'$. Since $v \in \sigma^{-k}(u)$ was arbitrary, we see that $\varphi$ defines a bijective map $\varphi: \sigma^{-k}(u) \to \sigma^{-k}(u')$. Using \cref{lem:SigmaHyperbolicity}, for all integers $0 \leq l \leq k$, we have $d(\sigma^l(v), \sigma^l(\varphi(v))) \leq \frac{1}{c_0 {\kappa_2}^{k - l}}d(u, u')$ and so
\begin{align*}
\left|f_k^{(a)}(v) - f_k^{(a)}(\varphi(v))\right| &\leq \sum_{l = 0}^{k - 1} \left|f^{(a)}(\sigma^l(v)) - f^{(a)}(\sigma^l(\varphi(v)))\right| \\
&\leq \sum_{l = 0}^{k - 1} \Lip_d^{\mathrm{e}}(f^{(a)}) \cdot d(\sigma^l(v), \sigma^l(\varphi(v))) \\
&\leq \sum_{l = 0}^{k - 1} \frac{T_0}{c_0 {\kappa_2}^{k - l}}d(u, u') \\
&\leq \frac{T_0}{c_0 (\kappa_2 - 1)}D(u, u').
\end{align*}
In the same way, we have a similar bound $|\tau_k(v) - \tau_k(\varphi(v))| \leq \frac{T_0}{c_0 (\kappa_2 - 1)}D(u, u')$. Using $\diam_d(U_j) \leq \hat{\delta} < 1$, we note that
\begin{align}
\label{eqn:faExponentialEstimate}
\left|1 - e^{f_k^{(a)}(\varphi(v)) - f_k^{(a)}(v)}\right| \leq e^{\left|f_k^{(a)}(v) - f_k^{(a)}(\varphi(v))\right|}\left|f_k^{(a)}(v) - f_k^{(a)}(\varphi(v))\right| \leq A_0D(u, u')
\end{align}
and similarly
\begin{align}
\begin{aligned}
\label{eqn:faplusTauExponentialEstimate}
&\left|1 - e^{(f_k^{(a)} + ib\tau_k)(\varphi(v)) - (f_k^{(a)} + ib\tau_k)(v)}\right| \\
\leq{}&e^{\left|f_k^{(a)}(v) - f_k^{(a)}(\varphi(v))\right|}\left|(f_k^{(a)} + ib\tau_k)(v) - (f_k^{(a)} + ib\tau_k)(\varphi(v))\right| \leq |b|A_0D(u, u').
\end{aligned}
\end{align}
Thus, again using $\diam_d(U_j) \leq \hat{\delta} < 1$, we have
\begin{align*}
&\left|\mathcal{L}_a^k(h)(u) - \mathcal{L}_a^k(h)(u')\right| \\
={}&\left|\sum_{v \in \sigma^{-k}(u)} e^{f_k^{(a)}(v)}h(v) - \sum_{v' \in \sigma^{-k}(u')} e^{f_k^{(a)}(v')}h(v')\right| \\
\leq{}&\sum_{v \in \sigma^{-k}(u)} \left|e^{f_k^{(a)}(v)}h(v) - e^{f_k^{(a)}(\varphi(v))}h(\varphi(v))\right| \\
\leq{}&\sum_{v \in \sigma^{-k}(u)} e^{f_k^{(a)}(\varphi(v))}|h(v) - h(\varphi(v))| + \sum_{v \in \sigma^{-k}(u)}\left|e^{f_k^{(a)}(v)} - e^{f_k^{(a)}(\varphi(v))}\right|h(v) \\
\leq{}&\sum_{v \in \sigma^{-k}(u)} c_0A_0e^{f_k^{(a)}(v)}Bh(v)D(v, \varphi(v)) \\
{}&+ \sum_{v \in \sigma^{-k}(u)}\left|1 - e^{f_k^{(a)}(\varphi(v)) - f_k^{(a)}(v)}\right|e^{f_k^{(a)}(v)}h(v).
\end{align*}
Now we use \cref{eqn:faExponentialEstimate} and continue the bound as
\begin{align*}
&\left|\mathcal{L}_a^k(h)(u) - \mathcal{L}_a^k(h)(u')\right| \\
\leq{}&c_0A_0B \cdot \frac{1}{c_0 {\kappa_2}^k}D(u, u')\sum_{v \in \sigma^{-k}(u)} e^{f_k^{(a)}(v)}h(v) + A_0D(u, u')\sum_{v \in \sigma^{-k}(u)} e^{f_k^{(a)}(v)}h(v) \\
={}&\frac{A_0B}{{\kappa_2}^k}\mathcal{L}_a^k(h)(u)D(u, u') + A_0\mathcal{L}_a^k(h)(u)D(u, u') \\
={}&A_0\left(\frac{B}{{\kappa_2}^k} + 1\right)\mathcal{L}_a^k(h)(u)D(u, u').
\end{align*}

To prove \cref{itm:PreliminaryLogLipschitzProperty1_tilde}, let $h \in \tilde{K}_B(U)$ for some $B > 0$. We again consider $u, u' \in \interior(U_j)$ for some $j \in \mathcal A$. Define the bijective map $\varphi: \sigma^{-k}(u) \to \sigma^{-k}(u')$ as before. We have
\begin{align*}
\mathcal{L}_a^k(h)(u') &= \sum_{v' \in \sigma^{-k}(u')} e^{f_k^{(a)}(v')}h(v') = \sum_{v \in \sigma^{-k}(u)} e^{f_k^{(a)}(\varphi(v))}h(\varphi(v)) \\
&\leq \sum_{v \in \sigma^{-k}(u)} e^{\left|f_k^{(a)}(v) - f_k^{(a)}(\varphi(v))\right|} e^{f_k^{(a)}(v)} e^{BD(v, \varphi(v))} h(v) \\
&\leq e^{A_0 D(u, u')} e^{\frac{B}{c_0{\kappa_2}^k}D(u, u')} \sum_{v \in \sigma^{-k}(u)} e^{f_k^{(a)}(v)} h(v) \\
&\leq e^{A_0\left(\frac{B}{{\kappa_2}^k} + 1\right)D(u, u')}\mathcal{L}_a^k(h)(u).
\end{align*}

To prove \cref{itm:PreliminaryLogLipschitzProperty2}, suppose $H \in C(U, L^2(F_\mathfrak{q}, \mathbb C))$ and $h \in B(U, \mathbb R)$ satisfy $\|H(u) - H(u')\|_2 \leq Bh(u)D(u, u')$ for all $u, u' \in U_j$, for all $j \in \mathcal A$, for some $B > 0$. We again consider $u, u' \in \interior(U_j)$ for some $j \in \mathcal A$. Define the bijective map $\varphi: \sigma^{-k}(u) \to \sigma^{-k}(u')$ as before. Similar to above, noticing that $\mathtt{c}^k(v) = \mathtt{c}^k(\varphi(v))$, we can use the same estimates to get
\begin{align*}
&\left\|\mathcal{M}_{\xi, \mathfrak{q}}^k(H)(u) - \mathcal{M}_{\xi, \mathfrak{q}}^k(H)(u')\right\|_2 \\
\leq{}&\left\|\sum_{v \in \sigma^{-k}(u)} e^{(f_k^{(a)} + ib\tau_k)(v)} \mathtt{c}^k(v)^{-1}H(v) - \sum_{v' \in \sigma^{-k}(u')} e^{(f_k^{(a)} + ib\tau_k)(v')} \mathtt{c}^k(v')^{-1}H(v')\right\|_2 \\
\leq{}&\sum_{v \in \sigma^{-k}(u)} \left\|e^{(f_k^{(a)} + ib\tau_k)(v)} \mathtt{c}^k(v)^{-1}H(v) - e^{(f_k^{(a)} + ib\tau_k)(\varphi(v))} \mathtt{c}^k(\varphi(v))^{-1}H(\varphi(v))\right\|_2 \\
\leq{}&\sum_{v \in \sigma^{-k}(u)} \left|e^{(f_k^{(a)} + ib\tau_k)(\varphi(v))}\right| \cdot \|H(v) - H(\varphi(v))\|_2 \\
{}&+ \sum_{v \in \sigma^{-k}(u)} \left|e^{(f_k^{(a)} + ib\tau_k)(v)} - e^{(f_k^{(a)} + ib\tau_k)(\varphi(v))}\right| \cdot \|H(v)\|_2 \\
\leq{}&\sum_{v \in \sigma^{-k}(u)} c_0A_0e^{f_k^{(a)}(v)} Bh(v)D(v, \varphi(v)) \\
{}&+ \sum_{v \in \sigma^{-k}(u)} \left|1 - e^{(f_k^{(a)} + ib\tau_k)(\varphi(v)) - (f_k^{(a)} + ib\tau_k)(v)}\right|e^{f_k^{(a)}(v)} \|H\|(v).
\end{align*}
Using \cref{eqn:faplusTauExponentialEstimate}, we can continue the bound as
\begin{align*}
&\left\|\mathcal{M}_{\xi, \mathfrak{q}}^k(H)(u) - \mathcal{M}_{\xi, \mathfrak{q}}^k(H)(u')\right\|_2 \\
\leq{}&c_0A_0B \cdot \frac{1}{c_0 {\kappa_2}^k}D(u, u') \sum_{v \in \sigma^{-k}(u)} e^{f_k^{(a)}(v)}h(v) \\
{}&+ |b|A_0D(u, u')\sum_{v \in \sigma^{-k}(u)} e^{f_k^{(a)}(v)} \|H\|(v) \\
\leq{}&\frac{A_0B}{{\kappa_2}^k}\mathcal{L}_a^k(h)(u)D(u, u') + |b|A_0\mathcal{L}_a^k\|H\|(u)D(u, u') \\
={}&A_0\left(\frac{B}{{\kappa_2}^k}\mathcal{L}_a^k(h)(u) + |b|\mathcal{L}_a^k\|H\|(u)\right)D(u, u').
\end{align*}
\end{proof}

Fix a $A_0 > 0$ provided by \cref{lem:PreliminaryLogLipschitz}. Now we fix some positive constants
\begin{align}
\label{eqn:Constant_b_0}
b_0 &= 1 \\
\label{eqn:ConstantE}
E &> \max(1, 2A_0) \\
\label{eqn:Constantepsilon1}
\epsilon_1 &< \min\left(\frac{c_0r_0}{{\kappa_1}^{m_1}}, \delta_1, \frac{\pi {c_0}^2(\kappa_2 - 1)}{2T_0 {\kappa_1}^{m_1}}\right) \\
\label{eqn:Constantm}
m &> m_0 \text{ such that } {\kappa_2}^m > \max\left(8A_0, \frac{4E{\rho}^{p_1}{\kappa_1}^{m_1} \epsilon_1}{{c_0}^2}, \frac{4\cdot 128E{\kappa_1}^{m_1}}{{c_0}^2\delta_0\rho}\right) \\
\label{eqn:Constantmu}
\mu &< \min\left(\frac{2E\epsilon_1{c_0}^2 \rho^{p_0p_1 + 1}{\kappa_2}^{m_1}}{{\kappa_1}^m}, \frac{1}{4}, \frac{1}{16 \cdot 16e^{2mT_0}}\left(\frac{\delta_0\rho\epsilon_1}{64}\right)^2\right).
\end{align}
For all $j \in \{1, 2\}$, for all integers $1 \leq \ell \leq \ell_0$, fix $v_j^\ell$ to be the corresponding sections of $\sigma^m$ provided by \cref{lem:LNIC}.

After constructing the Dolgopyat operators in \cref{subsec:ConstructionOfDolgopyatOperators}, \cref{thm:Dolgopyat} is a simple consequence of \cref{lem:DolgopyatProperty1,lem:LogLipschitzDolgopyat,lem:DolgopyatProperty2,lem:DominatedByDolgopyat}.

\subsection{Construction of Dolgopyat operators}
\label{subsec:ConstructionOfDolgopyatOperators}
For all $|b| > b_0$, we define the set $\{\mathtt{C}_1(b), \mathtt{C}_2(b), \dotsc, \mathtt{C}_{c_b}(b)\}$ of maximal cylinders $\mathtt{C} \subset U_0$ with $\diam_d(\mathtt{C}) \leq \frac{\epsilon_1}{|b|}$ so that $\overline{U_0} = \bigcup_{j = 1}^{c_b} \overline{\mathtt{C}_j(b)}$.

\begin{remark}
As a consequence of \cref{lem:SigmaHyperbolicity} and \cref{eqn:Constantepsilon1}, we have $\len(\mathtt{C}_j(b)) \geq m_1 + 1$ for all integers $1 \leq j \leq c_b$, for all $|b| > b_0$.
\end{remark}

Now, \cref{cor:CocyclesLocallyConstantCorollary} implies the following lemma.

\begin{lemma}
\label{lem:CocyclesConstantOnsigma^m1C_l}
For all $|b| > b_0$, for all integers $1 \leq l \leq c_b$, if $u, u' \in \mathtt{C}_l(b)$, then $\mathtt{c}^m(v_j^\ell(\sigma^{m_1}(u))) = \mathtt{c}^m(v_j^\ell(\sigma^{m_1}(u')))$ for all $j \in \{1, 2\}$, for all integers $1 \leq \ell \leq \ell_0$.
\end{lemma}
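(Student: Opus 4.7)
The plan is to exploit the fact that the Markov section and cocycle machinery makes $\mathtt{c}^m$ constant on cylinders of length $m$ (Corollary \ref{cor:CocyclesLocallyConstantCorollary}). So the whole task reduces to showing that $v_j^\ell(\sigma^{m_1}(u))$ and $v_j^\ell(\sigma^{m_1}(u'))$ lie in a common cylinder of length $m$.

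First I would use the remark immediately preceding the lemma: since $\mathtt{C}_l(b)$ is a maximal cylinder in $U_0$ satisfying $\diam_d(\mathtt{C}_l(b)) \leq \epsilon_1/|b|$ and the diameter bound together with \cref{lem:SigmaHyperbolicity} and the choice of $\epsilon_1$ in \cref{eqn:Constantepsilon1} forces $\len(\mathtt{C}_l(b)) \geq m_1 + 1$. Consequently $\sigma^{m_1}(\mathtt{C}_l(b))$ is itself a cylinder of length at least $1$; in particular $\sigma^{m_1}(u)$ and $\sigma^{m_1}(u')$ lie in the same $\interior(U_k)$, where $k$ is the $(m_1+1)$-th symbol of $\mathtt{C}_l(b)$, and they lie together in the cylinder $\sigma^{m_1}(\mathtt{C}_l(b))$.

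Next I would invoke the structural property of the section $v_j^\ell$ coming from \cref{lem:LNIC}: $v_j^\ell$ is a continuous section of $\sigma^m$ and $v_j^\ell(\mathcal{U})$ is a finite union of cylinders of length $m$. Since $\sigma^m$ restricted to any cylinder of length $m$ is a homeomorphism onto the interior of $U$ at its terminal symbol, a continuous section over $\interior(U_k)$ must land inside a single cylinder of length $m$ (otherwise two distinct length-$m$ cylinders with the same terminal symbol would both contain points of the image, contradicting injectivity of the section on a connected preimage). Therefore $v_j^\ell(\sigma^{m_1}(u))$ and $v_j^\ell(\sigma^{m_1}(u'))$ lie in a common cylinder $\mathtt{C}'' \subset U$ with $\len(\mathtt{C}'') = m$.

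Finally, applying \cref{cor:CocyclesLocallyConstantCorollary} to $\mathtt{C}''$ yields $\mathtt{c}^m(v_j^\ell(\sigma^{m_1}(u))) = \mathtt{c}^m(v_j^\ell(\sigma^{m_1}(u')))$, which is what was to be shown. There is no real obstacle here; the only subtle point is verifying that a continuous section of $\sigma^m$ restricted to the single interior $\interior(U_k)$ genuinely stays within one length-$m$ cylinder, but this is forced by the Markov partition structure together with the fact that $\sigma^m|_{\mathtt{C}''}$ is a homeomorphism onto $\interior(U_k)$ for any length-$m$ cylinder $\mathtt{C}''$ terminating at $k$.
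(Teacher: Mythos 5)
Your overall route is the intended one: reduce to \cref{cor:CocyclesLocallyConstantCorollary} by showing that $v_j^\ell(\sigma^{m_1}(u))$ and $v_j^\ell(\sigma^{m_1}(u'))$ lie in a common cylinder of length $m$, using $\len(\mathtt{C}_l(b)) \geq m_1 + 1$ so that $\sigma^{m_1}(u), \sigma^{m_1}(u')$ share the same symbol $k$ (indeed lie in the same cylinder $\sigma^{m_1}(\mathtt{C}_l(b))$). However, the justification you give for the key step is flawed: you argue that a continuous section of $\sigma^m$ over $\interior(U_k)$ must stay in a single length-$m$ cylinder because the preimage is connected. The sets $U_k$ are pieces of $W^{\mathrm{su}}_\gamma(w) \cap \Omega$, i.e.\ they live in the limit set direction of a convex cocompact (in general non-lattice) group, so $U_k$ and $\interior(U_k)$ are typically totally disconnected Cantor-like sets; connectedness is simply not available, and an argument resting on it would fail.

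The step itself is correct and can be repaired without connectedness. By \cref{lem:LNIC}, $v_j^\ell(\mathcal U)$ is a finite union of cylinders of length $m$, and $v_j^\ell$ is a single-valued section of $\sigma^m$. If two distinct length-$m$ cylinders $\mathtt{C}_1 \neq \mathtt{C}_2$ in this union had the same terminal symbol $k$, then for any $x \in \interior(U_k)$ the two points $z_i = (\sigma^m|_{\mathtt{C}_i})^{-1}(x)$, $i = 1, 2$, would both lie in the image of $v_j^\ell$; since every $z$ in the image satisfies $z = v_j^\ell(\sigma^m(z))$, this would force $v_j^\ell(x) = z_1 = z_2$, a contradiction as the cylinders are disjoint. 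Hence the union contains at most one cylinder per terminal symbol, so the branch chosen by $v_j^\ell$ depends only on which $\interior(U_k)$ the input lies in (equivalently, this is how the sections are built in Stoyanov's construction). With that substitution your argument goes through and coincides with the paper's, which states the lemma as an immediate consequence of \cref{cor:CocyclesLocallyConstantCorollary}.
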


Let $|b| > b_0$. By \cref{lem:CocyclesConstantOnsigma^m1C_l}, we can define $\mathtt{c}_{l, j, \ell}(b) = \mathtt{c}^m(v_j^\ell(\sigma^{m_1}(u)))$ for any choice of $u \in \mathtt{C}_l(b)$, for all integers $1 \leq l \leq c_b$, for all $j \in \{1, 2\}$, for all integers $1 \leq \ell \leq \ell_0$. We define $\{\mathtt{D}_1(b), \mathtt{D}_2(b), \dotsc, \mathtt{D}_{p_b}(b)\}$ to be the set of subcylinders $\mathtt{D} \subset \mathtt{C}_l(b)$ for some integer $1 \leq l \leq c_b$ with $\len(\mathtt{D}) = \len(\mathtt{C}_l(b)) + p_0p_1$. We say $\mathtt{D}_k(b)$ and $\mathtt{D}_{k'}(b)$ are \emph{adjacent} if $\mathtt{D}_k(b), \mathtt{D}_{k'}(b) \subset \mathtt{C}_l(b)$ for some integer $1 \leq l \leq c_b$. We define $\Xi(b) = \{1, 2, \dotsc, p_b\} \times \{1, 2\} \times \{1, 2, \dotsc, \ell_0\}$ and we also define $\mathtt{Z}_k(b) = \sigma^{m_1}(\mathtt{D}_k(b))$ and $\mathtt{X}_{j, k}^\ell(b) = \overline{v_j^\ell(\mathtt{Z}_k(b))}$ for all $(k, j, \ell) \in \Xi(b)$. Note that $\mathtt{X}_{j, k}^\ell(b) \cap \mathtt{X}_{j', k'}^{\ell'}(b) = \varnothing$ for all $(k, j, \ell), (k', j', \ell') \in \Xi(b)$ with $(k, j, \ell) \neq (k', j', \ell')$. For all $J \subset \Xi(b)$, we define the function $\beta_J = \chi_{U} - \mu \sum_{(k, j, \ell) \in J} \chi_{\mathtt{X}_{j, k}^\ell(b)}$, and it can be checked that in fact $\beta_J \in C^{\Lip(D)}(U, \mathbb R)$.

Let $|b| > b_0$. Here we record a number of basic facts derived from \cref{lem:SigmaHyperbolicity,lem:CylinderDiameterBound} and \cite[Lemma 5.2]{Sto11} regarding the above definitions which will be required later.

\begin{enumerate}
\item We have the diameter bounds
\begin{gather}
\label{eqn:DiameterBoundC_l}
\rho\frac{\epsilon_1}{|b|} \leq \diam_d(\mathtt{C}_l(b)) \leq \frac{\epsilon_1}{|b|} \\
\label{eqn:DiameterBoundD_k}
\rho^{p_0p_1 + 1}\frac{\epsilon_1}{|b|} \leq \diam_d(\mathtt{D}_k(b)) \leq \rho^{p_1}\frac{\epsilon_1}{|b|} \\
\label{eqn:DiameterBoundZ_k}
\frac{\epsilon_1c_0\rho^{p_0p_1 + 1}{\kappa_2}^{m_1}}{|b|} \leq \diam_d(\mathtt{Z}_k(b)) \leq \frac{\epsilon_1\rho^{p_1}{\kappa_1}^{m_1}}{|b|c_0} \\
\label{eqn:DiameterBoundX_jk}
\frac{\epsilon_1{c_0}^2\rho^{p_0p_1 + 1}{\kappa_2}^{m_1}}{|b|{\kappa_1}^m} \leq \diam_d(\mathtt{X}_{j, k}^\ell(b)) \leq \frac{\epsilon_1\rho^{p_1}{\kappa_1}^{m_1}}{|b|{c_0}^2{\kappa_2}^m} \\
\end{gather}
for all integers $1 \leq l \leq c_b$, for all $(k, j, \ell) \in \Xi(b)$.
\item	Let $J \subset \Xi(b)$. We have $\beta_J \in C^{\Lip(D)}(U, \mathbb R)$ with Lipschitz constant
\begin{align}
\label{eqn:LipschitzConstantbeta_J}
\Lip_D(\beta_J) \leq \frac{\mu}{\min_{(k, j, \ell) \in J} \diam_d(\mathtt{X}_{j, k}^\ell(b))} \leq \frac{\mu|b|{\kappa_1}^m}{\epsilon_1{c_0}^2\rho^{p_0p_1 + 1}{\kappa_2}^{m_1}}.
\end{align}
\item	Let $1 \leq l \leq c_b$ be an integer. If $u, u' \in \sigma^{m_1}(\mathtt{C}_l(b))$, then
\begin{align}
\label{eqn:DBoundOnsigma^m1C_l}
D(v_j^\ell(u), v_j^\ell(u')) \leq \frac{\epsilon_1{\kappa_1}^{m_1}}{|b|{c_0}^2{\kappa_2}^m}
\end{align}
and
\begin{align}
\label{eqn:LNIC_OutputReverseBound}
|b| \cdot |(\tau_m(v_2^{\ell}(u)) - \tau_m(v_1^{\ell}(u))) - (\tau_m(v_2^{\ell}(u')) - \tau_m(v_1^{\ell}(u')))| \leq \pi
\end{align}
for all $j \in \{1, 2\}$, for all integers $1 \leq \ell \leq \ell_0$, by using \cref{eqn:Constantepsilon1}.
\end{enumerate}

\begin{definition}
For all $\xi = a + ib \in \mathbb C$ with $|a| < a_0'$ and $|b| > b_0$, for all $J \subset \Xi(b)$, we define the \emph{Dolgopyat operators} $\mathcal{N}_{a, J}: C^{\Lip(D)}(U, \mathbb R) \to C^{\Lip(D)}(U, \mathbb R)$ by
\begin{align*}
\mathcal{N}_{a, J}(h) = \mathcal{L}_{a}^m(\beta_J h)
\end{align*}
for all $h \in C^{\Lip(D)}(U, \mathbb R)$.
\end{definition}

\begin{definition}
For all $|b| > b_0$, a subset $J \subset \Xi(b)$ is said to be \emph{dense} if for all integers $1 \leq l \leq c_b$, there exists $(k, j, \ell) \in J$ such that $\mathtt{D}_k(b) \subset \mathtt{C}_l(b)$. Denote $\mathcal{J}(b)$ to be the set of all dense subsets of $\Xi(b)$.
\end{definition}

\subsection{Proof of \texorpdfstring{\cref{itm:DolgopyatProperty1,itm:LogLipschitzDolgopyat}}{Properties \ref{itm:DolgopyatProperty1} and \ref{itm:LogLipschitzDolgopyat}} in \texorpdfstring{\cref{thm:Dolgopyat}}{\autoref{thm:Dolgopyat}}}
\begin{lemma}
\label{lem:DolgopyatProperty1}
There exits $a_0 > 0$ such that for all $\xi = a + ib \in \mathbb C$ with $|a| < a_0$ and $|b| > b_0$, for all $J \in \mathcal J(b)$, we have $\mathcal{N}_{a, J}(K_{E|b|}(U)) \subset K_{E|b|}(U)$.
\end{lemma}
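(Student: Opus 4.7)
I will verify that $\mathcal{N}_{a, J}(h) \in K_{E|b|}(U)$ whenever $h \in K_{E|b|}(U)$ by checking the two defining conditions of the cone.

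Positivity is immediate: since $\mu < 1/4$ by \cref{eqn:Constantmu}, one has $\beta_J \geq 1 - \mu \geq 3/4 > 0$ pointwise, so $\mathcal{N}_{a, J}(h) = \mathcal{L}_a^m(\beta_J h) > 0$. This also yields the comparability
\[
(1-\mu)\mathcal{L}_a^m(h)(u) \leq \mathcal{N}_{a, J}(h)(u) \leq \mathcal{L}_a^m(h)(u), \quad u \in U,
\]
which will be used repeatedly to replace $\mathcal{L}_a^m(h)$ by $\mathcal{N}_{a, J}(h)$ up to a factor of $2$ on the right-hand side of the target inequality.

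For the log-Lipschitz bound, fix $u, u' \in U_j$ and, mimicking the proof of \cref{lem:PreliminaryLogLipschitz}\cref{itm:PreliminaryLogLipschitzProperty1}, let $\varphi : \sigma^{-m}(u) \to \sigma^{-m}(u')$ be the bijection sending each $v \in \sigma^{-m}(u)$ to the unique preimage of $u'$ in the same length-$m$ cylinder as $v$. Telescoping the product $e^{f_m^{(a)}} \beta_J h$ at $v$ and $\varphi(v)$ into one difference per factor, I write
\[
\mathcal{N}_{a, J}(h)(u) - \mathcal{N}_{a, J}(h)(u') = S_1 + S_2 + S_3,
\]
where $S_1$, $S_2$, $S_3$ isolate the $h$-, $\beta_J$-, and $e^{f_m^{(a)}}$-differences respectively. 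The estimates for $S_1$ and $S_3$ parallel those in the proof of \cref{lem:PreliminaryLogLipschitz} via the cone bound on $h$, the hyperbolic contraction $D(v, \varphi(v)) \leq D(u, u')/(c_0 \kappa_2^m)$ from \cref{lem:SigmaHyperbolicity}, and the estimate $|e^{f_m^{(a)}(v)} - e^{f_m^{(a)}(\varphi(v))}| \leq A_0 e^{f_m^{(a)}(v)} D(u, u')$, yielding
\[
|S_1| \leq \frac{E|b|}{c_0 \kappa_2^m} D(u, u') \mathcal{N}_{a, J}(h)(u), \qquad |S_3| \leq 2 A_0 \Bigl(1 + \frac{E|b|}{c_0 \kappa_2^m}\Bigr) D(u, u') \mathcal{N}_{a, J}(h)(u).
\]

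I expect the $\beta_J$-difference term $S_2$ to be the main obstacle. By \cref{eqn:LipschitzConstantbeta_J} together with the choice of $\mu$ in \cref{eqn:Constantmu}, one has $\Lip_D(\beta_J) \leq 2E|b|$, so multiplication by $\beta_J$ would naively inflate the log-Lipschitz constant of $h$ by an amount comparable to $E|b|$, which cannot be absorbed by a single application of $\mathcal{L}_a^m$. To handle this, I plan to split into two cases according to the scale of $D(u, u')$. In the close regime $D(u, u') \leq c_0 \kappa_2^m/(E|b|)$, the cone bound on $h$ gives $h(\varphi(v)) \leq 2 h(v)$, and combining the Lipschitz bound on $\beta_J$ with the hyperbolic contraction yields $|S_2| \leq (8 E|b|/(c_0 \kappa_2^m))\, D(u, u') \mathcal{N}_{a, J}(h)(u)$. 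In the far regime $D(u, u') > c_0 \kappa_2^m/(E|b|)$, I replace the Lipschitz bound on $\beta_J$ by the pointwise $|\beta_J(v) - \beta_J(\varphi(v))| \leq \mu$, and use $E|b| D(u, u')/(c_0 \kappa_2^m) > 1$ to absorb the missing factor of $D(u, u')$, arriving at a bound of the same shape. Summing the three contributions and invoking $E > 2 A_0$, $\kappa_2^m > 8 A_0$, and $b_0 = 1$ from \cref{eqn:ConstantE,eqn:Constantm,eqn:Constantmu}, the total coefficient of $D(u, u') \mathcal{N}_{a, J}(h)(u)$ is $\leq E|b|$, establishing that the cone is preserved. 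The threshold $a_0 > 0$ is chosen so that the $A_0$-estimate remains valid uniformly and $\mathcal{L}_a^m$ behaves comparably to $\mathcal{L}_0^m$ on the cone; this is possible because $\lambda_a$ and $h_a$ depend Lipschitz-continuously on $a$ near $0$ by the perturbation theory recorded after \cref{thm:RPFonU}, and in particular one may take $a_0 \leq a_0'$.
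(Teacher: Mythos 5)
Your overall strategy is the right one, but the constant bookkeeping does not close, and here the constants are the whole point: the cone parameter $E|b|$ must be reproduced \emph{exactly}, so factor-of-two losses are fatal. In your three-term splitting, the only contribution that is not suppressed by the contraction factor ${\kappa_2}^{-m}$ is the $e^{f_m^{(a)}}$-difference term, and you bound it by $2A_0\bigl(1 + \tfrac{E|b|}{c_0\kappa_2^m}\bigr)D(u,u')\mathcal{N}_{a,J}(h)(u)$ — the factor $2$ arising from comparing $h(\varphi(v))$ with $h(v)$ and/or converting $\mathcal{L}_a^m(h)$ into $\mathcal{N}_{a,J}(h)$ via $\mathcal{L}_a^m(h) \leq \tfrac{1}{1-\mu}\mathcal{N}_{a,J}(h)$. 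But the paper only guarantees $E > 2A_0$ and $|b| > b_0 = 1$ (see \cref{eqn:ConstantE}), so $E|b| - 2A_0$ can be arbitrarily small (take $E$ barely above $2A_0$ and $|b|$ near $1$), while your remaining terms $|S_1| + |S_2|$ are bounded below by a fixed positive multiple of $E|b|/({c_0\kappa_2^m})$. Hence the asserted total $\leq E|b|$ does not follow from the stated relations among $E, A_0, \kappa_2^m, \mu$; your budget is off by exactly the factor of $2$ you lose on the dominant term. The repair is to keep $\beta_J(v)h(v)$ paired inside that term, so that it is bounded by $A_0 D(u,u')\sum_{v} e^{f_m^{(a)}(v)}(\beta_J h)(v) = A_0 D(u,u')\mathcal{N}_{a,J}(h)(u)$ with coefficient exactly $A_0 \leq E|b|/2$, leaving the other half of the budget for the ${\kappa_2}^{-m}$-suppressed terms — this is precisely how the exact split $A_0\bigl(\tfrac{4E|b|}{\kappa_2^m} + 1\bigr) \leq \tfrac{E|b|}{2} + \tfrac{E|b|}{2}$ works out using \cref{eqn:Constantm}.

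Relatedly, your premise that the $\beta_J$-inflation ``cannot be absorbed by a single application of $\mathcal{L}_a^m$'' is mistaken, and the two-regime case analysis in $D(u,u')$ is unnecessary. The paper's route is: first check, using \cref{eqn:LipschitzConstantbeta_J,eqn:Constantmu}, that $\Lip_D(\beta_J) \leq 2E|b|$ and hence $\beta_J h \in K_{4E|b|}(U)$ (the conversion $h(u) \leq \tfrac{1}{1-\mu}(\beta_J h)(u)$ costs only the harmless passage $3E \to 4E$); then apply \cref{lem:PreliminaryLogLipschitz}\cref{itm:PreliminaryLogLipschitzProperty1} to the single function $\beta_J h$ with $B = 4E|b|$ and $k = m$, so every term on the right is already $\mathcal{L}_a^m(\beta_J h)(u) = \mathcal{N}_{a,J}(h)(u)$ with no lossy comparisons, and the inflated constant $4E|b|$ is crushed by ${\kappa_2}^{-m}$. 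Your positivity argument and the choice $a_0 = a_0'$ are fine; the gap is solely in the Lipschitz estimate's constants.
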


\begin{proof}
Fix $a_0 = a_0'$ and recall that we already fixed $b_0 = 1$. Let $\xi = a + ib \in \mathbb C$ with $|a| < a_0$ and $|b| > b_0$ and $J \in \mathcal J(b)$. Let $h \in K_{E|b|}(U)$ and $u, u' \in U_j$ for some $j \in \mathcal{A}$. Using \cref{eqn:LipschitzConstantbeta_J,eqn:Constantmu}, we have
\begin{align*}
\left|(\beta_J h)(u) - (\beta_J h)(u')\right| &\leq |h(u) - h(u')| + h(u)\left|\beta_J(u) - \beta_J(u')\right| \\
&\leq E|b|h(u)D(u, u') + h(u)\left(\frac{|b|\mu{\kappa_1}^m}{\epsilon_1{c_0}^2\rho^{p_0p_1 + 1}{\kappa_2}^{m_1}}\right)D(u, u') \\
&\leq |b|\left(E + \frac{\mu{\kappa_1}^m}{\epsilon_1{c_0}^2\rho^{p_0p_1 + 1}{\kappa_2}^{m_1}}\right)h(u)D(u, u') \\
&\leq |b|\left(\frac{E + 2E}{1 - \mu}\right)(\beta_J h)(u)D(u, u') \\
&\leq 4E|b|(\beta_J h)(u)D(u, u').
\end{align*}
Hence $\beta_J h \in K_{4E|b|}(U)$. Now applying \cref{lem:PreliminaryLogLipschitz,eqn:ConstantE,eqn:Constantm}, we have
\begin{align*}
\left|\mathcal{N}_{a, J}(h)(u) - \mathcal{N}_{a, J}(h)(u')\right| &= \left|\mathcal{L}_a^m(\beta_J h)(u) - \mathcal{L}_a^m(\beta_J h)(u')\right| \\
&\leq A_0\left(\frac{4E|b|}{{\kappa_2}^m} + 1\right) \mathcal{L}_a^m(\beta_J h)(u) D(u, u') \\
&\leq A_0\left(\frac{4E|b|}{8A_0} + \frac{E|b|}{2A_0}\right) \mathcal{L}_a^m(\beta_J h)(u) D(u, u') \\
&= E|b|\mathcal{N}_{a, J}(h)(u)D(u, u').
\end{align*}
\end{proof}

\begin{lemma}
\label{lem:LogLipschitzDolgopyat}
There exists $a_0 > 0$ such that for all $\xi = a + ib \in \mathbb C$ with $|a| < a_0$ and $|b| > b_0$, for all nontrivial ideals $\mathfrak{q} \subset \mathcal{O}_{\mathbb K}$, if $H \in C(U, L^2(F_\mathfrak{q}, \mathbb C))$ and $h \in B(U, \mathbb R)$ satisfy \cref{itm:DominatedByh,itm:LogLipschitzh} in \cref{thm:Dolgopyat}, then for all $J \in \mathcal{J}(b)$ we have
\begin{align*}
\left\|\mathcal{M}_{\xi, \mathfrak{q}}^m(H)(u) - \mathcal{M}_{\xi, \mathfrak{q}}^m(H)(u')\right\|_2 \leq E|b|\mathcal{N}_{a, J}(h)(u)D(u, u')
\end{align*}
for all $u, u' \in U_j$, for all $j \in \mathcal{A}$.
\end{lemma}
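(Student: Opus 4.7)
The plan is to reduce Lemma \ref{lem:LogLipschitzDolgopyat} to the already established Lemma \ref{lem:PreliminaryLogLipschitz}\ref{itm:PreliminaryLogLipschitzProperty2}, and then absorb the resulting bound into the Dolgopyat operator $\mathcal{N}_{a, J}(h) = \mathcal{L}_{a}^m(\beta_J h)$ using the fact that the cutoff function $\beta_J$ is bounded below. I will fix $a_0 = a_0'$ (matching the hypothesis of Lemma \ref{lem:PreliminaryLogLipschitz}) and then let $\xi = a + ib$, $\mathfrak{q}$, $H$, $h$, $J$, $j$, and $u, u' \in U_j$ be as in the statement.

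First I would apply Lemma \ref{lem:PreliminaryLogLipschitz}\ref{itm:PreliminaryLogLipschitzProperty2} with the constant $B = E|b|$ supplied by hypothesis \ref{itm:LogLipschitzh}, yielding
\begin{align*}
\bigl\|\mathcal{M}_{\xi, \mathfrak{q}}^m(H)(u) - \mathcal{M}_{\xi, \mathfrak{q}}^m(H)(u')\bigr\|_2 \leq A_0 \left(\frac{E|b|}{{\kappa_2}^m}\mathcal{L}_a^m(h)(u) + |b|\mathcal{L}_a^m\|H\|(u)\right) D(u, u').
\end{align*}
Next I would apply hypothesis \ref{itm:DominatedByh}, namely $\|H\|(u'') \leq h(u'')$ pointwise, together with positivity of $\mathcal{L}_a^m$, to bound $\mathcal{L}_a^m\|H\|(u) \leq \mathcal{L}_a^m(h)(u)$. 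Collecting, the right-hand side is at most
\begin{align*}
A_0 |b| \left(\frac{E}{{\kappa_2}^m} + 1\right) \mathcal{L}_a^m(h)(u) \, D(u, u').
\end{align*}

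The final step is to trade $\mathcal{L}_a^m(h)$ for $\mathcal{N}_{a, J}(h) = \mathcal{L}_a^m(\beta_J h)$. Since $\beta_J = \chi_U - \mu \sum_{(k, j, \ell) \in J} \chi_{\mathtt{X}_{j, k}^\ell(b)}$ and the sets $\mathtt{X}_{j, k}^\ell(b)$ are pairwise disjoint, $\beta_J \geq 1 - \mu$ pointwise; by positivity and $h > 0$, this gives $\mathcal{L}_a^m(h)(u) \leq (1 - \mu)^{-1} \mathcal{N}_{a, J}(h)(u)$. Plugging in, it suffices to verify the numerical inequality
\begin{align*}
\frac{A_0}{1 - \mu}\left(\frac{E}{{\kappa_2}^m} + 1\right) \leq E,
\end{align*}
which is an immediate consequence of the standing choices $E > 2 A_0$ in \eqref{eqn:ConstantE}, ${\kappa_2}^m > 8A_0$ in \eqref{eqn:Constantm}, and $\mu < 1/4$ in \eqref{eqn:Constantmu}: indeed the bracketed factor is at most $\tfrac{1}{8A_0} E + 1 \leq \tfrac{E}{8} + \tfrac{E}{2A_0} \cdot A_0$, and after dividing by $1 - \mu \geq 3/4$ the resulting multiplier of $E$ stays below $1$.

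No part of this argument presents a genuine obstacle; the work was done in Lemma \ref{lem:PreliminaryLogLipschitz} and in the careful choice of constants $E$, $\mu$, and $m$. The only point that requires care is the direction of the inequality when passing between $\mathcal{L}_a^m(h)$ and $\mathcal{N}_{a, J}(h)$: one needs the lower bound $\beta_J \geq 1 - \mu$, not the trivial upper bound $\beta_J \leq 1$, and this is why the constant $\mu$ was taken strictly less than $1$ (in fact less than $1/4$) in \eqref{eqn:Constantmu}. Note also that density of $J$ plays no role in this lemma; it will only be needed for the $L^2$-contraction in Lemma \ref{thm:Dolgopyat}\ref{itm:DolgopyatProperty2}.
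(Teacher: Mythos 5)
Your proposal is correct and follows essentially the same route as the paper's proof: apply \cref{lem:PreliminaryLogLipschitz}\,\ref{itm:PreliminaryLogLipschitzProperty2} with $B = E|b|$, use \ref{itm:DominatedByh} to replace $\mathcal{L}_a^m\|H\|$ by $\mathcal{L}_a^m(h)$, pass to $\mathcal{N}_{a,J}(h)$ via the pointwise bound $\beta_J \geq 1-\mu$, and close with the constant choices \eqref{eqn:ConstantE}, \eqref{eqn:Constantm}, \eqref{eqn:Constantmu} (the paper's arithmetic likewise yields the factor $\tfrac{5}{6}E|b| \leq E|b|$). The only cosmetic difference is the order in which the constants are absorbed, and, as you note, density of $J$ is indeed not used here.
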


\begin{proof}
Fix $a_0 = a_0'$ and recall that we already fixed $b_0 = 1$. Let $\xi = a + ib \in \mathbb C$ with $|a| < a_0$ and $|b| > b_0$. Let $\mathfrak{q} \subset \mathcal{O}_{\mathbb K}$ be a nontrivial ideal and suppose $H \in C(U, L^2(F_\mathfrak{q}, \mathbb C))$ and $h \in B(U, \mathbb R)$ satisfy \cref{itm:DominatedByh,itm:LogLipschitzh} in \cref{thm:Dolgopyat}. Let $J \in \mathcal{J}(b)$ and $u, u' \in U_j$ for some $j \in \mathcal{A}$. Applying \cref{lem:PreliminaryLogLipschitz,eqn:ConstantE,eqn:Constantm,eqn:Constantmu}, we have
\begin{align*}
\left\|\mathcal{M}_{\xi, \mathfrak{q}}^m(H)(u) - \mathcal{M}_{\xi, \mathfrak{q}}^m(H)(u')\right\|_2 &\leq A_0\left(\frac{E|b|}{{\kappa_2}^m}\mathcal{L}_a^m(h)(u) + |b|\mathcal{L}_a^m\|H\|(u)\right)D(u, u') \\
&\leq A_0\left(\frac{E|b|}{8A_0} + \frac{E|b|}{2A_0}\right)\mathcal{L}_a^m(h)(u)D(u, u') \\
&\leq \left(\frac{E|b|}{8(1 - \mu)} + \frac{E|b|}{2(1 - \mu)}\right)\mathcal{L}_a^m(\beta_Jh)(u)D(u, u') \\
&\leq \left(\frac{E|b|}{6} + \frac{2E|b|}{3}\right)\mathcal{N}_{a, J}(h)(u)D(u, u') \\
&\leq E|b|\mathcal{N}_{a, J}(h)(u)D(u, u').
\end{align*}
\end{proof}

\subsection{Proof of \texorpdfstring{\cref{itm:DolgopyatProperty2}}{Property \ref{itm:DolgopyatProperty2}} in \texorpdfstring{\cref{thm:Dolgopyat}}{\autoref{thm:Dolgopyat}}}
\begin{definition}
\label{def:tSDenseSubset}
We say that a subset  $W \subset U$ is \emph{$(t, C)$-dense} for some $t > 0$ and $C \geq 1$, if there exists a set of mutually disjoint cylinders $\{\mathtt{B}_1, \mathtt{B}_2, \dotsc, \mathtt{B}_k\}$ for some $k \in \mathbb Z_{>0}$, with $\bigcup_{j = 1}^k \overline{\mathtt{B}_j} = U$ such that for all integers $1 \leq j \leq k$, we have
\begin{enumerate}
\item	$\diam_d(\mathtt{B}_j) \leq tC$
\item	there exists a subcylinder $\mathtt{B}_j' \subset W \cap \mathtt{B}_j$ with $\diam_d(\mathtt{B}_j') \geq t$.
\end{enumerate}
\end{definition}

\begin{lemma}
\label{lem:WDenseInequality}
Let $B > 0, C \geq 1$. There exists $\eta \in (0, 1)$ such that for all $t > 0$, for all $(t, C)$-dense subset $W \subset U$, for all $h \in \tilde{K}_{B/t}(U)$, we have $\int_W h^2 \, d\nu_U \geq \eta \int_{U} h^2 \, d\nu_U$.
\end{lemma}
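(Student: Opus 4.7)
The plan is to exploit that a function in $K_{B/t}(U)$ has bounded multiplicative variation on any cylinder of diameter $\lesssim t$, and to combine this with the Gibbs property \cref{eqn:PropertyOfGibbsMeasures} to compare the $\nu_U$-mass of $\mathtt{B}_j'$ with that of $\mathtt{B}_j$ (notation as in \cref{def:tSDenseSubset}). Both comparisons will produce constants that depend on $B$ and $C$ but not on $t$, so summing the resulting local inequalities across the cover $\{\mathtt{B}_j\}$ of $U$ will give the global conclusion with a uniform $\eta$.

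First, I would observe that the defining inequality of $K_{B/t}(U)$ is symmetric in $u, u'$ upon reapplying it at $u'$, hence for any $u, u'$ contained in a common cylinder $\mathtt{C}\subset U_i$ (for some $i\in\mathcal{A}$) one has $D(u, u')\leq \diam_d(\mathtt{C})$ and
\[
h(u)/h(u') \leq 1 + (B/t)\diam_d(\mathtt{C}).
\]
Applied to the pair $(\mathtt{B}_j', \mathtt{B}_j)$, whose common ambient cylinder $\mathtt{B}_j$ has $\diam_d(\mathtt{B}_j)\leq tC$, this yields $\sup_{\mathtt{B}_j} h \leq (1+BC)\inf_{\mathtt{B}_j'} h$, and in particular
\[
\int_{\mathtt{B}_j} h^2\, d\nu_U \leq (1+BC)^2 \inf_{\mathtt{B}_j'} h^2 \cdot \nu_U(\mathtt{B}_j).
\]

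Second, I would bound $\nu_U(\mathtt{B}_j')/\nu_U(\mathtt{B}_j)$ below by a constant $\alpha(C) > 0$. Writing $\ell = \len(\mathtt{B}_j') - \len(\mathtt{B}_j) \geq 0$, the iterated contraction from \cref{lem:CylinderDiameterBound} gives $\diam_d(\mathtt{B}_j') \leq \rho^{\lfloor \ell/p_0\rfloor}\diam_d(\mathtt{B}_j)$, and since $t \leq \diam_d(\mathtt{B}_j')$ and $\diam_d(\mathtt{B}_j)\leq tC$, this forces $\ell \leq p_0(1 + \log C/\log(1/\rho))$, a bound independent of $t$. Choosing $x \in \mathtt{B}_j' \subset \mathtt{B}_j$ and applying \cref{eqn:PropertyOfGibbsMeasures} to both cylinders,
\[
\frac{\nu_U(\mathtt{B}_j')}{\nu_U(\mathtt{B}_j)} \geq \frac{c_1^U}{c_2^U}\, e^{-\delta_\Gamma (\tau_{\len(\mathtt{B}_j')}(x) - \tau_{\len(\mathtt{B}_j)}(x))} \geq \frac{c_1^U}{c_2^U}\, e^{-\delta_\Gamma \overline{\tau}\ell} =: \alpha(C) > 0.
\]

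Combining the two bounds, $\int_{\mathtt{B}_j'} h^2\, d\nu_U \geq \inf_{\mathtt{B}_j'} h^2 \cdot \nu_U(\mathtt{B}_j') \geq \alpha(C)(1+BC)^{-2}\int_{\mathtt{B}_j} h^2\, d\nu_U$. Since the $\mathtt{B}_j'$ are pairwise disjoint subsets of $W$ and $\{\mathtt{B}_j\}$ covers $U$ up to a $\nu_U$-null boundary, summing over $j$ yields the lemma with $\eta = \alpha(C)/(1+BC)^2 \in (0,1)$. There is no real obstacle here; the only care needed is the symmetrization of the cone inequality and the fact that the length-difference $\ell$ is controlled solely by $C$ (not by $t$), which is precisely what makes the resulting $\eta$ uniform in $t$.
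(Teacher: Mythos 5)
Your proposal is correct and follows essentially the same route as the paper's proof: a uniform multiplicative oscillation bound for $h$ on each $\mathtt{B}_j$ coming from the cone condition (the paper gets $e^{-BC}$ via $-\log$, you get $(1+BC)^{-1}$ directly), a bound on the length gap $\len(\mathtt{B}_j')-\len(\mathtt{B}_j)$ depending only on $C$ via \cref{lem:CylinderDiameterBound}, the Gibbs property \cref{eqn:PropertyOfGibbsMeasures} to compare $\nu_U(\mathtt{B}_j')$ with $\nu_U(\mathtt{B}_j)$, and summation over the disjoint cover. The resulting $\eta$ matches the paper's up to the harmless difference in the oscillation constant.
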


\begin{proof}
Let $B > 0, C \geq 1$. Fix $\eta = e^{-2BC} \cdot \frac{c_1^U}{c_2^U}e^{-p_0 \delta_\Gamma \overline{\tau}\left(1 - \frac{\log(C)}{\log(\rho)}\right)} \in (0, 1)$. Let $t > 0$, $W \subset U$ be a $(t, C)$-dense subset and $\{\mathtt{B}_1, \mathtt{B}_2, \dotsc, \mathtt{B}_k\}$ be the set of mutually disjoint cylinders provided by \cref{def:tSDenseSubset}. Then $\bigcup_{j = 1}^k \overline{\mathtt{B}_j} = U$ and hence $\sum_{j = 1}^k \nu_U(\mathtt{B}_j) = 1$ and $\diam_d(\mathtt{B}_j) \leq tC$ for all integers $1 \leq j \leq k$. Let $h \in \tilde{K}_{B/t}(U)$. Setting $l_j = \inf_{u \in \overline{\mathtt{B}_j}} h(u)$ and $L_j = \sup_{u \in \overline{\mathtt{B}_j}} h(u)$, we have $l_j \geq L_j e^{-BC}$. Let $1 \leq j \leq k$ be an integer. Now, since $W$ is $(t, C)$-dense, there is a subcylinder $\mathtt{B}_j' \subset W \cap \mathtt{B}_j$ such that $\diam_d(\mathtt{B}_j') \geq t$. Write $\len(\mathtt{B}_j') = \len(\mathtt{B}_j) + r_jp_0 + s_j$ for some $r_j \in \mathbb Z_{\geq 0}$ and some integer $0 \leq s_j < p_0$. By \cref{lem:CylinderDiameterBound} we have $t \leq \diam_d(\mathtt{B}_j') \leq \rho^{r_j}\diam(\mathtt{B}_j) \leq \rho^{r_j}Ct$ which implies $r_j \leq -\frac{\log(C)}{\log(\rho)}$. Hence by the property of Gibbs measures in \cref{eqn:PropertyOfGibbsMeasures}, we have $\frac{\nu_U(\mathtt{B}_j')}{\nu_U(\mathtt{B}_j)} \geq \frac{c_1^U}{c_2^U}e^{-(r_jp_0 + s_j)\delta_\Gamma\overline{\tau}} \geq \frac{c_1^U}{c_2^U}e^{-p_0 \delta_\Gamma \overline{\tau}\left(1 - \frac{\log(C)}{\log(\rho)}\right)}$. Thus for all integers $1 \leq j \leq k$, we have $\nu_U(W \cap \mathtt{B}_j) \geq \nu_U(\mathtt{B}_j') \geq \frac{c_1^U}{c_2^U}e^{-p_0 \delta_\Gamma \overline{\tau}\left(1 - \frac{\log(C)}{\log(\rho)}\right)} \nu_U(\mathtt{B}_j)$. Thus we calculate that
\begin{align*}
\int_W h(u)^2 \, d\nu_U(u) &= \sum_{j = 1}^k \int_{W \cap \mathtt{B}_j} h(u)^2 \, d\nu_U(u) \geq \sum_{j = 1}^k {l_j}^2 \nu_U(W \cap \mathtt{B}_j) \\
&\geq \sum_{j = 1}^k {L_j}^2 e^{-2BC} \nu_U(W \cap \mathtt{B}_j) \\
&\geq e^{-2BC} \cdot \frac{c_1^U}{c_2^U}e^{-p_0 \delta_\Gamma \overline{\tau}\left(1 - \frac{\log(C)}{\log(\rho)}\right)} \sum_{j = 1}^k {L_j}^2 \nu_U(\mathtt{B}_j) \\
&\geq \eta\sum_{j = 1}^k \int_{\mathtt{B}_j} h(u)^2 \, d\nu_U(u) \\
&= \eta \int_{U} h(u)^2 \, d\nu_U(u).
\end{align*}
\end{proof}

\begin{lemma}
\label{lem:DolgopyatProperty2}
There exist $a_0 > 0$ and $\eta \in (0, 1)$ such that for all $\xi = a + ib \in \mathbb C$ with $|a| < a_0$ and $|b| > b_0$, for all $J \in \mathcal J(b)$, for all $h \in K_{E|b|}(U)$, we have $\|\mathcal{N}_{a, J}(h)\|_2 \leq \eta \|h\|_2$.
\end{lemma}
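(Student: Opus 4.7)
The plan is to execute the standard Dolgopyat $L^2$-contraction estimate, where the gain of a factor $< 1$ comes from the fact that $\beta_J \leq 1$ with strict inequality on a geometrically dense subset $W \subset U$ controlled by the density assumption on $J$. First I would apply the pointwise Cauchy--Schwarz inequality for the transfer operator: since $\beta_J h \geq 0$,
\[
|\mathcal{N}_{a,J}(h)(u)|^2 = |\mathcal{L}_a^m(\beta_J h)(u)|^2 \leq \mathcal{L}_a^m(1)(u) \cdot \mathcal{L}_a^m(\beta_J^2 h^2)(u).
\]
The normalization $\mathcal{L}_0(1) = 1$ together with the Lipschitz dependence of $\lambda_a$ and $h_a$ on $a$ keeps $\|\mathcal{L}_a^m(1)\|_\infty$ close to $1$ for small $|a|$, and the identity $\mathcal{L}_0^*(\nu_U) = \nu_U$ (perturbed by an amount depending on $|a|$) upon integration yields
\[
\|\mathcal{N}_{a,J}(h)\|_2^2 \leq C(a)\int_U \beta_J^2 h^2 \, d\nu_U, \qquad C(a) \xrightarrow{a \to 0} 1.
\]

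Since the $\mathtt{X}_{j,k}^\ell(b)$ are mutually disjoint, $\beta_J^2 = 1 - (2\mu - \mu^2)\chi_W$ where $W = \bigsqcup_{(k,j,\ell) \in J} \mathtt{X}_{j,k}^\ell(b)$, so the right-hand side equals $C(a)\bigl(\|h\|_2^2 - (2\mu - \mu^2)\int_W h^2 \, d\nu_U\bigr)$. The geometric core of the argument is then to show that $W$ is $(t, S)$-dense in the sense of \cref{def:tSDenseSubset}, with $t$ a fixed multiple of $\epsilon_1/|b|$ and $S$ an absolute constant. Density of $J$ ensures that every maximal cylinder $\mathtt{C}_l(b) \subset U_0$ contains some $\mathtt{D}_k(b)$ with $(k,j,\ell) \in J$; the image $\mathtt{X}_{j,k}^\ell(b) = v_j^\ell(\sigma^{m_1}(\mathtt{D}_k(b)))$ then sits inside the slightly larger container $v_j^\ell(\overline{\sigma^{m_1}(\mathtt{C}_l(b))})$, and the diameter bounds \cref{eqn:DiameterBoundX_jk,eqn:DBoundOnsigma^m1C_l} give simultaneously the required lower bound $\geq t$ on the inner cylinder and upper bound $\leq tS$ on the outer one.

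Once $(t, S)$-density is established, \cref{lem:WDenseInequality} applies with $B = E \cdot (t|b|)$, which is an absolute constant since $t$ is a fixed multiple of $\epsilon_1/|b|$ and $h \in K_{E|b|}(U) = K_{B/t}(U)$; this produces an absolute $\eta_0 \in (0, 1)$ with $\int_W h^2 \, d\nu_U \geq \eta_0 \|h\|_2^2$. Combining everything,
\[
\|\mathcal{N}_{a,J}(h)\|_2^2 \leq C(a)\bigl(1 - (2\mu - \mu^2)\eta_0\bigr)\|h\|_2^2,
\]
and shrinking $a_0$ so that $C(a)\bigl(1 - (2\mu - \mu^2)\eta_0\bigr) < 1$ for all $|a| < a_0$ gives the required $\eta \in (0, 1)$.

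The principal obstacle is the $(t, S)$-density step: one must produce a \emph{partition} of all of $U$ whose pieces simultaneously contain a sizeable portion of $W$ and have diameter $\lesssim \epsilon_1/|b|$. The sections $v_j^\ell$ furnished by \cref{lem:LNIC} are only defined on $\mathcal{U}$, and the images $v_j^\ell(\overline{\sigma^{m_1}(\mathtt{C}_l(b))})$ together with the $\mathtt{C}_l(b)$'s do not obviously tile $U$; so carefully assembling the partition, using that $\sigma^{m_1}(U_0) = \interior(U)$ is dense in $U$ and supplementing by cylinders of comparable length outside $v_j^\ell(U)$ as needed, is the main bookkeeping challenge and the only place where the geometry of the Markov section enters nontrivially.
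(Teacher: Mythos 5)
Your skeleton (reduce to $a=0$ via $|f^{(a)}-f^{(0)}|\le A_f|a|$, Cauchy--Schwarz, a $\mu$-gain on a set that must be dense at scale $\epsilon_1/|b|$, then \cref{lem:WDenseInequality}, then shrink $a_0$) matches the paper's, but your choice of pairing in Cauchy--Schwarz puts the gain on the wrong set, and the step you call ``the main bookkeeping challenge'' is where the argument breaks irreparably. With the splitting $|\mathcal{L}_a^m(\beta_Jh)|^2\le \mathcal{L}_a^m(1)\,\mathcal{L}_a^m(\beta_J^2h^2)$ you are left needing $\int_W h^2\,d\nu_U\ge \eta_0\|h\|_2^2$ uniformly in $|b|$ and $h\in K_{E|b|}(U)$, where $W=\bigcup_{(k,j,\ell)\in J}\mathtt{X}_{j,k}^\ell(b)$. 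But $W$ is contained in the \emph{fixed} ($b$-independent) union $\bigcup_{j,\ell}v_j^\ell(U)$ of the $2\ell_0$ pairwise disjoint section images, each a finite union of cylinders of length $m$; all the other inverse branches of $\sigma^m$ give cylinders of diameter and $\nu_U$-measure bounded below independently of $b$ that are entirely disjoint from $W$. Hence $W$ cannot be $(t,S)$-dense in the sense of \cref{def:tSDenseSubset} for any constant $S$: a partition of $U$ into pieces of diameter $\lesssim\epsilon_1/|b|$ necessarily has pieces inside those complementary cylinders, and such pieces contain no subcylinder of $W$ at all, so ``supplementing by cylinders outside $v_j^\ell(U)$'' is exactly what the definition forbids. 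Moreover the inequality you need is not merely hard to prove but should be expected to fail uniformly: membership in $K_{E|b|}(U)$ only constrains oscillation of $h$ at scale $\sim 1/(E|b|)$ and allows the values of $h$ on the fixed region $\bigcup_{j,\ell}v_j^\ell(U)$ to be smaller, by a factor growing with $|b|$, than its values on a complementary cylinder of definite measure, so no $b$-independent $\eta_0$ exists.

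The paper avoids this by pairing the factors the other way: $\mathcal{N}_{0,J}(h)^2=\mathcal{L}_0^m(\beta_Jh)^2\le \mathcal{L}_0^m(\beta_J^2)\,\mathcal{L}_0^m(h^2)$. The deficiency of $\beta_J$ is then detected after summing over \emph{all} inverse branches: for $u$ in a set $\mathtt{Z}_k(b)=\sigma^m(\mathtt{X}_{j,k}^\ell(b))$ with $(k,j,\ell)\in J$, one branch of $\sigma^{-m}(u)$ lies in $\mathtt{X}_{j,k}^\ell(b)$ and carries weight at least $e^{-mT_0}$, so $\mathcal{L}_0^m(\beta_J^2)(u)\le 1-\mu e^{-mT_0}$. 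Unlike your $W$, the union of these $\mathtt{Z}_k(b)$ lives downstairs and \emph{is} $(t,C)$-dense with $t=\epsilon_1c_0\rho^{p_0p_1+1}{\kappa_2}^{m_1}/|b|$ and an absolute $C$: the sets $\mathtt{B}_l=\sigma^{m_1}(\mathtt{C}_l(b))$ are disjoint, have diameter at most $tC$, their closures cover $U$ because $\sigma^{m_1}(U_0)=\interior(U)$ by \cref{lem:LNIC}, and density of $J$ places some $\mathtt{Z}_k(b)$ of diameter at least $t$ inside each $\mathtt{B}_l$. Then \cref{lem:WDenseInequality} (with $B/t=E|b|$), together with $\mathcal{L}_0^*(\nu_U)=\nu_U$, yields $\int_U\mathcal{L}_0^m(\beta_J^2)\,\mathcal{L}_0^m(h^2)\,d\nu_U\le(1-\eta'\mu e^{-mT_0})\|h\|_2^2$, and the factor $e^{mA_fa_0}$ from the $a$-perturbation is absorbed by shrinking $a_0$, exactly as in your final step. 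So the repair is not a cleverer partition but the other Cauchy--Schwarz pairing, which transports the $\mu$-gain from the section images $\mathtt{X}_{j,k}^\ell(b)$ to their projections $\mathtt{Z}_k(b)$.
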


\begin{proof}
Fix $B = E\epsilon_1c_0\rho^{p_0p_1 + 1}{\kappa_2}^{m_1} > 0$ and $C = \frac{{\kappa_1}^{m_1}}{{c_0}^2\rho^{p_0p_1 + 1}{\kappa_2}^{m_1}} \geq 1$. Fix $\eta' \in (0, 1)$ to be the $\eta$ provided by \cref{lem:WDenseInequality}. Fix some
\begin{align*}
a_0 \in \left(0, \min\left(a_0', \frac{1}{mA_f}\log\left(\frac{1}{1 - \eta'\mu e^{-mT_0}}\right)\right)\right)
\end{align*}
so that we can also fix $\eta = \sqrt{e^{mA_fa_0}(1 - \eta'\mu e^{-mT_0})} \in (0, 1)$. Recall that we already fixed $b_0 = 1$. Let $\xi = a + ib \in \mathbb C$ with $|a| < a_0$ and $|b| > b_0$. Let $J \in \mathcal J(b)$ and $h \in K_{E|b|}(U)$. We have the estimate $\mathcal{N}_{a, J}(h)^2 \leq e^{mA_fa_0}\mathcal{N}_{0, J}(h)^2$ since $|f^{(a)} - f^{(0)}| \leq A_f|a| < A_fa_0$ and by the Cauchy-Schwarz inequality, we have
\begin{align*}
\mathcal{N}_{0, J}(h)^2 = \mathcal{L}_0^m(\beta_J h)^2 \leq \mathcal{L}_0^m({\beta_J}^2) \mathcal{L}_0^m(h^2).
\end{align*}
We would like to apply \cref{lem:WDenseInequality} on $h$ but first we need to ensure all the hypotheses hold. Let $t = \frac{\epsilon_1 c_0\rho^{p_0p_1 + 1}{\kappa_2}^{m_1}}{|b|}$ and note that $\frac{B}{t} = E|b|$. Let $W = \bigcup_{(k, j, \ell) \in J} \mathtt{Z}_k(b) \subset U$. We will show that $W$ is $(t, C)$-dense. Let $\mathtt{B}_l = \sigma^{m_1}(\mathtt{C}_l(b))$ for all integers $1 \leq l \leq c_b$. Then $\mathtt{B}_l \cap \mathtt{B}_{l'} = \varnothing$ and $\diam_d(\mathtt{B}_l) \leq tC$ by \cref{lem:SigmaHyperbolicity} for all integers $1 \leq l, l' \leq c_b$ with $l \neq l'$, and $\bigcup_{l = 1}^{c_b} \overline{\mathtt{B}_l} = U$. Since $J$ is dense, for all integers $1 \leq l \leq c_b$, there is a $(k, j, \ell) \in J$ such that $\mathtt{D}_k(b) \subset \mathtt{C}_l(b)$ and so $\mathtt{Z}_k(b) = \sigma^{m_1}(\mathtt{D}_k(b)) \subset W \cap \mathtt{B}_l$ with $\diam_d(\mathtt{Z}_k(b)) \geq t$ by \cref{lem:SigmaHyperbolicity}. Hence $W$ is $(t, C)$-dense. Since $h \in K_{E|b|}(U) \subset \tilde{K}_{E|b|}(U)$, we have $h^2 \in \tilde{K}_{2E|b|}(U)$. Applying \cref{lem:PreliminaryLogLipschitz} gives $\mathcal{L}_0^m(h^2) \in \tilde{K}_{B'}(U)$ where $B' = A_0\left(\frac{2E|b|}{{\kappa_2}^m} + 1\right) \leq A_0\left(\frac{2E|b|}{8A_0} + \frac{E|b|}{2A_0}\right) \leq 2E|b|$. Thus $\mathcal{L}_0^m(h^2) \in \tilde{K}_{2E|b|}(U)$ and so $\left(\mathcal{L}_0^m(h^2)\right)^{\frac{1}{2}} \in \tilde{K}_{E|b|}(U) = \tilde{K}_{B/t}(U)$. Now we can apply \cref{lem:WDenseInequality} to get $\int_W \mathcal{L}_0^m(h^2) \, d\nu_U \geq \eta' \int_U \mathcal{L}_0^m(h^2) \, d\nu_U$. Note that $\mathcal{L}_0^m({\beta_J}^2)(u) \leq \mathcal{L}_0^m(\chi_U - \mu \chi_{\mathtt{X}_{j, k}^\ell(b)})(u) \leq 1 - \mu e^{-mT_0}$ for all $u \in W$ by choosing any $(k, j, \ell) \in J$. So putting everything together and using $\mathcal{L}_0^*(\nu_U) = \nu_U$, we have
\begin{align*}
&\int_U \mathcal{N}_{a, J}(h)^2 \, d\nu_U \\
\leq{}&\int_U e^{mA_fa_0}\mathcal{N}_{0, J}(h)^2 \, d\nu_U \\
\leq{}&e^{mA_fa_0}\left(\int_{W} \mathcal{L}_0^m({\beta_J}^2) \mathcal{L}_0^m(h^2) \, d\nu_U + \int_{U \setminus W} \mathcal{L}_0^m({\beta_J}^2) \mathcal{L}_0^m(h^2) \, d\nu_U\right) \\
\leq{}&e^{mA_fa_0}\left((1 - \mu e^{-mT_0})\int_{W} \mathcal{L}_0^m(h^2) \, d\nu_U + \int_{U \setminus W} \mathcal{L}_0^m(h^2) \, d\nu_U\right) \\
={}&e^{mA_fa_0}\left(\int_U \mathcal{L}_0^m(h^2) \, d\nu_U - \mu e^{-mT_0}\int_{W} \mathcal{L}_0^m(h^2) \, d\nu_U\right) \\
\leq{}&e^{mA_fa_0}(1 - \eta'\mu e^{-mT_0})\int_U \mathcal{L}_0^m(h^2) \, d\nu_U \\
={}&\eta^2 \int_U h^2 \, d\nu_U.
\end{align*}
\end{proof}

\subsection{Proof of \texorpdfstring{\cref{itm:DominatedByDolgopyat}}{Property \ref{itm:DominatedByDolgopyat}} in \texorpdfstring{\cref{thm:Dolgopyat}}{\autoref{thm:Dolgopyat}}}
We start with the following lemma which is derived from \cref{lem:LNIC}. It is exactly as in \cite[Lemma 5.9]{Sto11} with the same proof.

\begin{lemma}
\label{lem:LNIC_Output}
Let $z_1 \in R_1$ be the center and $|b| > b_0$. Suppose $\mathtt{D}_k(b), \mathtt{D}_{k'}(b) \subset \mathtt{C}_l(b)$ for some integers $1 \leq k, k' \leq p_b$ and $1 \leq l \leq c_b$ such that $d(u_0, u_0') \geq \frac{1}{2}\diam_d(\mathtt{C}_l(b))$ and
\begin{align*}
\left\langle\frac{(\exp_{z_1}^{\mathrm{su}})^{-1}(u_0) - (\exp_{z_1}^{\mathrm{su}})^{-1}(u_0')}{\|(\exp_{z_1}^{\mathrm{su}})^{-1}(u_0) - (\exp_{z_1}^{\mathrm{su}})^{-1}(u_0')\|}, \eta_{\ell}\right\rangle \geq \theta_1
\end{align*}
for some $u_0 \in \mathtt{D}_k(b), u_0' \in \mathtt{D}_{k'}(b)$ and some integer $1 \leq \ell \leq \ell_0$. Then we have
\begin{align*}
|b| \cdot |(\tau_m(v_2^{\ell}(u)) - \tau_m(v_1^{\ell}(u))) - (\tau_m(v_2^{\ell}(u')) - \tau_m(v_1^{\ell}(u')))| \geq \frac{\delta_0 \rho \epsilon_1}{16}
\end{align*}
for all $u \in \mathtt{Z}_k(b)$, for all $u' \in \mathtt{Z}_{k'}(b)$.
\end{lemma}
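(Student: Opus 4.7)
The plan is to reduce the claimed lower bound at a pair $(u, u') \in \mathtt{Z}_k(b) \times \mathtt{Z}_{k'}(b)$ to a direct application of property~(3) of \cref{lem:LNIC} at a pair $(\tilde u_0, \tilde u_0') \in \mathtt{D}_k(b) \times \mathtt{D}_{k'}(b)$ obtained by pulling $u, u'$ back through $\sigma^{m_1}$. Since $\sigma^{m_1}$ restricts to a homeomorphism from $\mathtt{D}_k(b)$ onto $\mathtt{Z}_k(b)$ (and similarly on $\mathtt{D}_{k'}(b)$), define $\tilde u_0 = (\sigma^{m_1}|_{\mathtt{D}_k(b)})^{-1}(u)$ and $\tilde u_0' = (\sigma^{m_1}|_{\mathtt{D}_{k'}(b)})^{-1}(u')$. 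Writing $F_\ell = \tau_m \circ v_2^\ell \circ \sigma^{m_1} - \tau_m \circ v_1^\ell \circ \sigma^{m_1}$ for the function governed by the local non-integrability condition, the identity $\tau_m(v_j^\ell(u)) = (\tau_m \circ v_j^\ell \circ \sigma^{m_1})(\tilde u_0)$ turns the quantity in question into $|b| \cdot |F_\ell(\tilde u_0) - F_\ell(\tilde u_0')|$.

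The first key step is to verify that $(\tilde u_0, \tilde u_0')$ satisfies the hypotheses of property~(3) of \cref{lem:LNIC}: both points must lie in $U_0$, and the unit vector $\tilde\eta = (r^{-1}(\tilde u_0) - r^{-1}(\tilde u_0'))/\|r^{-1}(\tilde u_0) - r^{-1}(\tilde u_0')\|$ must belong to $B_\ell$, i.e.\ satisfy $\langle \tilde\eta, \eta_\ell \rangle \geq \theta_0$. Membership in $U_0$ is immediate from $\mathtt{D}_k(b) \cup \mathtt{D}_{k'}(b) \subset \mathtt{C}_l(b) \subset U_0$. For the direction, \cref{eqn:DiameterBoundD_k} bounds $d(\tilde u_0, u_0)$ and $d(\tilde u_0', u_0')$ by $\rho^{p_1}\epsilon_1/|b|$, while the hypothesis combined with \cref{eqn:DiameterBoundC_l} gives $d(u_0, u_0') \geq \rho\epsilon_1/(2|b|)$. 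A routine normalization estimate, using that $r^{-1}$ is bi-Lipschitz with uniformly bounded distortion on $U_0$, then yields
\[
\left\|\tilde\eta - \frac{r^{-1}(u_0) - r^{-1}(u_0')}{\|r^{-1}(u_0) - r^{-1}(u_0')\|}\right\| \leq 32\rho^{p_1 - 1},
\]
and hence $\langle \tilde\eta, \eta_\ell \rangle \geq \theta_1 - 32\rho^{p_1 - 1} > \theta_0$ by the fixed choice of $p_1$.

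With these hypotheses in hand, I would invoke property~(3) of \cref{lem:LNIC} at $s = r^{-1}(\tilde u_0')$, $t = \|r^{-1}(\tilde u_0) - r^{-1}(\tilde u_0')\|$, and $\eta = \tilde\eta$; the constraint $|t| \leq \delta_0$ is harmless since $t$ is controlled by $\diam_d(\mathtt{C}_l(b)) \leq \epsilon_1/|b|$, which is small for $|b| > b_0$. This produces $|F_\ell(\tilde u_0) - F_\ell(\tilde u_0')| \geq (\delta_0/2)\, t$, and combining it with the lower bound $t \geq d(u_0, u_0') - 2\rho^{p_1}\epsilon_1/|b| \geq \rho\epsilon_1/(4|b|)$ (valid for the chosen $p_1$, again via the bi-Lipschitz property of $r^{-1}$) gives $|b| \cdot |F_\ell(\tilde u_0) - F_\ell(\tilde u_0')| \geq \delta_0\rho\epsilon_1/16$, which is exactly the desired bound.

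The hard part is the direction-stability step: one must quantify how far the normalized difference of $r^{-1}$-coordinates drifts when $(u_0, u_0')$ is replaced by the nearby points $(\tilde u_0, \tilde u_0')$. The constant $32\rho^{p_1 - 1}$ built into the choice $\theta_0 < \theta_1 - 32\rho^{p_1 - 1}$ is calibrated precisely to absorb this drift and keep $\tilde\eta$ safely inside $B_\ell$; without this buffer, the transfer from $\mathtt{C}_l(b)$ down to $\mathtt{Z}_k(b), \mathtt{Z}_{k'}(b)$ via $\sigma^{m_1}$ could destroy the crucial direction hypothesis. Everything else is bookkeeping based on \cref{lem:SigmaHyperbolicity}, \cref{lem:CylinderDiameterBound}, and the diameter bounds \cref{eqn:DiameterBoundC_l,eqn:DiameterBoundD_k,eqn:DiameterBoundZ_k}.
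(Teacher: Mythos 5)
The paper does not prove this lemma itself --- it is quoted from Stoyanov and the reader is referred to \cite[Lemma 5.9]{Sto11} for the proof --- and your reconstruction follows exactly that argument: pull $u, u'$ back through $\sigma^{m_1}$ to points of $\mathtt{D}_k(b), \mathtt{D}_{k'}(b)$, note the direction condition survives the move from $(u_0, u_0')$ to the pulled-back pair because the $\mathtt{D}$-diameters are smaller than $d(u_0, u_0')$ by a factor comparable to $\rho^{p_1}$ and the buffer $\theta_1 - \theta_0 > 32\rho^{p_1 - 1}$ absorbs the drift, then apply property (3) of \cref{lem:LNIC} together with the lower bound on $\|r^{-1}(\tilde u_0) - r^{-1}(\tilde u_0')\|$ coming from $d(u_0, u_0') \geq \tfrac{1}{2}\diam_d(\mathtt{C}_l(b)) \geq \tfrac{\rho\epsilon_1}{2|b|}$. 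This is correct and essentially the same proof, up to the routine chart-distortion bookkeeping (including the verification that $\|r^{-1}(\tilde u_0) - r^{-1}(\tilde u_0')\| \leq \delta_0$, which relies on the smallness of $\epsilon_1$ in Stoyanov's setup) that is carried out in \cite{Sto11}.
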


Now, for all $\xi = a + ib \in \mathbb C$ with $|a| < a_0'$ and $|b| > b_0$, for all integers $1 \leq \ell \leq \ell_0$, for all $H \in C(U, L^2(F_\mathfrak{q}, \mathbb C))$ for some nontrivial ideal $\mathfrak{q} \subset \mathcal{O}_{\mathbb K}$, for all $h \in K_{E|b|}(U)$, we define the functions $\chi_1^\ell[\xi, H, h], \chi_2^\ell[\xi, H, h]: U \to \mathbb R$ by
\begin{align*}
&\chi_1^\ell[\xi, H, h](u) \\
={}&\frac{\left\|e^{(f_m^{(a)} + ib\tau_m)(v_1^\ell(u))} \mathtt{c}_{l, 1, \ell}(b)^{-1}H(v_1^\ell(u)) + e^{(f_m^{(a)} + ib\tau_m)(v_2^\ell(u))} \mathtt{c}_{l, 2, \ell}(b)^{-1}H(v_2^\ell(u))\right\|_2}{(1 - \mu)e^{f_m^{(a)}(v_1^\ell(u))}h(v_1^\ell(u)) + e^{f_m^{(a)}(v_2^\ell(u))}h(v_2^\ell(u))}
\end{align*}
and
\begin{align*}
&\chi_2^\ell[\xi, H, h](u) \\
={}&\frac{\left\|e^{(f_m^{(a)} + ib\tau_m)(v_1^\ell(u))} \mathtt{c}_{l, 1, \ell}(b)^{-1}H(v_1^\ell(u)) + e^{(f_m^{(a)} + ib\tau_m)(v_2^\ell(u))} \mathtt{c}_{l, 2, \ell}(b)^{-1}H(v_2^\ell(u))\right\|_2}{e^{f_m^{(a)}(v_1^\ell(u))}h(v_1^\ell(u)) + (1 - \mu)e^{f_m^{(a)}(v_2^\ell(u))}h(v_2^\ell(u))}
\end{align*}
for all $u \in \sigma^{m_1}(\mathtt{C}_l(b))$, for all integers $1 \leq l \leq c_b$. We need another lemma before proving \cref{itm:DominatedByDolgopyat} in \cref{thm:Dolgopyat}.

\begin{lemma}
\label{lem:HTrappedByh}
Let $|b| > b_0$ and $\mathfrak{q} \subset \mathcal{O}_{\mathbb K}$ be a nontrivial ideal. Suppose $H \in C(U, L^2(F_\mathfrak{q}, \mathbb C))$ and $h \in K_{E|b|}(U)$ satisfy \cref{itm:DominatedByh,itm:LogLipschitzh} in \cref{thm:Dolgopyat}. Then for all $(k, j, \ell) \in \Xi(b)$, we have
\begin{align*}
\frac{1}{2} \leq \frac{h(v_j^\ell(u))}{h(v_j^\ell(u'))} \leq 2
\end{align*}
for all $u, u' \in \mathtt{Z}_k(b)$ and also either of the alternatives
\begin{alternative}
\item\label{alt:HLessThan3/4h}	$\big\|H(v_j^\ell(u))\big\|_2 \leq \frac{3}{4}h(v_j^\ell(u))$ for all $u \in \mathtt{Z}_k(b)$
\item\label{alt:HGreaterThan1/4h}	$\big\|H(v_j^\ell(u))\big\|_2 \geq \frac{1}{4}h(v_j^\ell(u))$ for all $u \in \mathtt{Z}_k(b)$.
\end{alternative}
\end{lemma}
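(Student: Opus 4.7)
The plan is to exploit the fact that $\mathtt{X}_{j,k}^\ell(b) = v_j^\ell(\mathtt{Z}_k(b))$ lies inside a single cylinder: since $v_j^\ell$ is a section of $\sigma^m$ and $v_j^\ell(\mathcal{U})$ consists of cylinders of length $m$, the image $v_j^\ell(\mathtt{Z}_k(b))$ is itself a cylinder (it records a fixed length-$m$ prefix followed by admissible extensions parameterized by $\mathtt{Z}_k(b)$). Consequently, $D(v_j^\ell(u), v_j^\ell(u')) \leq \diam_d(\mathtt{X}_{j,k}^\ell(b))$ for $u, u' \in \mathtt{Z}_k(b)$, so by \cref{eqn:DiameterBoundX_jk} we obtain
\begin{align*}
E|b| \cdot D(v_j^\ell(u), v_j^\ell(u')) \leq E|b| \cdot \frac{\epsilon_1 \rho^{p_1} {\kappa_1}^{m_1}}{|b|{c_0}^2 {\kappa_2}^m} = \frac{E \epsilon_1 \rho^{p_1} {\kappa_1}^{m_1}}{{c_0}^2 {\kappa_2}^m} < \frac{1}{4},
\end{align*}
where the last inequality uses the lower bound on ${\kappa_2}^m$ in \cref{eqn:Constantm}.

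For the first assertion, I would apply the cone condition $h \in K_{E|b|}(U)$ directly: for $u, u' \in \mathtt{Z}_k(b)$,
\begin{align*}
|h(v_j^\ell(u)) - h(v_j^\ell(u'))| \leq E|b| \, h(v_j^\ell(u)) D(v_j^\ell(u), v_j^\ell(u')) < \tfrac{1}{4} h(v_j^\ell(u)),
\end{align*}
which yields $\tfrac{3}{4} h(v_j^\ell(u)) < h(v_j^\ell(u')) < \tfrac{5}{4} h(v_j^\ell(u))$. Swapping the roles of $u, u'$ and rearranging, the ratio $h(v_j^\ell(u))/h(v_j^\ell(u'))$ is trapped in $[4/5, 4/3] \subset [1/2, 2]$.

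For the second assertion, I would argue by contradiction. If neither alternative holds, then there exist $u, u' \in \mathtt{Z}_k(b)$ with $\|H(v_j^\ell(u))\|_2 > \tfrac{3}{4} h(v_j^\ell(u))$ and $\|H(v_j^\ell(u'))\|_2 < \tfrac{1}{4} h(v_j^\ell(u'))$. Combining with $h(v_j^\ell(u')) \leq 2 h(v_j^\ell(u))$ from the first assertion, we get
\begin{align*}
\|H(v_j^\ell(u))\|_2 - \|H(v_j^\ell(u'))\|_2 > \tfrac{3}{4} h(v_j^\ell(u)) - \tfrac{1}{2} h(v_j^\ell(u)) = \tfrac{1}{4} h(v_j^\ell(u)).
\end{align*}
On the other hand, hypothesis \cref{itm:LogLipschitzh} of \cref{thm:Dolgopyat} combined with the bound $E|b| D(v_j^\ell(u), v_j^\ell(u')) < \tfrac{1}{4}$ gives
\begin{align*}
\|H(v_j^\ell(u)) - H(v_j^\ell(u'))\|_2 \leq E|b| \, h(v_j^\ell(u)) D(v_j^\ell(u), v_j^\ell(u')) < \tfrac{1}{4} h(v_j^\ell(u)),
\end{align*}
and the reverse triangle inequality yields $\big|\|H(v_j^\ell(u))\|_2 - \|H(v_j^\ell(u'))\|_2\big| < \tfrac{1}{4} h(v_j^\ell(u))$, contradicting the previous inequality.

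The key conceptual step is recognizing that $\mathtt{X}_{j,k}^\ell(b)$ is itself a cylinder so that the sharp bound \cref{eqn:DiameterBoundX_jk} applies to the $D$-distance (not merely the $d$-distance), which in turn forces $E|b| D < 1/4$ on this set; everything else follows by direct use of the hypotheses and the constant choices fixed in \cref{eqn:ConstantE,eqn:Constantm}. I do not anticipate a major obstacle here — this lemma functions as a bookkeeping preparation for the dichotomy used in proving \cref{itm:DominatedByDolgopyat} in \cref{thm:Dolgopyat}.
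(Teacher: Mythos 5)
Your proof is correct and follows essentially the same route as the paper: both rest on the bound $E|b|\,D(v_j^\ell(u), v_j^\ell(u')) \leq E\epsilon_1\rho^{p_1}{\kappa_1}^{m_1}/({c_0}^2{\kappa_2}^m) < \tfrac14$ from \cref{eqn:DiameterBoundX_jk,eqn:Constantm} together with the cone condition and \cref{itm:LogLipschitzh}. The only difference is cosmetic: the paper derives Alternative \ref{alt:HLessThan3/4h} directly by propagating from a point where $\|H\|_2 \leq \tfrac14 h$, whereas you phrase the dichotomy as a proof by contradiction — the same estimates in a rearranged logical order.
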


\begin{proof}
Recall that we already fixed $b_0 = 1$. Let $|b| > b_0$ and $\mathfrak{q} \subset \mathcal{O}_{\mathbb K}$ be a nontrivial ideal. Suppose $H \in C(U, L^2(F_\mathfrak{q}, \mathbb C))$ and $h \in K_{E|b|}(U)$ satisfy \cref{itm:DominatedByh,itm:LogLipschitzh} in \cref{thm:Dolgopyat}. Let $(k, j, \ell) \in \Xi(b)$. We first note that using \cref{eqn:DiameterBoundX_jk,eqn:Constantm}, for all $u, u' \in \mathtt{Z}_k(b)$, we have
\begin{align*}
h(v_j^\ell(u')) &\leq h(v_j^\ell(u)) + E|b|h(v_j^\ell(u))D(v_j^\ell(u), v_j^\ell(u')) \\
&\leq h(v_j^\ell(u))(1 + E|b|\diam_d(\mathtt{X}_{j, k}^\ell)) \\
&\leq h(v_j^\ell(u))\left(1 + E|b| \cdot \frac{\epsilon_1\rho^{p_1}{\kappa_1}^{m_1}}{|b|{c_0}^2{\kappa_2}^m}\right) \\
&\leq 2h(v_j^\ell(u))
\end{align*}
which proves the first sequence of inequalities. If $\big\|H(v_j^\ell(u))\big\|_2 \geq \frac{1}{4}h(v_j^\ell(u))$ for all $u \in \mathtt{Z}_k(b)$, then we are done. Otherwise there is a $u_0 \in \mathtt{Z}_k(b)$ such that $\big\|H(v_j^\ell(u_0))\big\|_2 \leq \frac{1}{4}h(v_j^\ell(u_0))$. Then for all $u \in \mathtt{Z}_k(b)$, using the above gives
\begin{align*}
\big\|H(v_j^\ell(u_0))\big\|_2 \leq \frac{1}{4}h(v_j^\ell(u_0)) \leq \frac{1}{2}h(v_j^\ell(u)).
\end{align*}
Similarly proceeding as above, using \cref{eqn:DiameterBoundX_jk,eqn:Constantm}, for all $u \in \mathtt{Z}_k(b)$, we have
\begin{align*}
\big\|H(v_j^\ell(u))\big\|_2 &\leq \big\|H(v_j^\ell(u_0))\big\|_2 + E|b|h(v_j^\ell(u))D(v_j^\ell(u), v_j^\ell(u_0)) \\
&\leq \frac{1}{2}h(v_j^\ell(u)) + E|b|h(v_j^\ell(u))\diam_d(\mathtt{X}_{j, k}^\ell) \\
&\leq h(v_j^\ell(u))\left(\frac{1}{2} + E|b| \cdot \frac{\epsilon_1\rho^{p_1}{\kappa_1}^{m_1}}{|b|{c_0}^2{\kappa_2}^m}\right) \\
&\leq \frac{3}{4}h(v_j^\ell(u)).
\end{align*}
\end{proof}

For any integer $k \geq 2$, let $\Theta: (\mathbb R^k \setminus \{0\}) \times (\mathbb R^k \setminus \{0\}) \to [0, \pi]$ be the map which gives the angle defined by $\Theta(w_1, w_2) = \arccos\left(\frac{\langle w_1, w_2\rangle}{\|w_1\| \cdot \|w_2\|}\right)$ for all $w_1, w_2 \in \mathbb R^k \setminus \{0\}$, where we use the standard inner product and norm.

\begin{lemma}
\label{lem:StrongTriangleInequality}
Let $k \geq 2$ be an integer. Suppose $w_1, w_2 \in \mathbb R^k \setminus \{0\}$ such that $\Theta(w_1, w_2) \geq \alpha$ and $\frac{\|w_1\|}{\|w_2\|} \leq L$ for some $\alpha \in [0, \pi]$ and $L \geq 1$. Then we have a stronger version of the triangle inequality
\begin{align*}
\|w_1 + w_2\| \leq \left(1 - \frac{\alpha^2}{16L}\right)\|w_1\| + \|w_2\|.
\end{align*}
\end{lemma}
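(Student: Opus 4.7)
The plan is to reduce everything to the law of cosines and then handle the resulting one-variable trigonometric inequality. Set $a = \|w_1\|$, $b = \|w_2\|$, $\theta = \Theta(w_1, w_2) \in [\alpha, \pi]$, and $\eta = \alpha^2/(16L)$. Note that since $\alpha \in [0,\pi]$ and $L \geq 1$, we have $\eta \leq \pi^2/16 < 1$, so $(1-\eta)a + b$ is nonnegative and we may square. The law of cosines gives $\|w_1 + w_2\|^2 = a^2 + b^2 + 2ab\cos\theta$, so after expanding and canceling $a^2 + b^2$ from both sides, the desired inequality
\[
\|w_1 + w_2\|^2 \leq \bigl((1-\eta)a + b\bigr)^2
\]
becomes the equivalent statement
\[
2ab\,(1 - \cos\theta) \geq (2\eta - \eta^2)\,a^2 + 2\eta ab.
\]

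The first step is to replace $\theta$ by $\alpha$ via $1 - \cos\theta \geq 1 - \cos\alpha$, valid since $\cos$ is decreasing on $[0,\pi]$ and $\theta \geq \alpha$. The second step is the elementary estimate
\[
1 - \cos\alpha \geq \tfrac{\alpha^2}{8} \qquad \text{for all } \alpha \in [0,\pi],
\]
which follows by comparing the Taylor expansion $1-\cos\alpha = \alpha^2/2 - \alpha^4/24 + \cdots$ on $[0,3]$ and checking the remaining interval $[3,\pi]$ directly against $\pi^2/8$. Substituting this into the target inequality, it suffices to prove
\[
2ab \cdot \tfrac{\alpha^2}{8} \geq (2\eta - \eta^2)\,a^2 + 2\eta ab,
\]
i.e., dividing by $a$ and rearranging, $2b(\tfrac{\alpha^2}{8} - \eta) \geq (2-\eta)\eta a$, for which it is enough (using $2-\eta \leq 2$) to check $b(\alpha^2/8 - \eta) \geq \eta a$.

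The final step is to use the size hypothesis $a \leq Lb$: the last inequality is implied by $\alpha^2/8 - \eta \geq \eta L$, that is, $\eta(1 + L) \leq \alpha^2/8$. With $\eta = \alpha^2/(16L)$ and $L \geq 1$, the left side equals $\alpha^2(1+L)/(16L) \leq 2\alpha^2/(16L) \cdot L/L = \alpha^2/8 \cdot (1+L)/(2L) \leq \alpha^2/8$, which is exactly what is needed. Reversing the chain of implications completes the proof.

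I do not anticipate any genuine obstacle here; the argument is a routine application of the law of cosines and the convexity estimate $1 - \cos\alpha \geq \alpha^2/8$. The only thing to be careful about is the choice of the constant $16$ in the denominator, which is dictated by the interplay between the factor $(1+L)/L \leq 2$ coming from the size hypothesis and the factor $8$ coming from the estimate of $1-\cos\alpha$; any smaller constant in the denominator would fail when $L = 1$ and $\alpha$ approaches $\pi$, and the choice $16$ leaves a comfortable margin.
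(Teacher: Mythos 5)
Your proof is correct and takes essentially the same route as the paper: square (legitimate since both sides are nonnegative), apply the law of cosines together with monotonicity of $\cos$ on $[0,\pi]$, use the quadratic bound $1-\cos\alpha \geq \tfrac{\alpha^2}{8}$ (which is exactly the paper's $\sin x \geq \tfrac{x}{2}$ in the form $2\sin^2(\tfrac{\alpha}{2}) \geq \tfrac{\alpha^2}{8}$), and close with $1+L \leq 2L$. The only blemishes are cosmetic: the intermediate expressions in your final chain are garbled and should simply read $\eta(1+L) = \tfrac{\alpha^2(1+L)}{16L} = \tfrac{\alpha^2}{8}\cdot\tfrac{1+L}{2L} \leq \tfrac{\alpha^2}{8}$, and your closing remark that any smaller denominator would fail as $\alpha \to \pi$ is not accurate (the constraint is not tight there), though neither point affects the validity of the argument.
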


\begin{proof}
Let $k \geq 2$ be an integer. Let $w_1, w_2 \in \mathbb R^k \setminus \{0\}$ such that $\Theta(w_1, w_2) \geq \alpha$ and $\frac{\|w_1\|}{\|w_2\|} \leq L$ for some $\alpha \in [0, \pi]$ and $L \geq 1$. Let $\epsilon = \frac{\alpha^2}{16L} \in [0, 1]$. Using the cosine law, we have
\begin{align*}
\|w_1 + w_2\|^2 &= \|w_1\|^2 + 2\|w_1\| \cdot \|w_2\|\cos(\Theta(w_1, w_2)) + \|w_2\|^2 \\
&\leq \|w_1\|^2 + 2\|w_1\| \cdot \|w_2\|\cos(\alpha) + \|w_2\|^2.
\end{align*}
Hence it suffices to show that $\|w_1\|^2 + 2\|w_1\| \cdot \|w_2\|\cos(\alpha) + \|w_2\|^2 \leq ((1 - \epsilon)\|w_1\| + \|w_2\|)^2$ since $(1 - \epsilon)\|w_1\| + \|w_2\| \geq 0$. Using the double angle formula and some simplification, we see that this is equivalent to showing
\begin{align*}
2\epsilon\left(\frac{\|w_1\|}{\|w_2\|} + 1\right) - \epsilon^2\frac{\|w_1\|}{\|w_2\|} \leq 4\sin^2\left(\frac{\alpha}{2}\right).
\end{align*}
Indeed, by our choice of $\epsilon$ and the simple inequality $\frac{\theta}{2} \leq \sin(\theta)$ for all $\theta \in \left[0, \frac{\pi}{2}\right]$, we have
\begin{align*}
2\epsilon\left(\frac{\|w_1\|}{\|w_2\|} + 1\right) - \epsilon^2\frac{\|w_1\|}{\|w_2\|} \leq 2\epsilon\left(\frac{\|w_1\|}{\|w_2\|} + 1\right) \leq 4\epsilon L = 4 \cdot \frac{\alpha^2}{16} \leq 4\sin^2\left(\frac{\alpha}{2}\right).
\end{align*}
\end{proof}

\begin{lemma}
\label{lem:chiLessThan1}
Let $\xi = a + ib \in \mathbb C$ with $|a| < a_0'$ and $|b| > b_0$. Let $\mathfrak{q} \subset \mathcal{O}_{\mathbb K}$ be a nontrivial ideal and suppose $H \in C(U, L^2(F_\mathfrak{q}, \mathbb C))$ and $h \in K_{E|b|}(U)$ satisfy \cref{itm:DominatedByh,itm:LogLipschitzh} in \cref{thm:Dolgopyat}. For all integers $1 \leq l \leq c_b$, there exists $(k, j, \ell) \in \Xi(b)$ such that $\mathtt{D}_k(b) \subset \mathtt{C}_l(b)$ and such that $\chi_j^\ell[\xi, H, h](u) \leq 1$ for all $u \in \mathtt{Z}_k(b)$.
\end{lemma}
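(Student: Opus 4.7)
The plan is to follow the standard Dolgopyat dichotomy based on \cref{lem:HTrappedByh}. Fix $1 \leq l \leq c_b$. I consider two cases: either some triple $(k, j, \ell) \in \Xi(b)$ with $\mathtt{D}_k(b) \subset \mathtt{C}_l(b)$ satisfies the small alternative \cref{alt:HLessThan3/4h} on $\mathtt{Z}_k(b)$, or every such triple satisfies the large alternative \cref{alt:HGreaterThan1/4h}. In the former case, since $\mathtt{c}_{l, j, \ell}(b)^{-1}$ is unitary on $L^2(F_\mathfrak{q}, \mathbb C)$, the triangle inequality together with \cref{alt:HLessThan3/4h} on the $j$-th summand and $\|H\|_2 \leq h$ on the other yields
\begin{align*}
\bigl(\text{numerator of } \chi_j^\ell[\xi,H,h](u)\bigr) \leq \tfrac{3}{4} e^{f_m^{(a)}(v_j^\ell(u))} h(v_j^\ell(u)) + e^{f_m^{(a)}(v_{j'}^\ell(u))} h(v_{j'}^\ell(u))
\end{align*}
where $j'$ is the index distinct from $j$. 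Since $\mu < \tfrac{1}{4}$ by \eqref{eqn:Constantmu}, this is bounded by the denominator of $\chi_j^\ell(u)$, so $\chi_j^\ell \leq 1$ on $\mathtt{Z}_k(b)$.

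In the latter case, Stoyanov's local non-integrability output through \eqref{eqn:U_0InUnionM_eta_ell} supplies an index $\ell$ and points $u_0, u_0' \in \mathtt{C}_l(b)$ with $d(u_0, u_0') \geq \tfrac{1}{2}\diam_d(\mathtt{C}_l(b))$ and connecting direction close to $\eta_\ell$. Let $\mathtt{D}_k(b), \mathtt{D}_{k'}(b) \subset \mathtt{C}_l(b)$ be the subcylinders containing $u_0, u_0'$. By \cref{lem:LNIC_Output} combined with \eqref{eqn:LNIC_OutputReverseBound}, the phase
\begin{align*}
\Delta(u) = b\bigl(\tau_m(v_1^\ell(u)) - \tau_m(v_2^\ell(u))\bigr)
\end{align*}
satisfies $|\Delta(u) - \Delta(u')| \in [\delta_0\rho\epsilon_1/16, \pi]$ for all $u \in \mathtt{Z}_k(b)$, $u' \in \mathtt{Z}_{k'}(b)$. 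Writing $\psi_j(u) = e^{(f_m^{(a)} + ib\tau_m)(v_j^\ell(u))}\mathtt{c}_{l, j, \ell}(b)^{-1}H(v_j^\ell(u))$ and $w_j(u) = \mathtt{c}_{l, j, \ell}(b)^{-1}H(v_j^\ell(u))$, the real inner product in $L^2(F_\mathfrak{q}, \mathbb C)$ decomposes as
\begin{align*}
\Re\langle\psi_1(u), \psi_2(u)\rangle = e^{f_m^{(a)}(v_1^\ell(u)) + f_m^{(a)}(v_2^\ell(u))} \Re\bigl(e^{i\Delta(u)}\langle w_1(u), w_2(u)\rangle\bigr).
\end{align*}
By \cref{itm:LogLipschitzh} together with \eqref{eqn:DiameterBoundX_jk} and the smallness constraints \eqref{eqn:Constantm}--\eqref{eqn:Constantmu}, the complex argument of $\langle w_1, w_2\rangle$ varies negligibly on each of $\mathtt{Z}_k(b), \mathtt{Z}_{k'}(b)$, so the total phase $\Delta(u) + \arg\langle w_1(u), w_2(u)\rangle$ still jumps by at least $\delta_0\rho\epsilon_1/32$ between the two subcylinders. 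Hence at least one of them has the uniform bound $\Theta(\psi_1(u), \psi_2(u)) \geq \delta_0\rho\epsilon_1/64$. On that subcylinder, \cref{alt:HGreaterThan1/4h} together with $\|H\|_2 \leq h$ and the slow Lipschitz variation of $h, f_m^{(a)}$ bound the ratio $\max(\|\psi_1\|_2/\|\psi_2\|_2, \|\psi_2\|_2/\|\psi_1\|_2)$ by a constant $L$ of order unity, so \cref{lem:StrongTriangleInequality} gives
\begin{align*}
\|\psi_1(u) + \psi_2(u)\|_2 \leq \|\psi_1(u)\|_2 + \|\psi_2(u)\|_2 - \tfrac{(\delta_0\rho\epsilon_1/64)^2}{16L}\|\psi_{j_*}(u)\|_2,
\end{align*}
where $j_*$ is the index minimizing $\|\psi_j(u)\|_2$. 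Combining with $\|\psi_{j_*}\|_2 \geq \tfrac{1}{4}e^{f_m^{(a)}(v_{j_*}^\ell)}h(v_{j_*}^\ell)$ from \cref{alt:HGreaterThan1/4h}, the resulting deficit on the $e^{f_m^{(a)}(v_{j_*}^\ell)}h(v_{j_*}^\ell)$ coefficient is at least $(\delta_0\rho\epsilon_1/64)^2/(64L)$, which exceeds $\mu$ by \eqref{eqn:Constantmu}. Thus $\chi_{j_*}^\ell \leq 1$ on the chosen subcylinder.

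The main obstacle is the vector-valued Hilbert space setting: the scalar oscillating phase $e^{i\Delta(u)}$ must produce cancellation despite the a priori unknown complex argument of the inner product $\langle w_1, w_2\rangle$ of the cocycle-twisted $H$-values. One has to show that the oscillation of magnitude $\geq \delta_0\rho\epsilon_1/16$ across the two far-apart subcylinders, combined with the near-constancy of all other quantities on each $\mathtt{Z}_k(b)$ guaranteed by \cref{itm:LogLipschitzh} and the specific choices of $m, \epsilon_1, \mu$ in \eqref{eqn:Constantepsilon1}--\eqref{eqn:Constantmu}, forces a uniform angular separation on at least one of the two subcylinders that \cref{lem:StrongTriangleInequality} converts into the $\mu$-level deficit needed to overcome the weighting in the denominator of $\chi_j^\ell$.
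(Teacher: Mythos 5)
Your overall architecture is the same as the paper's: the dichotomy supplied by \cref{lem:HTrappedByh}, the phase gap from \cref{lem:LNIC_Output} together with \eqref{eqn:LNIC_OutputReverseBound}, and \cref{lem:StrongTriangleInequality} played off against the $\mu$ in the denominator of $\chi_j^\ell$; your Case 1 is fine. The gap is in the pivotal step of Case 2. You assert that \cref{itm:LogLipschitzh} forces the complex argument of $\langle w_1(u), w_2(u)\rangle$ to vary negligibly on each of $\mathtt{Z}_k(b)$ and $\mathtt{Z}_{k'}(b)$, so that the ``total phase'' $\Delta(u) + \arg\langle w_1(u), w_2(u)\rangle$ inherits the LNIC jump. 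That claim does not follow from the Lipschitz control on $H$ and is false in general: when $|\langle w_1, w_2\rangle|$ is small compared with $\|w_1\|_2\|w_2\|_2$ (the twisted values of $H$ at the two sections nearly orthogonal), perturbations of the size permitted by \cref{itm:LogLipschitzh} can rotate $\arg\langle w_1, w_2\rangle$ by an arbitrary amount, so the total phase need not jump between the two subcylinders. In that regime $\Theta(\psi_1,\psi_2)$ is automatically close to $\pi/2$ and the desired angular separation holds for a different reason, but you neither state this nor show how to make the resulting case split uniform over a whole subcylinder. The paper's proof avoids the problem entirely: the only angular stability it needs is for $\Theta\bigl(H(v_j^\ell(u)), H(v_j^\ell(u'))\bigr)$ --- the same section $j$ evaluated on the two subcylinders --- which is robust because \cref{alt:HGreaterThan1/4h} bounds $\|H(v_j^\ell(\cdot))\|_2$ below by $\frac14 h$, so the relative perturbation is at most $\delta_0\rho\epsilon_1/128$ by \eqref{eqn:DBoundOnsigma^m1C_l} and \eqref{eqn:Constantm}; it then chains these small angles with the spherical triangle inequality and the identity $\Theta(e^{i\theta}w, w) = |\theta|$ for $|\theta| \leq \pi$ (this is exactly what \eqref{eqn:LNIC_OutputReverseBound} is for) to get $\Theta(V_1(u), V_2(u)) + \Theta(V_1(u'), V_2(u')) \geq \delta_0\rho\epsilon_1/32$, hence a uniform lower bound $\delta_0\rho\epsilon_1/64$ on one of the two subcylinders. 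Your route would need the missing near-parallel versus non-parallel analysis to be carried out and made uniform; as written the step fails.

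A second, smaller defect: you place the deficit from \cref{lem:StrongTriangleInequality} on the index $j_*$ minimizing $\|\psi_j(u)\|_2$, which may vary with $u$, whereas the lemma requires a single triple $(k,j,\ell)$, with $j$ fixed, such that $\chi_j^\ell[\xi,H,h] \leq 1$ on all of $\mathtt{Z}_k(b)$; the conclusion ``$\chi_{j_*}^\ell \leq 1$'' is ill-formed if $j_*$ depends on $u$. Since you do claim a two-sided ratio bound this is repairable by fixing $j$ once (the paper fixes $j$ by comparing $h(v_1^\ell(u_0))$ with $h(v_2^\ell(u_0))$ at one reference point and then proves the one-sided bound $\|V_j(u)\|_2/\|V_{j'}(u)\|_2 \leq 16A_0$ uniformly), but note also that $L$ is not ``of order unity'': the uniform bound obtainable from \cref{alt:HGreaterThan1/4h}, $\|H\|_2 \leq h$, \cref{lem:HTrappedByh} and the $f_m^{(a)}$-comparison is of size $16A_0$, and the third constraint in \eqref{eqn:Constantmu} is calibrated exactly against this $A_0$-dependent value, so leaving $L$ unspecified leaves the final comparison of the deficit with $\mu$ unjustified.
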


\begin{proof}
Let $z_1 \in R_1$ be the center. Recall that we already fixed $b_0 = 1$. Let $\xi = a + ib \in \mathbb C$ with $|a| < a_0'$ and $|b| > b_0$. Let $\mathfrak{q} \subset \mathcal{O}_{\mathbb K}$ be a nontrivial ideal and suppose $H \in C(U, L^2(F_\mathfrak{q}, \mathbb C))$ and $h \in K_{E|b|}(U)$ satisfy \cref{itm:DominatedByh,itm:LogLipschitzh} in \cref{thm:Dolgopyat}. Let $1 \leq l \leq c_b$ be an integer. Since $\frac{\epsilon_1}{|b|} \in (0, \delta_1]$, so it follows from \cref{eqn:U_0InUnionM_eta_ell} that $\mathtt{C}_l(b) \subset M_{\eta_\ell}^{\epsilon_1/|b|}(U_0)$ for some integer $1 \leq \ell \leq \ell_0$. Hence there are $u_0, u_0' \in \mathtt{C}_l(b)$ such that $d(u_0, u_0') \geq \frac{1}{2}\diam_d(\mathtt{C}_l(b))$ and
\begin{align*}
\left\langle\frac{(\exp_{z_1}^{\mathrm{su}})^{-1}(u_0) - (\exp_{z_1}^{\mathrm{su}})^{-1}(u_0')}{\|(\exp_{z_1}^{\mathrm{su}})^{-1}(u_0) - (\exp_{z_1}^{\mathrm{su}})^{-1}(u_0')\|}, \eta_\ell\right\rangle \geq \theta_1.
\end{align*}
Take integers $1 \leq k, k' \leq p_b$ such that $u_0 \in \mathtt{D}_k(b)$ and $u_0' \in \mathtt{D}_{k'}(b)$ so that the hypotheses of \cref{lem:LNIC_Output} are satisfied. Now, suppose \cref{alt:HLessThan3/4h} in \cref{lem:HTrappedByh} holds for one of $(k, j, \ell), (k', j, \ell) \in \Xi(b)$ for some $j \in \{1, 2\}$. Without loss of generality, we can assume it holds for $(k, j, \ell) \in \Xi(b)$ and then it is a straightforward calculation to check that $\chi_j^\ell[\xi, H, h](u) \leq 1$ for all $u \in \mathtt{Z}_k(b)$, using \cref{eqn:Constantmu}. Otherwise, \cref{alt:HGreaterThan1/4h} in \cref{lem:HTrappedByh} holds for all of $(k, 1, \ell), (k, 2, \ell), (k', 1, \ell), (k', 2, \ell) \in \Xi(b)$. Let $u \in \mathtt{Z}_k(b)$ and $u' \in \mathtt{Z}_{k'}(b)$. Note that $\big\|H(v_j^\ell(u))\big\|_2, \big\|H(v_j^\ell(u'))\big\|_2 > 0$ for all $j \in \{1, 2\}$. We would like to apply \cref{lem:StrongTriangleInequality} but first we need to establish bounds on relative angle and relative size. We start with the former. For all $j \in \{1, 2\}$, let $u_j \in \{u, u'\}$ such that $\big\|H(v_j^\ell(u_j))\big\|_2 = \min\left(\big\|H(v_j^\ell(u))\big\|_2, \big\|H(v_j^\ell(u'))\big\|_2\right)$. Then recalling \cref{eqn:DBoundOnsigma^m1C_l,eqn:Constantm}, for all $j \in \{1, 2\}$, we have
\begin{align*}
\frac{\big\|H(v_j^\ell(u)) - H(v_j^\ell(u'))\big\|_2}{\min\left(\big\|H(v_j^\ell(u))\big\|_2, \big\|H(v_j^\ell(u'))\big\|_2\right)} &\leq \frac{E|b|h(v_j^\ell(u_j))D(v_j^\ell(u), v_j^\ell(u'))}{\big\|H(v_j^\ell(u_j))\big\|_2} \\
&\leq 4E|b| \cdot \frac{\epsilon_1 {\kappa_1}^{m_1}}{|b|{c_0}^2{\kappa_2}^m} \leq \frac{\delta_0 \rho \epsilon_1}{128}.
\end{align*}
Using the isomorphism $L^2(F_\mathfrak{q}, \mathbb C) \cong \mathbb R^{2\#F_\mathfrak{q}}$ of real vector spaces and some elementary geometry, the above shows that $\sin(\Theta(H(v_j^\ell(u)), H(v_j^\ell(u')))) \leq \frac{\delta_0 \rho \epsilon_1}{128}$ with $\Theta(H(v_j^\ell(u)), H(v_j^\ell(u'))) \in [0, \frac{\pi}{2})$, for all $j \in \{1, 2\}$. A simple inequality $\frac{\theta}{2} \leq \sin(\theta)$ for all $\theta \in \left[0, \frac{\pi}{2}\right]$ gives the angular bound $\Theta(H(v_j^\ell(u)), H(v_j^\ell(u'))) \leq 2\sin(\Theta(H(v_j^\ell(u)), H(v_j^\ell(u')))) \leq \frac{\delta_0 \rho \epsilon_1}{64}$ for all $j \in \{1, 2\}$. For notational convenience, we define $\varphi: U \to \mathbb R$ by $\varphi(w) = b(\tau_m(v_1^\ell(w)) - \tau_m(v_2^\ell(w)))$ for all $w \in U$. By \cref{lem:LNIC_Output,eqn:LNIC_OutputReverseBound}, we have $\frac{\delta_0 \rho \epsilon_1}{16} \leq |\varphi(u) - \varphi(u')| \leq \pi$. The second inequality is to ensure that we take the correct branch of angle in the following calculations. We will use these bounds to obtain a lower bound for $\Theta(V_1(u), V_2(u))$ or $\Theta(V_1(u'), V_2(u'))$ where we define
\begin{align*}
V_j(w) = e^{(f_m^{(a)} + ib\tau_m)(v_j^\ell(w))} \mathtt{c}_{l, j, \ell}(b)^{-1}H(v_j^\ell(w))
\end{align*}
for all $w \in U$, for all $j \in \{1, 2\}$. Also note that the triangle inequality on spheres implies that for all $w_1, w_2, w_3 \in \mathbb R^{2\#F_\mathfrak{q}} \setminus \{0\}$ we have $\Theta(w_1, w_3) \leq \Theta(w_1, w_2) + \Theta(w_2, w_3)$. Using the triangle inequality, we have
\begin{align*}
&\Theta\left(e^{(f_m^{(a)} + ib\tau_m)(v_1^\ell(u))} \mathtt{c}_{l, 1, \ell}(b)^{-1}H(v_1^\ell(u)), e^{(f_m^{(a)} + ib\tau_m)(v_2^\ell(u))} \mathtt{c}_{l, 2, \ell}(b)^{-1}H(v_2^\ell(u))\right) \\
={}&\Theta\left(e^{i\varphi(u)} \mathtt{c}_{l, 1, \ell}(b)^{-1}H(v_1^\ell(u)), \mathtt{c}_{l, 2, \ell}(b)^{-1}H(v_2^\ell(u))\right) \\
\geq{}&\Theta\left(e^{i\varphi(u)} \mathtt{c}_{l, 1, \ell}(b)^{-1}H(v_1^\ell(u)), e^{i\varphi(u')} \mathtt{c}_{l, 1, \ell}(b)^{-1}H(v_1^\ell(u))\right) \\
&{}- \Theta\left(e^{i\varphi(u')} \mathtt{c}_{l, 1, \ell}(b)^{-1}H(v_1^\ell(u)), e^{i\varphi(u')} \mathtt{c}_{l, 1, \ell}(b)^{-1}H(v_1^\ell(u'))\right) \\
&{}- \Theta\left(\mathtt{c}_{l, 2, \ell}(b)^{-1}H(v_2^\ell(u)), \mathtt{c}_{l, 2, \ell}(b)^{-1}H(v_2^\ell(u'))\right) \\
&{}- \Theta\left(e^{i\varphi(u')} \mathtt{c}_{l, 1, \ell}(b)^{-1}H(v_1^\ell(u')), \mathtt{c}_{l, 2, \ell}(b)^{-1}H(v_2^\ell(u'))\right).
\end{align*}
Since cocycles act unitarily, we can continue the bound as
\begin{align*}
&\Theta\left(e^{(f_m^{(a)} + ib\tau_m)(v_1^\ell(u))} \mathtt{c}_{l, 1, \ell}(b)^{-1}H(v_1^\ell(u)), e^{(f_m^{(a)} + ib\tau_m)(v_2^\ell(u))} \mathtt{c}_{l, 2, \ell}(b)^{-1}H(v_2^\ell(u))\right) \\
\geq{}&\Theta\left(e^{i(\varphi(u) - \varphi(u'))} \mathtt{c}_{l, 1, \ell}(b)^{-1}H(v_1^\ell(u)), \mathtt{c}_{l, 1, \ell}(b)^{-1}H(v_1^\ell(u))\right) \\
{}&- \Theta\left(H(v_1^\ell(u)), H(v_1^\ell(u'))\right) - \Theta\left(H(v_2^\ell(u)), H(v_2^\ell(u'))\right) \\
&{}- \Theta\left(e^{i\varphi(u')} \mathtt{c}_{l, 1, \ell}(b)^{-1}H(v_1^\ell(u')), \mathtt{c}_{l, 2, \ell}(b)^{-1}H(v_2^\ell(u'))\right) \\
={}&|\varphi(u) - \varphi(u')| - \Theta\left(H(v_1^\ell(u)), H(v_1^\ell(u'))\right) - \Theta\left(H(v_2^\ell(u)), H(v_2^\ell(u'))\right) \\
&{}- \Theta\left(e^{i\varphi(u')} \mathtt{c}_{l, 1, \ell}(b)^{-1}H(v_1^\ell(u')), \mathtt{c}_{l, 2, \ell}(b)^{-1}H(v_2^\ell(u'))\right).
\end{align*}
Using the previously calculated angular bounds, we see that
\begin{align*}
&\Theta\left(e^{(f_m^{(a)} + ib\tau_m)(v_1^\ell(u))} \mathtt{c}_{l, 1, \ell}(b)^{-1}H(v_1^\ell(u)), e^{(f_m^{(a)} + ib\tau_m)(v_2^\ell(u))} \mathtt{c}_{l, 2, \ell}(b)^{-1}H(v_2^\ell(u))\right) \\
\geq{}&\frac{\delta_0 \rho \epsilon_1}{16} - \frac{\delta_0 \rho \epsilon_1}{64} - \frac{\delta_0 \rho \epsilon_1}{64} - \Theta\left(e^{(f_m^{(a)} + ib\tau_m)(v_1^\ell(u'))} \mathtt{c}_{l, 1, \ell}(b)^{-1}H(v_1^\ell(u')), \right. \\
{}&\left.e^{(f_m^{(a)} + ib\tau_m)(v_2^\ell(u'))} \mathtt{c}_{l, 2, \ell}(b)^{-1}H(v_2^\ell(u'))\right) \\
={}&\frac{\delta_0 \rho \epsilon_1}{32} - \Theta\left(e^{(f_m^{(a)} + ib\tau_m)(v_1^\ell(u'))} \mathtt{c}_{l, 1, \ell}(b)^{-1}H(v_1^\ell(u')), \right. \\
{}&\left.e^{(f_m^{(a)} + ib\tau_m)(v_2^\ell(u'))} \mathtt{c}_{l, 2, \ell}(b)^{-1}H(v_2^\ell(u'))\right).
\end{align*}
Hence $\Theta(V_1(u), V_2(u)) + \Theta(V_1(u'), V_2(u')) \geq \frac{\delta_0 \rho \epsilon_1}{32}$ for all $u \in \mathtt{Z}_k(b)$, for all $u' \in \mathtt{Z}_{k'}(b)$. Thus, without loss of generality, we can assume that $\Theta(V_1(u), V_2(u)) \geq \frac{\delta_0 \rho \epsilon_1}{64}$ for all $u \in \mathtt{Z}_k(b)$, which establishes the required bound on relative angle. For the bound on relative size, let $(j, j') \in \{(1, 2), (2, 1)\}$ such that $h(v_j^\ell(u_0)) \leq h(v_{j'}^\ell(u_0))$ for some $u_0 \in \mathtt{Z}_k(b)$. Then by \cref{lem:HTrappedByh}, we have
\begin{align*}
\frac{\|V_j(u)\|_2}{\|V_{j'}(u)\|_2} &= \frac{\big\|e^{(f_m^{(a)} + ib\tau_m)(v_j^\ell(u))} \mathtt{c}_{l, j, \ell}(b)^{-1}H(v_j^\ell(u))\big\|_2}{\big\|e^{(f_m^{(a)} + ib\tau_m)(v_{j'}^\ell(u))} \mathtt{c}_{l, j', \ell}(b)^{-1}H(v_{j'}^\ell(u))\big\|_2} = \frac{e^{f_m^{(a)}(v_j^\ell(u))}\big\|H(v_j^\ell(u))\big\|_2}{e^{f_m^{(a)}(v_{j'}^\ell(u))}\big\|H(v_{j'}^\ell(u))\big\|_2} \\
&\leq \frac{4e^{f_m^{(a)}(v_j^\ell(u)) - f_m^{(a)}(v_{j'}^\ell(u))}h(v_j^\ell(u))}{h(v_{j'}^\ell(u))} \leq \frac{16e^{2mT_0}h(v_j^\ell(u_0))}{h(v_{j'}^\ell(u_0))} \leq 16e^{2mT_0}
\end{align*}
for all $u \in \mathtt{Z}_k(b)$, which establishes the required bound on relative size. Now applying \cref{lem:StrongTriangleInequality,eqn:Constantmu} and $\|H\| \leq h$ on $\|V_j(u) + V_{j'}(u)\|_2$ gives $\chi_j^\ell[\xi, H, h](u) \leq 1$ for all $u \in \mathtt{Z}_k(b)$.
\end{proof}

\begin{lemma}
\label{lem:DominatedByDolgopyat}
There exists $a_0 > 0$ such that for all $\xi = a + ib \in \mathbb C$ with $|a| < a_0$ and $|b| > b_0$, for all nontrivial ideals $\mathfrak{q} \subset \mathcal{O}_{\mathbb K}$, if $H \in C(U, L^2(F_\mathfrak{q}, \mathbb C))$ and $h \in K_{E|b|}(U)$ satisfy \cref{itm:DominatedByh,itm:LogLipschitzh} in \cref{thm:Dolgopyat}, then there exists $J \in \mathcal{J}(b)$ such that
\begin{align*}
\left\|\mathcal{M}_{\xi, \mathfrak{q}}^m(H)(u)\right\|_2 \leq \mathcal{N}_{a, J}(h)(u)
\end{align*}
for all $u \in U$.
\end{lemma}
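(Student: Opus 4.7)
The plan is to construct $J \in \mathcal{J}(b)$ by selecting one good triple per cylinder via \cref{lem:chiLessThan1}, and then verify the pointwise inequality by regrouping the preimages of $u$ under $\sigma^m$. Concretely, for each integer $1 \leq l \leq c_b$, \cref{lem:chiLessThan1} provides a triple $(k_l, j_l, \ell_l) \in \Xi(b)$ with $\mathtt{D}_{k_l}(b) \subset \mathtt{C}_l(b)$ such that $\chi_{j_l}^{\ell_l}[\xi, H, h](w) \leq 1$ for all $w \in \mathtt{Z}_{k_l}(b)$. I would take $J = \{(k_l, j_l, \ell_l) : 1 \leq l \leq c_b\}$, which is dense in $\Xi(b)$ by construction, and fix $a_0$ to be any positive constant no larger than those appearing in the preceding lemmas.

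It suffices to check the pointwise bound for $u \in \mathcal{U} = \interior(U)$ and extend to $U$ by continuity (or by appealing to \cref{def:TransferOperatorOriginal}). Fixing such a $u$, I would partition $\sigma^{-m}(u)$ as follows. For every $l$ with $u \in \mathtt{Z}_{k_l}(b)$, group the two preimages $v_{j_l}^{\ell_l}(u)$ and $v_{3-j_l}^{\ell_l}(u)$ together; by the mutual disjointness of the images $v_j^\ell(U)$ from \cref{lem:LNIC} and the fact that $J$ contains exactly one triple per index $l$, precisely the first member lies in $\bigcup_{(k,j,\ell) \in J} \mathtt{X}_{j,k}^\ell(b)$, so $\beta_J(v_{j_l}^{\ell_l}(u)) = 1 - \mu$ while $\beta_J(v_{3-j_l}^{\ell_l}(u)) = 1$. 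All remaining preimages $v$ satisfy $\beta_J(v) = 1$, for otherwise $v = v_j^\ell(w)$ for some $(k, j, \ell) \in J$ and some $w \in \mathtt{Z}_k(b)$, which would force $w = \sigma^m(v) = u$ and thus already place $v$ inside a constructed pair. Writing $\mathcal{M}_{\xi, \mathfrak{q}}^m(H)(u)$ as the sum over $\sigma^{-m}(u)$ and applying the triangle inequality group-by-group, each pair contributes a norm bounded — directly by $\chi_{j_l}^{\ell_l}[\xi, H, h](u) \leq 1$ — by
\[
(1 - \mu) e^{f_m^{(a)}(v_{j_l}^{\ell_l}(u))} h(v_{j_l}^{\ell_l}(u)) + e^{f_m^{(a)}(v_{3-j_l}^{\ell_l}(u))} h(v_{3-j_l}^{\ell_l}(u)),
\]
while each unpaired preimage $v$ contributes at most $e^{f_m^{(a)}(v)} \|H(v)\|_2 \leq e^{f_m^{(a)}(v)} h(v)$ thanks to \cref{itm:DominatedByh}. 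In every case the bound matches the corresponding $\beta_J$-weighted summand of $\mathcal{N}_{a, J}(h)(u) = \sum_{v \in \sigma^{-m}(u)} e^{f_m^{(a)}(v)} \beta_J(v) h(v)$ term by term, and summation yields $\|\mathcal{M}_{\xi, \mathfrak{q}}^m(H)(u)\|_2 \leq \mathcal{N}_{a, J}(h)(u)$.

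The main delicate point is verifying that this partition is well-defined and exhausts $\sigma^{-m}(u)$ without double-counting; this reduces to the two structural facts mentioned above (disjointness of the section images, and the equivalence between $v_j^\ell(u) \in \mathtt{X}_{j,k}^\ell(b)$ and $u \in \mathtt{Z}_k(b)$). Once this bookkeeping is in place the proof is a repackaging of \cref{lem:chiLessThan1}, since the genuine analytic obstacle — producing, for every cylinder, a triple whose associated $\chi$ is at most $1$ — has already been overcome there via the local non-integrability condition and the strengthened triangle inequality of \cref{lem:StrongTriangleInequality}. So beyond the organization of the sum, there is no further analytic difficulty to surmount here.
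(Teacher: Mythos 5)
Your proposal is correct and follows essentially the same route as the paper: build $J$ by selecting, for each $\mathtt{C}_l(b)$, the triple furnished by \cref{lem:chiLessThan1}, then split the sum over $\sigma^{-m}(u)$ into the distinguished pair $v_1^{\ell_l}(u), v_2^{\ell_l}(u)$ (bounded via $\chi_{j_l}^{\ell_l}[\xi, H, h](u) \leq 1$) and the remaining branches (bounded via \cref{itm:DominatedByh} with $\beta_J = 1$), matching $\mathcal{N}_{a, J}(h)(u)$ termwise. The only cosmetic difference is that you allow $u$ to lie in several $\mathtt{Z}_{k_l}(b)$; the pairwise disjointness of the sets $\mathtt{X}_{j,k}^\ell(b)$ noted in \cref{subsec:ConstructionOfDolgopyatOperators} forces the $\mathtt{Z}_k(b)$ to be pairwise disjoint, so at most one pair occurs, exactly as in the paper's proof.
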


\begin{proof}
Fix $a_0 = a_0'$ and recall that we already fixed $b_0 = 1$. Let $\xi = a + ib \in \mathbb C$ with $|a| < a_0$ and $|b| > b_0$, and $\mathfrak{q} \subset \mathcal{O}_{\mathbb K}$ be a nontrivial ideal. Suppose $H \in C(U, L^2(F_\mathfrak{q}, \mathbb C))$ and $h \in K_{E|b|}(U)$ satisfy \cref{itm:DominatedByh,itm:LogLipschitzh} in \cref{thm:Dolgopyat}. For all integers $1 \leq l \leq c_b$, we can choose a $(k_l, j_l, \ell_l) \in \Xi(b)$ as guaranteed by \cref{lem:chiLessThan1}. Let $J = \{(k_l, j_l, \ell_l) \in \Xi(b): 1 \leq l \leq c_b\} \subset \Xi(b)$ which is then dense by construction and so $J \in \mathcal{J}(b)$. Now we show that $\big\|\mathcal{M}_{\xi, \mathfrak{q}}^m(H)\big\| \leq \mathcal{N}_{a, J}(h)$ for this choice of $J \in \mathcal{J}(b)$. We consider $u \in \interior(U)$ and refer the reader to \cref{def:TransferOperatorOriginal} for the case when $u \in \partial(U)$. If $u \notin \mathtt{Z}_k(b)$ for all $(k, j, \ell) \in J$, then $\beta_J(v) = 1$ for all branch $v \in \sigma^{-m}(u)$ and hence the bound follows trivially by definitions. Otherwise there is an integer $1 \leq l \leq c_b$ such that $u \in \mathtt{Z}_{k_l}(b)$ corresponding to $(k_l, j_l, \ell_l) \in J$. Note that by definition of $J$, we have $(k_l, j, \ell) \notin J$ for all $(k_l, j, \ell) \in \Xi(b)$ with $(j, \ell) \neq (j_l, \ell_l)$. Let $(j_l, j_l') \in \{(1, 2), (2, 1)\}$. Then by construction of $J$, we have $\chi_{j_l}^{\ell_l}[\xi, H, h](u) \leq 1$ and $\beta_J(v_{j_l}^{\ell_l}(u)) = 1 - \mu$ and $\beta_J(v_j^\ell(u)) = 1$ for all $(k_l, j, \ell) \in \Xi(b)$ with $(j, \ell) \neq (j_l, \ell_l)$. Hence we compute that
\begin{comment}
Let $J_1 = \{(k, 1, \ell) \in \Xi(b): \chi_1^\ell[\xi, H, h](u) \leq 1 \text{ for all } u \in \hat{\mathtt{Z}}_k(b)\}$ and let $J_2 = \{(k, 2, \ell) \in \Xi(b) \setminus J_1: \chi_2^\ell[\xi, H, h](u) \leq 1 \text{ for all } u \in \hat{\mathtt{Z}}_k(b)\}$ and finally let $J = J_1 \cup J_2$. By \cref{lem:chiLessThan1}, $J \in \Xi(b)$ is a dense subset and hence $J \in \mathcal{J}(b)$. Now we show that $\left\|\mathcal{M}_{\xi, q}^m(H)\right\| \leq \mathcal{N}_{J, a}(h)$ for this choice of $J \in \mathcal{J}(b)$. Let $u \in \hat{U}$. If $u \notin \hat{\mathtt{Z}}_k(b)$ for all $(k, j, \ell) \in J$, then $\beta_J(v) = 1$ for all branch $v \in \sigma^{-m}(u)$ and hence the bound follows trivially by definitions. Now suppose that $u \in \hat{\mathtt{Z}}_k(b)$ for some $(k, j, \ell) \in J$. Also suppose $\mathtt{D}_k(b) \subset \mathtt{C}_l(b)$ for some integer $1 \leq l \leq c_b$. Let $j' \in \{1, 2\} \setminus \{j\}$. Then by construction of $J$, we have $\chi_j^\ell[\xi, H, h](u) \leq 1$ for all $u \in \hat{\mathtt{Z}}_k(b)$. Again, $(k, j', \ell) \notin J$ by construction and so we have $\beta_J(v_j^\ell(u)) = 1 - \mu$ and $\beta_J(v_{j'}^\ell(u)) = 1$ for all $u \in \hat{\mathtt{Z}}_k(b)$. Hence for all $u \in \hat{\mathtt{Z}}_k(b)$, we compute that
\end{comment}
\begin{align*}
&\left\|\mathcal{M}_{\xi, \mathfrak{q}}^m(H)(u)\right\|_2 \\
={}& \left\|\sum_{v \in \sigma^{-m}(u)} e^{(f_m^{(a)} + ib\tau_m)(v)} \mathtt{c}^m(v)^{-1} H(v)\right\|_2 \\
\leq{}&\sum_{v \in \sigma^{-m}(u), v \neq v_1^{\ell_l}(u), v \neq v_2^{\ell_l}(u)} \left\|e^{(f_m^{(a)} + ib\tau_m)(v)} \mathtt{c}^m(v)^{-1} H(v)\right\|_2 \\
&{}+ \left\|e^{(f_m^{(a)} + ib\tau_m)(v_{j_l}^{\ell_l}(u))} \mathtt{c}_{l, j_l, \ell_l}(b)^{-1} H(v_{j_l}^{\ell_l}(u))\right. \\
&\left.{}+ e^{(f_m^{(a)} + ib\tau_m)(v_{j_l'}^{\ell_l}(u))} \mathtt{c}_{l, j_l', \ell_l}(b)^{-1} H(v_{j_l'}^{\ell_l}(u))\right\|_2 \\
\leq{}&\sum_{v \in \sigma^{-m}(u), v \neq v_1^{\ell_l}(u), v \neq v_2^{\ell_l}(u)} e^{f_m^{(a)}(v)} h(v) \\
{}&+ \left((1 - \mu)e^{f_m^{(a)}(v_{j_l}^{\ell_l}(u))}h(v_{j_l}^{\ell_l}(u)) + e^{f_m^{(a)}(v_{j_l'}^{\ell_l}(u))}h(v_{j_l'}^{\ell_l}(u))\right) \\
\leq{}&\mathcal{N}_{a, J}(h)(u).
\end{align*}
\end{proof}

\section{Uniform exponential mixing of the geodesic flow}
\label{sec:UniformExponentialMixingOfTheGeodesicFlow}
The aim of this section is to prove \cref{thm:TheoremUniformExponentialMixingOfGeodesicFlow} using the proven spectral bound \cref{thm:TheoremGeodesicFlow}.

%The equivalence relation $\sim$ on $\Sigma \times \mathbb R$ defined by $(u, t + \tau(u)) \sim (\sigma(u), t)$ for all $(u, t) \in \Sigma \times \mathbb R$ gives the suspension space $\Sigma^\tau = (\Sigma \times \mathbb R)/\sim$.

Let $\mathfrak{q} \subset \mathcal{O}_{\mathbb K}$ be an ideal coprime to $\mathfrak{q}_0$. Similar to $R^{\mathfrak{q}, \tau}$, consider the suspension space $U^{\mathfrak{q}, \tau} = (U \times \tilde{G}_\mathfrak{q} \times \mathbb R_{\geq 0})/\mathord{\sim}$ where $\sim$ is the equivalence relation on $U \times \tilde{G}_\mathfrak{q} \times \mathbb R_{\geq 0}$ defined by $(u, g, t + \tau(u)) \sim (\sigma(u), g\mathtt{c}_\mathfrak{q}(u), t)$ for all $(u, g, t) \in U \times \tilde{G}_\mathfrak{q} \times \mathbb R_{\geq 0}$. Define the norm $\|\phi\|_{\mathcal{B}_\mathfrak{q}} = \|\phi\|_\infty + \Lip_{d, |\cdot|}(\phi)$ where the second term is defined to be
\begin{align*}
&\sup\left\{\frac{|\phi(u, g, t) - \phi(u', g, t')|}{d(u, u') + |t - t'|}: u, u' \in U, g \in \tilde{G}_\mathfrak{q}, t \in [0, \tau(u)), t' \in [0, \tau(u')), \right.\\
&\left.(u, t) \neq (u', t')\rule{0cm}{0.5cm}\right\}
\end{align*}
for any function $\phi: U^{\mathfrak{q}, \tau} \to \mathbb R$ and define the associated space $\mathcal{B}_\mathfrak{q} = \{\phi: U^{\mathfrak{q}, \tau} \to \mathbb R: \phi(u, g, \cdot)|_{[0, \tau(u))} \in C^1([0, \tau(u)), \mathbb R), \|\phi\|_{\mathcal{B}_\mathfrak{q}} < \infty\}$. For all $\phi \in \mathcal{B}_\mathfrak{q}$, for all $\xi \in \mathbb C$, define the measurable function $\hat{\phi}_\xi \in B(U, L^2(\tilde{G}_\mathfrak{q}, \mathbb C))$ by
\begin{align*}
\hat{\phi}_\xi(u)(g) = \int_0^{\tau(u)} \phi(u, g, t)e^{-\xi t} \, dt
\end{align*}
for all $g \in \tilde{G}_\mathfrak{q}$, for all $u \in U$.

\begin{remark}
For all $u \in U$, for all $g \in \tilde{G}_\mathfrak{q}$, the map $\mathbb C \to \mathbb C$ defined by $\xi \mapsto \hat{\phi}_\xi(u)(g)$ is entire.
\end{remark}

\begin{comment}
Let $\phi: \hat{U}^{\mathfrak{q}, \tau} \to \mathbb C$. Define the norms
\begin{align*}
\|\phi\|_{\mathcal{B}_0} &= \|\phi\|_\infty + \sup \frac{|\phi(u, g, t) - \phi(u', g', s')|}{d(u, u) + |s - s'|} \\
\|\phi\|_{\mathcal{B}_1} &= \|\phi\|_\infty + \sup\{\Var_{[0, \tau(u)]}\phi(u, g, \cdot): u \in \hat{U}, g \in \tilde{G}_\mathfrak{q}\}
\end{align*}
and the respective spaces $\mathcal{B}_0 = \{\phi: \hat{U}^{\mathfrak{q}, \tau} \to \mathbb C: \|\phi\|_{\mathcal{B}_0} < \infty\}$ and $\mathcal{B}_1 = \{\phi: \hat{U}^{\mathfrak{q}, \tau} \to \mathbb C: \|\phi\|_{\mathcal{B}_1} < \infty\}$. \\
\end{comment}

\begin{remark}
Let $\mathfrak{q} \subset \mathcal{O}_{\mathbb K}$ be an ideal coprime to $\mathfrak{q}_0$, $\phi \in \mathcal{B}_\mathfrak{q}$ and $\xi \in \mathbb C$. Because of $\tau$ involved in the definition of $\hat{\phi}_\xi$, it is not necessarily Lipschitz but it is essentially Lipschitz. However we can see from the proof of \cref{lem:ExtractNormOfLaplaceTransformDecay} that in fact $\mathcal{M}_{\xi, \mathfrak{q}}(\hat{\phi}_{-\overline{\xi}}) \in \mathcal{V}_\mathfrak{q}(U)$. Moreover, we can then deduce that if $\sum_{g \in \tilde{G}_\mathfrak{q}} \phi(u, g, t) = 0$ for all $(u, t) \in U^\tau$, then we have $\mathcal{M}_{\xi, \mathfrak{q}}(\hat{\phi}_{-\overline{\xi}}) \in \mathcal{W}_\mathfrak{q}(U)$.
\end{remark}

\subsection{Correlation function and its Laplace transform}
Let $\mathfrak{q} \subset \mathcal{O}_{\mathbb K}$ be an ideal coprime to $\mathfrak{q}_0$ and $\phi, \psi \in \mathcal{B}_\mathfrak{q}$. Define the continuous correlation function $\Upsilon_{\phi, \psi} \in L^\infty(\mathbb R_{\geq 0}, \mathbb R)$ by
\begin{align*}
\Upsilon_{\phi, \psi}(t) = \sum_{g \in \tilde{G}_\mathfrak{q}} \int_U \int_0^{\tau(u)} \phi(u, g, r + t) \psi(u, g, r) \, dr \, d\nu_U(u)
\end{align*}
for all $t \in \mathbb R_{\geq 0}$. We can decompose this into continuous functions as $\Upsilon_{\phi, \psi} = \Upsilon_{\phi, \psi}^0 + \Upsilon_{\phi, \psi}^1$ where we define
\begin{align*}
\Upsilon_{\phi, \psi}^0(t) &= \sum_{g \in \tilde{G}_\mathfrak{q}} \int_U \int_{\max(0, \tau(u) - t)}^{\tau(u)} \phi(u, g, r + t) \psi(u, g, r) \, dr \, d\nu_U(u) \\
\Upsilon_{\phi, \psi}^1(t) &= \sum_{g \in \tilde{G}_\mathfrak{q}} \int_U \int_0^{\max(0, \tau(u) - t)} \phi(u, g, r + t) \psi(u, g, r) \, dr \, d\nu_U(u)
\end{align*}
for all $t \in \mathbb R_{\geq 0}$.

\begin{remark}
This decomposition is useful because the Laplace transform of $\Upsilon_{\phi, \psi}^0$ has a clean expression in terms of the congruence transfer operator. Combining this with the fact that $\Upsilon_{\phi, \psi}(t) = \Upsilon_{\phi, \psi}^0(t)$ for all $t \geq \overline{\tau}$, it is clear that it suffices to study decay properties of $\Upsilon_{\phi, \psi}^0$.
\end{remark}

Consider the Laplace transform on the half plane $\hat{\Upsilon}_{\phi, \psi}^0: \{\xi \in \mathbb C: \Re(\xi) > 0\} \to \mathbb C$ defined by
\begin{align*}
\hat{\Upsilon}_{\phi, \psi}^0(\xi) = \int_0^\infty \Upsilon_{\phi, \psi}^0(t) e^{-\xi t} \, dt
\end{align*}
for all $\xi = a + ib \in \mathbb C$ with $a > 0$.

\begin{remark}
Clearly $\hat{\Upsilon}_{\phi, \psi}^0$ is holomorphic. However, it is not at all obvious from the above definition whether it can be defined holomorphically on a larger half plane. Nevertheless, using the congruence transfer operator bounds, we will show that $\hat{\Upsilon}_{\phi, \psi}^0$ has a holomorphic extension to a larger half plane $\{\xi \in \mathbb C: \Re(\xi) > -a_0\}$ for some $a_0 > 0$. This will allow us to apply the inverse Laplace transform formula to extract an exponential decay for $\Upsilon_{\phi, \psi}^0$.
\end{remark}

\begin{lemma}
\label{lem:LaplaceTransformOfSymbolicCodingCorrelationFunctionInTermsOfTransferOperator}
For all ideals $\mathfrak{q} \subset \mathcal{O}_{\mathbb K}$ coprime to $\mathfrak{q}_0$, for all $\phi, \psi \in \mathcal{B}_\mathfrak{q}$, for all $\xi = a + ib \in \mathbb C$ with $a > 0$, we have
\begin{align*}
\hat{\Upsilon}_{\phi, \psi}^0(\xi) = \sum_{k = 1}^\infty {\lambda_a}^k \left\langle \hat{\phi}_\xi, \mathcal{M}_{\xi, \mathfrak{q}}^k(\hat{\psi}_{-\overline{\xi}}) \right\rangle.
\end{align*}
\end{lemma}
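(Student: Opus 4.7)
The plan is to expand the Laplace transform, perform a change of variable that separates it into the product of two one-variable integrals, and then recognize the resulting expression as the desired sum via Pollicott's identity together with the duality properties of the (normalized) congruence transfer operator. Let $\xi = a+ib$ with $a > 0$.

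First I would substitute the definition of $\Upsilon^0_{\phi,\psi}$ into $\hat\Upsilon^0_{\phi,\psi}(\xi)$ and apply Fubini (justified since $\phi,\psi$ are bounded, $\tau_k(u) \geq k\underline\tau$ grows linearly, and $a > 0$). The constraint $r \in [\max(0,\tau(u)-t),\tau(u))$ with $t \geq 0$ is exactly $\{0 \leq r < \tau(u),\ s := r+t \geq \tau(u)\}$. Changing variable from $t$ to $s$ at fixed $r$ gives
\[
\hat\Upsilon^0_{\phi,\psi}(\xi) = \int_U \sum_{g \in \tilde G_\mathfrak{q}} \left(\int_0^{\tau(u)} \psi(u,g,r) e^{\xi r}\,dr\right) \left(\int_{\tau(u)}^\infty \phi(u,g,s) e^{-\xi s}\,ds\right) d\nu_U(u).
\]
The inner $r$-integral equals $\overline{\hat\psi_{-\overline\xi}(u)(g)}$ because $\psi$ is real.

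Next I would partition $[\tau(u),\infty) = \bigsqcup_{k \geq 1}[\tau_k(u),\tau_{k+1}(u))$ and, on each piece, apply the equivalence relation on $U^{\mathfrak{q},\tau}$ iteratively: setting $s' = s - \tau_k(u) \in [0,\tau(\sigma^k(u)))$, we have $(u,g,s) \sim (\sigma^k(u), g\,\mathtt{c}^k_\mathfrak{q}(u), s')$ and hence $\phi(u,g,s) = \phi(\sigma^k(u), g\,\mathtt{c}^k_\mathfrak{q}(u), s')$. Therefore
\[
\int_{\tau_k(u)}^{\tau_{k+1}(u)} \phi(u,g,s) e^{-\xi s}\,ds = e^{-\xi \tau_k(u)} \hat\phi_\xi(\sigma^k(u))(g\,\mathtt{c}^k_\mathfrak{q}(u)),
\]
and summing yields
\[
\hat\Upsilon^0_{\phi,\psi}(\xi) = \sum_{k=1}^\infty \int_U e^{-\xi\tau_k(u)} \sum_{g \in \tilde G_\mathfrak{q}} \hat\phi_\xi(\sigma^k(u))(g\,\mathtt{c}^k_\mathfrak{q}(u))\, \overline{\hat\psi_{-\overline\xi}(u)(g)} \, d\nu_U(u).
\]
The interchange is justified by the uniform bound $\|e^{-\xi\tau_k(u)}\| \leq e^{-ak\underline\tau}$ and $\|\hat\phi_\xi\|_\infty,\|\hat\psi_{-\overline\xi}\|_\infty \leq \overline\tau e^{a\overline\tau} \max(\|\phi\|_\infty,\|\psi\|_\infty)$.

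Finally I would identify each summand with $\lambda_a^k \langle \hat\phi_\xi, \mathcal{M}^k_{\xi,\mathfrak{q}}(\hat\psi_{-\overline\xi})\rangle$. Expanding $\mathcal{M}^k_{\xi,\mathfrak{q}}$ on the right, substituting $g' = g\,\mathtt{c}^k_\mathfrak{q}(u')^{-1}$ in the inner group sum to absorb the cocycle twist, using the identity
\[
e^{f_k^{(a)}(u')} = \frac{e^{-(\delta_\Gamma+a)\tau_k(u')}\, h_0(u')}{h_0(\sigma^k(u'))\,\lambda_a^k}
\]
and $d\nu_U = h_0\,d\nu_0$, and then applying the dual relation $(\mathcal{L}_{-\delta_\Gamma\tau})^*(\nu_0) = \nu_0$ (valid because $\lambda_0 = 1$) in the form
\[
\int_U \sum_{u'\in\sigma^{-k}(u)} e^{-\delta_\Gamma\tau_k(u')} F(u')\,d\nu_0(u) = \int_U F(u')\,d\nu_0(u'),
\]
one checks that $\lambda_a^k\langle \hat\phi_\xi, \mathcal{M}^k_{\xi,\mathfrak{q}}(\hat\psi_{-\overline\xi})\rangle$ equals the $k$\textsuperscript{th} summand above, completing the proof.

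The computation is essentially bookkeeping; the main thing to get right is the direction of the cocycle twist, which is why the substitution $g' = g\,\mathtt{c}^k_\mathfrak{q}(u')^{-1}$ must be performed on the group sum before invoking the transfer-operator duality, so that the ``forward'' expression obtained from the suspension and the ``backward'' expression obtained from the operator $\mathcal{M}^k_{\xi,\mathfrak{q}}$ match exactly.
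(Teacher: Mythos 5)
Your proposal is correct and follows essentially the same route as the paper's proof: change of variables in the Laplace transform, splitting $[\tau(u), \infty)$ into the return-time intervals $[\tau_k(u), \tau_{k+1}(u))$ and using the suspension identification $(u, g, s) \sim (\sigma^k(u), g\mathtt{c}_\mathfrak{q}^k(u), s - \tau_k(u))$, and then the RPF duality (the paper invokes $\mathcal{L}_0^*(\nu_U) = \nu_U$ directly, which is the same as your unnormalized form $\mathcal{L}_{-\delta_\Gamma\tau}^*(\nu_0) = \nu_0$ together with $d\nu_U = h_0 \, d\nu_0$), with the cocycle twist handled correctly. The only cosmetic difference is that you verify the identity summand-by-summand starting from $\lambda_a^k \langle \hat{\phi}_\xi, \mathcal{M}_{\xi, \mathfrak{q}}^k(\hat{\psi}_{-\overline{\xi}})\rangle$ rather than transforming the integral forward, and you should note (as the paper does) that the sum-over-preimages formula for $\mathcal{M}_{\xi, \mathfrak{q}}^k$ is used only on $\interior(U)$, which suffices since $\nu_U(\hat{U}) = 1$.
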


\begin{proof}
Let $\mathfrak{q} \subset \mathcal{O}_{\mathbb K}$ be an ideal coprime to $\mathfrak{q}_0$ and $\phi, \psi \in \mathcal{B}_\mathfrak{q}$ and $\xi = a + ib \in \mathbb C$ with $a > 0$. Observe that since $\nu_U(\hat{U}) = 1$, we can compute some of the integrals over $\interior(U)$ instead of $U$ when convenient to avoid the boundary. Define the product measure $\nu$ by $d\nu = dt \, dr \, d\nu_U$ for convenience. We calculate that
\begin{align*}
&\hat{\Upsilon}_{\phi, \psi}^0(\xi) \\
={}&\int_0^\infty \left(\sum_{g \in \tilde{G}_\mathfrak{q}} \int_U \int_{\max(0, \tau(u) - t)}^{\tau(u)} \phi(u, g, r + t) \psi(u, g, r) \, dr \, d\nu_U(u)\right) e^{-\xi t} \, dt \\
={}&\sum_{g \in \tilde{G}_\mathfrak{q}} \int_U \int_0^{\tau(u)} \int_{\tau(u) - r}^\infty e^{-\xi t} \phi(u, g, r + t) \psi(u, g, r) \, dt \, dr \, d\nu_U(u) \\
={}&\sum_{g \in \tilde{G}_\mathfrak{q}} \int_U \int_0^{\tau(u)} \int_{\tau(u)}^\infty e^{-\xi (t - r)} \phi(u, g, t) \psi(u, g, r) \, d\nu(t, r, u) \\
={}&\sum_{g \in \tilde{G}_\mathfrak{q}} \int_U \int_0^{\tau(u)} \sum_{k = 1}^\infty \int_{\tau_k(u)}^{\tau_{k + 1}(u)} e^{-\xi (t - r)} \phi(u, g, t) \psi(u, g, r) \, d\nu(t, r, u) \\
={}&\sum_{k = 1}^\infty \sum_{g \in \tilde{G}_\mathfrak{q}} \int_U \int_0^{\tau(u)} \int_0^{\tau(\sigma^k(u))} e^{-\xi (t + \tau_k(u) - r)} \phi(u, g, t + \tau_k(u)) \psi(u, g, r) \, d\nu(t, r, u) \\
={}&\sum_{k = 1}^\infty \sum_{g \in \tilde{G}_\mathfrak{q}} \int_U \int_0^{\tau(u)} \int_0^{\tau(\sigma^k(u))} e^{-\xi (t + \tau_k(u) - r)} \phi(\sigma^k(u), g\mathtt{c}_\mathfrak{q}^k(u), t) \psi(u, g, r) \, d\nu(t, r, u) \\
={}&\sum_{k = 1}^\infty \int_U e^{-\xi \tau_k(u)} \left\langle \mathtt{c}_\mathfrak{q}^k(u)\hat{\phi}_\xi(\sigma^k(u)), \hat{\psi}_{-\overline{\xi}}(u) \right\rangle \, d\nu_U(u).
\end{align*}
Now using the fact that cocycles act unitarily and $\mathcal{L}_0^*(\nu_U) = \nu_U$, we continue the calculation as
\begin{align*}
&\hat{\Upsilon}_{\phi, \psi}^0(\xi) \\
={}&\sum_{k = 1}^\infty \int_U e^{-\xi \tau_k(u)} \left\langle \hat{\phi}_\xi(\sigma^k(u)), \mathtt{c}_\mathfrak{q}^k(u)^{-1}\hat{\psi}_{-\overline{\xi}}(u) \right\rangle \, d(\mathcal{L}_0^*)^k(\nu_U)(u) \\
={}&\sum_{k = 1}^\infty \int_{\interior(U)} \sum_{u' \in \sigma^{-k}(u)} e^{f_k^{(0)}(u')} e^{-\xi \tau_k(u')} \left\langle \hat{\phi}_\xi(\sigma^k(u')), \mathtt{c}_\mathfrak{q}^k(u')^{-1}\hat{\psi}_{-\overline{\xi}}(u') \right\rangle \, d\nu_U(u) \\
={}&\sum_{k = 1}^\infty {\lambda_a}^k \int_{\interior(U)} \left\langle \hat{\phi}_\xi(u), \sum_{u' \in \sigma^{-k}(u)} e^{(f_k^{(a)} + ib\tau_k)(u')} \mathtt{c}_\mathfrak{q}^k(u')^{-1}\hat{\psi}_{-\overline{\xi}}(u') \right\rangle \, d\nu_U(u) \\
={}&\sum_{k = 1}^\infty {\lambda_a}^k \left\langle \hat{\phi}_\xi, \mathcal{M}_{\xi, \mathfrak{q}}^k(\hat{\psi}_{-\overline{\xi}}) \right\rangle.
\end{align*}
\end{proof}

\subsection{Exponential decay of the correlation function}
\begin{lemma}
\label{lem:ExtractNormOfLaplaceTransformDecay}
There exists $C > 0$ such that for all ideals $\mathfrak{q} \subset \mathcal{O}_{\mathbb K}$ coprime to $\mathfrak{q}_0$, for all $\phi \in \mathcal{B}_\mathfrak{q}$, for all $\xi = a + ib \in \mathbb C$ with $|a| \leq a_0'$, we have
\begin{align*}
\left\|\hat{\phi}_\xi\right\|_2 &\leq \left\|\hat{\phi}_\xi\right\|_\infty \leq C N_{\mathbb K}(\mathfrak{q})^C \frac{\|\phi\|_{\mathcal{B}_\mathfrak{q}}}{\max(1, |b|)} & \left\|\mathcal{M}_{\xi, \mathfrak{q}}(\hat{\phi}_{-\overline{\xi}})\right\|_{1, b} &\leq C N_{\mathbb K}(\mathfrak{q})^C \frac{\|\phi\|_{\mathcal{B}_\mathfrak{q}}}{\max(1, |b|)}.
\end{align*}
\end{lemma}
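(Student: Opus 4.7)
The plan is to handle the two bounds in turn. For the first, I would estimate $|\hat{\phi}_\xi(u)(g)|$ pointwise in $u \in U$ and $g \in \tilde{G}_\mathfrak{q}$ in two regimes. When $|b| \leq 1$, the trivial bound $|\hat{\phi}_\xi(u)(g)| \leq e^{|a|T_0}\|\phi\|_\infty T_0$ suffices; when $|b| > 1$, a single integration by parts in $t$ gives
\begin{align*}
\hat{\phi}_\xi(u)(g) = \frac{\phi(u,g,0) - \phi(u,g,\tau(u))e^{-\xi\tau(u)}}{\xi} + \frac{1}{\xi}\int_0^{\tau(u)} \partial_t\phi(u,g,t)e^{-\xi t}\,dt,
\end{align*}
so that $|\hat{\phi}_\xi(u)(g)| \leq C\|\phi\|_{\mathcal{B}_\mathfrak{q}}/|b|$ using $|a| < a_0'$, $\tau \leq T_0$, and $|\partial_t\phi| \leq \Lip_{d, |\cdot|}(\phi)$. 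Passing from this pointwise bound to $\|\hat{\phi}_\xi(u)\|_2 = \bigl(\sum_g |\hat{\phi}_\xi(u)(g)|^2\bigr)^{1/2}$ costs a factor of $\sqrt{\#\tilde{G}_\mathfrak{q}} \leq N_{\mathbb K}(\mathfrak{q})^{c/2}$ for the constant $c$ appearing in the proof of \cref{lem:L2FlatteningLemma}, which yields the first displayed inequality.

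For the second bound, the $\|\cdot\|_\infty$ part follows immediately from the first together with the uniform estimate $\|\mathcal{M}_{\xi,\mathfrak{q}}\|_{\mathrm{op}} \leq Ne^{T_0}$ on $C(U, L^2(\tilde{G}_\mathfrak{q}, \mathbb{C}))$. The main obstacle is the Lipschitz part, because differentiating each summand naively produces a factor $|b|$ from $\partial_{u'} e^{ib\tau(u')}$ that we cannot afford. The trick is to absorb this phase into the Laplace integral via the substitution $s = \tau(u') - t$:
\begin{align*}
e^{ib\tau(u')}\hat{\phi}_{-\overline{\xi}}(u')(g) = e^{a\tau(u')}\int_0^{\tau(u')} \phi(u', g, \tau(u') - s)\, e^{-(a - ib)s}\,ds,
\end{align*}
whose oscillatory factor $e^{ibs}$ no longer depends on $\tau(u')$. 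Hence, restricted to any cylinder of length one, the combined function $u' \mapsto e^{ib\tau(u')}\hat{\phi}_{-\overline{\xi}}(u')$ is Lipschitz with constant $\leq C\sqrt{\#\tilde{G}_\mathfrak{q}}\,\|\phi\|_{\mathcal{B}_\mathfrak{q}}$ \emph{uniformly} in $|b|$: the prefactor $e^{a\tau(u')}$ and the integrand both have Lipschitz modulus depending only on $T_0$ and $\|\phi\|_{\mathcal{B}_\mathfrak{q}}$, and the variation of the integration range is controlled using $\Lip_d^{\mathrm{e}}(\tau) \leq T_0$ together with the $L^\infty$ bound on $\phi$.

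With this uniform-in-$|b|$ Lipschitz estimate, I would follow the branch-matching scheme of the proof of \cref{lem:PreliminaryLogLipschitz}: for $u_1, u_2 \in U_j$, pair $u_1' \in \sigma^{-1}(u_1)$ with the unique $u_2' \in \sigma^{-1}(u_2)$ lying in the same cylinder of length one, noting that $\mathtt{c}_\mathfrak{q}(u_1') = \mathtt{c}_\mathfrak{q}(u_2')$ by \cref{lem:CocyclesLocallyConstantCorollary} and $d(u_1', u_2') \leq (c_0\kappa_2)^{-1}d(u_1, u_2)$ by \cref{lem:SigmaHyperbolicity}. Writing each summand as the product $e^{f^{(a)}(u')} \cdot \mathtt{c}_\mathfrak{q}(u')^{-1}\bigl[e^{ib\tau(u')}\hat{\phi}_{-\overline{\xi}}(u')\bigr]$ and splitting the difference across the two non-unitary factors, the essentially Lipschitz bounds on $e^{f^{(a)}}$ and on the combined factor give a pointwise estimate dominated by $CN_{\mathbb K}(\mathfrak{q})^C\|\phi\|_{\mathcal{B}_\mathfrak{q}}\,e^{f^{(a)}(u_1')}\,d(u_1, u_2)$. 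Summing over preimages via \cref{lem:SumExpf^aBound} produces $\Lip_d(\mathcal{M}_{\xi,\mathfrak{q}}(\hat{\phi}_{-\overline{\xi}})) \leq CN_{\mathbb K}(\mathfrak{q})^C\|\phi\|_{\mathcal{B}_\mathfrak{q}}$, independent of $|b|$; dividing by $\max(1,|b|)$ and combining with the $L^\infty$ part gives the second displayed inequality.
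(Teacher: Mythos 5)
Your proposal is correct, and for the first displayed inequality and for the $L^\infty$ part of the second it is the same argument as the paper's: the trivial bound for $|b|\leq 1$, one integration by parts for $|b|>1$, the factor $(\#\tilde{G}_\mathfrak{q})^{1/2}\leq N_{\mathbb K}(\mathfrak{q})^{C}$ from passing to the $L^2(\tilde{G}_\mathfrak{q})$-norm, and $\|\mathcal{M}_{\xi,\mathfrak{q}}\|_{\mathrm{op}}\leq Ne^{T_0}$. For the Lipschitz part you take a genuinely different route, and it is worth recording the difference. Your premise that the ``naive'' factor $|b|$ from the phase $e^{ib\tau(u')}$ is unaffordable is actually not accurate: since $\|\cdot\|_{1,b}$ already divides the Lipschitz seminorm by $\max(1,|b|)$, the target is only $\Lip_d\big(\mathcal{M}_{\xi,\mathfrak{q}}(\hat{\phi}_{-\overline{\xi}})\big)\leq CN_{\mathbb K}(\mathfrak{q})^{C}\|\phi\|_{\mathcal{B}_\mathfrak{q}}$ uniformly in $b$, and the paper obtains exactly this by the naive branch-matching of \cref{lem:PreliminaryLogLipschitz}: the term $\big|1-e^{(f^{(a)}+ib\tau)(\varphi(v))-(f^{(a)}+ib\tau)(v)}\big|$, which costs $(1+|b|)$, multiplies $\|\hat{\phi}_{-\overline{\xi}}\|_\infty\lesssim N_{\mathbb K}(\mathfrak{q})^{C}\|\phi\|_{\mathcal{B}_\mathfrak{q}}/\max(1,|b|)$ (the bound you just proved), so the $|b|$ cancels; the remaining term $\|\hat{\phi}_{-\overline{\xi}}(v)-\hat{\phi}_{-\overline{\xi}}(\varphi(v))\|_2$ is estimated directly from the Lipschitz property of $\phi$ and of $\tau$ on length-one cylinders. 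Your substitution $s=\tau(u')-t$ achieves the same cancellation purely algebraically, by showing that $u'\mapsto e^{ib\tau(u')}\hat{\phi}_{-\overline{\xi}}(u')$ is essentially Lipschitz with constant independent of $b$; combined with the local constancy of the cocycle (\cref{lem:CocyclesLocallyConstant}), its unitarity, and \cref{lem:SigmaHyperbolicity}, this gives the same conclusion. What your variant buys is that the Lipschitz estimate no longer leans on the $1/\max(1,|b|)$ decay coming from integration by parts, which is arguably cleaner; what the paper's variant buys is that no reorganization of the Laplace integral is needed. Two small points to complete your write-up: include the trivial case $u\in U_j$, $u'\in U_k$ with $j\neq k$, where $d(u,u')=1$ and the Lipschitz difference is bounded by $2\|\mathcal{M}_{\xi,\mathfrak{q}}(\hat{\phi}_{-\overline{\xi}})\|_\infty$, and note that the boundary values of $U$ are handled exactly as in \cref{def:TransferOperatorOriginal}.
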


\begin{proof}
Fix $c > 0$ (depending on $n$) such that $\#\tilde{G}_\mathfrak{q} \leq N_{\mathbb K}(\mathfrak{q})^{2c}$ for all nontrivial ideals $\mathfrak{q} \subset \mathcal{O}_{\mathbb K}$. Fix $C_1 = (2 + \overline{\tau})e^{a_0'\overline{\tau}}, C_2 = \max\left(2Ne^{T_0}C_1, \frac{2NT_0e^{2T_0}C_1}{c_0 \kappa_2} + \frac{Ne^{T_0}e^{a_0'\overline{\tau}}(\overline{\tau} + T_0)}{c_0 \kappa_2}\right)$, $C_3 = C_2 + Ne^{T_0}C_1$ and $C = \max(c, C_1, C_3)$. Let $\mathfrak{q} \subset \mathcal{O}_{\mathbb K}$ be an ideal coprime to $\mathfrak{q}_0$, $\phi \in \mathcal{B}_\mathfrak{q}$, $\xi = a + ib \in \mathbb C$ with $|a| \leq a_0'$. For the first inequality, clearly
\begin{align}
\label{eqn:L2BoundByPointwiseSup}
\left\|\hat{\phi}_\xi\right\|_2 \leq \left\|\hat{\phi}_\xi\right\|_\infty = \sup_{u \in U} \left\|\hat{\phi}_\xi(u)\right\|_2 \leq (\#\tilde{G}_\mathfrak{q})^{\frac{1}{2}} \sup_{u \in U} \sup_{g \in \tilde{G}_\mathfrak{q}} \left|\hat{\phi}_\xi(u)(g)\right|.
\end{align}
If $|b| \leq 1$, from the definition of $\hat{\phi}_\xi$ we have $\left\|\hat{\phi}_\xi\right\|_\infty \leq N_{\mathbb K}(\mathfrak{q})^c \overline{\tau}e^{a_0'\overline{\tau}}\|\phi\|_\infty \leq C_1 N_{\mathbb K}(\mathfrak{q})^c \|\phi\|_{\mathcal{B}_\mathfrak{q}}$. If $|b| \geq 1$, integrating by parts gives
\begin{align*}
\hat{\phi}_\xi(u)(g) &= \int_0^{\tau(u)} \phi(u, g, t)e^{-\xi t} \, dt \\
&= \left[-\frac{1}{\xi}\phi(u, g, t)e^{-\xi t}\right]_{t = 0}^{t \nearrow \tau(u)} + \frac{1}{\xi}\int_0^{\tau(u)} \left.\frac{d}{dt'}\right|_{t' = t}\phi(u, g, t') \cdot e^{-\xi t} \, dt
\end{align*}
for all $g \in \tilde{G}_\mathfrak{q}$, for all $u \in U$, and hence
\begin{align*}
\left\|\hat{\phi}_\xi\right\|_\infty \leq N_{\mathbb K}(\mathfrak{q})^c \left(\frac{2}{|b|}\|\phi\|_\infty e^{a_0'\overline{\tau}} + \frac{\overline{\tau}}{|b|}\|\phi\|_{\mathcal{B}_\mathfrak{q}} e^{a_0'\overline{\tau}}\right) \leq C_1 N_{\mathbb K}(\mathfrak{q})^c \frac{\|\phi\|_{\mathcal{B}_\mathfrak{q}}}{|b|}.
\end{align*}

For the second inequality, recall that
\begin{align*}
\left\|\mathcal{M}_{\xi, \mathfrak{q}}(\hat{\phi}_{-\overline{\xi}})\right\|_{1, b} = \left\|\mathcal{M}_{\xi, \mathfrak{q}}(\hat{\phi}_{-\overline{\xi}})\right\|_\infty + \frac{1}{\max(1, |b|)}\Lip_d\left(\mathcal{M}_{\xi, \mathfrak{q}}(\hat{\phi}_{-\overline{\xi}})\right).
\end{align*}
For the first term, we can use the above bound to get
\begin{align*}
\left\|\mathcal{M}_{\xi, \mathfrak{q}}(\hat{\phi}_{-\overline{\xi}})\right\|_\infty \leq Ne^{T_0}\left\|\hat{\phi}_{-\overline{\xi}}\right\|_\infty \leq Ne^{T_0}C_1 N_{\mathbb K}(\mathfrak{q})^c \frac{\|\phi\|_{\mathcal{B}_\mathfrak{q}}}{\max(1, |b|)}.
\end{align*}
For the second term, if $u \in U_j$ and $u' \in U_k$ where $j, k \in \mathcal{A}$ with $j \neq k$, then
\begin{align*}
\left\|\mathcal{M}_{\xi, \mathfrak{q}}(\hat{\phi}_{-\overline{\xi}})(u) - \mathcal{M}_{\xi, \mathfrak{q}}(\hat{\phi}_{-\overline{\xi}})(u')\right\|_2 &\leq 2Ne^{T_0}\left\|\hat{\phi}_{-\overline{\xi}}\right\|_\infty \\
&\leq 2Ne^{T_0}C_1 N_{\mathbb K}(\mathfrak{q})^c \frac{\|\phi\|_{\mathcal{B}_\mathfrak{q}}}{\max(1, |b|)} d(u, u').
\end{align*}
Now let $u, u' \in \interior(U_j)$ for some $j \in \mathcal{A}$. Again we refer the reader to \cref{def:TransferOperatorOriginal} for the case when $u \in \partial(U_j)$ or $u' \in \partial(U_j)$. With a similar computation (using the same notations and constants) as in the proof of \cref{itm:PreliminaryLogLipschitzProperty2} in \cref{lem:PreliminaryLogLipschitz}, we have
\begin{align}
&\left\|\mathcal{M}_{\xi, \mathfrak{q}}(\hat{\phi}_{-\overline{\xi}})(u) - \mathcal{M}_{\xi, \mathfrak{q}}(\hat{\phi}_{-\overline{\xi}})(u')\right\|_2 \\
\leq{}&\sum_{v \in \sigma^{-1}(u)} \left(\left|1 - e^{(f^{(a)} + ib\tau)(\varphi(v)) - (f^{(a)} + ib\tau)(v)}\right|e^{f^{(a)}(v)} \left\|\hat{\phi}_{-\overline{\xi}}(v)\right\|_2\right. \\
{}&\left.+ e^{f^{(a)}(\varphi(v))} \left\|\hat{\phi}_{-\overline{\xi}}(v) - \hat{\phi}_{-\overline{\xi}}(\varphi(v))\right\|_2\right) \\
\leq{}&e^{T_0} \sum_{v \in \sigma^{-1}(u)} \left(e^{\left|f^{(a)}(\varphi(v)) - f^{(a)}(v)\right|} \left|(f^{(a)} + ib\tau)(\varphi(v)) - (f^{(a)} + ib\tau)(v)\right| \cdot \left\|\hat{\phi}_{-\overline{\xi}}\right\|_\infty\right. \\
{}&\left.+ \left\|\hat{\phi}_{-\overline{\xi}}(v) - \hat{\phi}_{-\overline{\xi}}(\varphi(v))\right\|_2\right) \\
\leq{}&\frac{NT_0e^{2T_0}}{c_0 \kappa_2} (1 + |b|) \cdot C_1 N_{\mathbb K}(\mathfrak{q})^c \frac{\|\phi\|_{\mathcal{B}_\mathfrak{q}}}{\max(1, |b|)} d(u, u') \\
{}&+ e^{T_0} \sum_{v \in \sigma^{-1}(u)} \left\|\hat{\phi}_{-\overline{\xi}}(v) - \hat{\phi}_{-\overline{\xi}}(\varphi(v))\right\|_2 \\
\label{eqn:LastLineOfLipschitzCalculation}
\leq{}&\frac{2NT_0e^{2T_0}C_1}{c_0 \kappa_2} N_{\mathbb K}(\mathfrak{q})^c \|\phi\|_{\mathcal{B}_\mathfrak{q}} d(u, u') + e^{T_0} \sum_{v \in \sigma^{-1}(u)} \left\|\hat{\phi}_{-\overline{\xi}}(v) - \hat{\phi}_{-\overline{\xi}}(\varphi(v))\right\|_2.
\end{align}
Let $v \in \sigma^{-1}(u)$. To bound the terms in the sum, recall \cref{eqn:L2BoundByPointwiseSup}. Assume $\tau(v) \leq \tau(\varphi(v))$. Then remembering that $v$ and $\varphi(v)$ are in the same cylinder of length $1$, we have
\begin{align*}
&\left|\hat{\phi}_{-\overline{\xi}}(v)(g) - \hat{\phi}_{-\overline{\xi}}(\varphi(v))(g)\right| \\
\leq{}&\int_0^{\tau(v)} |\phi(v, g, t) - \phi(\varphi(v), g, t)|e^{|a|t} \, dt + \int_{\tau(v)}^{\tau(\varphi(v))} |\phi(\varphi(v), g, t)| e^{|a|t} \, dt \\
\leq{}&\frac{1}{c_0 \kappa_2}\left(\overline{\tau} e^{a_0'\overline{\tau}} \|\phi\|_{\mathcal{B}_\mathfrak{q}} + \Lip_d^{\mathrm{e}}(\tau) e^{a_0'\overline{\tau}} \|\phi\|_\infty\right)d(u, u').
\end{align*}
The case $\tau(\varphi(v)) \leq \tau(v)$ is similar and so
\begin{align*}
\left\|\hat{\phi}_{-\overline{\xi}}(v) - \hat{\phi}_{-\overline{\xi}}(\varphi(v))\right\|_2 \leq \frac{e^{a_0'\overline{\tau}}(\overline{\tau} + T_0)}{c_0 \kappa_2} N_{\mathbb K}(\mathfrak{q})^c \|\phi\|_{\mathcal{B}_\mathfrak{q}} d(u, u').
\end{align*}
Putting this in \cref{eqn:LastLineOfLipschitzCalculation}, gives $\Lip_d\left(\mathcal{M}_{\xi, \mathfrak{q}}(\hat{\phi}_{-\overline{\xi}})\right) \leq C_2 N_{\mathbb K}(\mathfrak{q})^c \|\phi\|_{\mathcal{B}_\mathfrak{q}}$. Hence we conclude that $\left\|\mathcal{M}_{\xi, \mathfrak{q}}(\hat{\phi}_{-\overline{\xi}})\right\|_{1, b} \leq C_3 N_{\mathbb K}(\mathfrak{q})^c \frac{\|\phi\|_{\mathcal{B}_\mathfrak{q}}}{\max(1, |b|)}$.
\end{proof}

\begin{comment}
\begin{remark}
In the above lemma, the reason we prove the second inequality for $\mathcal{M}_{\xi, \mathfrak{q}}(\hat{\phi}_\xi)$ instead of $\hat{\phi}_\xi$ is simply to ensure that we can use the essentially Lipschitz property of $\tau$ since it is not Lipschitz and consequently neither is $\hat{\phi}_\xi$.
\end{remark}
\end{comment}

\begin{lemma}
\label{lem:DecayOfSymbolicCodingCorrelationFunction}
There exist $C > 0, \eta > 0$ and a nontrivial proper ideal $\mathfrak{q}_0' \subset \mathcal{O}_{\mathbb K}$ such that for all square free ideals $\mathfrak{q} \subset \mathcal{O}_{\mathbb K}$ coprime to $\mathfrak{q}_0\mathfrak{q}_0'$, for all $\phi \in \mathcal{B}_\mathfrak{q}$, for all $\psi \in \mathcal{B}_\mathfrak{q}$ such that $\sum_{g \in \tilde{G}_\mathfrak{q}} \psi(u, g, t) = 0$ for all $(u, t) \in U^\tau$, we have
\begin{align*}
\left|\Upsilon_{\phi, \psi}(t)\right| \leq CN_{\mathbb K}(\mathfrak{q})^C e^{-\eta t} \|\phi\|_{\mathcal{B}_\mathfrak{q}} \|\psi\|_{\mathcal{B}_\mathfrak{q}}.
\end{align*}
\end{lemma}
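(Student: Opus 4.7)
The plan is to combine the Laplace transform formula from \cref{lem:LaplaceTransformOfSymbolicCodingCorrelationFunctionInTermsOfTransferOperator}, the uniform spectral bounds from \cref{thm:TheoremGeodesicFlow}, and the norm estimates of \cref{lem:ExtractNormOfLaplaceTransformDecay} to obtain a holomorphic extension of $\hat{\Upsilon}_{\phi, \psi}^0$ to a vertical strip strictly to the left of the imaginary axis, then to apply a Paley-Wiener contour shift to extract exponential decay in $t$. Since $\Upsilon_{\phi, \psi}(t) = \Upsilon_{\phi, \psi}^0(t)$ for all $t \geq \overline{\tau}$, it suffices to bound $\Upsilon_{\phi, \psi}^0$.

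First I would fix $a_0, C_1, \eta_1$ from \cref{thm:TheoremGeodesicFlow} and take $\mathfrak{q}_0' \subset \mathcal{O}_{\mathbb K}$ to be the corresponding nontrivial proper ideal. The hypothesis $\sum_{g \in \tilde{G}_\mathfrak{q}} \psi(u, g, t) = 0$ ensures, via the remark after the definition of $\hat{\phi}_\xi$, that $\mathcal{M}_{\xi, \mathfrak{q}}(\hat{\psi}_{-\overline{\xi}}) \in \mathcal{W}_\mathfrak{q}$ for every $\xi$. Hence, for $|\Re(\xi)| < a_0$ and for each $k \geq 1$, Cauchy-Schwarz together with \cref{thm:TheoremGeodesicFlow} applied to $\mathcal{M}_{\xi, \mathfrak{q}}^{k - 1}\bigl(\mathcal{M}_{\xi, \mathfrak{q}}(\hat{\psi}_{-\overline{\xi}})\bigr)$ and \cref{lem:ExtractNormOfLaplaceTransformDecay} for the two factors yield
\begin{align*}
\left|\left\langle \hat{\phi}_\xi, \mathcal{M}_{\xi, \mathfrak{q}}^k(\hat{\psi}_{-\overline{\xi}}) \right\rangle\right| \leq \frac{C_2 N_{\mathbb K}(\mathfrak{q})^{C_2} e^{-\eta_1(k - 1)}}{\max(1, |b|)^2} \|\phi\|_{\mathcal{B}_\mathfrak{q}} \|\psi\|_{\mathcal{B}_\mathfrak{q}}
\end{align*}
for some $C_2 > 0$. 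Using the perturbation bound $|\lambda_a - 1| \leq C|a|$, I would shrink $a_0$ so that $|\lambda_a| \leq e^{\eta_1/2}$ whenever $|a| < a_0$, ensuring the series in \cref{lem:LaplaceTransformOfSymbolicCodingCorrelationFunctionInTermsOfTransferOperator} converges uniformly on the strip $\{|\Re(\xi)| < a_0\}$. This supplies a holomorphic extension
\begin{align*}
\hat{\Upsilon}_{\phi, \psi}^0(\xi) = \sum_{k = 1}^\infty {\lambda_a}^k \left\langle \hat{\phi}_\xi, \mathcal{M}_{\xi, \mathfrak{q}}^k(\hat{\psi}_{-\overline{\xi}}) \right\rangle
\end{align*}
to $\{|\Re(\xi)| < a_0\}$ with bound $|\hat{\Upsilon}_{\phi, \psi}^0(a + ib)| \leq C_3 N_{\mathbb K}(\mathfrak{q})^{C_3} \|\phi\|_{\mathcal{B}_\mathfrak{q}} \|\psi\|_{\mathcal{B}_\mathfrak{q}} / \max(1, |b|)^2$ uniformly in $a$.

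Next I would invoke a Paley-Wiener / inverse Laplace argument to convert the strip of holomorphy into exponential decay for $\Upsilon_{\phi, \psi}^0(t)$. For any $c \in (0, a_0)$ the Mellin-Fourier inversion formula gives $\Upsilon_{\phi, \psi}^0(t) = \frac{1}{2\pi i}\int_{c - i\infty}^{c + i\infty} \hat{\Upsilon}_{\phi, \psi}^0(\xi) e^{\xi t} \, d\xi$, and the $\max(1, |b|)^{-2}$ decay of the integrand (combined with the convergent geometric factor in $k$) makes the contour shift from $\Re(\xi) = c$ to $\Re(\xi) = -\eta$ legitimate for sufficiently small $\eta > 0$; the integrand is holomorphic on the enclosed rectangle and decays uniformly in $|\Im(\xi)|$, so the horizontal pieces vanish in the limit. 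The resulting bound
\begin{align*}
\left|\Upsilon_{\phi, \psi}^0(t)\right| \leq \frac{e^{-\eta t}}{2\pi} \int_{-\infty}^\infty |\hat{\Upsilon}_{\phi, \psi}^0(-\eta + ib)| \, db \leq C N_{\mathbb K}(\mathfrak{q})^C e^{-\eta t} \|\phi\|_{\mathcal{B}_\mathfrak{q}} \|\psi\|_{\mathcal{B}_\mathfrak{q}}
\end{align*}
then combines with $\Upsilon_{\phi, \psi}^1 \equiv 0$ on $[\overline{\tau}, \infty)$ (and the trivial bound $|\Upsilon_{\phi, \psi}(t)| \lesssim N_{\mathbb K}(\mathfrak{q})^c \|\phi\|_\infty \|\psi\|_\infty \overline{\tau}$ on $[0, \overline{\tau}]$, absorbed into the constant after adjusting $\eta$) to give the claimed estimate.

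The main technical obstacle will be justifying the Paley-Wiener inversion: $\Upsilon_{\phi, \psi}^0$ is only continuous and bounded rather than $L^1$ or Schwartz on $[0, \infty)$, so the raw inversion formula need not apply pointwise. I would handle this either by a smoothing / mollification argument (replacing $\phi$ by a time-averaged version, controlling the error by $\|\phi\|_{\mathcal{B}_\mathfrak{q}}$, and taking limits), or by a Tauberian-style argument of the type used in \cite{Dol98, Sto11, OW16}, exploiting the extra $|b|^{-2}$ factor in the bound to make the shifted integral absolutely convergent. The only other book-keeping issue is tracking the polynomial dependence on $N_{\mathbb K}(\mathfrak{q})$: three applications of estimates of the form $N_{\mathbb K}(\mathfrak{q})^{C_i}$ appear, but each is finite and they compose to a single constant of the required form.
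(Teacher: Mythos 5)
Your proposal is correct and follows essentially the same route as the paper: expand $\hat{\Upsilon}_{\phi, \psi}^0$ via \cref{lem:LaplaceTransformOfSymbolicCodingCorrelationFunctionInTermsOfTransferOperator}, bound each term by Cauchy--Schwarz together with \cref{thm:TheoremGeodesicFlow} applied to $\mathcal{M}_{\xi, \mathfrak{q}}(\hat{\psi}_{-\overline{\xi}}) \in \mathcal{W}_\mathfrak{q}$ and \cref{lem:ExtractNormOfLaplaceTransformDecay}, absorb ${\lambda_a}^k$ by shrinking $a_0$, obtain the holomorphic extension with the $N_{\mathbb K}(\mathfrak{q})^C/(1 + b^2)$ bound, and invert along a line to the left of the imaginary axis. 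The ``technical obstacle'' you flag is not really one: since the extension decays like $1/(1 + b^2)$ in $b$ and $\Upsilon_{\phi, \psi}^0$ is continuous and bounded, the paper applies the inverse Laplace transform formula directly on the line $\Re(\xi) = -a_0$ (your absolute-convergence fallback), with no mollification needed.
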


%See quick derivation of inverse Laplace transform in The Laplace Transform by Joel Schiff
\begin{proof}
Fix $C_1 \geq 1, \tilde{\eta} > 0, a_0'' > 0$ and the nontrivial proper ideal $\mathfrak{q}_0' \subset \mathcal{O}_{\mathbb K}$ be the $C, \eta, a_0, \mathfrak{q}_0'$ from \cref{thm:TheoremGeodesicFlow} and $C_2 > c$ be the $C$ from \cref{lem:ExtractNormOfLaplaceTransformDecay} where $c$ is the constant from its proof. Fix $\eta = a_0 \in \left(0, \frac{1}{2}\min(a_0', a_0'')\right)$ such that $\sup_{|a| \leq 2a_0} \log(\lambda_a) \leq \frac{\tilde{\eta}}{2}$. Fix $C_3 = \max(2e^{\tilde{\eta}}C_1{C_2}^2, C_1 + 2C_2)$ and
\begin{align*}
C = \max\left(C_3, C_3\sum_{k = 1}^\infty e^{-\frac{\tilde{\eta}}{2}k}, 2\overline{\tau}e^{\eta \overline{\tau}}\right).
\end{align*}
Let $\mathfrak{q} \subset \mathcal{O}_{\mathbb K}$ be a square free ideal coprime to $\mathfrak{q}_0\mathfrak{q}_0'$, $\phi \in \mathcal{B}_\mathfrak{q}$, and $\psi \in \mathcal{B}_\mathfrak{q}$ such that $\sum_{g \in \tilde{G}_\mathfrak{q}} \psi(u, g, t) = 0$ for all $(u, t) \in U^\tau$. Recall that \cref{lem:LaplaceTransformOfSymbolicCodingCorrelationFunctionInTermsOfTransferOperator} gives
\begin{align*}
\hat{\Upsilon}_{\phi, \psi}^0(\xi) = \sum_{k = 1}^\infty {\lambda_a}^k \left\langle \hat{\phi}_\xi, \mathcal{M}_{\xi, \mathfrak{q}}^k(\hat{\psi}_{-\overline{\xi}}) \right\rangle
\end{align*}
for all $\xi = a + ib \in \mathbb C$ with $a > 0$. Also, recalling previous remarks, we can see that for all integers $k \geq 1$, the map $\xi \mapsto {\lambda_a}^k \left\langle \hat{\phi}_\xi, \mathcal{M}_{\xi, \mathfrak{q}}^k(\hat{\psi}_{-\overline{\xi}}) \right\rangle$ is defined on $\mathbb C$ and is entire. Hence, to show that $\hat{\Upsilon}_{\phi, \psi}^0$ has a holomorphic extension to the half plane $\{\xi \in \mathbb C: \Re(\xi) > -2a_0\}$, it suffices to show that the above sum is absolutely convergent for all $\xi = a + ib \in \mathbb C$ with $|a| < 2a_0$. Recalling $\mathcal{M}_{\xi, \mathfrak{q}}(\hat{\psi}_{-\overline{\xi}}) \in \mathcal{W}_\mathfrak{q}(U)$ and noting $\frac{1}{\max(1, |b|)^2} \leq \frac{2}{1 + b^2}$, we use \cref{thm:TheoremGeodesicFlow,lem:ExtractNormOfLaplaceTransformDecay} to calculate that
\begin{align*}
&\left|{\lambda_a}^k \left\langle \hat{\phi}_\xi, \mathcal{M}_{\xi, \mathfrak{q}}^k(\hat{\psi}_{-\overline{\xi}}) \right\rangle\right| \\
\leq{}&{\lambda_a}^k \left\|\hat{\phi}_\xi\right\|_2 \cdot \left\|\mathcal{M}_{\xi, \mathfrak{q}}^k(\hat{\psi}_{-\overline{\xi}})\right\|_2 \\
\leq{}&{\lambda_a}^k \left\|\hat{\phi}_\xi\right\|_2 \cdot C_1N_{\mathbb K}(\mathfrak{q})^{C_1}e^{-\tilde{\eta} (k - 1)} \left\|\mathcal{M}_{\xi, \mathfrak{q}}(\hat{\psi}_{-\overline{\xi}})\right\|_{1, b} \\
\leq{}&{\lambda_a}^k C_2N_{\mathbb K}(\mathfrak{q})^{C_2} \frac{\|\phi\|_{\mathcal{B}_\mathfrak{q}}}{\max(1, |b|)} \cdot C_1N_{\mathbb K}(\mathfrak{q})^{C_1}e^{-\tilde{\eta} (k - 1)} \cdot C_2N_{\mathbb K}(\mathfrak{q})^{C_2}\frac{\|\psi\|_{\mathcal{B}_\mathfrak{q}}}{\max(1, |b|)} \\
\leq{}&\frac{C_3 N_{\mathbb K}(\mathfrak{q})^{C_3} e^{-\frac{\tilde{\eta}}{2}k}}{1 + b^2} \|\phi\|_{\mathcal{B}_\mathfrak{q}} \|\psi\|_{\mathcal{B}_\mathfrak{q}}
\end{align*}
for all $\xi = a + ib \in \mathbb C$ with $|a| < 2a_0$, whose sum over integers $k \geq 1$ converges as desired. The above calculation also gives the important bound $\left|\hat{\Upsilon}_{\phi, \psi}^0(\xi)\right| \leq \frac{CN_{\mathbb K}(\mathfrak{q})^C}{1 + b^2} \|\phi\|_{\mathcal{B}_\mathfrak{q}} \|\psi\|_{\mathcal{B}_\mathfrak{q}}$ for all $\xi = a + ib \in \mathbb C$ with $|a| < 2a_0$. Since $\Upsilon_{\phi, \psi}^0$ is continuous and in $L^\infty(\mathbb R_{\geq 0}, \mathbb R)$, we use the holomorphic extension and the inverse Laplace transform formula along the line $\{\xi \in \mathbb C: \Re(\xi) = -a_0\}$ to obtain
\begin{align*}
\Upsilon_{\phi, \psi}^0(t) = \frac{1}{2\pi i} \lim_{B \to \infty} \int_{-a_0 - iB}^{-a_0 + iB} \hat{\Upsilon}_{\phi, \psi}^0(\xi) e^{\xi t} \, d\xi = \frac{1}{2\pi} \int_{-\infty}^\infty \hat{\Upsilon}_{\phi, \psi}^0(-a_0 + ib) e^{(-a_0 + ib) t} \, db
\end{align*}
for all $t > 0$ and we already know $\Upsilon_{\phi, \psi}^0(0) = 0$. Then using the above bound, we have
\begin{align*}
\left|\Upsilon_{\phi, \psi}^0(t)\right| &\leq \frac{1}{2\pi} e^{-a_0t} \int_{-\infty}^\infty \left|\hat{\Upsilon}_{\phi, \psi}^0(-a_0 + ib)\right| \, db \\
&\leq \frac{1}{2\pi} e^{-a_0t} \int_{-\infty}^\infty \frac{CN_{\mathbb K}(\mathfrak{q})^C}{1 + b^2} \|\phi\|_{\mathcal{B}_\mathfrak{q}} \|\psi\|_{\mathcal{B}_\mathfrak{q}} \, db \\
&= \frac{C}{2}N_{\mathbb K}(\mathfrak{q})^C e^{-\eta t} \|\phi\|_{\mathcal{B}_\mathfrak{q}} \|\psi\|_{\mathcal{B}_\mathfrak{q}}
\end{align*}
for all $t \geq 0$. Now, $\Upsilon_{\phi, \psi}(t) = \Upsilon_{\phi, \psi}^0(t)$ for all $t \geq \overline{\tau}$ while
\begin{align*}
\left|\Upsilon_{\phi, \psi}^1(t)\right| \leq \overline{\tau} N_{\mathbb K}(\mathfrak{q})^c \|\phi\|_{\mathcal{B}_\mathfrak{q}} \|\psi\|_{\mathcal{B}_\mathfrak{q}} \leq \frac{C}{2}N_{\mathbb K}(\mathfrak{q})^C e^{-\eta t} \|\phi\|_{\mathcal{B}_\mathfrak{q}} \|\psi\|_{\mathcal{B}_\mathfrak{q}}
\end{align*}
for all $t \in [0, \overline{\tau}]$ and hence
\begin{align*}
\left|\Upsilon_{\phi, \psi}(t)\right| \leq CN_{\mathbb K}(\mathfrak{q})^C e^{-\eta t} \|\phi\|_{\mathcal{B}_\mathfrak{q}} \|\psi\|_{\mathcal{B}_\mathfrak{q}}.
\end{align*}
\end{proof}

\subsection{Integrating out the strong stable direction and proof of \texorpdfstring{\cref{thm:TheoremUniformExponentialMixingOfGeodesicFlow}}{\autoref{thm:TheoremUniformExponentialMixingOfGeodesicFlow}}}
Let $\mathfrak{q} \subset \mathcal{O}_{\mathbb K}$ be an ideal coprime to $\mathfrak{q}_0$. Given a $\phi \in C^1(\Gamma_\mathfrak{q} \backslash G/M, \mathbb R)$, we can convert it to a function in $\mathcal{B}_\mathfrak{q}$. By Rokhlin's disintegration theorem with respect to the projection $\proj_U: R \to U$, the probability measure $\nu_R$ disintegrates to give the set of conditional probability measures $\{\nu_u: u \in U\}$. For all $j \in \mathcal{A}$, for all $u \in U_j$, the measure $\nu_u$ is actually define on the fiber ${\proj_U}^{-1}(u) = [u, S_j]$ but we can push forward via the diffeomorphism $[u, S_j] \to S_j$ defined by $[u, s] \mapsto s$ to think of $\nu_u$ as a measure on $S_j$. Thinking of the overline as an isometry $R \to \overline{R}$, for all $t \geq 0$, we define $\phi_t \in \mathcal{B}_\mathfrak{q}$ by
\begin{align*}
\phi_t(u, g, r) = \int_{S_j} \phi(g[\overline{u}, \overline{s}]a_{t + r}) \, d\nu_u(s)
\end{align*}
%There is a mistake. Write U_j \times \tilde{G}_\mathfrak{q} \times [0, \tau(u)) more correctly.
for all $(u, g, r) \in U_j \times \tilde{G}_\mathfrak{q} \times [0, \tau(u))$, for all $j \in \mathcal{A}$, and in order to ensure that indeed $\phi_t \in \mathcal{B}_\mathfrak{q}$, we must define $\phi_t(u, g, r) = \phi_t(\sigma^k(u), g\mathtt{c}_\mathfrak{q}^k(u), r - \tau_k(u))$ for all $r \in [\tau_k(u), \tau_{k + 1}(u))$, for all $k \geq 0$.

\begin{remark}
We need to deal with some technicalities. Let $\mathfrak{q} \subset \mathcal{O}_{\mathbb K}$ be an ideal coprime to $\mathfrak{q}_0$ and $j \in \mathcal{A}$.
%$[u, s], [u', s] \in R_j$ for some $j \in \mathcal{A}$ and $[\overline{u}, \overline{s}], [\overline{u}', \overline{s}] \in \overline{R}_j$ be their lifts. Let $w \in R_j$ be the center.
Firstly, by smoothness of the strong unstable and strong stable foliations and compactness of $R_j$, there is a $C_1 > 1$ such that $d(g[\overline{u}, \overline{s}], g[\overline{u}', \overline{s}]) = d([u, s], [u', s]) \leq C_1d(u, u') \leq C_1d_{\mathrm{su}}(u, u')$ for all $u, u' \in U_j$, for all $s \in S_j$, for all $g \in \tilde{G}_\mathfrak{q}$. Now, for all $u \in U_j$, the Patterson-Sullivan density induces the measure $d\mu^{\mathrm{PS}}_{[u, S_j]}([u, s]) = e^{\delta_\Gamma \beta_{[\overline{u}, \overline{s}]^-}(o, [\overline{u}, \overline{s}])} \, d\mu^{\mathrm{PS}}_o([\overline{u}, \overline{s}]^-)$ on $[u, S_j]$. For all $u \in U_j$, pushing forward via the diffeomorphism $[u, S_j] \to S_j$ mentioned above gives the measure $\mu^{\mathrm{PS}}_u(s) = e^{\delta_\Gamma \beta_{[\overline{u}, \overline{s}]^-}(o, [\overline{u}, \overline{s}])} d\mu^{\mathrm{PS}}_o((\overline{s})^-)$ on $S_j$, using the important fact that $[\overline{u}, \overline{s}]^- = (\overline{s})^-$. In fact, for all $u \in U_j$, comparing with the definition of the BMS measure, it can be checked that $\mu^{\mathrm{PS}}_u$ after normalization is exactly the conditional measures $\nu_u$. The nontrivial consequence is that $\frac{d\nu_{u'}}{d\nu_u} \in C^\infty(S_j, \mathbb R)$ for all $u, u' \in U_j$. Together with compactness of $R_j$, there is a $C_2 > 0$ such that $\left|1 - \frac{d\nu_{u'}}{d\nu_u}(s)\right| \leq C_2d(u, u') \leq C_2d_{\mathrm{su}}(u, u')$ for all $u, u' \in U_j$, for all $s \in S_j$. An easy computation using the two derived inequalities shows that $\|\phi_t\|_{\mathcal{B}_\mathfrak{q}} \leq C\|\phi\|_{C^1}$ where $C = C_1 + C_2$, for all $\phi \in C^1(\Gamma_\mathfrak{q} \backslash G/M, \mathbb R)$, for all $t \geq 0$.
\end{remark}

\begin{lemma}
\label{lem:EstimateFunctionByIntegratingOverS}
There exist $C > 0$ and $\eta > 0$ such that for all ideals $\mathfrak{q} \subset \mathcal{O}_{\mathbb K}$ coprime to $\mathfrak{q}_0$, for all $\phi \in C^1(\Gamma_\mathfrak{q} \backslash G/M, \mathbb R)$, we have
\begin{align*}
|\phi(g[\overline{u}, \overline{s}]a_{2t + r}) - \phi_t(u, g, t + r)| \leq Ce^{-\eta t}\|\phi\|_{C^1}
\end{align*}
for all $[u, s] \in R$, for all $g \in \tilde{G}_\mathfrak{q}$, for all $t, r \geq 0$.
\end{lemma}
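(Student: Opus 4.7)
The plan is to exploit the exponential contraction of the geodesic flow along strong stable leaves: the averaging defining $\phi_t$ is taken over a strong stable disk of bounded diameter, and the extra factor of $a_t$ in $a_{2t+r}$ (versus the $a_{t+r}$ appearing inside the integrand of $\phi_t$) supplies the contraction that shrinks the relevant disk to size $O(e^{-t})$. Consequently the average differs from the value at any specified point in the disk by $O(e^{-t} \|\phi\|_{C^1})$.

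First, I would unwind the recursive definition of $\phi_t$ evaluated at $(u, g, t+r)$. Let $k \in \mathbb Z_{\geq 0}$ be the unique integer with $t + r \in [\tau_k(u), \tau_{k+1}(u))$, set $r' = t + r - \tau_k(u) \in [0, \tau(\sigma^k(u)))$, and let $j_k \in \mathcal{A}$ satisfy $\sigma^k(u) \in U_{j_k}$. Iterating the recursion yields
\[
\phi_t(u, g, t+r) = \int_{S_{j_k}} \phi\bigl(g\mathtt{c}_\mathfrak{q}^k(u)[\sigma^k(\overline{u}), \overline{s'}] a_{t + r'}\bigr) \, d\nu_{\sigma^k(u)}(s').
\]

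Next, I would track the distinguished point $g[\overline{u}, \overline{s}] a_{2t+r}$. Using that $\tau$ is constant on $[u, S_j]$, the cocycle identity $\overline{u} a_{\tau_k(u)} = \mathtt{c}^k(u) \sigma^k(\overline{u})$, and iterating the Markov property $\mathcal{P}([u, \interior(S_j)]) \subset [\mathcal{P}(u), \interior(S_{k'})]$ together with the $G$-equivariance of the bracket on lifts, one shows that $[\overline{u}, \overline{s}] a_{\tau_k(u)} = \mathtt{c}^k(u)[\sigma^k(\overline{u}), \overline{s''}]$ for some $s'' = s''(k, s) \in S_{j_k}$. Decomposing $a_{2t+r} = a_{\tau_k(u)} a_{t + r'}$ and projecting to $\Gamma_\mathfrak{q} \backslash G/M$ therefore gives
\[
\phi\bigl(g[\overline{u}, \overline{s}] a_{2t+r}\bigr) = \phi\bigl(g\mathtt{c}_\mathfrak{q}^k(u)[\sigma^k(\overline{u}), \overline{s''}] a_{t + r'}\bigr).
\]
Subtracting from the integral expression above, the difference becomes an average over $s' \in S_{j_k}$ of $\phi$-differences at two points, both of the form $g\mathtt{c}_\mathfrak{q}^k(u)[\sigma^k(\overline{u}), \overline{s^\star}] a_{t + r'}$ with $s^\star \in \{s'', s'\}$. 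These two points lie on the same strong stable leaf through $g\mathtt{c}_\mathfrak{q}^k(u)\sigma^k(\overline{u})a_{t+r'}$, and before flowing by $a_{t+r'}$ their $d_{\mathrm{ss}}$-distance is at most $2\hat{\delta}$. Uniform exponential contraction of the geodesic flow along $W^{\mathrm{ss}}$, combined with comparability of $d_{\mathrm{ss}}$ with the ambient Riemannian $d$ on compact pieces of the leaf, yields $d(\cdot, \cdot) \leq 2C_0\hat{\delta} e^{-(t+r')} \leq 2C_0 \hat{\delta} e^{-t}$ for an absolute $C_0 > 0$. The Lipschitz bound $|\phi(w_1) - \phi(w_2)| \leq \|\phi\|_{C^1} d(w_1, w_2)$ and the fact that $\nu_{\sigma^k(u)}$ is a probability measure then produce the desired estimate with $\eta = 1$ and $C = 2C_0\hat{\delta}$.

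The main obstacle I anticipate is the bookkeeping around the recursive definition of $\phi_t$ and the iterated Markov property needed to identify the intermediate point $s''$ with $[\overline{u}, \overline{s}] a_{\tau_k(u)} = \mathtt{c}^k(u)[\sigma^k(\overline{u}), \overline{s''}]$; everything after that is a direct consequence of the uniform hyperbolicity of the geodesic flow. A minor technical point is confirming that the restriction of the ambient metric $d$ to a strong stable leaf is bounded above by a fixed constant multiple of $d_{\mathrm{ss}}$, so that the $C^1$-Lipschitz bound on $\phi$ (stated with respect to $d$) can absorb the contraction estimate that is naturally stated with respect to $d_{\mathrm{ss}}$.
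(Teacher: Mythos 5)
Your proposal is correct and follows essentially the same route as the paper's proof: unwind the recursive definition of $\phi_t$ to the $k$-th return, note that $[\overline{u}, \overline{s}]a_{\tau_k(u)} = \mathtt{c}^k(u)\overline{\mathcal{P}^k([u, s])}$ lies in the same stable piece $\mathtt{c}^k(u)[\sigma^k(\overline{u}), \overline{S}_{j_k}]$ as the points being averaged, then use exponential contraction along $W^{\mathrm{ss}}$ over the remaining flow time $2t + r - \tau_k(u) \geq t$ together with the $C^1$ bound and the fact that $\nu_{\sigma^k(u)}$ is a probability measure. The only cosmetic differences are that the paper quotes the Anosov constants $c, \log(\lambda)$ from Ratner rather than asserting contraction rate $1$, and the metric comparison you flag is immediate since $d \leq d_{\mathrm{ss}}$.
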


\begin{proof}
Fix $C, \eta > 0$ to be the constants $c, \log(\lambda)$ from the Anosov property in \cite{Rat73}. Let $\mathfrak{q} \subset \mathcal{O}_{\mathbb K}$ be an ideal coprime to $\mathfrak{q}_0$ and $\phi \in C^1(\Gamma_\mathfrak{q} \backslash G/M, \mathbb R)$. Let $j \in \mathcal{A}, [u, s] \in R_j, g \in \tilde{G}_\mathfrak{q}$ and $t, r \geq 0$. Let $m \in \mathbb Z_{\geq 0}$ such that $t + r \in [\tau_m(u), \tau_{m + 1}(u))$ and let $u_1 = \sigma^m(u) \in U_k$ for some $k \in \mathcal{A}$. Let $s_1 \in S_k$. Then $[\overline{u}, \overline{s}]a_{\tau_m(u)} = \mathtt{c}^m(u)\overline{\mathcal{P}^m([u, s])}$ and $\mathtt{c}^m(u)[\overline{u}_1, \overline{s}_1]$ are both in $\mathtt{c}^m(u)[\overline{u}_1, \overline{S}_k]$. Noting that $\tau_m(u) \leq t + r$, we have
\begin{align*}
&d(g[\overline{u}, \overline{s}]a_{2t + r}, g\mathtt{c}^m(u)[\overline{u}_1, \overline{s}_1]a_{2t + r - \tau_m(u)}) \\
\leq{}&d_{\mathrm{ss}}(g[\overline{u}, \overline{s}]a_{2t + r}, g\mathtt{c}^m(u)[\overline{u}_1, \overline{s}_1]a_{2t + r - \tau_m(u)}) \\
\leq{}&Ce^{-\eta(2t + r - \tau_m(u))} \leq Ce^{-\eta t}.
\end{align*}
Thus, $|\phi(g[\overline{u}, \overline{s}]a_{2t + r}) - \phi(g\mathtt{c}^m(u)[\overline{u}_1, \overline{s}_1]a_{2t + r - \tau_m(u)})| \leq Ce^{-\eta t}\|\phi\|_{C^1}$. Integrating over $s_1 \in S_k$ with respect to the probability measure $\nu_{u_1}$ gives $|\phi(g[\overline{u}, \overline{s}]a_{2t + r}) - \phi_t(u, g, t + r)| \leq Ce^{-\eta t}\|\phi\|_{C^1}$.
\end{proof}

\begin{corollary}
\label{cor:ApproximatingMixingOfGeodesicFlowBySymbolicCodingCorrelationFunction}
There are $C, \eta > 0$ such that for all ideals $\mathfrak{q} \subset \mathcal{O}_{\mathbb K}$ coprime to $\mathfrak{q}_0$, for all $\phi, \psi \in C^1(\Gamma_\mathfrak{q} \backslash G/M, \mathbb R)$, we have
\begin{multline*}
\left|\int_{\Gamma_\mathfrak{q} \backslash G/M} \phi(xa_{2t})\psi(x) \, dm^{\mathrm{BMS}}_\mathfrak{q}(x) - \frac{m^{\mathrm{BMS}}(\Gamma \backslash G/M)}{\nu_U(\tau)}\Upsilon_{\phi_t, \psi_0}(t)\right| \\
\leq CN_{\mathbb K}(\mathfrak{q})^C e^{-\eta t} \|\phi\|_{C^1} \|\psi\|_{C^1}.
\end{multline*}
\end{corollary}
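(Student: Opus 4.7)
The plan is to transport the BMS integral to the suspension model $R^{\mathfrak{q},\tau}$ and then apply \cref{lem:EstimateFunctionByIntegratingOverS} to replace $\phi(\,\cdot\, a_{2t})$ by its strong-stable average $\phi_t$. The $\mathfrak{q}$-cover analogue of \cref{thm:FlowEquilibriumStateEqualsBMS} together with the identity $m^{\mathrm{BMS}}_\mathfrak{q}(\Gamma_\mathfrak{q}\backslash G/M)=\#F_\mathfrak{q}\cdot m^{\mathrm{BMS}}(\Gamma\backslash G/M)$ gives $m^{\mathrm{BMS}}_\mathfrak{q} = \frac{m^{\mathrm{BMS}}(\Gamma\backslash G/M)}{\nu_U(\tau)}\,\nu^{\mathfrak{q},\tau}$. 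Using the identification $\overline{R}\times F_\mathfrak{q}\cong R^\mathfrak{q}$, unfolding via the suspension, and disintegrating $\nu_R$ along $\proj_1\colon R\to U$ into conditional probability measures $\nu_{u'}$ on the fibres $\{u'\}\times S_{j(u')}$, I rewrite the left-hand side of the desired inequality as
\begin{align*}
&\frac{m^{\mathrm{BMS}}(\Gamma\backslash G/M)}{\nu_U(\tau)}\\
&\quad\cdot\sum_{g\in F_\mathfrak{q}}\int_U \int_{S_{j(u')}}\int_0^{\tau(u')}\phi(g[\overline{u'},\overline{s}]a_{r+2t})\,\psi(g[\overline{u'},\overline{s}]a_r)\,dr\,d\nu_{u'}(s)\,d\nu_U(u').
\end{align*}

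Next, I invoke \cref{lem:EstimateFunctionByIntegratingOverS} pointwise to substitute $\phi(g[\overline{u'},\overline{s}]a_{r+2t})\mapsto\phi_t(u',g,r+t)$, each substitution incurring a pointwise error of at most $Ce^{-\eta t}\|\phi\|_{C^1}$. Because $\phi_t(u',g,r+t)$ is independent of $s$, Fubini lets me pull it outside the $s$-integral, and the inner $s$-integral is then exactly $\psi_0(u',g,r)$ by the very definition of $\psi_0$. Reassembling the outer integrals and the sum over $g\in F_\mathfrak{q}$ recovers $\Upsilon_{\phi_t,\psi_0}(t)$ and hence produces the desired main term $\frac{m^{\mathrm{BMS}}(\Gamma\backslash G/M)}{\nu_U(\tau)}\Upsilon_{\phi_t,\psi_0}(t)$.

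The accumulated error from the pointwise substitution is bounded by $Ce^{-\eta t}\|\phi\|_{C^1}\cdot\|\psi\|_\infty\cdot\#F_\mathfrak{q}\cdot\nu_R(\tau)$. Since $\#F_\mathfrak{q}\leq\#\tilde{G}_\mathfrak{q}\leq N_{\mathbb K}(\mathfrak{q})^c$ for a constant $c$ depending only on $n$, both $\nu_R(\tau)$ and $m^{\mathrm{BMS}}(\Gamma\backslash G/M)$ depend only on $\Gamma$, and $\|\psi\|_\infty\leq\|\psi\|_{C^1}$, this is bounded by $C'N_{\mathbb K}(\mathfrak{q})^{C'}e^{-\eta t}\|\phi\|_{C^1}\|\psi\|_{C^1}$ after absorbing absolute constants, which is what the statement requires. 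No serious obstacle arises beyond \cref{lem:EstimateFunctionByIntegratingOverS} itself; the argument is Fubini-type bookkeeping and its only quantitative delicacy is controlling the polynomial growth in $N_{\mathbb K}(\mathfrak{q})$, which is automatic from the polynomial bound on $\#F_\mathfrak{q}$.
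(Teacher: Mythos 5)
Your proposal is correct and follows essentially the same route as the paper: unfold $m^{\mathrm{BMS}}_\mathfrak{q}$ to the suspension measure $\nu^{\mathfrak{q},\tau}$ via \cref{thm:FlowEquilibriumStateEqualsBMS} and $m^{\mathrm{BMS}}_\mathfrak{q}(\Gamma_\mathfrak{q}\backslash G/M)=\#F_\mathfrak{q}\cdot m^{\mathrm{BMS}}(\Gamma\backslash G/M)$, disintegrate $\nu_R$ over $\proj_1$, substitute $\phi(g[\overline{u},\overline{s}]a_{2t+r})$ by $\phi_t(u,g,t+r)$ using \cref{lem:EstimateFunctionByIntegratingOverS}, and recognize the inner $s$-integral of $\psi$ as $\psi_0$, which is exactly what the paper's proof does. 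Your error bookkeeping is slightly loose about the prefactor $\frac{m^{\mathrm{BMS}}(\Gamma\backslash G/M)}{\nu_U(\tau)}$ versus the factor $\nu_R(\tau)$ coming from $\int_0^{\tau(u)}dr\,d\nu_U$, but since these are constants depending only on $\Gamma$ this does not affect the conclusion.
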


\begin{proof}
Fix $c > 0$ (depending on $n$) such that $\#\tilde{G}_\mathfrak{q} \leq N_{\mathbb K}(\mathfrak{q})^c$ for all nontrivial ideals $\mathfrak{q} \subset \mathcal{O}_{\mathbb K}$. Fix $C', \eta > 0$ to be the $C$ and $\eta$ from \cref{lem:EstimateFunctionByIntegratingOverS}. Fix $C = \max(C'm^{\mathrm{BMS}}(\Gamma \backslash G/M), c)$. Let $\mathfrak{q} \subset \mathcal{O}_{\mathbb K}$ be an ideal coprime to $\mathfrak{q}_0$, and $\phi, \psi \in C^1(\Gamma_\mathfrak{q} \backslash G/M, \mathbb R)$. We have
\begin{align*}
&\int_{\Gamma_\mathfrak{q} \backslash G/M} \phi(xa_{2t})\psi(x) \, dm^{\mathrm{BMS}}_\mathfrak{q}(x) \\
={}&\frac{m^{\mathrm{BMS}}_\mathfrak{q}(\Gamma_\mathfrak{q} \backslash G/M)}{\#\tilde{G}_\mathfrak{q} \cdot \nu_R(\tau)} \int_{\Omega_\mathfrak{q}} \phi(xa_{2t})\psi(x) \, d\nu^{\mathfrak{q}, \tau}(x) \\
={}&\frac{m^{\mathrm{BMS}}(\Gamma \backslash G/M)}{\nu_R(\tau)} \int_{R^\mathfrak{q}} \int_0^{\tau_\mathfrak{q}(x)} \phi(xa_{r + 2t})\psi(xa_r) \, dr \, d\nu_{R^\mathfrak{q}}(x) \\
={}&\frac{m^{\mathrm{BMS}}(\Gamma \backslash G/M)}{\nu_R(\tau)} \sum_{g \in \tilde{G}_\mathfrak{q}} \int_{R} \int_0^{\tau(x)} \phi(g\overline{x}a_{r + 2t})\psi(g\overline{x}a_r) \, dr \, d\nu_R(x) \\
={}&\frac{m^{\mathrm{BMS}}(\Gamma \backslash G/M)}{\nu_U(\tau)} \sum_{g \in \tilde{G}_\mathfrak{q}} \sum_{j = 1}^N \int_{U_j} \int_{S_j} \\
&\int_0^{\tau(u)} \phi(g[\overline{u}, \overline{s}]a_{r + 2t})\psi(g[\overline{u}, \overline{s}]a_r) \, dr \, d\nu_u(s) \, d\nu_U(u).
\end{align*}
Now recall the definition of $\Upsilon_{\phi_t, \psi_0}$ and use \cref{lem:EstimateFunctionByIntegratingOverS} to finish the proof.
\end{proof}

\begin{proof}[Proof of \cref{thm:TheoremUniformExponentialMixingOfGeodesicFlow}]
A single ideal $\mathfrak{q} = \mathcal{O}_{\mathbb K}$, corresponds to the single manifold $\Gamma \backslash G/M$ and hence exponential mixing of the geodesic flow has already been established by works of Dolgopyat and Stoyanov (essentially using Dolgopyat's method from \cref{sec:GeodesicFlowLarge|b|} for large $|b|$ and the complex RPF theorem for small $|b|$ in a similar proof as in \cref{lem:DecayOfSymbolicCodingCorrelationFunction}). Now, recall the remark before \cref{lem:EstimateFunctionByIntegratingOverS} and fix $C_4 > 1$ to be the constant described there. Fix $C_1, \eta_1, C_2, \eta_2 > 0$ to be the $C$ and $\eta$ from \cref{cor:ApproximatingMixingOfGeodesicFlowBySymbolicCodingCorrelationFunction,lem:DecayOfSymbolicCodingCorrelationFunction} respectively. Fix $C_3 = \frac{m^{\mathrm{BMS}}(\Gamma \backslash G/M)}{\nu_U(\tau)}C_2, \eta = \frac{1}{2}\min(\eta_1, \eta_2)$ and $C = \max(C_1, C_2, C_1 + C_3{C_4}^2)$. Fix the nontrivial proper ideal $\mathfrak{q}_0' \subset \mathcal{O}_{\mathbb K}$ from \cref{lem:DecayOfSymbolicCodingCorrelationFunction}. Let $\mathfrak{q} \subset \mathcal{O}_{\mathbb K}$ be a square free ideal coprime to $\mathfrak{q}_0\mathfrak{q}_0'$. Let $\phi, \psi \in C^1(\Gamma_\mathfrak{q} \backslash G/M, \mathbb R)$. Consider the decomposition $\psi = \psi^{\tilde{G}_\mathfrak{q}} + \psi^0$ where $\psi^{\tilde{G}_\mathfrak{q}}$, defined by $\psi^{\tilde{G}_\mathfrak{q}}(x) = \sum_{g \in \tilde{G}_\mathfrak{q}} \psi(gx)$ for all $x \in \Gamma_\mathfrak{q} \backslash G/M$, is $\tilde{G}_\mathfrak{q}$-invariant and consequently $\psi^0$ satisfies $\sum_{g \in \tilde{G}_\mathfrak{q}} \psi^0(gx) = 0$ for all $x \in \Gamma_\mathfrak{q} \backslash G/M$. Then
\begin{align*}
\left|\int_{\Gamma_\mathfrak{q} \backslash G/M} \phi(xa_t)\psi(x) \, dm^{\mathrm{BMS}}_\mathfrak{q}(x)\right| \leq{}&\left|\int_{\Gamma_\mathfrak{q} \backslash G/M} \phi(xa_t)\psi^{\tilde{G}_\mathfrak{q}}(x) \, dm^{\mathrm{BMS}}_\mathfrak{q}(x)\right| \\
{}&+ \left|\int_{\Gamma_\mathfrak{q} \backslash G/M} \phi(xa_t)\psi^0(x) \, dm^{\mathrm{BMS}}_\mathfrak{q}(x)\right|
\end{align*}
where the first term can be written as
\begin{align*}
\left|\int_{\Gamma_\mathfrak{q} \backslash G/M} \phi(xa_t)\psi^{\tilde{G}_\mathfrak{q}}(x) \, dm^{\mathrm{BMS}}_\mathfrak{q}(x)\right| &= \left|\sum_{g \in \tilde{G}_\mathfrak{q}} \int_{\Gamma \backslash G/M} \phi(gxa_t)\psi^{\tilde{G}_\mathfrak{q}}(gx) \, dm^{\mathrm{BMS}}(x)\right| \\
&= \left|\int_{\Gamma \backslash G/M} \left(\sum_{g \in \tilde{G}_\mathfrak{q}} \phi(gxa_t)\right) \psi^{\tilde{G}_\mathfrak{q}}(x) \, dm^{\mathrm{BMS}}(x)\right|
\end{align*}
using the $\tilde{G}_\mathfrak{q}$-invariance of $\psi^{\tilde{G}_\mathfrak{q}}$. Hence it again reduces to a question of exponential mixing of the geodesic flow on $\Gamma \backslash G/M$ which is known as explained above. Thus it suffices to assume that $\psi = \psi^0$, i.e., $\sum_{g \in \tilde{G}_\mathfrak{q}} \psi(gx) = 0$ for all $x \in \Gamma_\mathfrak{q} \backslash G/M$. Thus we have corresponding functions $\phi_t, \psi_0 \in \mathcal{B}_\mathfrak{q}$ with $\sum_{g} \psi_0(u, g, t) = 0$ for all $(u, t) \in U^\tau$ and $\|\phi_t\|_{\mathcal{B}_\mathfrak{q}} \leq C_4\|\phi\|_{C^1}$ for all $t \geq 0$ and $\|\psi_0\|_{\mathcal{B}_\mathfrak{q}} \leq C_4\|\psi\|_{C^1}$. Hence by \cref{cor:ApproximatingMixingOfGeodesicFlowBySymbolicCodingCorrelationFunction,lem:DecayOfSymbolicCodingCorrelationFunction}, for all $t \geq 0$, letting $t' = \frac{t}{2}$, we have
\begin{align*}
&\left|\int_{\Gamma_\mathfrak{q} \backslash G/M} \phi(xa_t)\psi(x) \, dm^{\mathrm{BMS}}_\mathfrak{q}(x)\right| \\
\leq{}&\frac{m^{\mathrm{BMS}}(\Gamma \backslash G/M)}{\nu_U(\tau)}\left|\Upsilon_{\phi_{t'}, \psi_0}(t')\right| + C_1N_{\mathbb K}(\mathfrak{q})^{C_1} e^{- \eta_1 t'} \|\phi\|_{C^1} \|\psi\|_{C^1} \\
\leq{}&C_3{C_4}^2N_{\mathbb K}(\mathfrak{q})^{C_2} e^{- \eta_2 t'} \|\phi\|_{C^1} \|\psi\|_{C^1} + C_1N_{\mathbb K}(\mathfrak{q})^{C_1} e^{- \eta_1 t'} \|\phi\|_{C^1} \|\psi\|_{C^1} \\
\leq{}&CN_{\mathbb K}(\mathfrak{q})^C e^{-\eta t} \|\phi\|_{C^1} \|\psi\|_{C^1}.
\end{align*}
\end{proof}

%\newpage
\nocite{*}
\bibliographystyle{alpha}
\bibliography{References}
\end{document}